\documentclass[a4paper]{amsart}
\usepackage[a4paper, margin=1in]{geometry}

\usepackage{amsthm,amsmath}
\usepackage{amssymb, amsfonts}
\usepackage{caption}
\usepackage{chngcntr}
\usepackage{color}
\usepackage{dsfont}
\usepackage{enumitem}
\usepackage{fancyhdr}
\usepackage{float}
\usepackage{graphicx}
\usepackage{hyperref}
\usepackage{letterspace}
\usepackage{mathdots}
\usepackage{mathtools}
\usepackage{marvosym}
\usepackage[none]{hyphenat}
\usepackage{tikz}
\usepackage{tikz-cd}
\usepackage{ulem}
\usepackage{upgreek}
\usepackage{wasysym}
\usepackage{lipsum}
\usepackage{setspace}
\usepackage{cleveref}

\usetikzlibrary{trees}
\usetikzlibrary{arrows.meta}

\newtheorem{thm}{Theorem}[section]
\newtheorem{theorem}[thm]{Theorem}

\newtheorem{lemma}[thm]{Lemma}

\theoremstyle{definition}

\newtheorem{notation}[thm]{Notation}

\newtheorem{fact}[thm]{Fact}
\newtheorem{definition}[thm]{Definition}

\newtheorem{question}[thm]{Question}

\newtheorem{proposition}[thm]{Proposition}
\newtheorem{corollary}[thm]{Corollary}

\newtheorem{remark}[thm]{Remark}

\newtheorem{rem/def}[thm]{Remark/Definition}
\newtheorem{rem/not}[thm]{Remark/Notation}

\makeatletter
\let\c@table\c@thm
\makeatother

\makeatletter
\let\c@figure\c@thm
\makeatother

\newtheorem*{thm-one-var}{One-variable theorem for a dividing line D}
\newtheorem*{thm-n-var-NCTP}{$n$-variable theorem for NCTP}

\def \lex {<_{lex}}

\newcommand{\trn}{%
  \mathrel{\ooalign{$\lneq$\cr\raise.21ex\hbox{$\lhd$}\cr}}}
\newcommand{\trrn}{%
  \mathrel{\ooalign{$\gneq$\cr\raise.21ex\hbox{$\rhd$}\cr}}}
\def \coc {{^{\frown}}}

\def \cl {{\rm cl}}

\def \la {\langle}
\def \ra {\rangle}
\def \Tau {\mathcal{T}}

\def \tp {\textrm{tp}}
\def \dom {\textrm{dom}}

\def \age {\textrm{age}}

\def \CC {\mathcal{C}}
\def \CI {\mathcal{I}}
\def \CJ {\mathcal{J}}
\def \CL {\mathcal{L}}
\def \CK {\mathcal{K}}
\def \CP {\mathcal{P}}

\def \cf {\text{cf}}
\def \tp {\operatorname{tp}}
\def \Th {\operatorname{Th}}
\def \acl {\operatorname{acl}}
\def \dcl {\operatorname{dcl}}
\def \span {\operatorname{span}}
\def \qftp {\operatorname{qftp}}
\def \Fraisse {Fra\"{\i}ss\'e\;\,}

\DeclareFontFamily{U}{txoflocal}{}
\DeclareFontShape{U}{txoflocal}{m}{n}{<-> txr-of}{}
\DeclareMathAlphabet{\mathbbof}{U}{txoflocal}{m}{n}

\title{$n$-variable theorems for dividing lines characterized by positive consistency-inconsistency configurations, and their applications to preservation problems}
\author[Joonhee Kim]{Joonhee Kim}
\address{Korea Institute for Advanced Study, School of Mathematics\\ 85 Hoegiro Dongdaemun-gu\\ 02455, Seoul, South Korea}
\email{kimjoonhee@kias.re.kr}
\date{\today}

\begin{document}


\begin{abstract}
We define a class of classes of complete first-order theories, denoted by $\mathfrak{D}_{n\text{-var}}$, in terms of positive consistency-inconsistency configurations and generalized indiscernibles. $\mathfrak{D}_{n\text{-var}}$ contains the class of stable theories, the class of simple theories, the class of NIP theories, the class of NTP$_1$ theories, the class of NTP$_2$ theories, the class of NATP theories, the class of NCTP theories, the class of NBTP theories, the class of theories not having a $(k,1,1)$-weave of depth $\omega$ for any $k<\omega$, the class of theories not having an infinite $k$-grid for any $k<\omega$, and the class of NPM$^{(k)}$ theories, for each $1<k<\omega$.

We prove that for any dividing line $D\in\mathfrak{D}_{n\text{-var}}$ and any complete first-order theory $T\notin D$, we can always find a formula $\varphi(x,y)$ witnessing $T\notin D$  with some indexed set of parameters such that the set of instances of $\varphi$ required to be consistent has a realization whose algebraic dimension over the whole set of parameters is the same as $|x|$. We call statements of this form $n$-variable theorems, as they may be regarded as weak versions of one-variable theorems. We show that for any $D\in\mathfrak{D}_{n\text{-var}}$ and any complete theory $T$ satisfying the appropriate hypotheses,
\vspace{1.5pt}
\begin{itemize}
\item[(i)] $T\in D$ if and only if $T^{gt}\in D$,
\item[(ii)] $T\in D$ if and only if $T_{P}\in D$,
\item[(iii)] $T\in D$ if and only if $T^{ind}\in D$,
\item[(iv)] $T\in D$ if and only if $T^\text{G}_K\in D$,
\item[(v)] $T\in D$ if and only if $\text{ACF}_T\in D$,
\item[(vi)] $T\in D$ if and only if $T^\delta_g\in D$,
\end{itemize}
\vspace{2pt}
where $T^{gt}$, $T_P$, $T^{ind}$, $T^\mathrm{G}_K$, $\mathrm{ACF}_T$, and $T^\delta_g$ are the theory of the generic trivialization of $T$, the theory of the lovely pair expansion of $T$, the theory of the $H$-structure expansion of $T$, the theory of a vector space with a dense-codense generic $K$-subspace, the theory of an algebraically closed field with a distinguished subfield, and an arbitrary completion of the theory of a generic derivation of an algebraically bounded field, as introduced or studied in \cite{BV14}, \cite{BV10}, \cite{BV16}, \cite{BdV22}, \cite{dKN21}, and \cite{FT26}, respectively. In proving (i), (ii), (iii), (iv), and (v), the $n$-variable theorem plays an essential role.

We also introduce a larger class $\mathfrak{D}^h_{n\text{-var}}\supseteq\mathfrak{D}_{n\text{-var}}$, which captures some higher-arity dividing lines such as NOP$_k$, NFOP$_k$, and NIP$_k$ for  $1<k<\omega$. We show that the $n$-variable theorem and the preservation results (ii), (iii), (iv), and (vi) listed above still hold for $\mathfrak{D}^h_{n\text{-var}}$. (i) also holds for $\mathfrak{D}^h_{n\text{-var}}$ assuming $\acl=\dcl$. 
\end{abstract}
\maketitle
\vspace{-32pt}
\begin{figure}[ht]
\[
\begin{tikzpicture}[>={stealth[length=1pt,width=6pt]}, x={(1.6cm,0cm)}, y={(0cm,0.82cm)}, z={(-0.5cm,-0.71cm)}]

\footnotesize
\hspace{5pt}
\node (stable) at (0,0,0) {stable};
\node (NIP) at (0,0,-4) {NIP};
\node (NTP)   at (1,0,0) {simple};
\node (NTP2)   at (1,0,-4) {NTP$_2$};
\node (NTP1)   at (3,0,0) {NTP$_1$};
\node (NSOP4)   at (4,0,0) {NSOP$_4$};
\node (NSOP5)   at (5,0,0) {NSOP$_5$};
\node (cdots)   at (5.81,0,0) {$\cdots$};
\node (NBTP)   at (1.65,0,-1.5) {NBTP};
\node (NCTP)   at (2.,0,-2.4) {NCTP};
\node (NATP)   at (3.,0,-2.4) {NATP};
\node (NWP)   at (2.,0,-4) {NWP};
\node (NGP)   at (3.,0,-4) {NGP};
\node (NPM2)   at (3.75,0,-4.2) {NPM$^{(2)}$\!};
\node (NPM3)   at (4.575,0,-4.4) {NPM$^{(3)}$\!\!\!\!};
\node (cdots2) at (5.3,0,-4.5) {\rotatebox{5}{$\cdots$}};

\node (o-minimal) at (-2.8,0,-4) {o-minimal};
\node (distal) at (-1.8,0,-4) {distal};
\draw[->] (o-minimal) -- (distal);
\node (NIPfake) at (-0.12,0,-4) {};
\draw[->] (distal) -- (NIPfake);

\node[fill=white,inner sep=0pt]
      (distal-nip0) at (0,3.465,0) {\color{white}o};
\node[fill=white,inner sep=0pt]
      (distal-nip1) at (0.313,3.5,0) {\color{white}o};
\node[fill=white,inner sep=0pt]
      (distal-nip2) at (0.622,3.47,0) {\tiny\color{white}o};
\node[fill=white,inner sep=0pt]
      (distal-nip3) at (0.786,3.47,0) {\tiny\color{white}o};
      
\node (NOP2) at (0,1.3,0) {NOP$_2$};
\node (NIP2fake) at (0,1.3,-4) {};
\node (NIP3fake) at (0,2.6,-4) {};

\node[fill=white,inner sep=2pt]
      (NOP3) at (0,2.6,0) {\!\!\!\!NOP$_3$\!\!\!};
\node (NFOP3) at (0,2.6,-2) {\!\!\!\!NFOP$_3$\!\!\!\!\!};
\node[fill=white,inner sep=2pt]
      (NOP4) at (0,3.9,0) {NOP$_4$};
\node (NFOP4) at (0,3.9,-2) {\!\!\!\!NFOP$_4$\!\!\!\!\!};
\node (NIP4) at (0,3.9,-4) {NIP$_4$};

\node (vdots0) at (0,5.2,0) {$\vdots$};
\node (vdots1) at (0,5.2,-2) {$\vdots$};
\node (vdots2) at (0,5.2,-4) {$\vdots$};

\draw[->] (stable) -- (NIP);
\draw[->] (stable) -- (NTP);
\draw[->] (NTP) -- (NTP2);
\draw[->] (NTP) -- (NTP1);
\draw[->] (NIP) -- (NTP2);
\draw[->] (NTP1) -- (NSOP4);
\draw[->] (NSOP4) -- (NSOP5);
\draw[->] (NSOP5) -- (cdots);
\draw[->] (NTP) -- (NBTP);
\draw[->] (NTP1) -- (NBTP);
\draw[->] (NTP2) -- (NBTP);
\draw[->] (NBTP) -- (NCTP);
\draw[->] (NTP1) -- (NCTP);
\draw[->] (NTP2) -- (NCTP);
\draw[->] (NTP1) -- (NATP);
\draw[->] (NTP2) -- (NWP);
\draw[->] (NCTP) -- (NATP);
\draw[->] (NCTP) -- (NWP);
\draw[->] (NATP) -- (NGP);
\draw[->] (NWP) -- (NGP);
\draw[->] (NGP) -- (NPM2);
\draw[->] (NPM2) -- (NPM3);

\node (NPM3fake2)   at (4.9,0,-4.39) {};
\node (cdots2fake) at (5.2,0,-4.46) {};
\node (cdots2fake2) at (5.3725,0,-4.525) {};
\node (NPMfake) at (5.66,0,-4.6) {};
\draw[->] (NPM3fake2) -- (cdots2fake);

\node (NPM3fake)   at (4.305,0,-4.42) {};
\node (NIP2fake2) at (0.05,1.34,-3.8) {};
\draw[->] (NIP2fake2) to[out=-20, in=180, out distance=12mm, in distance=20mm] (NPM3fake);

\node (NTP22)   at (1,1.3,-4) {NTP$_2^{*2}$};
\node (NTP23)   at (1,2.6,-4) {NTP$_2^{*3}$};
\node (NTP24)   at (1,3.9,-4) {NTP$_2^{*4}$};
\node (vdots3) at (1,5.2,-4) {$\vdots$};

\draw[->] (stable) -- (NOP2);
\draw[->] (NIP) -- (NOP2);

\node[fill=white,inner sep=0.7pt]
      (NFOP2) at (0,1.3,-2) {NFOP$_{2\hspace{1.8pt}}$\!\!};

\draw[->] (NIP) -- (NFOP2);
\draw[->] (NOP2) -- (NFOP2);
\draw[->] (NOP2) -- (NOP3);
\draw[->] (NFOP2) -- (NOP3);
\draw[->] (NFOP2) -- (NFOP3);
\draw[->] (NIP2fake) -- (NFOP3);
\node[fill=white,inner sep=0.5pt]
      (NIP2) at (0,1.3,-4) {NIP$_2$};
\draw[->] (NIP) -- (NIP2);
\draw[->] (NFOP2) -- (NIP2);
\draw[->] (NOP3) -- (NFOP3);
\node (NOP4fake) at (0,3.87,0) {};
\draw[->] (NOP3) -- (NOP4fake);
\draw[->] (NFOP3) -- (NFOP4);
\draw[->] (NIP3fake) -- (NFOP4);
\node[fill=white,inner sep=0.5pt]
      (NIP3) at (0,2.6,-4) {NIP$_3$};
\draw[->] (NIP2) -- (NIP3);
\draw[->] (NFOP3) -- (NIP3);
\draw[->] (NIP3) -- (NIP4);
\draw[->] (NOP4) -- (NFOP4);
\draw[->] (NFOP4) -- (NIP4);

\draw[->] (NOP4) -- (vdots0);
\draw[->] (NFOP4) -- (vdots1);
\draw[->] (NIP4) -- (vdots2);

\node (NTP22fake)   at (1,1.3,-4) {};
\node[fill=white,inner sep=0pt]
      (NTP2-NTP22) at (1.31,1.55,-3) {\tiny\color{white}0};
\draw[->] (NTP2) -- (NTP22fake);

\draw[->] (NTP22) -- (NTP23);
\draw[->] (NTP23) -- (NTP24);
\draw[->] (NTP24) -- (vdots3);

\draw[->] (NIP2) -- (NTP22);
\draw[->] (NIP3) -- (NTP23);
\draw[->] (NIP4) -- (NTP24);

\node[fill=white,inner sep=2pt] 
      (simple2)   at (1,1.3,0) {?};
\draw[->] (NTP) -- (simple2);      
      
\node
      (NTP12)   at (3,1.3,0) {?};
\draw[fill=white,inner sep=2pt,->] 
      (NTP1) -- (NTP12); 

\end{tikzpicture}
\]
\vspace{-15pt}
\caption{:
\footnotesize 
\Cref{fig: 1} $\oplus$ \Cref{fig: m/nPn} $\oplus$ \Cref{fig: 2}
}
\label{fig: 0}
\end{figure}

\section{Introduction}

We introduce a uniform way of defining dividing lines that is closely connected to the model-theoretic notion of algebraicity. One of the main ways to define dividing lines in model theory is to check whether definable sets can form certain patterns. Many well-known and widely studied dividing lines, such as stability, simplicity, NIP, NTP$_1$, NTP$_2$, and NSOP$_n$, are defined in this way.

Such definitions usually involve two ingredients, namely a partitioned formula $\varphi(x,y)$ and an indexed set of parameters $\{a_\eta\}_{\eta\in I}$. The dividing line is determined by which subsets of $\{\varphi(x,a_\eta):\eta\in I\}\cup\{\neg\varphi(x,a_\eta):\eta\in I\}$ are consistent and which are inconsistent. Thus the second ingredient $\{a_\eta\}_{\eta\in I}$  carries two kinds of information, namely subsets $\mathfrak{X}$ and $\mathfrak{Y}$ of $\mathcal{P}(I)\times\mathcal{P}(I)$ such that
\begin{itemize}
\item[$\ast$] $\{\varphi(x,a_\eta):\eta\in X^+\}\cup\{\neg\varphi(x,a_\eta):\eta\in X^-\}$ is consistent for all $(X^+\!,X^-)\in\mathfrak{X}$,
\item[$\ast$] $\{\varphi(x,a_\nu):\nu\in Y^+\}\cup\{\neg\varphi(x,a_\nu):\nu\in Y^-\}$ is consistent for all $(Y^+\!,Y^-)\in\mathfrak{Y}$.
\end{itemize}

Shelah \cite{She99} introduced a general way to describe dividing lines from this point of view. Through the notion of straight definability\footnote{Shelah describes these properties in terms of the weak $\Gamma$-property, with a suitable choice of $\Gamma$ for each one.}, he observed that several familiar properties, including OP, IP, SOP, and TP, fit into this framework. 
More recently, Bailetti \cite{Bai24} and Bodirsky, Bodor, and Marimon \cite{BBM25} further developed and refined this approach, introducing positive maximality and the PM$^{(k)}$ hierarchy, and proving preservation results for core companions, respectively.
Using a slightly different approach, Garc\'ia and Mennuni \cite{GM22} characterized OP, IP, TP$_1$, TP$_2$, ATP, and SOP$_3$ in terms of $\Sigma$-SOP for suitable posets $\Sigma$. Day and Mutchnik \cite{DM26} also showed that SOP$_n$  is both straightly and poset definable and introduced higher-arity straight definability.

Our work continues this line of research. If we restrict our attention to instances of $\varphi$, without considering their negations, then the second ingredient $\{a_\eta\}_{\eta\in I}$ mentioned above can be described in terms of three components, namely 
\begin{itemize}
\item[(i)] the index set $I$, 
\item[(ii)] a collection of subsets $X\subseteq I$ such that $\{\varphi(x,a_\eta):\eta\in X\}$ is required to be consistent, 
\item[(iii)] a collection of subsets $Y\subseteq I$ such that $\{\varphi(x,a_\nu):\nu\in Y\}$ is required to be inconsistent. 
\end{itemize}
In this paper, we add one more ingredient to this picture, namely 
\begin{itemize}
\item[(iv)] a language that allows us to view $I$ not merely as a set, but as a structure where generalized indiscernibility behaves well.
\end{itemize}
In other words, we consider a language $\CL^*$ and an $\CL^*$-structure $\CI^*$ on $I$. If we choose $\CL^*$ and its interpretaion in $\CI^*$ carefully, then we can obtain generalized indiscernibility for $\CI^*$. 

\smallskip

In \Cref{def: n-var quadruple}, we will introduce a few natural conditions on these four ingredients, namely a languge $\CL^*\!$, an $\CL^*$-structure $\CI^*\!$ on a set $I$, a subset $X$ of $I$, and a finite sequence $\bar Y$ of subsets of $I$. Each quadruple $(\CL^*\!,\CI^*\!,X,\bar Y)$ satisfying conditions in \Cref{def: n-var quadruple} canonically determines a dividing line $D^{\CL^*\!\!,\,\CI^*}_{X,\bar Y}\!\!$, such that for any complete first-order theory $T$, $T\in D^{\CL^*\!\!,\,\CI^*}_{X,\bar Y}\!$ if and only if there exist $\mathbb{M}\models T$, $\varphi(x,y)$, and $(a_\eta)_{\eta\in\CI^*}\!\subseteq \mathbb{M}$ such that
\begin{itemize}
\item[$\ast$] $\{\varphi(x,a_\eta):\eta\in X\}$ is consistent,
\item[$\ast$] $\{\varphi(x,a_\nu):\nu\in Y\}$ is inconsistent for each $Y\in \bar Y$.
\end{itemize}
We will denote the class of all such dividing lines by $\mathfrak{D}_{n\text{-var}}$ (\Cref{def: D_n-var}). In \Cref{sec: D-n-var}, we will show that many known dividing lines can be defined in this way, including stability, simplicity, NIP, NTP$_1$, NTP$_2$, NATP, NCTP, NBTP, NWP, NGP, and NPM$^{(k)}$ for $1<k<\omega$.

\smallskip

A key advantage of defining dividing lines in this way is that the consistent part of a witness can be made as large as possible in terms of model-theoretic algebraic dimension. More precisely, if a complete theory $T$ does not belong to $D^{\CL^*\!\!,\,\CI^*}_{X,\bar Y}\!\!$, then there exist a witnessing formula $\varphi(x,y)$ and $(a_\eta)_{\eta\in\CI^*}$ such that there exists $b$ satisfying
\begin{itemize}
\item[$\ast$] $b\models\{\varphi(x,a_\eta):\eta\in X\}$,
\item[$\ast$] $\dim(b/(a_\eta)_{\eta\in\CI^*\!})=|x|$.
\end{itemize}
 We will call the statement describing this phenomenon the {\it $n$-variable theorem}\footnote{Here, the letter `$n$' is just a part of the name, not a parameter in the statement. The name is intended to emphasize that the theorem is a weaker version of the 1-variable theorem. Instead of reducing a witness to a single free variable, it allows an arbitrary finite tuple $x$, and ensures that the consistent part has full algebraic dimension $|x|$.} (\Cref{thm: n-var theorem}).

The $n$-variable theorem can be applied to preservation problems in several different ways. Here, by a preservation problem, we mean a question asking whether a dividing line is closed under taking a model-theoretic construction. From \Cref{sec: T gt} to \Cref{sec: generic derivation fields}, we study generic trivializations $T^{gt}$, lovely pair expansions $T_P$, $H$-structure expansions $T^{ind}$, generic subspace expansions $T^G_R$, algebraically closed fields with a distinguished subfield ACF$_T$, and generic derivations $T^\delta_g$. Under the appropriate hypotheses for each construction, we prove that the original theory belongs to a dividing line in $\mathfrak{D}_{n\text{-var}}$ if and only if the resulting theory belongs to the same dividing line. We will prove these results by transferring witnesses from one theory to the other.

The $n$-variable theorem plays an essential role in the first five constructions. We will see that the theorem can be useful in settings where model-theoretic algebraic independence or dimension plays an important role. The constructions $T^{gt}$, $T_P$, $T^{ind}$, and $T^G_R$ assume that the original theory $T$ is geometric, where algebraic closure satisfies the exchange property and algebraic independence and dimension behave well.
In this setting, we will see that the $n$-variable theorem interacts well with the density property, $H$-independence, and the relative quantifier elimination results in \cite{BV10, BV14, BK16} to provide useful techniques.
 In ACF$_T$, we do not assume that the original theory is geometric, but we still have good properties related to algebraic closure, such as the fact that any element outside the algebraic closure of the distinguished subfield can be sent to any other such element by an automorphism fixing this closure. This property and the stable embeddedness of the distinguished subfield allow us to use the $n$-variable theorem to prove preservation in this case as well. Thus, when looking for applications of the $n$-variable theorem, we do not need to restrict ourselves to geometric theories.

As mentioned above, we define dividing lines from quadruples satisfying the conditions in \Cref{def: n-var quadruple} and prove preservation results for these dividing lines. However, we do not use all these conditions in every proof. For example, the preservation result for ACF$_T$ uses the modeling property and conditions (ii), (iii), and (vii) in \Cref{def: n-var quadruple}, but does not require the finiteness of $\bar Y$ in (i). For generic derivations $T^\delta_g$, we only use the modeling property, the finiteness of $\bar Y$, and the cofinality condition (iii) in \Cref{def: n-var quadruple}. This means that our framework can be customized for different preservation problems. If we want to prove that a particular model-theoretic construction preserves a particular dividing line, then we can try to modify the conditions in \Cref{def: n-var quadruple} to fit that problem. We may weaken the conditions so that the resulting class $\mathfrak{D}_{n\text{-var}}$ includes the dividing lines we want to study, or strengthen them so that we can use stronger techniques to prove preservation. We explain this further in Subsection~4.2 and summarize the conditions used in each proof in \Cref{tab: conditions used}.

In \Cref{sec: higher arity}, we extend our framework to cover higher-arity dividing lines such as NOP$_k$, NFOP$_k$, and NIP$_k$. The $n$-variable theorem and several of our preservation results hold in this setting as well. In the process, we obtain a natural generalization of Takeuchi's OP$_2$ \cite{Tak17}, which we will call OP$_k$.

\section*{Acknowledgment}
The author thanks Alexander Berenstein, Yvon Bossut, Gabriel Day, James Hanson, Mark Kamsma, Elliot Kaplan, Itay Kaplan, Junguk Lee, Anand Pillay, and Nicholas Ramsey for the inspiring discussions and helpful comments.

The author was supported by a KIAS individual grant
(project no.~6G091801) and by an NRF of Korea grant
(No.~2021R1A2C1009639). 

The author developed the main ideas, results, and proof strategies of this paper. After completing the first draft, the author used GPT-6 Astra to check the proofs, and some minor corrections were made during this process. During this process, GPT pointed out minor errors caused by the author's mistake, and the author revised the arguments. No new results were added using AI. GPT was also used to correct typos and refine the overall presentation of the paper.

\section{Preliminaries}\label{sec: preliminaries}

\subsection{Dividing lines given by positive consistency-inconsistency configurations}

We first recall some terminology needed to review the main dividing lines considered in this paper.

\begin{notation}\label{notation: language of omega^<omega}
Let $\kappa$ and $\lambda$ be cardinals. Let $i, i_0,...,i_{n-1}$ be arbitrary elements of $\lambda$ with $n<\omega$. Let $\alpha$ be an arbitrary element of $\kappa$.
\begin{enumerate}
\item[(i)] $\lambda^0:=\{ \emptyset\}$.
\item[(ii)] $\lambda^\kappa$ is the set of all functions from $\kappa$ to $\lambda$, when $\kappa\neq0$.
\item[(iii)] $\lambda^{<\kappa}:=\bigcup_{\beta<\kappa}{\lambda^\beta}$.
\item[(iv)] $\la i_0, ...,i_{n-1}\ra$ is the function $\eta$ from $n$ to $\lambda$ such that $\eta(m)=i_m$ for each $m<n$.
\item[(v)]$\la i\ra^\alpha$ is the function $\eta$ from $\alpha$ to $\lambda$ such that $\eta(m)=i$ for all $m<\alpha$. Note that $\la i\ra^0=\emptyset$.
\end{enumerate}
Let $\eta,\nu\in \lambda^{<\kappa}$, $i<\lambda$, $\alpha<\kappa$.
\begin{itemize}
\item[(vi)] $\eta\unlhd\nu$ if $\eta \subseteq \nu$. $\lambda^{<\kappa}$ is partially ordered by $\unlhd$. 
\item[(vii)] $\eta$ and $\nu$ are {\it comparable} if $\eta\unlhd\nu$ or $\nu\unlhd\eta$.
\item[(viii)] $\eta\perp\nu$ if $\eta\not\!\!\unlhd\,\nu$ and $\nu\not\!\!\unlhd\,\eta$.
\item[(ix)] $\eta$ and $\nu$ are {\it incomparable} if $\eta\perp\nu$.
\item[(x)] $\eta\wedge\nu$ is the $\unlhd$-maximal element $\xi\in\lambda^{<\kappa}$ such that $\xi\unlhd\eta$ and $\xi\unlhd\nu$.
\item[(xi)] $l(\eta)$ is the domain of $\eta$.
\item[(xii)] $\eta<_l \nu$ if $l(\eta)< l(\nu)$. We write $\eta=_l\nu$ if $l(\eta)=l(\nu)$.
\item[(xiii)] $\eta\lex\nu$ if either $\eta\lhd\nu$, or $\eta\perp\nu$ and $\eta(l(\eta\wedge\nu))<\nu(l(\eta\wedge\nu))$. 
\item[(xiv)] $\eta\coc\nu:=\eta\cup\{(l(\eta)+i,\nu(i)):i< l(\nu)\}$. Note that $\emptyset\coc\nu$ is just $\nu$.
\item[(xv)] $\eta\lhd_i\nu$ if $\eta^\frown\la i\ra\unlhd\nu$.
\item[(xvi)] $t(\eta):=\eta(l(\eta)-1)$ if $l(\eta)$ is a successor ordinal.
\item[(xvii)] $\eta^-:=\eta|_{l(\eta)-1}$ if $l(\eta)$ is a successor ordinal.
\item[(xviii)] $P_\alpha:=\{\eta\in\lambda^{<\kappa}: l(\eta)=\alpha\}$.
\end{itemize}
Let $\eta_0,...,\eta_n\in\lambda^{<\kappa}$.
\begin{enumerate}
\item[(xix)] $\cl(\eta_0,...,\eta_n):=(\eta_0\wedge\eta_0,...,\eta_0\wedge\eta_n,...,\eta_n\wedge\eta_0,...,\eta_n\wedge\eta_n)$. 
\end{enumerate}
\end{notation}

The following notion of an ill-founded tree is a slight modification
of the one in \cite[Definition 5.1]{KR20}.

\begin{notation}\cite{KR20}\label{notation: language of Tau_omega}
Let $\kappa$ be an infinite cardinal.
\begin{itemize}
\item[(i)] $\Tau_\kappa$ is the set of all functions $\eta$ such that
\begin{itemize}
\item[$\ast$] the domain of $\eta$ is of the form $[\alpha,\kappa)$ for some ordinal $\alpha<\kappa$,
\item[$\ast$] the range of $\eta$ is a subset of $\omega$,
\item[$\ast$] (finite support) $|\{i\in\dom(\eta):\eta(i)\neq0\}|<\omega$. 
\end{itemize}
We call $\Tau_\kappa$ the {\it ill-founded tree on $\kappa$}.
\end{itemize}
Let $\eta,\nu\in \Tau_\kappa$, $i<\omega$, $\alpha<\kappa$.
\begin{itemize}
\item[(ii)] $\eta\unlhd\nu$ if $\eta \subseteq \nu$. As in the case of $\lambda^{<\kappa}$, $\unlhd$ is a partial order on $\Tau_\kappa$. 
\item[(iii)] $\eta$ and $\nu$ are {\it comparable} if $\eta\unlhd\nu$ or $\nu\unlhd\eta$. 
\item[(iv)] $\eta\perp\nu$ if $\eta\not\!\!\unlhd\,\nu$ and $\nu\not\!\!\unlhd\,\eta$. 
\item[(v)] $\eta$ and $\nu$ are {\it incomparable} if $\eta\perp\nu$.
\item[(vi)] $\eta\wedge\nu$ is the $\unlhd$-maximal element $\xi\in\Tau_\kappa$ such that $\xi\unlhd\eta$ and $\xi\unlhd\nu$.
\item[(vii)] $l(\eta)$ is the least element of the domain of $\eta$.
\item[(viii)] $\eta<_l \nu$ if $l(\eta)> l(\nu)$. We write $\eta=_l\nu$ if $l(\eta)=l(\nu)$.
\item[(ix)] $\eta\lex\nu$ if either $\eta\lhd\nu$, or $\eta\perp\nu$ and $\eta(l(\eta\wedge\nu)-1)<\nu(l(\eta\wedge\nu)-1)$. 
\item[(x)] $\eta^\frown\la i\ra^0:=\eta$.
\item[(xi)]
$\eta^\frown\la i\ra^1:=\eta^\frown\la i\ra:=\eta\cup\{(l(\eta)-1,i)\}$.
\item[(xii)] $\eta^\frown\la i\ra^{n+1}:=(\eta^\frown\la i\ra^n{})^\frown\la i\ra$ for $n<\omega$.
\item[(xiii)] $\eta\lhd_i\nu$ if $\eta^\frown\la i\ra\unlhd\nu$.
\item[(xiv)] $t(\eta):=\eta(l(\eta))$.
\item[(xv)] $\eta^-:=\eta|_{[l(\eta)+1,\kappa)}$.
\item[(xvi)] $P_\alpha:=\{\eta\in\Tau_\kappa: l(\eta)=\alpha\}$.
\item[(xvii)] $\la 0\ra^\alpha$ is the function $\eta\in\Tau_{\kappa}$ such that $\dom(\eta)=[\alpha,\kappa)$ and $\eta(\beta)=0$ for all $\alpha\le \beta<\kappa$.
\end{itemize}
Let $\eta_0,...,\eta_n\in\Tau_\kappa$.
\begin{enumerate}
\item[(xviii)] $\cl(\eta_0,...,\eta_n):=(\eta_0\wedge\eta_0,...,\eta_0\wedge\eta_n,...,\eta_n\wedge\eta_0,...,\eta_n\wedge\eta_n)$. 
\end{enumerate}
\end{notation}

\begin{definition}\label{def: path antichain descending comb}
Let $\kappa$ be an infinite cardinal, $\lambda$ a cardinal, and $X$ be a non-empty subset of $\lambda^{<\kappa}$ or $\Tau_\kappa$.
\begin{itemize}
\item[(i)] $X$ is  a {\it path} if $X$ is linearly ordered by $\lhd$.
\item[(ii)] $X$ is an {\it antichain} if $\eta\perp\nu$ for all distinct $\eta,\nu\in X$.
\item[(iii)]
$X$ is a {\it descending comb} if it is an antichain and,
for all $\eta_0,\eta_1,\eta_2\in X$ with
$\eta_0\lex\eta_1\lex\eta_2$, we have
$
\eta_0\wedge\eta_2
=
\eta_1\wedge\eta_2
\lhd
\eta_0\wedge\eta_1.
$
\item[(iv)]
$X$ is an {\it increasing comb} if it is an antichain and,
for all $\eta_0,\eta_1,\eta_2\in X$ with
$\eta_0\lex\eta_1\lex\eta_2$, we have
$
\eta_0\wedge\eta_2
=
\eta_0\wedge\eta_1
\lhd
\eta_1\wedge\eta_2.
$
\item[(v)] $X$ is a {\it left-leaning path} if there is an enumeration $\{\eta_i\}_{i<\lambda}$ of $X$ for some cardinal $\lambda$ such that $\eta_i^- {^\frown} \la j\ra\lhd \eta_{i+1}$ for some $j\le t(\eta_i)$ for all $i<\lambda$ with $i+1<\lambda$. 
\item[(vi)] $X$ is a {\it right-veering path} if there is an enumeration $\{\eta_i\}_{i<\lambda}$ of $X$ for some cardinal $\lambda$ such that $\eta_i^- {^\frown} \la j\ra\unlhd \eta_{i+1}$ for some $j> t(\eta_i)$ for all $i<\lambda$ with $i+1<\lambda$. 
\item[(vii)] $X$ is a {\it strict left-leaning path} if there is an enumeration $\{\eta_i\}_{i<\lambda}$ of $X$ for some cardinal $\lambda$ such that $\eta_i^- {^\frown} \la j\ra\lhd \eta_{i+1}$ for some $j< t(\eta_i)$ for all $i<\lambda$ with $i+1<\lambda$. 
\item[(viii)] $X$ is a {\it strict right-veering path} if there is an enumeration $\{\eta_i\}_{i<\lambda}$ of $X$ for some cardinal $\lambda$ such that $\eta_i^- {^\frown} \la j\ra\lhd \eta_{i+1}$ for some $j> t(\eta_i)$ for all $i<\lambda$ with $i+1<\lambda$. 
\item[(ix)] $X$ is a {\it direct sibling set} if there exists $\eta$ such that $X\subseteq \{\eta^\frown\la i\ra:i<\lambda\}$.
\end{itemize}
Let $X\subseteq (2^2)^\omega$. Note that $(2^2)^\omega$ is isomorphic to $4^\omega$ by giving an order on $2^2$ such that $(0,0)<(0,1)<(1,0)<(1,1)$. Since $(2^2)^\omega\subseteq (2^2)^{<\omega_1}$, we can use the notation we defined above on $X$.
\begin{itemize}
\item[(x)] $X$ is a {\it right-$1$-comb} if $X$ is an increasing comb, and for each $\eta_i,\eta_j\in X$ with $\eta_i\lex\eta_j$, there exists $k<2$ such that $\eta_i\wedge\eta_j\lhd_{(0,k)}\eta_i$ and $\eta_i\wedge\eta_j\lhd_{(1,k)}\eta_j$.
\item[(xi)] $X$ is an {\it up-$1$-comb} if $X$ is an increasing comb, and for each $\eta_i,\eta_j\in X$ with $\eta_i\lex\eta_j$, there exists $k<2$ such that $\eta_i\wedge\eta_j\lhd_{(k,0)}\eta_i$ and $\eta_i\wedge\eta_j\lhd_{(k,1)}\eta_j$.

\end{itemize}
Let $1<k<\omega$. 
\begin{itemize}
\item[(xii)] 
By a {\it $k$-hypergraph}, we mean a structure
$(V,E)$ where $E\subseteq V^k$ is symmetric and irreflexive.
 The {\it generic ordered $k$-hypergraph} is
the Fra\"{\i}ss\'e limit of the class of finite structures $(V,<,E)$
such that $<$ is a linear order on $V$ and $(V,E)$ is a $k$-hypergraph. We say $X\subseteq V$ is a {\it clique} if $E(x_0,...,x_{k-1})$ for all distinct $x_0,...,x_{k-1}\in X$.
\end{itemize}
\end{definition}

\begin{definition}\label{def: stable NIP} 
Let $T$ be a complete theory.
\begin{itemize}
\item[(i)] \cite{She90} We say $T$ is {\it unstable} if there exist $\varphi(x,y)$, $(a_i)_{i<\omega}$, and $(b_i)_{i<\omega}$ such that\vspace{-3pt}
\[\models\varphi(a_i,b_j)\text{ if and only if }i<j.\vspace{-5pt}\]
If $T$ is not unstable, then we say it is {\it stable}. 

\smallskip

\item[(ii)] \cite{She90} We say $T$ has {\it IP} if there exist $\varphi(x,y)$, $(a_i)_{i<\omega}$, and $(b_I)_{I\subseteq\omega}$ such that\vspace{-3pt}
\[\models\varphi(a_i,b_I)\text{ if and only if }i\in I.\vspace{-5pt}\]
If $T$ does not have IP, then we say it is {\it NIP}. 
\end{itemize}
\end{definition}

\begin{definition}\label{def: ATP CTP BTP} 
Let $T$ be a complete theory.
\begin{itemize}

\item[(iii)] \cite{She90} We say $T$ has {\it TP} if there exist $\varphi(x,y)$ and $(a_\eta)_{\eta\in\omega^{<\omega}}$ such that
\begin{itemize}
\item[$\ast$] $\{\varphi(x,a_\eta)\}_{\eta\in X}$ is consistent for any path $X$,
\item[$\ast$] $\{\varphi(x,a_\eta),\varphi(x,a_\nu)\}$ is inconsistent for any direct sibling set $\{\eta,\nu\}$ with $\eta\neq\nu$.
\end{itemize} 
If $T$ does not have TP, then we say it is {\it NTP}. We say $T$ is {\it simple} if it is NTP.

\smallskip

\item[(iv)] \cite{She90,DS04,KK11} We say $T$ has {\it TP$_1$} if there exist $\varphi(x,y)$ and $(a_\eta)_{\eta\in\omega^{<\omega}}$ such that
\begin{itemize}
\item[$\ast$] $\{\varphi(x,a_\eta)\}_{\eta\in X}$ is consistent for any path $X$,
\item[$\ast$] $\{\varphi(x,a_\eta),\varphi(x,a_\nu)\}$ is inconsistent for any incomparable $\eta,\nu\in\omega^{<\omega}$.
\end{itemize}
If $T$ does not have TP$_1$, then we say it is {\it NTP$_1$}.

\smallskip

\item[(v)] \cite{She90,Che13} We say $T$ has {\it TP$_2$} if there exist $\varphi(x,y)$ and $(a_{i,j})_{i,j<\omega}$ such that
\begin{itemize}
\item[$\ast$] $\{\varphi(x,a_{i,f(i)})\}_{i<\omega}$ is consistent for any function $f:\omega\to\omega$,
\item[$\ast$] $\{\varphi(x,a_{i,j}),\varphi(x,a_{i,j'})\}$ is inconsistent for any $i<\omega$ and $j,j'<\omega$ with $j\neq j'$.
\end{itemize}
If $T$ does not have TP$_2$, then we say it is {\it NTP$_2$}.

\smallskip

\item[(vi)] \cite{AK20,AKL21} We say $T$ has {\it ATP} if there exist $\varphi(x,y)$ and $(a_\eta)_{\eta\in\omega^{<\omega}}$ such that
\begin{itemize}
\item[$\ast$] $\{\varphi(x,a_\eta)\}_{\eta\in X}$ is consistent for any antichain $X$,
\item[$\ast$] $\{\varphi(x,a_\eta),\varphi(x,a_\nu)\}$ is inconsistent for any comparable $\eta,\nu\in\omega^{<\omega}$ with $\eta\neq\nu$.
\end{itemize}
If $T$ does not have ATP, then we say it is {\it NATP}.

\smallskip

\item[(vii)] \cite{Mut22}\footnote{In the original definition by Mutchnik \cite{Mut22}, this property is formulated in terms of $k$-DCTP$_2$, with the underlying index set $2^{<\omega}$. It is easy to check that having $k$-DCTP$_2$ for some $k<\omega$ is equivalent to having CTP as defined here.} We say $T$ has {\it CTP} if there exist $\varphi(x,y)$, $(a_\eta)_{\eta\in\omega^{<\omega}}$, and $k<\omega$ such that
\begin{itemize}
\item[$\ast$] $\{\varphi(x,a_\eta)\}_{\eta\in X}$ is consistent for any descending comb $X$,
\item[$\ast$] $\{\varphi(x,a_{\eta|_n})\}_{n<\omega}$ is $k$-inconsistent for any $\eta\in\omega^\omega$.
\end{itemize}
If $T$ does not have CTP, then we say it is {\it NCTP}.

\smallskip

\item[(viii)] \cite{KR23}\footnote{In the original definition by Kruckman and Ramsey \cite{KR23}, the underlying index set is $\omega^{<\omega}\setminus\{\emptyset\}$. We may instead define an equivalent notion of BTP using $\Tau_\omega$, as in this paper. See \cite[Lemma 4.2]{AK26}.} We say $T$ is {\it BTP} if there exist $\varphi(x,y)$, $(a_\eta)_{\eta\in\Tau_\omega}$, and $k<\omega$ such that
\begin{itemize}
\item[$\ast$] $\{\varphi(x,a_\eta)\}_{\eta\in X}$ is consistent for any strict left-leaning path $X$,
\item[$\ast$] $\{\varphi(x,a_\eta)\}_{\eta\in X}$ is $k$-inconsistent for any strict right-veering path $X$.
\item[$\ast$] $\{\varphi(x,a_\eta)\}_{\eta\in X}$ is $k$-inconsistent for any direct sibling set $X$.
\end{itemize}
If $T$ does not have BTP, then we say it is {\it NBTP}.

\smallskip 

\item[(ix)] \cite{She96}
For an integer\footnote{The SOP$_n$ hierarchy for integers $n\geq 3$ can be extended to SOP$_r$ for real numbers $r\geq 3$. See \cite{Mut25,Mut26}.} $n\geq 3$, we say that $T$ has {\it SOP$_n$} if there exist a formula $\varphi(x,y)$ with $|x|=|y|$ and a sequence $(a_i)_{i<\omega}$ such that
\begin{itemize}
\item[$\ast$] $\models\varphi(a_i,a_j)$ for all $i<j<\omega$,
\item[$\ast$] $\{\varphi(x_i,x_{i+1})\}_{i<n-1}\cup\{\varphi(x_{n-1},x_0)\}$ is inconsistent.
\end{itemize}
If $T$ does not have SOP$_n$, then we say it is {\it NSOP$_n$}.

\smallskip 

\item[(x)] \cite{Han25} For $k<\omega$, we say $T$ has a {\it$(k,1,1)$-weave of depth $\omega$} if there exist $\varphi(x,y)$ and $(a_\eta)_{\eta\in (2^2)^\omega}$ such that
\begin{itemize}
\item[$\ast$] $\{\varphi(x,a_\eta)\}_{\eta\in X}$ is consistent for each finite right-1-comb $X$,
\item[$\ast$] $\{\varphi(x,a_\eta)\}_{\eta\in X}$ is $k$-inconsistent for each finite up-1-comb $X$.
\end{itemize}
If $T$ does not have a $(k,1,1)$-weave of depth $\omega$ for any $k<\omega$, then we say it is {\it NWP}.

\smallskip

\item[(xi)] \cite{Han25} For $k<\omega$, we say $T$ has an {\it infinite $k$-grid} if there exist $\varphi(x,y)$ and $(a_{i,j})_{i,j<\omega}$ such that
\begin{itemize}
\item[$\ast$] $\{\varphi(x,a_{i,j})\}_{(i,j)\in X}$ is consistent if $X$ satisfies $(i,j),(i',j')\in X\Rightarrow i\le i'\wedge j\le j'$ or $i'\le i\wedge j'\le j$,
\item[$\ast$] $\{\varphi(x,a_{i,j})\}_{(i,j)\in X}$ is $k$-inconsistent if for all distinct $(i,j),(i',j')\in X$, $i\not\le i'\vee j\not\le j'$ and $i'\not\le i\vee j'\not\le j$.
\end{itemize}
If $T$ does not have an infinite $k$-grid for any $k<\omega$, then we say it is {\it NGP}.

\smallskip

\item[(xii)] \cite{Bai24}\footnote{In the original definition \cite[Definition 6.1]{Bai24}, $T$ is $\mathrm{PM}^{(k)}$ if
some formula exhibits every positive $n$-pattern, for every
$n<\omega$, where each inconsistency condition has size $k$. The
equivalent hypergraph formulation used here follows from
\cite[Propositions 6.3 and 6.4]{Bai24}.} For $1<k<\omega$, we say $T$ is {\it PM$^{(k)}$} if there exists $\varphi(x,y)$ such that for a generic ordered $k$-hypergraph $(\omega,<,E)$ on $\omega$, there exists $(b_i)_{i<\omega}$ such that
\begin{itemize}
\item[$\ast$] $\{\varphi(x,b_i)\}_{i\in X}$ is consistent if $X\subseteq \omega$ is a clique,
\item[$\ast$] $\{\varphi(x,b_i)\}_{i\in X}$ is inconsistent if $X\subseteq \omega$ contains distinct $i_0,...,i_{k-1}$ such that $\neg E(i_0,...,i_{k-1})$.
\end{itemize}
We say a formula is {\it PM$^{(k)}$} if it satisfies the above conditions. If $T$ is not PM$^{(k)}$, then we say it is {\it NPM$^{(k)}$}.
\end{itemize}
In the above definitions, we call $x$ and $y$ the {\it free variable part} and the {\it parameter variable part} of $\varphi(x,y)$, respectively.
\end{definition}

\begin{definition}
Let $T$ be a complete theory and $\mathcal{F}$ be the set of all nondecreasing functions from $\omega$ to $\omega$. By nondecreasing, we mean that $i<j$ implies $f(i)\leq f(j)$.
\begin{itemize}
\item[(xiii)] \cite{Tak17} We say $T$ has {\it OP$_2$} if there exist $\varphi(x,y_0,y_1)$, $(a^0_i)_{i<\omega}$, $(a^1_i)_{i<\omega}$, and $(b_f)_{f\in\mathcal{F}}$ such that\vspace{-3pt}
\[\models\varphi(b_f,a^0_i,a^1_j)
\text{ if and only if }
j\leq f(i).\vspace{-3pt}
\]
If $T$ does not have OP$_2$, then we say it is {\it NOP$_2$}.

\smallskip

\item[(xiv)] \cite{TW21,ACT23} For $0<k<\omega$, we say $T$ has {\it FOP$_k$} if there exist $\varphi(x,y_0,...,y_{k-1})$, $(a^0_i)_{i<\omega}$, ..., $(a^{k-1}_i)_{i<\omega}$, and $(b_f)_{f:\omega^{k-1}\to\omega}$ such that\vspace{-3pt}
\[\models\varphi(b_f,a^0_{i_0},...,a^{k-1}_{i_{k-1}})
\text{ if and only if }
i_{k-1}\leq f(i_0,...,i_{k-2}).\vspace{-3pt}
\]
If $T$ does not have FOP$_k$, then we say it is {\it NFOP$_k$}.

\smallskip

\item[(xv)] \cite{She14,CPT19} For $0<k<\omega$, we say $T$ has {\it IP$_k$} if there exist $\varphi(x,y_0,...,y_{k-1})$, $(a^0_i)_{i<\omega},...,(a^{k-1}_i)_{i<\omega}$, and $(b_I)_{I\subseteq\omega^k}$ such that\vspace{-3pt}
\[\models\varphi(b_I,a^0_{i_0},...,a^{k-1}_{i_{k-1}})
\text{ if and only if }
(i_0,...,i_{k-1})\in I.\vspace{-3pt}
\]
If $T$ does not have IP$_k$, then we say it is {\it NIP$_k$}.

\end{itemize}
\end{definition}

\begin{fact}\label{fact: relation between tree properties}
Let $T$ be a complete theory.
\begin{itemize}
\item[(i)] \cite{She90} If $T$ is stable, then it is NIP and simple.
\item[(ii)] \cite{She90} If $T$ is simple, then it is NTP$_1$ and NTP$_2$.
\item[(iii)] \cite{She80} If $T$ is NIP, then it is NTP$_2$.
\item[(iv)] \cite{KK11,Che26} $T$ is NTP$_1$ if and only if it is NSOP$_3$.
\item[(v)] \cite{She96} For every $3\leq n<\omega$, if $T$ is NSOP$_n$, then it is NSOP$_{n+1}$.
\item[(vi)] \cite{KR23} If $T$ is NTP$_1$ or NTP$_2$, then it is NBTP.
\item[(vii)] \cite{KR23,Han23} If $T$ is NBTP, then it is NCTP.
\item[(viii)] \cite{Mut22} If $T$ is NCTP, then it is NATP.
\item[(ix)] \cite{Han25} If $T$ is NCTP, then it is NWP.
\item[(x)] \cite{Han25} If $T$ is NATP or NWP, then it is NGP.
\item[(xi)] \cite{Han25,Bai24} If $T$ is NGP, then it is NPM$^{(2)}$.
\item[(xii)] \cite{Bai24} For every $1<k<\omega$, if $T$ is NPM$^{(k)}$, then it is NPM$^{(k+1)}$.
\item[(xiii)] \cite{ACT23} For every $0<k<\omega$, if $T$ is NFOP$_k$, then it is NIP$_k$, and if $T$ is NIP$_k$, then it is NFOP$_{k+1}$.
\item[(xiv)] \cite{Bai24} For every $0<k<\omega$, if $T$ is NIP$_k$, then it is NPM$^{(k+1)}$.
\end{itemize}
\end{fact}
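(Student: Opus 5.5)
Since this is a collected Fact, my plan is not to reprove every item from scratch. I will verify the elementary implications directly from \Cref{def: stable NIP} and \Cref{def: ATP CTP BTP}, and import the deep ones from the cited sources, checking only that their definitions agree with those used here (this matters mainly for CTP, BTP and PM$^{(k)}$; see the footnotes to \Cref{def: ATP CTP BTP}).

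\textbf{Elementary items.} I would argue as follows.
\begin{itemize}
\item For (iii), take a TP$_2$ array $(a_{i,j})$ for $\varphi$. For $I\subseteq\omega$, let $f_I(i)=0$ if $i\in I$ and $f_I(i)=1$ otherwise, and let $b_I$ realize $\{\varphi(x,a_{i,f_I(i)})\}_{i<\omega}$. Then $\models\varphi(b_I,a_{i,0})$ if and only if $i\in I$, because $b_I\models\varphi(x,a_{i,1})$ and row $i$ is $2$-inconsistent. So the opposite formula $\varphi^{op}(y,x)$ has IP. IP of $\varphi^{op}$ gives IP of $\varphi$ by the usual compactness and Ramsey argument.
\item For (ii), both TP$_1$ and TP$_2$ yield TP directly. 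A TP$_1$ tree is already a TP tree, since direct siblings are incomparable. From a TP$_2$ array, set $c_\eta:=a_{l(\eta)-1,t(\eta)}$ for $\eta\neq\emptyset$ (and put something harmless at the root). Paths are then consistent and direct siblings are $2$-inconsistent.
\item For (i), stability implies NIP because IP yields the order property: use $I=\{j:j>i\}$-type sets. Stability implies simplicity because a tree property yields, by the standard counting argument, $2^{\lambda}$ types over $\lambda^{<\omega}$-sized sets for suitable $\lambda$, contradicting stability. Alternatively, I would cite \cite{She90}.
\end{itemize}

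\textbf{Deep items.} For (iv)--(xiv) I would cite the references attached to each item, as the statement does:
\begin{itemize}
\item (iv) needs both directions of the TP$_1$/SOP$_3$ equivalence, \cite{KK11} and \cite{Che26};
\item (v) is Shelah's SOP$_n$ hierarchy \cite{She96};
\item (vi) and (vii) are from \cite{KR23,Han23};
\item (viii) is \cite{Mut22};
\item (ix)--(xi) are Hanson's weave and grid results \cite{Han25}, combined with Bailetti's hypergraph characterization for (xi);
\item (xii) and (xiv) are from \cite{Bai24};
\item (xiii) is \cite{ACT23}.
\end{itemize}
Where a cited definition differs from ours, for example $k$-DCTP$_2$ on $2^{<\omega}$ versus CTP on $\omega^{<\omega}$, or BTP on $\omega^{<\omega}\setminus\{\emptyset\}$ versus $\Tau_\omega$, I would insert a short compactness and translation argument, or invoke \cite[Lemma 4.2]{AK26}.

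\textbf{Main obstacle.} The genuinely hard content is in the imported items: the direction SOP$_3\Rightarrow$TP$_1$ in (iv), and NCTP$\Rightarrow$NATP in (viii). These require indiscernible-tree modelling arguments that I would not attempt to reproduce. The realistic risk within this paper is definitional mismatch, so the step I would spend effort on is checking these translations.
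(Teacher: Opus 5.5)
The paper gives no proof of this Fact; it only lists attributions, so citing the sources (with the translations you mention for CTP, BTP and PM$^{(k)}$) is exactly what is intended. Your direct arguments for (iii), for TP$_1\Rightarrow$TP, and for stable $\Rightarrow$ NIP are fine. Three details should be fixed if you keep the rest.

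First, in TP$_2\Rightarrow$TP the root cannot be left unspecified, because every path through $\emptyset$ must stay consistent. Shift the rows by one: put $c_\emptyset:=a_{0,0}$ and $c_\eta:=a_{l(\eta),t(\eta)}$ for $\eta\neq\emptyset$. Then a path uses pairwise distinct rows, and direct siblings share a row with distinct columns.

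Second, the counting argument for stable $\Rightarrow$ simple fails as written. A TP tree on $\lambda^{<\omega}$ gives at most $\lambda^{\aleph_0}$ branches. Counting \emph{complete} types over such a tree can never contradict stability. A stable theory is only guaranteed to be $\lambda$-stable when $\lambda^{|T|}=\lambda$, and then $\lambda^{\aleph_0}=\lambda$. Moreover, stable non-superstable theories really do have $\lambda^{\aleph_0}$ types over suitable sets of size $\lambda$. Count $\varphi$-types instead: in a stable theory $|S_\varphi(A)|\le|A|+|T|$. On the other hand, for $\lambda\ge|T|$ of countable cofinality the branches give $\lambda^{\aleph_0}>\lambda$ pairwise contradictory $\varphi$-types over a set of size $\lambda$. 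Alternatively, stretch the tree by compactness to $2^{<\mu}$ with $2^{<\mu}\le\kappa<2^{\mu}$ for some $\kappa=\kappa^{|T|}$, or simply cite Shelah as the paper does.

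Third, your closing remark reverses the deep direction of (iv). SOP$_3\Rightarrow$TP$_1$ is the classical direction: it follows from SOP$_3\Rightarrow$SOP$_2$ together with the Kim--Kim equivalence of SOP$_2$ and TP$_1$. The hard direction is TP$_1\Rightarrow$SOP$_3$, that is, NSOP$_3\Rightarrow$NTP$_1$.
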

Thus we have the following diagram.
\vspace{0pt}

\begin{figure}[H]
\[
\begin{tikzpicture}[>=stealth, x=1.6cm, y=1.2cm]
\node (stable) at (0.15,-1) {stable};
\node (NIP) at (0.15,0.7) {NIP};
\node (NIP2) at (0.15,1.35) {NIP$_2$};
\node (NIP3) at (0.15,2.05) {NIP$_3$};
\node (vdots) at (0.15,2.85) {$\vdots$};
\node (vdots2) at (-0.8,2.85) {$\vdots$};
\node (NFOP2) at (-0.8,1.35) {NFOP$_2$};
\node (NFOP3) at (-0.8,2.05) {NFOP$_3$};
\node (NTP)   at (1.1,-1) {simple};
\node (NTP2)   at (1.1,0.7) {NTP$_2$};
\node (NTP1)   at (4.3,-1) {NTP$_1$};
\node (NSOP3)   at (5.3,-1) {NSOP$_4$};
\node (NSOP4)   at (6.35,-1) {NSOP$_5$};
\node (cdots)   at (7.20,-1) {$\cdots$};
\node (NBTP)   at (2.32,-0.35) {\!\!NBTP\!\!};
\node (NCTP)   at (3.2,0.1) {\!NCTP\!};
\node (NATP)   at (4.3,0.1) {NATP};
\node (NWP)   at (3.2,0.7) {NWP};
\node (NGP)   at (4.3,0.7) {\!\!NGP\!\!};
\node (NPM2)   at (5.1,1.) {\!\!NPM$^{(2)}$\!\!\!};
\node (NPM3)   at (6,1.45) {\!\!NPM$^{(3)}$\!\!};
\node (NPM4)   at (6.6,2.05) {\!\!NPM$^{(4)}$\!\!};
\node (diagdotsfake)   at (6.95,2.56) {};
\node (diagdotsfake2)   at (7.1,2.64) {};
\node (diagdots)   at (6.9,2.75) {\rotatebox{15}{\;\;\;$\iddots$}};
\node (NPM2fake) at (5.38,1.1) {};
\node (NPM3fake) at (5.65,1.35) {};
\node (NPM3fake2) at (6.1,1.55) {};
\node (NPM3fake3) at (5.725,1.325) {};
\node (NPM4fake) at (6.65,2.1) {};

\draw[->] (stable) -- (NIP);
\draw[->] (stable) -- (NTP);
\draw[->] (NTP) -- (NTP2);
\draw[->] (NIP) -- (NIP2);
\draw[->] (NIP2) -- (NIP3);
\draw[->] (NIP3) -- (vdots);
\draw[->] (NFOP3) -- (vdots2);

\draw[->] (NIP) -- (NFOP2);
\draw[->] (NFOP2) -- (NIP2);
\draw[->] (NFOP2) -- (NFOP3);
\draw[->] (NIP2) -- (NFOP3);
\draw[->] (NFOP3) -- (NIP3);
\draw[->] (NTP) -- (NTP1);
\draw[->] (NIP) -- (NTP2);
\draw[->] (NTP1) -- (NSOP3);
\draw[->] (NSOP3) -- (NSOP4);
\draw[->] (NSOP4) -- (cdots);
\draw[->] (NTP) -- (NBTP);
\draw[->] (NTP1) -- (NBTP);
\draw[->] (NTP2) -- (NBTP);
\draw[->] (NBTP) -- (NCTP);
\draw[->] (NTP1) -- (NCTP);
\draw[->] (NTP2) -- (NCTP);
\draw[->] (NTP1) -- (NATP);
\draw[->] (NTP2) -- (NWP);
\draw[->] (NCTP) -- (NATP);
\draw[->] (NCTP) -- (NWP);
\draw[->] (NATP) -- (NGP);
\draw[->] (NWP) -- (NGP);
\draw[->] (NGP) -- (NPM2);
\draw[->] (NPM2fake) -- (NPM3fake3);
\draw[->] (NPM4fake) -- (diagdotsfake);
\draw[->] (NIP2) -- (NPM3fake);
\draw[->] (NPM3fake2) -- (NPM4);
\draw[->] (NIP3) -- (NPM4);
\end{tikzpicture}
\]
\vspace{-22.5pt}
\caption{
\footnotesize  
}
\label{fig: 1}
\end{figure}

Unlike the other dividing lines introduced above, stability, NIP, OP$_2$, FOP$_k$, and NIP$_k$ were not defined in terms of positive consistency-inconsistency configurations. In other words, the consistency pattern appearing in their definitions involves negations of the witnessing formula. Recently, Day \cite{Day25} found positive consistency-inconsistency characterizations of stable and NIP theories. We recall them below.

\begin{notation}\cite{Day25}
Let $\CL_c:=\lbrace <,C_0,C_1\rbrace$, where $<$ is a binary relation symbol and $C_0,C_1$ are unary relation symbols. Let $\CK_c$ be the class of all finite $\CL_c$-structures $A$ such that
\begin{itemize}
\item[(i)] $<$ is a linear order on $A$,
\item[(ii)] $C_0(A)$ and $C_1(A)$ form a partition of $A$.
\end{itemize}
We denote by $\mathbf{c}_2$ the \Fraisse limit of $\CK_c$. Thus $\mathbf{c}_2$ is a countable homogeneous linear order equipped with two unary predicates $C_0$ and $C_1$, which we regard as two colors, and every finite linearly ordered set colored by two colors embeds into $\mathbf{c}_2$.
\end{notation}

\begin{fact}\cite{Day25}
Let $T$ be a complete theory.
\begin{itemize}
\item[(i)] $T$ is unstable if and only if there exist $\varphi(x,y)$, $(a_\eta)_{\eta\in\mathbf{c}_2^{<\omega}}$, $n<\omega$, and a complete quantifier-free $n$-type $q$ in $\CL_c$ such that
\begin{itemize}
\item[$\ast$] $\lbrace\varphi(x,a_{\eta|_i})\rbrace_{i<\omega}$ is consistent for every $\eta\in\mathbf{c}_2^\omega$,
\item[$\ast$] $\lbrace \varphi(x,a_{\nu^\frown\la j_0\ra}), ...,\varphi(x,a_{\nu^\frown\la j_{n-1}\ra})\rbrace$ is inconsistent whenever $\nu\in\mathbf{c}_2^{<\omega}$ and $(j_0,...,j_{n-1})\models q$ in $\mathbf{c}_2$.
\end{itemize}
\item[(ii)] $T$ has IP if and only if there exist $\varphi(x,y)$, $(a_{i,j})_{i<\omega,j\in\mathbf{c}_2}$, $n<\omega$, and a complete quantifier-free $n$-type $q$ in $\CL_c$ such that
\begin{itemize}
\item[$\ast$] $\{\varphi(x,a_{i,f(i)})\}_{i<\omega}$ is consistent for every function $f:\omega\to\mathbf{c}_2$,
\item[$\ast$] $\{\varphi(x,a_{i,j_0}), ...,\varphi(x,a_{i,j_{n-1}})\}$ is inconsistent whenever $i<\omega$ and $(j_0, ...,j_{n-1})\models q$ in $\mathbf{c}_2$.
\end{itemize}
\end{itemize}
\end{fact}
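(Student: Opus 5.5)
We take the two equivalences in turn, each in both directions. In both cases we use the fact that $\CK_c$ is a Ramsey class: finite linear orders with a $2$-partition are a standard example. Consequently $\mathbf{c}_2$, and trees $\mathbf{c}_2^{<\omega}$ built over it with the natural tree language, have the modeling property. So any witness to the configurations in (i) or (ii) may be replaced by one that is indiscernible in the appropriate generalized sense, without changing which finite subsets of instances are consistent or inconsistent.

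For (i), direction ($\Rightarrow$): we start from a formula $\varphi(x,y)$ with the order property, witnessed by $(a_i)_{i\in\mathbb{Q}}$ and $(b_i)_{i\in\mathbb{Q}}$. By compactness we pass to a dense indiscernible version. We then build $\psi(x;y_1y_2)$ so that an instance with parameters from this sequence defines an ``interval'' $\{x: \varphi(x,b_s)\wedge\neg\varphi(x,b_t)\}$, which is a positive instance of $\psi$. The tree is constructed by recursion on $\mathbf{c}_2^{<\omega}$. The children of a node $\nu$ are subintervals of the interval at $\nu$. They are arranged so that $<$ among the children is reflected in the order of the endpoints, and so that the color decides which end of the subinterval is moved. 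With this arrangement every branch is a decreasing nested chain, hence consistent. Meanwhile a pair of children of the form ``color $1$ below color $0$'' gives disjoint intervals, so $q$ can be taken to be the complete $2$-type of such a pair. Compactness lets us carry out the recursion with $\mathbf{c}_2$ embedded into $\mathbb{Q}$.

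For (i), direction ($\Leftarrow$), which is the main obstacle: assume $T$ is stable and take a witness made tree-indiscernible. Fix a node $\nu$ and consider its children $(a_{\nu^\frown\la j\ra})_{j\in\mathbf{c}_2}$ over the ancestors of $\nu$. By indiscernibility, each monochromatic suborder is an indiscernible sequence over the ancestors. By stability, these sequences are totally indiscernible, and an interleaved sequence of the two colors is totally indiscernible as well, with its colors as labels. Hence the inconsistency pattern $q$ depends only on how many times each color occurs, not on the order. The crucial step is then to exploit the order freedom to arrange that an inconsistent $q$-tuple is contained in a consistent branch-type set, which is a contradiction. If the children alone cannot be made to lie on one branch, we would instead use $q$ together with the path consistency to build an infinite binary $\varphi$-tree of depth $\omega$ for a Boolean combination of instances, so that the local $2$-rank $R(x=x,\varphi',2)$ is infinite, which stability forbids. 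We expect the colored, order-sensitive nature of $q$ to be what makes this go through, as opposed to the merely $k$-inconsistent trees that simple theories also admit.

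For (ii) the structure is the same, with the tree replaced by an array of independent rows. Direction ($\Rightarrow$): from IP we obtain an indiscernible sequence $(b_i)$ and a formula alternating along it. We set $a_{i,j}$ to encode a pair of elements of the $i$-th block such that colors and order of $j$ determine a positive instance. Every choice function is then consistent, by the independence of the rows coming from IP, while a fixed bad colored pattern in one row is inconsistent. Direction ($\Leftarrow$): assuming NIP, we make the array mutually indiscernible. We then apply the fact that in NIP theories an indiscernible sequence cannot alternate infinitely on a consistent type, together with total-indiscernibility-type arguments inside each row, to reach a contradiction. As in (i), the main work lies in this converse.
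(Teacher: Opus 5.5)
The paper gives no proof of this Fact; it is quoted from Day. So your plan has to stand on its own, and it has genuine gaps in both directions.

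\emph{Forward directions.} In your construction for (i), distinct children get distinct endpoints. Two children with disjoint index intervals therefore carry instances involving four distinct parameters $b_s,b_u,b_v,b_t$. Such a conjunction is contradictory only if the instances $\varphi(x,b_r)$ are monotone in $r$ under implication, and the order property does not give this. In the random graph with $\varphi(x,y)=xRy$, every Boolean combination of instances over distinct parameters is consistent, so your ``disjoint intervals'' are never inconsistent there.

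The inconsistency has to be syntactic. Cut the parent's index interval at a single point $u$, give \emph{all} color-$0$ children the left piece and \emph{all} color-$1$ children the right piece. Then two siblings of different colors contain $\varphi(x,b_u)$ and $\neg\varphi(x,b_u)$. Every finite part of a branch is a nested chain realized by some $a_c$, and $q$ is the type of a two-colored pair. Equivalently, start from a binary tree witnessing infinite $\varphi$-$2$-rank. Likewise in (ii), the two colors in row $i$ should code $\varphi(x,b_i)$ and $\neg\varphi(x,b_i)$ for the same $b_i$, because distinct members of an independent family never conflict.

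\emph{Converse directions.} The step ``arrange that an inconsistent $q$-tuple is contained in a consistent branch-type set'' cannot be carried out. A $q$-tuple consists of at least two siblings, and no relabelling respecting the tree makes siblings comparable. The order freedom you get from stability only shows that the order within $q$ is irrelevant.

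Your fallback binary tree is available only in two cases. If $q$ is monochromatic, you have the tree property. If $q$ has exactly one point of each color, split on $\varphi(x,a_{\nu^\frown\langle j_1\rangle})$, whose negation follows from $\varphi(x,a_{\nu^\frown\langle j_0\rangle})$. If $q$ has two points of one color, the contradictory pieces are conjunctions over several siblings. Path consistency says nothing about several siblings together with the continuation of a branch below one of them. The same problem blocks (ii): choice functions take one point per row, and total indiscernibility fails under NIP.

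The missing idea is a counting argument on $\varphi$-types. It needs no indiscernibility, nor the modeling property for colored trees that you assert.
\begin{itemize}
\item[(ii)] Fix a $q$-tuple $j^i_0,\dots,j^i_{n-1}$ in each of $N$ rows. For each $f\colon N\to n$, realize $\{\varphi(x,a_{i,j^i_{f(i)}})\}_{i<N}$ by some $b_f$. Its trace on these $nN$ parameters meets every row in a nonempty proper subset. Hence there are at least $(n/(n-1))^N$ distinct traces, which contradicts Sauer--Shelah for $\varphi^{\mathrm{opp}}$.
\item[(i)] Stability makes $S_\varphi(A)$ countable, hence scattered, for the countable parameter set $A$. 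Let $K_\nu$ be the closed set of $\varphi$-types containing the path formulas up to $\nu$. Its Cantor--Bendixson rank and degree are non-increasing along the tree. Well-foundedness then yields a node $\nu$ and a $q$-tuple of its children with the same rank and degree as $K_\nu$; otherwise every node has a child where the pair strictly drops, giving an infinite descent. A point of maximal rank in $K_\nu$ then lies in all of these sets. It is a consistent $\varphi$-type containing an inconsistent set, a contradiction.
\end{itemize}
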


In \Cref{subsection: l over kNPk}, we will make use of Day's argument to obtain positive consistency-inconsistency characterizations of OP$_2$, FOP$_k$, and IP$_k$, together with a natural generalization of OP$_2$, which we denote by OP$_k$.

\subsection{Generalized indiscernibles}

\begin{definition}
Let $\CL^*$ be a language, $\CI^*$ an $\CL^*$-structure. Let $\CL$ be another language, $T$ an $\CL$-theory, and $\mathbb{M}$ a sufficiently saturated model of $T$. Fix a cardinal $\kappa$, a small subset $A$ of $\mathbb{M}$, and an $\CI^*$-indexed set $(a_i)_{i\in\CI^*}$ such that $a_i\in \mathbb{M}^\kappa$ for each $i\in\CI^*$. We say $(a_i)_{i\in\CI^*}$ is {\it $\CI^*_{\CL^*}$-indiscernible over $A$} if 
\[ \qftp_{\CL^*}^{\CI^*}(I)=\qftp_{\CL^*}^{\CI^*}(J)\;\Rightarrow\; \tp_\CL^\mathbb{M}( (a_i)_{i\in I}/A )=\tp_\CL^\mathbb{M}( (a_i)_{i\in J}/A ) \]
for all $I,J\subseteq\CI^*$. If $\CL^*$ is clear from the context, we omit $\CL^*$ and just say that the indexed set is {\it $\CI^*$-indiscernible}. Depending on the context, we may instead emphasize the language and simply say that the indexed set is {\it $\CL^*$-indiscernible}.
\end{definition}

\begin{notation}
Let $\CL^*$ be a language, $\CI^*$ an $\CL^*$-structure, and $I,J\subseteq \CI^*$.
\begin{itemize}
\item[(i)] We write $I\sim_{\CL^*}^{\CI^*}\!J$ and say $I$ and $J$ are {\it $\CL^*\!$-similar in $\CI^*$} if $\qftp_{\CL^*}^{\CI^*}(I)=\qftp_{\CL^*}^{\CI^*}(J)$. If $\CL^*$ or $\CI^*$ is clear from the context, then we omit $\CL^*$ or $\CI^*$ and write $I\sim_{\CL^*}\!\!J$, $I\sim^{\CI^*}\!\!\!J$, or just simply $I\sim J$.
\item[(ii)] We write $I\sim_{\CL^*}^\text{fin}J$ if  
\begin{itemize}
\item[$\ast$] for each finite $I_0\subseteq I$, there exists $J_0\subseteq J$ such that $J_0\sim_{\CL^*} I_0$, and
\item[$\ast$] for each finite $J_0\subseteq J$, there exists $I_0\subseteq I$ such that $I_0\sim_{\CL^*} J_0$,
\end{itemize}
\item[(iii)] $\la I\ra_{\CL^*}^{\CI^*}$ is the substructure of $\CI^*$ generated by $I$. As above, we omit $\CL^*$ or $\CI^*$ and write $\la I\ra_{\CL^*}$, $\la I\ra^{\CI^*}$, or $\la I\ra$, when $\CL^*$ or $\CI^*$ is clear from the context.
\end{itemize}
\end{notation}

\begin{definition}\cite[Definition 3.1.1.]{Sco10}\label{def: generalized indiscernible}
Let $\CL^*$ and $\CI^*$ be as above. We say $\CI^*_{\CL^*}$-indiscernibles have the {\it modeling property} if for any given language $\CL$, an $\CL$-theory $T$, a monster model $\mathbb{M}$ of $T$, a small subset $A$ of $\mathbb{M}$, and an $\CI^*$-indexed set $(a_i)_{i\in\CI^*}$ in $\mathbb{M}$ whose elements have the same length, we can always find $(b_i)_{i\in\CI^*}$ in $\mathbb{M}$ such that 
\begin{itemize}
\item[(i)] $(b_i)_{i\in\CI^*}$ is $\CI^*_{\CL^*}$-indiscernible over $A$,
\item[(ii)] for any finite $I\subseteq\CI^*$ and $\varphi(x)\in \tp_\CL^\mathbb{M}(  (b_i)_{i\in I} /A)$, there exists $J\subseteq\CI^*$ such that $J\sim_{\CL^*}\!I$ and $\mathbb{M}\models \varphi((a_i)_{i\in J})$.
\end{itemize}

If some $\CI^*$-indexed set satisfies (ii), then we say the $\CI^*$-indexed set is {\it $\CL^*\!$-locally based} on $(a_i)_{i\in\CI^*}$.
\end{definition}

\begin{notation}
$\CL_0 := \{\unlhd, \lex, \wedge\}$. We can regard $\omega^{<\omega}$ and $\Tau_\omega$ as $\CL_0$-structures by interpreting the symbols of $\CL_0$ as in \Cref{notation: language of omega^<omega} and \Cref{notation: language of Tau_omega}. We will write a subscript $\CL_0$ on $\omega^{<\omega}$ and $\Tau_\omega$, namely $\omega^{<\omega}_{\CL_0}$ and $\Tau_{\omega,\CL_0}$, when we want to emphasize the language that the structures are based on. We say $(a_\eta)_{\eta \in \omega^{<\omega}}$ and $(a_\eta)_{\eta \in \Tau_\omega}$ are {\it strongly indiscernible} if they are $\omega^{<\omega}_{\CL_0}$-indiscernible and $\Tau_{\omega,\CL_0}$-indiscernible, respectively.

Similarly, we can give $\CL_s:=\{\unlhd,\lex,\wedge\}\cup\{P_i\}_{i<\omega}$-structures and $\CL_l:=\{\unlhd,\lex,\wedge,<_l\}$-structures on $\omega^{<\omega}$ and $\Tau_\omega$. We say that $(a_\eta)_{\eta \in \omega^{<\omega}}$ and $(a_\eta)_{\eta \in \Tau_\omega}$ are {\it s-indiscernible} if they are $\omega^{<\omega}_{\CL_s}$-indiscernible and $\Tau_{\omega,\CL_s}$-indiscernible, respectively. We can define {\it l-indiscernibility} in the same manner.
\end{notation}

\begin{fact}\cite{TT12,KKS14}
The strong indiscernibility of $\omega^{<\omega}$ and s-indiscernibility of $\omega^{<\omega}$ have the modeling property.
\end{fact}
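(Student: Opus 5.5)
The plan is to prove the modeling property through the Ramsey property of the relevant ages, following Scow's correspondence \cite{Sco10}. In both structures $\omega^{<\omega}_{\CL_0}$ and $\omega^{<\omega}_{\CL_s}$, the relation $\lex$ is a quantifier-free definable linear order. Both structures are also locally finite, since the $\wedge$-closure of a finite set is finite. For such structures, Scow's theorem says that $\CI^*$-indiscernibles have the modeling property if and only if the class of finitely generated substructures of $\CI^*$ is a Ramsey class. I will use this reduction as a black box, or reprove the direction we need: Ramsey implies modeling.

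That direction goes by compactness. Given $(a_\eta)_{\eta\in\omega^{<\omega}}$ and $A$, let $\Gamma$ be the set of formulas in variables $(x_\eta)_\eta$ saying two things:
\begin{itemize}
\item[$\ast$] $\psi((x_\eta)_{\eta\in I})\leftrightarrow\psi((x_\eta)_{\eta\in J})$ for all finite $I\sim J$ and all $\psi$ over $A$;
\item[$\ast$] every $\varphi((x_\eta)_{\eta\in I})$ that is realized at some $J\sim I$ fails to hold only if\ldots{} more precisely, for each finite $I$ and each finite set $\Delta$ of formulas, the $\Delta$-type of $(x_\eta)_{\eta\in I}$ must be one realized by some $(a_\eta)_{\eta\in J}$ with $J\sim I$.
\end{itemize}
To see that a finite part of $\Gamma$ is satisfiable, fix a finite substructure $B=\la I\ra$ containing everything mentioned, and a finite $\Delta$. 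Colour each copy $A'\cong A_0$ of each relevant finite substructure $A_0\le B$ by the $\Delta$-type of $(a_\eta)_{\eta\in A'}$; there are finitely many colours. Iterating the Ramsey property over the finitely many $A_0$ gives a copy $B'\cong B$ on which the colouring depends only on the isomorphism type. We then set $x_\eta:=a_{f(\eta)}$, where $f:B\cong B'$. Any realization of $\Gamma$ is the required $(b_\eta)$.

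The core is the Ramsey statement, which I would derive from Milliken's tree theorem. For $\CL_s$, a finite substructure is a finite meet-closed set with levels recorded. Copies of it inside $\omega^{<\omega}$ are exactly the images under level-preserving, $\wedge$-, $\unlhd$-, and $\lex$-preserving maps. Such a copy is determined by a strong subtree of finite height together with the positions of the nodes inside it. Milliken's theorem, that the strong subtrees of $\omega^{<\omega}$ of height $k$ form a Ramsey family, gives a strong subtree $S$ of infinite height in which all height-$k$ strong subtrees have the same colour. Since $S$ is again isomorphic to $\omega^{<\omega}$ as an $\CL_s$-structure up to re-indexing levels, this yields monochromatic copies of any finite $B$, because $B$ embeds into a height-$k$ strong subtree. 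For $\CL_0$, the levels are forgotten. Two substructures then have the same type when their meet trees are isomorphic as ordered trees, including the relative order of the meet-lengths only through $\unlhd$. I would handle this case by first applying the $\CL_s$ result and then noting that each $\CL_0$-type splits into finitely many $\CL_s$-types, according to the relative heights of nodes on different branches. To make the colouring of $\CL_0$-copies monochromatic, one then needs to choose a "canonical" level arrangement. Concretely, I would embed $\omega^{<\omega}_{\CL_0}$ into a subtree where incomparable nodes lie at distinct levels arranged in a fixed lexicographic pattern. This is the usual trick of passing to a tree in which $<_l$ is determined by $\lex$ and $\wedge$, and it makes the $\CL_0$-age's Ramsey property follow from the $\CL_s$ one.

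I expect the main obstacle to be this last step for strong indiscernibility. One must check that the level-arranged subtree is still rich enough to contain copies of every finite $\CL_0$-structure, and that all $\CL_0$-isomorphic copies inside it are $\CL_s$-isomorphic, so that the Milliken argument applies. The rest, namely Scow's reduction and compactness, is routine.
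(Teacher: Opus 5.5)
There is a genuine gap, and it sits in the combinatorial core. Your use of the Ramsey--modeling correspondence (the paper records it as a Fact from \cite{MP23,Sco11}) and your compactness argument are fine; note that the paper does not prove this statement itself but only cites \cite{TT12,KKS14}.

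\textbf{The Milliken step.} For $\CL_s$, the predicates $P_i$ fix \emph{exact} levels, so an $\CL_s$-copy of $B$ must lie on the same levels as $B$. Milliken's theorem gives a strong subtree $S$ on a level set $L\subseteq\omega$ that you cannot control; colouring nodes by the parity of their length already forces $L\neq\omega$. ``Isomorphic to $\omega^{<\omega}$ up to re-indexing levels'' is not an $\CL_s$-isomorphism, so the homogeneous copies you find in $S$ are not $\CL_s$-copies of $B$ at all.

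Independently, Milliken's theorem concerns finitely branching trees. In $\omega^{<\omega}$ the strong subtrees of height $\ge 2$ are infinite, and suitably chosen colourings of them have no homogeneous strong subtree of infinite height.

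Even in $n^{<\omega}$, Milliken only homogenises colourings of strong subtrees. After transfer, copies of $A$ are monochromatic only within each strong similarity type. That type records through which immediate successor each branch leaves a splitting node, information neither $\CL_0$ nor $\CL_s$ sees. A copy of $B$ generally contains copies of $A$ of several types: three nodes of $B$ with common meet $\rho$ leave $\rho$ through three distinct successors, so the three pairs have three distinct types. Hence ``all height-$k$ strong subtrees of $S$ have the same colour'' does not give a monochromatic copy of $B$. You would need a further product-Ramsey argument on successor indices, and that is where the real work lies.

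\textbf{The $\CL_0$ step.} This step conflates exact levels with relative heights. An $\CL_0$-type splits into infinitely many $\CL_s$-types, not finitely many. The property you single out as the main obstacle also cannot hold. In any subtree containing copies of all finite $\CL_0$-structures, single points are $\CL_0$-isomorphic but lie on different levels. So $\CL_s$-homogeneity says nothing about a colouring such as $\eta\mapsto l(\eta)\bmod 2$.

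The missing ingredient is a passage from exact levels to relative heights, i.e.\ to $\CL_l=\{\unlhd,\lex,\wedge,<_l\}$. From an s-indiscernible tree one first obtains a level-indiscernible one by a Ramsey-type argument on finite sets of levels; compare the step the paper takes from \cite[Lemma 5.10]{KR20} in \Cref{prop: modeling property of krstar-indiscernibility}. Only after that does your canonical-arrangement idea work. It takes the form of a $\unlhd$- and $\wedge$-preserving embedding $g$ with $\eta\lex\nu\iff g(\eta)<_l g(\nu)$, so that $\CL_0$-similar tuples in the image are $\CL_l$-similar. Compare the use of \cite[Lemma 3.1]{KK11} and \cite[Theorem 16]{TT12} in \Cref{prop: modeling property of kr-indiscernibility}.
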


\begin{notation}
Let $\CL^*$ be a language and $\CI^*$ an $\CL^*$-structure. $\age_{\CL^*}(\CI^*)$ is the class of all $\CL^*$-structures that are isomorphic to some finitely generated $\CL^*$-substructures of $\CI^*$. If $X$ is a subset of $\CI^*$, then $\age_{\CL^*}(X)$ is the class of all $\CL^*$-structures that are isomorphic to $\la X_0 \ra_{\CL^*}^{\CI^*}$ for some finite $X_0\subseteq X$.
 If it is clear from the context, then we omit the subscript $\CL^*$ and just write $\age(\CI^*)$ and $\age(X)$.
\end{notation}

\begin{definition}
For a language $\CL^*$, an $\CL^*$-structure $\CI^*$ is said to be {\it locally finite} if every finitely generated substructure of $\CI^*$ is  finite.
\end{definition}

\begin{definition}\label{def: Ramsey}
Let $\CL^*$ be a language and $\CK$ a class of finitely generated $\CL^*$-structures. For $A,B\in\CK$, let $\binom{B}{A}$ be the set of all substructures of $B$ that are $\CL^*$-isomorphic to $A$. For $0<r<\omega$, we write $C\to(B)^A_r$ if for every coloring $\sigma:\binom{C}{A}\to r$, there exists $B'\in\binom{C}{B}$ such that $\sigma|_{\binom{B'}{A}}$ is constant. 

For $A,B\in\CK$, let $\operatorname{Emb}_{\CL^*}(A,B)$ be the set of all $\CL^*$-embeddings from $A$ to $B$.  For $0<r<\omega$, we write
$C\to_{\operatorname{emb}}(B)^A_r$
if for every coloring $\sigma:\operatorname{Emb}_{\CL^*}(A,C)\to r$, there exists $f\in\operatorname{Emb}_{\CL^*}(B,C)$ such that
$\sigma|_{\lbrace f\circ g:g\in\operatorname{Emb}_{\CL^*}(A,B)\rbrace}$
is constant.
\begin{itemize}
\item[(i)] We say $\CK$ has the {\it Ramsey property} if for all $A,B\in\CK$ and $0<r<\omega$, there exists $C\in\CK$ such that $C\to(B)^A_r$.
\item[(ii)] We say $\CK$ has the {\it embedding Ramsey property} if for all $A,B\in\CK$ and $0<r<\omega$, there exists $C\in\CK$ such that $C\to_{\operatorname{emb}}(B)^A_r$.
\end{itemize}
\end{definition}

The following facts are useful for determining whether a given $\CL^*$-structure has the modeling property.

\begin{fact}\cite{MP23,Sco11}
Let $\CL^*$ be a language and $\CI^*$ an infinite locally finite $\CL^*$-structure. 
\begin{itemize}
\item[(i)] $\age_{\CL^*}(\CI^*)$ has the embedding Ramsey property if and only if $\CI^*_{\CL^*}$-indiscernibles have the modeling property.
\item[(ii)] If $\CI^*$ is linearly ordered by some binary relation symbol in $\CL^*$, then $\age_{\CL^*}(\CI^*)$ has the Ramsey property if and only if $\CI^*_{\CL^*}$-indiscernibles have the modeling property.
\end{itemize}
\end{fact}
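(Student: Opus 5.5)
Since $\CI^*$ is locally finite, every finitely generated substructure is finite, and we identify an embedding $g\in\operatorname{Emb}_{\CL^*}(A,\CI^*)$ with the image $g(\bar a)$ of a fixed finite generating tuple $\bar a$ of $A$. Two finite tuples $\bar i,\bar j$ of $\CI^*$ satisfy $\bar i\sim\bar j$ exactly when $\bar i\mapsto\bar j$ extends to an isomorphism $\la\bar i\ra\cong\la\bar j\ra$. With these identifications, I would prove (i) in both directions and then derive (ii) from (i).

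\emph{Embedding Ramsey implies modeling.} Let $(a_i)_{i\in\CI^*}$ and $A\subseteq\mathbb{M}$ be given. Introduce variables $(x_i)_{i\in\CI^*}$ and let $\Sigma$ consist of:
\begin{itemize}
\item[$\ast$] all formulas $\varphi(\bar x_{\bar i})\leftrightarrow\varphi(\bar x_{\bar j})$ with parameters from $A$, for finite $\bar i\sim\bar j$;
\item[$\ast$] for each finite $\bar i$ and each $\varphi(\bar x_{\bar i})$ over $A$, the formula $\varphi(\bar x_{\bar i})$ whenever $\mathbb{M}\models\varphi(a_{\bar j})$ for \emph{every} $\bar j\sim\bar i$.
\end{itemize}
A realization of $\Sigma$ is indiscernible over $A$ by the first group. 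It is also locally based: if $\varphi$ is in its type and no $\bar j\sim\bar i$ satisfied $\varphi$, then $\neg\varphi(\bar x_{\bar i})\in\Sigma$, a contradiction.

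For finite satisfiability, fix a finite part of $\Sigma$. It mentions only finitely many isomorphism types $A_1,\dots,A_m$ of generated substructures, finitely many formulas, and variables indexed by a finite substructure $B\subseteq\CI^*$. Apply the embedding Ramsey property $m$ times in succession, each time coloring $\operatorname{Emb}(A_k,C)$ by the vector of truth values of the finitely many relevant formulas at $a_{g(\bar a_k)}$. This produces $C\in\age(\CI^*)$ and a copy $f\colon B\to C$ on which all these colorings are constant. Embed $C$ into $\CI^*$ and put $x_i:=a_{f(i)}$ for $i\in B$. The equivalences in the first group then hold by monochromaticity, and the formulas in the second group hold trivially, because $f$ maps tuples to similar tuples.

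\emph{Modeling implies embedding Ramsey.} Suppose $A,B,r$ witness failure, so every $C\in\age(\CI^*)$ has a bad $r$-coloring. By a compactness (König-type) argument over the directed family of finite substructures of $\CI^*$, the sets of bad colorings of each finite substructure form an inverse system of nonempty finite sets. This yields a single coloring $\sigma$ of $\operatorname{Emb}(A,\CI^*)$, equivalently of the tuples $\bar i\sim\bar a$, with no monochromatic copy of $B$. Expand $\CI^*$ by $|\bar a|$-ary predicates $R_c$ for the colors, and work in the monster model of the theory of this expanded structure. Apply the modeling property to $a_i:=i$ to get an indiscernible $(b_i)$ locally based on it.

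Fix a copy of $B$ in $\CI^*$. By indiscernibility, all embeddings of $A$ into it give tuples $b_{g(\bar a)}$ with the same type, hence satisfying the same $R_c$. The conjunction of these $R_c$'s over the finitely many embeddings lies in the type, so local basedness yields $J\sim B$ in $\CI^*$ on which $\sigma$ is constant, a contradiction.

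\emph{Deriving (ii).} Assume $\CI^*$ is linearly ordered by a symbol of $\CL^*$. Then every finite structure in the age is rigid, so $g\mapsto g(A)$ is a bijection $\operatorname{Emb}(A,C)\to\binom{C}{A}$ compatible with composition. Hence the Ramsey property and the embedding Ramsey property coincide, and (ii) follows from (i).

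The step I expect to be most delicate is the compactness passage to a global bad coloring, and more generally keeping straight the correspondence between tuples, their generated (finite) substructures, and embeddings. In particular, colors must be well-defined on tuples generating a copy of $A$, which needs the fixed generating tuple $\bar a$.
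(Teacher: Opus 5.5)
Your argument is correct. The paper gives no proof of this fact and only cites it; your route is the standard one from the cited sources. For modeling, you use EM-type compactness with iterated applications of the embedding Ramsey property. For the converse, you code a global bad coloring by predicates $R_c$ and apply the modeling property to $(i)_{i\in\CI^*}$. For (ii), you use rigidity of finite linearly ordered structures.

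Two small points need tightening:
\begin{itemize}
\item[(a)] In the first direction, the colorings only make sense after $C$ has been identified with a substructure of $\CI^*$. They should also record the relevant formulas at the whole tuple $(a_{g(c)})_{c\in A_k}$, not only at $a_{g(\bar a_k)}$.
\item[(b)] In the second direction, the fact that each $b_{g(\bar a)}$ satisfies some $R_c$ does not follow from indiscernibility alone. It follows from local basedness, since $\bigwedge_{c<r}\neg R_c$ fails at every $\bar j\sim g(\bar a)$ in $\CI^*$.
\end{itemize}
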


\begin{fact}\cite{MP23,Sco11}\label{fact: age(I)=age(J)}
Let $\CL^*$ be a language, $\CI^*$ and $\CJ^*$ be locally finite
$\CL^*$-structures with $\age(\CI^*)=\age(\CJ^*)$.
If $\CI^*$-indiscernibles have the modeling property, then
$\CJ^*$-indiscernibles also have the modeling property.
Moreover, for any $\CI^*$-indexed set of parameters
$(a_\eta)_{\eta\in\CI^*}$, there is a $\CJ^*$-indexed set of
parameters $(b_\eta)_{\eta\in\CJ^*}$ such that it is
$\CJ^*$-indiscernible and for any $\bar{\eta}\in\CJ^*$
and $\varphi(\bar{x})$ with $\models\varphi(\bar{b}_{\bar{\eta}})$,
there exists $\bar{\nu}\in\CI^*$ such that
$\bar{\nu}\sim_{\CL^*}\bar{\eta}$ and
$\models\varphi(\bar{a}_{\bar{\nu}})$.
Thus the strong indiscernibility of $\Tau_\omega$ has the
modeling property.
The $s$-modeling property of $\Tau_\omega$ follows from that of
$\omega^{<\omega}$ by relabeling the finitely many relevant levels
in reverse order and applying compactness.
\end{fact}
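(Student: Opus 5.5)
The plan is to use the combinatorial characterization of the modeling property in the preceding Fact for the first claim, and an explicit compactness construction for the ``moreover'' part. Assume $\CI^*$ and $\CJ^*$ are infinite, which is implicit in that Fact. Because $\age(\CI^*)=\age(\CJ^*)$, the class of finite structures whose embedding Ramsey property is in question is the same for both. Part (i) of the Fact then gives: $\CI^*$-indiscernibles have the modeling property iff $\age(\CI^*)$ has the embedding Ramsey property iff $\CJ^*$-indiscernibles have the modeling property.

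For the ``moreover'' part, start from $(a_\eta)_{\eta\in\CI^*}$ and apply the modeling property of $\CI^*$. This gives an $\CI^*$-indiscernible $(a'_\eta)$ that is locally based on $(a_\eta)$.
\begin{itemize}
\item Introduce variables $(x_\eta)_{\eta\in\CJ^*}$ and define a partial type $\Sigma$. For each finite tuple $\bar\eta$ in $\CJ^*$, the structure $\la\bar\eta\ra^{\CJ^*}$ is finite by local finiteness. It lies in $\age(\CI^*)$, so there is an embedding $f:\la\bar\eta\ra\to\CI^*$. Put $\tp(a'_{f(\bar\eta)})$ into $\Sigma$ as the type of $\bar x_{\bar\eta}$.
\item This is well defined. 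Any two choices of $f$ give tuples with the same $\CL^*$-quantifier-free type, and $(a'_\eta)$ is $\CI^*$-indiscernible.
\item $\Sigma$ is finitely satisfiable. Finitely many such conditions mention only finitely many indices, and these generate a finite substructure of $\CJ^*$. Embed that substructure into $\CI^*$ by a single $g$; then $(a'_{g(\eta)})$ realizes all of these conditions simultaneously.
\end{itemize}
By compactness and saturation, let $(b_\eta)_{\eta\in\CJ^*}$ realize $\Sigma$. It is $\CJ^*$-indiscernible, since the type of $\bar b_{\bar\eta}$ depends only on $\qftp(\bar\eta)$. It is locally based on $(a_\eta)$ by a two-step transfer. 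If $\models\varphi(\bar b_{\bar\eta})$, then $\models\varphi(\bar a'_{f(\bar\eta)})$. By local basedness of $a'$ on $a$, we get $\bar\nu\sim f(\bar\eta)\sim\bar\eta$ with $\models\varphi(\bar a_{\bar\nu})$. The same argument works over a small set $A$.

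For $\Tau_\omega$, the task reduces to checking $\age_{\CL_0}(\Tau_\omega)=\age_{\CL_0}(\omega^{<\omega})$. Given a finite $\wedge$-closed set of nodes in $\Tau_\omega$, its elements are determined by finitely many levels and finitely many nonzero coordinates. Map it into $\omega^{<\omega}$ by reading each function from a common starting level downward, i.e.\ reversing the level order on the finitely many relevant levels. This map preserves $\unlhd$, $\wedge$ and $<_{lex}$; the last holds because the definition of $<_{lex}$ on $\Tau_\kappa$ compares at coordinate $l(\eta\wedge\nu)-1$, which mirrors position $l(\eta\wedge\nu)$ on $\omega^{<\omega}$. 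The converse embedding is the same map run backwards, padding below with zeros. Strong indiscernibility of $\omega^{<\omega}$ has the modeling property (\cite{TT12,KKS14}), and $\Tau_\omega$ is infinite and locally finite, so it inherits the property.

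For the $s$-version the ages differ, because the predicates $P_i$ pin down actual levels. Here the plan is to prove a local statement and globalize by compactness. For each finite set of levels $P_{\alpha_0},\dots,P_{\alpha_m}$ of $\Tau_\omega$, relabel them in reverse order as levels of $\omega^{<\omega}$. Apply the $s$-modeling property of $\omega^{<\omega}$ on that fragment and pull the result back. These finite approximations are coherent, so compactness yields an $s$-indiscernible $\Tau_\omega$-indexed family that is locally based on the original. I expect the main obstacle to be verifying that the level-reversal map really preserves $\wedge$ and $<_{lex}$, given the off-by-one conventions between the two trees.
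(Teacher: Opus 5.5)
Your proposal is correct and matches the paper's cited-and-sketched argument: the general transfer comes from the embedding-Ramsey characterization of the modeling property, the strong case for $\Tau_\omega$ from $\age_{\CL_0}(\Tau_\omega)=\age_{\CL_0}(\omega^{<\omega})$ (your level-reversal check of $\wedge$ and $\lex$ is right), and the $s$-case from reversing finitely many levels and applying compactness. One wording fix in the $s$-case: the approximations obtained for different cutoffs $N$ are not coherent in general, and no coherence is needed, since compactness is applied to the type expressing $s$-indiscernibility plus $s$-local basedness, and each finite fragment of that type is realized by a single pulled-back approximation, because any tuple of $\omega^{<\omega}$ that is $s$-similar to the reversed image of $\bar\eta$ again lies in $\omega^{\le N}$.
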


\subsection{Algebraic dimension and geometric theories}

The notion of algebraic dimension below is based on the algebraic independence considered in \cite{CP98} and the dimension notion used in \cite{BV14}, but we formulate it in a slightly more general form suitable for our purposes.

\begin{definition}\cite{CP98,BV14}
Let $\CL$ be a language, $T$ an $\CL$-theory, and $\mathbb{M}$ a sufficiently saturated model of $T$.
\begin{itemize}
\item[(i)] For $A,B\subseteq \mathbb{M}^1$, we say $B$ is {\it algebraically independent} over $A$ if $b\notin\acl_\CL(A\cup (B\setminus\{b\}))$ for all $b\in B$.
\item[(ii)] For $\bar{b}:=(b_0,...,b_{n-1})\in\mathbb{M}^n$ with $n<\omega$ and $A\subseteq\mathbb{M}^1$, the {\it algebraic dimension} of $\bar{b}$ over $A$, denoted by $\dim_\CL(\bar{b}/A)$, is defined by
\[
\max\{|B|: B\subseteq\{b_0,...,b_{n-1}\}, B\text{ is algebraically independent over }A\}.
\]
\item[(iii)] For a formula $\varphi(x_0,...,x_{n-1},y)$ with $|x_0|=\cdots=|x_{n-1}|=1$ and $a$ with $|a|=|y|$, the {\it algebraic dimension} of $\varphi(\bar{x},a)$, denoted by $\dim_\CL(\varphi(\bar{x},a))$, is defined by 
\[
\max\{\dim_\CL(\bar{b}/a):\bar{b}\models\varphi(\bar{x},a)\}.
\]
\item[(iv)] For a type $\Phi(x_0,...,x_{n-1},y)$ without parameters and $a$ such that $|x_0|=\cdots=|x_{n-1}|=1$ and $|a|=|y|$, the {\it algebraic dimension} of $\Phi(\bar{x},a)$, denoted by $\dim_\CL(\Phi(\bar{x},a))$, is defined by
\[
\max\{\dim_\CL(\bar{b}/a):\bar{b}\models\Phi(\bar{x},a)\}.
\]
\item[(v)] When a defining formula $\varphi(\bar{x},a)$ for $D$
is fixed by the context, we also write $\dim_\CL(D)$ for
$\dim_\CL(\varphi(\bar{x},a))$.
\end{itemize}
If $\CL$ is clear from the context, we omit the subscript $\CL$ and just write $\dim(\bar{b}/A)$, $\dim(\varphi)$, $\dim(\Phi)$, and $\dim(D)$.
\end{definition}

\begin{definition}\cite{BV14,BV16}
We say a complete theory $T$ is {\it geometric} if it eliminates $\exists^{\infty}$ and $\acl$ has the exchange property.
\end{definition}

\begin{definition}\cite{BV14,BV16}
Let $T$ be a geometric theory, $\mathbb{M}\models T$ a sufficiently saturated model, and $A\subseteq\mathbb{M}^1$.
\begin{itemize}
\item[(i)] We say $A$ satisfies the {\it density property} if $p(x)\in S_1(B)$ always has a realization in $A$ whenever $p(x)$ is non-algebraic and $\dim(B/\emptyset)<\omega$.
\item[(ii)] We say $A$ satisfies the {\it extension property} if $p(x)\in S_1(B)$ always has a realization in $\mathbb{M}\setminus\acl(AB)$ whenever $p(x)$ is non-algebraic and $\dim(B/\emptyset)<\omega$.
\end{itemize}
\end{definition}

\begin{definition}\cite{BV10,BV14,BV16}
Let $T$ be a geometric $\CL$-theory, let $H$ be a new unary predicate
symbol, and let $\CL_H:=\CL\cup\{H\}$. For $A\subseteq\mathbb{M}$,
we write $H(A):=A\cap H(\mathbb{M})$.
\begin{itemize}
\item[(i)] An $\CL_H$-structure $(\mathbb{M},H(\mathbb{M}))$ is an
{\it $H$-structure} of $T$ if $\mathbb{M}\models T$ and
$H(\mathbb{M})$ is algebraically independent and satisfies the
density and extension properties. The common complete theory of $H$-structures of $T$ is
denoted by $T^{ind}$.
\item[(ii)] The theory of the structure induced on $H(\mathbb{M})$ by
the parameter-free $\CL$-definable relations is called the
{\it generic trivialization} of $T$ and is denoted by $T^{gt}$.
\item[(iii)] $(\mathbb{M},H(\mathbb{M}))$ is a {\it lovely pair} of
$T$ if $H(\mathbb{M})\preccurlyeq_{\CL}\mathbb{M}$ and
$H(\mathbb{M})$ satisfies the density and extension properties.
The common complete theory of lovely pairs of $T$ is denoted by $T_P$.
\end{itemize}
\end{definition}

\begin{fact}\cite[Proposition 2.13 and Remark 2.14]{BV14}\label{fact: acl(H(M))-def -> H(M)-def}
Let $T$ be a geometric theory, $(\mathbb{M},H(\mathbb{M}))\models T^{ind}$ a sufficiently saturated model, and $n<\omega$.
 If $D\subseteq\mathbb{M}^n$ is a set $\CL$-definable over $\acl_\CL(H(\mathbb{M}))$ and $\dim(D)=n$, then there exists $D'\subseteq D$, $\CL$-definable over $H(\mathbb{M})$, such that $\dim(D\setminus D')<n$.
\end{fact}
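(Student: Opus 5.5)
The idea is to replace the parameter from $\acl_\CL(H(\mathbb{M}))$ by something $\CL$-definable over $H(\mathbb{M})$, losing only a set of dimension $<n$. Write $D=\varphi(\mathbb{M},c)$ with $c\in\acl_\CL(h)$ for a finite tuple $h\subseteq H(\mathbb{M})$. Fix an algebraic formula $\psi(z,h)$ isolating the finitely many conjugates $c=c_1,\dots,c_k$ of $c$ over $h$ that matter. The naive candidate is
\[
D'':=\{x:\forall z\,(\psi(z,h)\to\varphi(x,z))\}=\textstyle\bigcap_i\varphi(\mathbb{M},c_i),
\]
which is $\CL$-definable over $h$ and contained in $D$. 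This fails in general: in an o-minimal setting a conjugate $c_i$ can cut $D$ in half. So the intersection must be refined using dimension.

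\textbf{Step 1 (definable equivalence relation).} Since $T$ is geometric, it eliminates $\exists^\infty$, so dimension is definable in families. Hence the relation
\[
z\sim z'\iff \dim\bigl(\varphi(\mathbb{M},z)\,\triangle\,\varphi(\mathbb{M},z')\bigr)<n
\]
is $\emptyset$-definable and is an equivalence relation. Let $e=c/{\sim}$ in $T^{eq}$.

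\textbf{Step 2 (reduction).} Any set $D'$ defined by $\varphi(x,c')$ with $c'\sim c$ satisfies $\dim(D\triangle D')<n$. Intersecting it with $D$ then gives the required $D'\subseteq D$ with $\dim(D\setminus D')<n$. So it suffices to find a parameter $c'\sim c$ for which the set $\bigcap\{\varphi(\mathbb{M},z): z\sim c'\ \text{and}\ \psi'(z)\}$ is defined over $H(\mathbb{M})$, where $\psi'$ is a suitable algebraic formula over $H$. Equivalently, it suffices to show that the class $e$ is fixed by the relevant conjugations over some $h'\subseteq H(\mathbb{M})$. Note that $e\in\acl^{eq}(h)$.

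\textbf{Step 3 (main obstacle: moving $e$ into $\dcl^{eq}$ of $H$).} This is where the $H$-structure axioms, and not just geometricity, must enter. I would first take a generic point $b\in D$ with $\dim(b/ch)=n$. Then, using the density and extension properties together with the algebraic independence of $H(\mathbb{M})$, I would show that the conjugates $c_i$ with $c_i\not\sim c$ contribute pieces of dimension $<n$ to $D$ when measured against generics over $H$. If that fails, I would instead enlarge $h$ by new elements of $H(\mathbb{M})$ that are generic over $ch$; by density these exist realizing any prescribed non-algebraic type. I would choose them so that $\tp(c/h')$ isolates the $\sim$-class of $c$. At that point $D':=D\cap\bigcap_{z\models\psi'(z,h'),\,z\sim c}\varphi(\mathbb{M},z)$ is $\CL$-definable over $h'$. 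It differs from $D$ only by finitely many sets of the form $\varphi(\mathbb{M},c)\setminus\varphi(\mathbb{M},z)$ with $z\sim c$, each of dimension $<n$.

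I expect Step 3 to be the main difficulty: showing that the $H$-structure axioms actually force the $\sim$-class of $c$ to be isolated over some finite subset of $H(\mathbb{M})$. If neither approach works, the fallback would be to follow the original argument of \cite[Proposition 2.13]{BV14}.
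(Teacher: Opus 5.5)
\textbf{There is a genuine gap in Step 3, which carries the whole content of the statement.} As written it lists possible routes rather than giving an argument, and the first route is false. If $c_i\not\sim c$, then by definition $\dim(\varphi(\mathbb{M},c_i)\triangle D)=n$. Nothing prevents $D\setminus\varphi(\mathbb{M},c_i)$ from having dimension $n$; that is exactly why your naive $D''$ fails. So you cannot show that these conjugates ``contribute pieces of dimension $<n$''. The second route, adjoining elements of $H(\mathbb{M})$ until the class of $c$ becomes definable, is the right direction. But you never say which elements to adjoin, and that choice is the proof.

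Two smaller points. Your o-minimal example cannot occur: in any theory with a $\emptyset$-definable linear order, finite definable sets can be enumerated, so $\acl=\dcl$, $c\in\dcl(h)$, and $D$ is already $h$-definable. Also, in Step 2, ``intersecting with $D$'' reintroduces the parameter $c$, so only your subsequent reformulation is usable.

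\textbf{The missing idea is to decompose into atoms and use density to put a point of $H(\mathbb{M})^n$ in each full-dimensional atom.} Write $c=c_1$, let $c_1,\dots,c_k$ be the realizations of $\psi(z,h)$, and for $G\subseteq\{1,\dots,k\}$ put
$D^G:=\bigcap_{i\in G}\varphi(\mathbb{M},c_i)\cap\bigcap_{i\notin G}\bigl(\mathbb{M}^n\setminus\varphi(\mathbb{M},c_i)\bigr)$.
Take any $G$ with $\dim D^G=n$. Apply the density property $n$ times, coordinate by coordinate, over $hc$ together with the coordinates already chosen. Each time, realize in $H(\mathbb{M})$ the non-algebraic type of the next coordinate of a point of $D^G$ that is generic over the current parameters. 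This produces $g_G\in H(\mathbb{M})^n\cap D^G$. Then
\[
D^G=\bigl\{x:\ \forall z\,\bigl(\psi(z,h)\rightarrow(\varphi(x,z)\leftrightarrow\varphi(g_G,z))\bigr)\bigr\},
\]
which is $\CL$-definable over $hg_G\subseteq H(\mathbb{M})$. Now set $D':=\bigcup\{D^G:\ 1\in G,\ \dim D^G=n\}$. It is contained in $D$ and definable over $H(\mathbb{M})$, and $D\setminus D'$ is a finite union of atoms of dimension $<n$.

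This is the same device the paper uses in the proof of \Cref{thm: T^gt preservation}: the sets $D^G_\eta$, the points $c^G_\eta$, and formula (xix). The same points also close your Step 3 if you want to keep $\sim$. Indeed, $c_i\sim c$ if and only if $\varphi(g_G,c_i)\leftrightarrow\varphi(g_G,c)$ for every full-dimensional $G$. Hence the set of conjugates equivalent to $c$ is definable over $h\cup\{g_G\}_G$. However, once you have the $g_G$, Step 1 and the definability of dimension are no longer needed.
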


\begin{definition}\cite[Definition 2.4]{BK16}
Let $T^*$ be $T_P$ or $T^{ind}\!$ and $(\mathbb{M},H(\mathbb{M}))\models T^*$ a sufficiently saturated model. We say a tuple $\bar{a}\in\mathbb{M}$ is {\it $H$-independent} if $\dim_\CL(\bar{a}/H(\bar{a}))=\dim_\CL(\bar{a}/H(\mathbb{M}))$.
\end{definition}

\begin{fact}\cite[Proposition 2.5 and Lemma 2.6]{BK16}\label{fact: H-independent tuples}
Let $T^*$ be $T_P$ or $T^{ind}\!$ and $(\mathbb{M},H(\mathbb{M}))$ a sufficiently saturated model of $T^*$.
\begin{itemize}
\item[(i)] For any tuple $\bar{a}$, there exists $\bar{b}$ such that $\bar{a}\bar{b}$ is $H$-independent.
\item[(ii)] If a tuple $\bar{a}$ is $H$-independent, then $\bar{a}\bar{h}$ is $H$-independent for any $\bar{h}\in H(\mathbb{M})$.
\item[(iii)] For any $H$-independent tuples $\bar{a}$ and $\bar{b}$,
$\tp_{\CL_H}(\bar{a})=\tp_{\CL_H}(\bar{b})
$ if and only if $
\tp_\CL(\bar{a},H(\bar{a}))=\tp_\CL(\bar{b},H(\bar{b})).
$
\end{itemize}
\end{fact}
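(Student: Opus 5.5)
The three parts would be handled separately: (i) and (ii) by a direct dimension count, and (iii) by a back-and-forth. Throughout I use two facts about geometric theories: $\dim$ is additive, and $\dim(\bar a/A)$ is already witnessed over some finite subset of $A$.

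\emph{Parts (i) and (ii).} For (i), fix $\bar a$. Choose a maximal subtuple of $\bar a$ that is algebraically independent over $H(\mathbb{M})$. Every other coordinate of $\bar a$ is then algebraic over that subtuple together with finitely many elements of $H(\mathbb{M})$. Collecting these finitely many elements gives a finite $\bar b\subseteq H(\mathbb{M})$ with $\dim(\bar a/\bar b)=\dim(\bar a/H(\mathbb{M}))$. Since $\bar b\subseteq H(\bar a\bar b)\subseteq H(\mathbb{M})$ and the coordinates of $\bar b$ contribute nothing to the dimension, we get
\[
\dim(\bar a\bar b/H(\mathbb{M}))\le\dim(\bar a\bar b/H(\bar a\bar b))\le \dim(\bar a/\bar b)=\dim(\bar a/H(\mathbb{M}))=\dim(\bar a\bar b/H(\mathbb{M})),
\]
so $\bar a\bar b$ is $H$-independent. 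Part (ii) is the same computation. We have $H(\bar a\bar h)=H(\bar a)\cup\bar h$, and
\[
\dim(\bar a\bar h/H(\bar a)\bar h)\le\dim(\bar a/H(\bar a))=\dim(\bar a/H(\mathbb{M})),
\]
while monotonicity gives the reverse inequality against $\dim(\bar a\bar h/H(\mathbb{M}))$.

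\emph{Part (iii), forward direction.} This is immediate. The $\CL_H$-type of $\bar a$ records which coordinates lie in $H$, so it determines $\tp_\CL(\bar a,H(\bar a))$ up to the obvious identification of coordinates.

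\emph{Part (iii), reverse direction.} I would show that, in the sufficiently saturated model, the following family is a back-and-forth system for $\CL_H$: the partial maps $\bar a\mapsto\bar b$ between $H$-independent tuples with $\tp_\CL(\bar a,H(\bar a))=\tp_\CL(\bar b,H(\bar b))$. The key observation is what $H$-independence buys. It says that $\bar a$ is algebraically independent from $H(\mathbb{M})$ over $H(\bar a)$, so the configuration of $\bar a$ relative to all of $H(\mathbb{M})$ is controlled by the finite set $H(\bar a)$. For the forth step, take a new element $c$. By (i), enlarge $\bar a c$ by finitely many $\bar h\subseteq H(\mathbb{M})$ so that $\bar a c\bar h$ is $H$-independent. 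Then split the new elements $c\bar h$ into three kinds:
\begin{itemize}
\item elements of $H$;
\item elements outside $\acl(\bar a\,H(\mathbb{M}))$;
\item elements algebraic over $\bar a$ together with the new $H$-elements.
\end{itemize}
Add them one at a time, finding matching elements on the $\bar b$ side as follows.
\begin{itemize}
\item For a new $H$-element $h$ that is not algebraic over the current tuple, the corresponding $\CL$-type over the image is non-algebraic and has finite-dimensional domain. The density property therefore provides a realization in $H(\mathbb{M})$. Because $H(\mathbb{M})$ is algebraically independent (or, for $T_P$, by the density property together with the standard lovely-pair argument), this realization can be chosen so that the image tuple stays $H$-independent.
\item For an element outside $\acl(\bar a\,H(\mathbb{M}))$, use the extension property to realize the image type outside $\acl(\bar b\,H(\mathbb{M}))$.
\item Algebraic elements are then determined up to the $\CL$-type, and their membership in $H$ is recorded in the $\CL$-type of the tuple together with its $H$-part.
\end{itemize}
At each stage the dimension bookkeeping from (i) and (ii) keeps both tuples $H$-independent with equal $\CL$-types of the pair (tuple, $H$-part). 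The back direction is symmetric.

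\emph{Main obstacle.} I expect the forth step to be the hard part, specifically the step where new $H$-elements are added. There one must check that realizing the type by density does not accidentally put image elements into $H$, or into $\acl(H)$, in a way the $\CL$-type does not predict. I would first try to prove a small lemma: for $H$-independent $\bar a$, the type $\tp_\CL(\bar a/H(\mathbb{M}))$ is the unique extension of $\tp_\CL(\bar a/H(\bar a))$ that is algebraically independent from $H(\mathbb{M})$ over $H(\bar a)$, in the dimension sense. Then I would use this lemma to verify $H$-independence of each extended image tuple.
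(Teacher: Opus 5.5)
The parts that are fine: (i), (ii), the forward half of (iii), and the back-and-forth architecture for the converse. The latter includes density for non-algebraic $H$-elements and extension for elements outside $\acl_\CL(\bar a H(\mathbb M))$. This follows the standard argument that the paper imports from the cited source rather than reproving.

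\textbf{The gap is your third bullet.} It is the only place where $H$-independence does real work. For $c\in\acl_\CL(\bar a)$ you pick $d$ realizing the transported $\tp_\CL(c/\bar a)$ over $\bar b$, and say that membership in $H$ is ``recorded in the $\CL$-type''. But an $\CL$-type says nothing about $H$, so you must prove $c\in H(\mathbb M)\iff d\in H(\mathbb M)$.

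The missing idea is that, for $H$-independent $\bar a$, $\acl_\CL(\bar a)\cap H(\mathbb M)\subseteq\acl_\CL(H(\bar a))$. An $h$ violating this would give $\dim(\bar a/H(\bar a))=1+\dim(\bar a/H(\bar a)h)\ge 1+\dim(\bar a/H(\mathbb M))$, contradicting $H$-independence. Then use what $H$ is:
\begin{itemize}
\item For $T^{ind}$, algebraic independence of $H(\mathbb M)$ yields $\acl_\CL(H(\bar a))\cap H(\mathbb M)=H(\bar a)$. So $c\in H(\mathbb M)$ iff $\tp_\CL(c/\bar a)$ contains $x=y_j$ for some coordinate $a_j\in H(\bar a)$.
\item For $T_P$, $H(\mathbb M)\preceq\mathbb M$ yields $\acl_\CL(H(\bar a))\subseteq H(\mathbb M)$. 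So $c\in H(\mathbb M)$ iff $\tp_\CL(c/\bar a)$ contains a formula over the $H$-subtuple of $\bar a$ with finitely many solutions.
\end{itemize}
Both conditions transfer to $d$ over $\bar b$ because the $H$-coordinates of $\bar a$ and $\bar b$ correspond. Then $\bar b d$ is $H$-independent since $d\in\acl_\CL(\bar b)$. This argument also covers algebraic $H$-elements. It is also why $\bar h$ must be added before $c$: only then is the base $H$-independent. Moreover, $H$-independence of $\bar a\bar h c$ is what forces $c\in\acl_\CL(\bar a\bar h)$ when $c\in\acl_\CL(\bar a H(\mathbb M))\setminus H(\mathbb M)$, so your three cases are exhaustive.

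\textbf{The step you flag as the main obstacle is actually trivial.} Once density gives a new $H$-element $h'$, the image tuple is $H$-independent by (ii), with nothing to choose. The lemma you propose for this step is also false. Take $T=\mathrm{RCF}$ and a saturated model of $T_P$. Pick $p\in H(\mathbb M)$ larger than every $n\in\mathbb{N}$, and use the extension property to find $a<p<a'$, both outside $H(\mathbb M)$ and larger than every $n\in\mathbb{N}$. Then $a$ and $a'$ are $H$-independent with $H(a)=H(a')=\emptyset$ and $\tp_\CL(a)=\tp_\CL(a')$. Yet $\tp_\CL(a/H(\mathbb M))\neq\tp_\CL(a'/H(\mathbb M))$, since they lie in different cuts of $H(\mathbb M)$. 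So $\acl$-independent extensions of $\tp_\CL(\bar a/H(\bar a))$ to $H(\mathbb M)$ are not unique, and the back-and-forth should not rely on such uniqueness.
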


\section{A class of dividing lines satisfying some consistency-inconsistency configurations}\label{sec: D-n-var}

In this section, we introduce the class $\mathfrak{D}_{n\text{-var}}$ of dividing lines that can be characterized in a uniform way in terms of positive consistency-inconsistency configurations and generalized indiscernibles. We then show that many of the dividing lines recalled in the previous section belong to $\mathfrak{D}_{n\text{-var}}$.

Note that most of the dividing lines introduced in the previous section involve three ingredients, namely, an index set for the parameters, subsets of the index set corresponding to consistent sets of formulas, and subsets corresponding to inconsistent sets of formulas. For example, in the case of TP$_1$, these can be taken to be $\omega^{<\omega}$, paths, and incomparable pairs, respectively.

We add one more ingredient to this picture, namely,  a language where the index set can be viewed as a structure with the modeling property. Using these four ingredients, we introduce a uniform framework for defining dividing lines. As we will see in the following sections, this framework provides useful tools for studying dividing lines, particularly in connection with preservation problems.

We begin by introducing some terminology for the conditions that these ingredients are required to satisfy.

\begin{definition}\label{def: weakly monochromatic pseudo-Ramsey cofinality}
Let $\CL^*$ be a language and $\CI^*$ an $\CL^*$-structure.
\begin{itemize}
\item[(i)] We say $\CI^*$ is {\it monochromatic} if  $\{\la \eta \ra\subseteq \CI^*: \eta\in \CI^*, |\eta|=1\}$ has only one element up to $\CL^*$-isomorphism.
\item[(ii)] We say $\CI^*$ is {\it weakly monochromatic} if  $\{\la \eta \ra\subseteq \CI^*: \eta\in \CI^*, |\eta|=1\}$ has only finitely many elements up to $\CL^*$-isomorphism.
\item[(iii)] Let $J\subseteq\CI^*$. We say $\CI^*$ is {\it monochromatically extendable} over $J$ if for any cardinal $\lambda$, there exists $\CI^\dagger$ such that $\age_{\CL^*}(\CI^\dagger)=\age_{\CL^*}(\CI^*)$ and for any coloring $c:\CI^\dagger\to\lambda$, there exists an $\CL^*$-embedding $f:\CI^*\to\CI^\dagger$ such that $c(f(\eta))=c(f(\nu))$ for $\eta,\nu\in J$ with $\la\eta\ra\sim_{\CL^*}\la\nu\ra$. When $J=\CI^*$, we simply say that $\CI^*$ is
monochromatically extendable. We call such $\CI^\dagger$ a {\it monochromatic extension} of $\CI^*$ over $J$ in $\lambda$. 
\item[(iv)] We say a subset $X$ of $\CI^*$ is {\it cofinal} in $\CI^*$ if for any finite subset $Z$ of $\CI^*$, there exists an $\CL^*$-embedding $f:\CI^*\to\CI^*$ such that
\begin{itemize}
\item[$\ast$] $f(\CI^*)\cap Z=\emptyset$,
\item[$\ast$] $\bar{\eta}\sim_{\CL^*}\bar{\nu} \Rightarrow \bar{\eta} Z\sim_{\CL^*}\bar{\nu}Z$ for all $\bar{\eta},\bar{\nu}\in f(\CI^*)$ with $|\bar{\eta}|=|\bar{\nu}|<\omega$,
\item[$\ast$] $(f(\CI^*)\cap X)\sim_{\CL^*}^\text{fin}X$.
\end{itemize}
\end{itemize}
\end{definition}

\begin{remark}\label{rmk: basic facts for LIXY}
The following are easy to check.
\begin{itemize}
\item[(i)] If $\CL^*$ is a finite relational language, then $\CI^*$ is weakly monochromatic.
\item[(ii)] For any infinite cardinals $\lambda$ and $\kappa$, the path $\{\la0\ra^i:i<\kappa\}$ is cofinal in the $\CL_0$-structure on $\lambda^{<\kappa}$. 
\item[(iii)] For any infinite cardinals $\lambda$ and $\kappa$, $\lambda^\kappa$ is cofinal in the $\CL_0$-structure on $\lambda^{\le \kappa}$.
\item[(iv)] For any infinite cardinals $\kappa_0$ and $\kappa_1$ and any cardinal $\lambda$, there exists a cardinal $\kappa_2$ such that $\kappa_2^{<\kappa_2}$ is a monochromatic extension of $\kappa_0^{<\kappa_1}$ in $\lambda$, with respect to the language $\CL_0$. This follows by the arguments in \cite[Remark~3.22 and Corollary~3.23(a)]{AKL21}.
\item[(v)] If $\CI^*$ is monochromatically extendable over $\CI^*$, then it is monochromatically extendable over $\mathbbof{X}$ for any $\mathbbof{X}\subseteq \CI^*$.
\end{itemize}
\end{remark}

\begin{definition}\label{def: n-var quadruple}
We say a quadruple $(\CL^*\!\!,\,\CI^*\!\!,\,X,\bar{Y})$ satisfies the {\it $n$-variable theorem} if
\begin{itemize}
\item[(i)] $\CL^*$ is a language, $\CI^*$ is a locally finite $\CL^*$-structure having the modeling property, $X$ is a subset of $\CI^*$, and $\bar{Y}$ is a finite set of subsets of $\CI^*$,
\item[(ii)] $\CI^*$ is weakly monochromatic,
\item[(iii)] $X$ is cofinal in $\CI^*$,
\item[(iv)] $\CI^*$ is monochromatically extendable over $\mathbbof{X}:=\{\eta\in\CI^*:\la\eta\ra\sim_{\CL^*}\la\nu\ra\text{ for some }\nu\in X\}$,
\item[(v)] $Y\subseteq\mathbbof{X}$ for each $Y\in\bar{Y}$,
\item[(vi)] $|Y|>1$ for each $Y\in\bar{Y}$,
\item[(vii)] there exists a disjoint union $W\subseteq\CI^*$ of $X'$ and $Y'$ such that 
\begin{itemize}
\item[$\ast$] $X'\sim^\text{fin}_{\CL^*}X$ and $Y'\sim^\text{fin}_{\CL^*}Y$ for some $Y\in\bar{Y}$,
\item[$\ast$] $\{\eta\}\cup X'\sim^\text{fin}_{\CL^*}X$ for each $\eta\in Y'$.
\end{itemize}
\end{itemize}
If $|\bar{Y}|=1$ and $\bar{Y}=\{Y\}$, then we just say $(\CL^*\!\!,\,\CI^*\!\!,\,X,Y)$ satisfies  the $n$-variable theorem.
\end{definition}

\begin{definition}\label{def: D_n-var}
For a given quadruple $(\CL^*\!\!,\,\CI^*\!\!,\,X,\bar{Y})$ satisfying  the $n$-variable theorem, let
$D^{\CL^*\!\!\!,\,\CI^*}_{X,\bar{Y}}$ be the class of complete first-order theories $T$ such that there do not exist $\varphi(x,y)$ in the language of $T$ and $(a_\eta)_{\eta\in\CI^*}$ in a model of $T$ such that
\begin{itemize}
\item[(i)] $(a_\eta)_{\eta\in\CI^*}$ is $\CI^*_{\CL^*}$-indiscernible,
\item[(ii)] $\{\varphi(x,a_\eta)\}_{\eta\in X}$ is consistent,
\item[(iii)] $\{\varphi(x,a_\eta)\}_{\eta\in Y}$ is inconsistent for each $Y\in \bar{Y}$.
\end{itemize}

Let $\mathfrak{D}_{n\text{-var}}$ be a class of classes of complete first-order theories such that
\[
\mathfrak{D}_{n\text{-var}}:=\left\lbrace D^{\CL^*\!\!\!,\,\CI^*}_{X,\bar{Y}}: (\CL^*\!\!,\,\CI^*\!\!,X,\bar{Y})\text{ satisfies the }n\text{-variable theorem} \right\rbrace.
\]
\end{definition}

\begin{definition}\label{def: witnessing neg D}
In \Cref{def: D_n-var}, if such a formula $\varphi(x,y)$ and $(a_\eta)_{\eta\in\CI^*}$ exist, then we say $\varphi(x,y)$ {\it witnesses} $T\notin D^{\CL^*\!\!\!,\,\CI^*}_{X,\bar{Y}}$ with $(a_\eta)_{\eta\in\CI^*}$, or it {\it witnesses} $\neg D^{\CL^*\!\!\!,\,\CI^*}_{X,\bar{Y}}$ with $(a_\eta)_{\eta\in\CI^*}$.
\end{definition}

\begin{remark}
We emphasize that the symbol $n$ in $\mathfrak{D}_{n\text{-var}}$ is not a parameter in the definition, but is simply part of the name. The reason for this terminology will become clear in \Cref{sec: n-var thm}: for every dividing line in $\mathfrak{D}_{n\text{-var}}$, we will prove a result that may be viewed as a weak form of a one-variable theorem. Here, `$n$' is just meant to indicate that these results are weaker than the corresponding `1'-variable theorems.
\end{remark}

\begin{notation}
Let $D_\text{stable}$, $D_\text{simple}$, $D_\text{NIP}$, $D_{\text{NTP}_1}$, $D_{\text{NTP}_2}$, $D_\text{NATP}$, $D_\text{NCTP}$, $D_\text{NBTP}$, $D_\text{NWP}$, $D_\text{NGP}$, and $D_{\text{NPM}^{(k)}}$ be the classes of stable theories, simple theories, NIP theories, NTP$_1$ theories, NTP$_2$ theories, NATP theories, NCTP theories, NBTP theories, NWP theories, NGP theories, and NPM$^{(k)}$ theories,  respectively.
\end{notation}

\begin{lemma}\label{lem: equiv cond stable}
$T$ is unstable if and only if there exist $\varphi(x,y)$ and $(a_{n,i})^{n<\omega}_{i<2}$ such that 
\begin{itemize}
\item[(i)] $\{\varphi(x,a_{n',0})\}_{n'<n}\cup\{\varphi(x,a_{n',1})\}_{n<n'}$ is consistent for each $n<\omega$,
\item[(ii)] $\{\varphi(x,a_{n,0}),\varphi(x,a_{n,1})\}$ is inconsistent for each $n<\omega$.
\end{itemize}
\begin{proof}
Suppose $T$ is unstable. Then there exist $\varphi(x,y)$ and $(a_q)_{q\in\mathbb{Q}}$ such that 
\begin{itemize}
\item[(iii)] $\{\varphi(x,a_{q'})\}_{q'<q}\cup\{\neg\varphi(x,a_{q'})\}_{q<q'}$ is consistent for each $q\in\mathbb{Q}$.
\end{itemize}
Let $\varphi'(x,y_0,y_1):=\varphi(x,y_0)\wedge\neg\varphi(x,y_1)$. Choose any sequences of rational numbers $(p_n)_{n<\omega}$, $(q_n)_{n<\omega}$, and $(r_n)_{n<\omega}$ such that
\begin{itemize}
\item[(iv)] $p_n<q_{n'}<r_{n''}$ for any $n,n',n''<\omega$,
\item[(v)] $p_n<p_{n+1}$ and $q_{n+1}<q_n$ for each $n<\omega$.
\end{itemize}
For each $n<\omega$ and $i<2$, let
\[b_{n,i}:=
\begin{cases}
a_{p_n}a_{q_n}&\text{if }i=0   \\
a_{q_n}a_{r_n}&\text{if }i=1.
\end{cases}
\]
Then $\varphi'$ and $(b_{n,i})^{n<\omega}_{i<2}$ satisfy (i) and (ii). The other direction is clear.
\end{proof}
\end{lemma}

\begin{proposition}\label{prop: stable is D_n-var}
$D_\text{stable}\in\mathfrak{D}_{n\text{-var}}$.
\begin{proof}
We use \Cref{lem: equiv cond stable}. Let $<^*$ be a binary relation symbol, $C_0,C_1$ unary relation symbols, and $\CL^*:=\{<^*,C_0,C_1\}$. We give an $\CL^*$-structure $\CI^*$ on $\mathbb{Q}\times2$ as follows.
\begin{itemize}
\item[(i)] $(q,i)<^*(q',i')$ if and only if $q<q'$.
\item[(ii)] $C_i((q,i'))$ if and only if $i=i'$.
\end{itemize}
Then $\CI^*$ has the modeling property. Choose any irrational number $r$ and let $X:=\{(q,0):q<r\}\cup\{(q,1):q>r\}$ and $Y:=\{(0,0),(0,1)\}$. It is easy to check that $(\CL^*\!,\CI^*\!,X,Y)$ satisfies the $n$-variable theorem and $D_\text{stable}=D^{\CL^*\!,\CI^*}_{X,Y}\!$.
\end{proof}
\end{proposition}

\begin{proposition}\label{prop: NTP1 is D_n-var}
$D_{\text{NTP}_1} \in \mathfrak{D}_{n\text{-var}}$. 
\begin{proof}
Choose any uncountable cardinal $\kappa$ with $\cf(\kappa)=\kappa$. Consider the $\CL_0$-structure on $\omega^{<\kappa}$ and subsets $X,Y$ of $\omega^{<\kappa}$ given by $X:=\{\la 0\ra^i:i<\kappa\}$ and $Y:=\{\la0\ra,\la1\ra\}$. The monochromatic extendability of $\omega^{<\kappa}$ follows from \Cref{rmk: basic facts for LIXY} (iv). The cofinality of $X$ follows from \Cref{rmk: basic facts for LIXY} (ii). Thus  $(\CL_0,\omega^{<\kappa},X,Y)$ satisfies the $n$-variable theorem and $D_{\text{NTP}_1}=D^{\CL_0,\omega^{<\kappa}}_{X,Y}\!$. 
\end{proof}
\end{proposition}

\begin{proposition}\label{prop: NATP is D_n-var}
$D_{\text{NATP}}\in \mathfrak{D}_{n\text{-var}}$. 
\begin{proof}
Choose any uncountable cardinal $\kappa$ with $\cf(\kappa)=\kappa$. Consider the $\CL_0$-structure on $\omega^{\le\kappa}$ and subsets $X,Y$ of $\omega^{\le\kappa}$ given by $X:=\{\eta:l(\eta)=\kappa\}$ and $Y:=\{\emptyset,\la0\ra\}$. The monochromatic extendability is as in \Cref{prop: NTP1 is D_n-var} and the cofinality of $X$ follows from \Cref{rmk: basic facts for LIXY} (iii). Thus $(\CL_0,\omega^{\le\kappa},X,Y)$ satisfies the $n$-variable theorem and $D_{\text{NATP}}=D^{\CL_0,\omega^{\le\kappa}}_{X,Y}\!$. 
\end{proof}
\end{proposition}

\begin{proposition}\label{prop: NCTP is D_n-var}
$D_\text{NCTP}\in \mathfrak{D}_{n\text{-var}}$. 
\begin{proof}
Choose any uncountable cardinal $\kappa$ with $\cf(\kappa)=\kappa$. Consider the $\CL_0$-structure on $\omega^{<\kappa}$ and subsets $X,Y$ of $\omega^{<\kappa}$ given by $X:=\{\la0\ra^i{}^{\frown}\la1\ra:i<\kappa\}$ and $Y:=\{\la0\ra^i\}_{i<\omega}$. The monochromatic extendability follows from \Cref{rmk: basic facts for LIXY} (iv). It is easy to check that $X$ is cofinal in $\omega^{<\kappa}$. Thus $(\CL_0,\omega^{<\kappa},X,Y)$ satisfies the $n$-variable theorem and $D_{\text{NCTP}}=D^{\CL_0,\omega^{<\kappa}}_{X,Y}\!$.
\end{proof}
\end{proposition}

\begin{proposition}\label{prop: NTP2 is D_n-var}
$D_{\text{NTP}_2}\in \mathfrak{D}_{n\text{-var}}$. 
\begin{proof}
Let $E,\le^*$ be binary relation symbols and put $\CL^*:=\{E,\le^*\}$. We consider the class of finite $\CL^*$-structures $A$ such that $\le^*$ is a linear order on $A$ and $E$ is a $\le^*$-convex equivalence relation on $A$ (i.e., $E$ is an equivalence relation satisfying $\forall xyz(E(x,z)\wedge x\le^* y\le^* z \rightarrow E(x,y))$). This class is known as a \Fraisse and Ramsey class. Let $\CI^*$ be its \Fraisse limit. Consider an $\CL^*$-structure on $\omega\times\omega$ with the lexicographic order $\le^*$ and an equivalence relation $E$ such that $E( (i,j),(i',j'))$ if and only if $i=i'$. Then $\age(\CI^*)=\age(\omega\times\omega)$ and hence the $\CL^*$-structure on $\omega\times\omega$ has the modeling property. To show the monochromatic extendability of $\omega\times\omega$, fix any cardinal $\lambda$ and choose any infinite cardinal $\kappa>\lambda$. Define an $\CL^*$-structure on $\kappa\times\kappa$ in the same way as on $\omega\times\omega$. Then we can show that $\kappa\times\kappa$ is a monochromatic extension by applying the pigeonhole principle twice. Let $X:=\{(i,0):i<\omega\}$ and $Y:=\{(0,0),(0,1)\}$. It is easy to check that $X$ is cofinal. Thus $(\CL^*\!,\omega\times\omega,X,Y)$ satisfies the $n$-variable theorem and $D_{\text{NTP}_2}=D^{\CL^*\!\!,\,\omega\times\omega}_{X,Y}$. 
\end{proof}
\end{proposition}

\begin{proposition}\label{prop: NIP is D_n-var}
$D_\text{NIP}\in\mathfrak{D}_{n\text{-var}}$.
\begin{proof}
Let $E,\le^*$ be binary relation symbols, $C_0, C_1$ unary relation symbols, and put $\CL^*:=\{E,\le^*,C_0,C_1\}$. We consider the class of finite $\CL^*$-structures $A$ such that $\le^*$ is a linear order on $A$, $E$ is a $\le^*$-convex equivalence relation on $A$, and $\{C_0,C_1\}$ is a partition of $A$. This class is a \Fraisse and Ramsey class. Let $\CI^*$ be its \Fraisse limit. Consider an $\CL^*$-structure on $\omega\times\omega$ such that $E((i,j),(i',j'))$ if and only if $i=i'$, $\le^*$ is the lexicographic order, $C_0((i,j))$ if $i+j\equiv_{\rm{mod}\;2}0$, and $C_1((i,j))$ if $i+j\equiv_{\rm{mod}\;2}1$. Then $\age(\CI^*)=\age(\omega\times\omega)$ and hence the $\CL^*$-structure on $\omega\times\omega$ has the modeling property. To show the monochromatic extendability of $\omega\times\omega$, fix any cardinal $\lambda$ and choose any infinite cardinal $\kappa>\max\{\lambda,\omega\}$. Define an $\CL^*$-structure on $\kappa\times\kappa$ in the same way as on $\omega\times\omega$. Let 
\[
(\omega\times\omega)/2^2:=\{(2i,2j):i,j<\omega\}
\] 
and 
\[(\kappa\times\kappa)/2^2:=\{(\alpha+2i,\beta+2j):\alpha,\beta<\kappa,\;i,j<\omega,\;\alpha,\beta\text{ are limit ordinals}\}.
\]
Let $c$ be a $\lambda$-coloring on $\kappa\times\kappa$. Then we can define a $\lambda^2$-coloring $d$ on $(\kappa\times\kappa)/2^2$ such that $d(\alpha+2i,\beta+2j):=(c(\alpha+2i,\beta+2j),c(\alpha+2i,\beta+2j+1))$. Then by the same argument as in \Cref{prop: NTP2 is D_n-var}, there exists an $\{E,\le^*\}$-embedding $g:(\omega\times\omega)/2^2\to(\kappa\times\kappa)/2^2$ such that $d(g(2i,2j))=d(g(2i',2j'))$ for all $(2i,2j),(2i',2j')\in(\omega\times\omega)/2^2$. For each $(i,j)\in\omega\times\omega$, write $g(2i,2j)=(\gamma,\delta)$ and define a map $f:\omega\times\omega\to\kappa\times\kappa$ such that 
\[
f(i,j)=
\begin{cases}
(\gamma,\delta) &\text{ if } i+j\text{ is even} \\
(\gamma,\delta+1) &\text{ if } i+j\text{ is odd}.
\end{cases}
\]
Then $f$ is an $\CL^*$-embedding and $c(f(i,j))=c(f(i',j'))$ if $\models C_0(i,j)\leftrightarrow C_0(i',j')$. Thus $\kappa\times\kappa$ is a monochromatic extension of $\omega\times\omega$.
Let $X:=\{(i,0):i<\omega\}$ and $Y:=\{(0,i):i<\omega\}$. It is easy to check that $X$ is cofinal in $\omega\times\omega$. Thus $(\CL^*\!,\omega\times\omega,X,Y)$ satisfies the $n$-variable theorem. $D_{\text{NIP}}=D^{\CL^*\!\!,\,\omega\times\omega}_{X,Y}$ follows from \cite{Day25}. 
\end{proof}
\end{proposition}

Before we prove that $D_\text{NGP}\in\mathfrak{D}_{n\text{-var}}$, we show that having a $k$-grid is equivalent to having a $2$-grid for any $k\ge 2$, which answers the question posed in \cite[Question 5.15]{Han25}. The discussion below concerning NGP was developed with helpful advice from James Hanson.

\begin{proposition}\label{prop: k-grid -> 2-grid}
If $T$ has a $k$-grid for some $k>2$, then $T$ has a $2$-grid.
\begin{proof}
Let $\le_0,\le_1$ be binary symbols and put $\CL^*:=\{\le_0,\le_1\}$. Consider the class of finite $\CL^*$-structures $A$ such that $\le_0,\le_1$ are linear orders on $A$. This class is a \Fraisse and Ramsey class. Let $\CI^*$ be its \Fraisse limit. Give an $\CL^*$-structure on $\mathbb{Q}\times\mathbb{Q}$ such that $(p,q)\le_0(p',q')$ if $p< p'$ or $p=p'\wedge q\le q'$,  $(p,q)\le_1(p',q')$ if $q< q'$ or $q=q'\wedge p\le p'$. Then $\age(\CI^*)=\age(\mathbb{Q}\times\mathbb{Q})$ and hence the $\CL^*$-structure on $\mathbb{Q}\times\mathbb{Q}$ has the modeling property.

 Now suppose $\varphi(x,y)$ has a $k$-grid with $(a_{p,q})_{p,q\in\mathbb{Q}}$, for some $k>2$. We may assume $(a_{p,q})_{p,q\in\mathbb{Q}}$ is $\CL^*$-indiscernible. Let 
\[
\Phi(x):=\{\varphi(x,a_{-1,1})\}\cup\{\varphi(x,a_{n,-{1\over n}}):n\in\mathbb{N}\setminus\{0\}\}.
\]
If $\Phi(x)$ is consistent, then $T$ has a $(k-1)$-grid. If $\Phi(x)$ is inconsistent, then $T$ has a $2$-grid.
\end{proof}
\end{proposition}

\begin{lemma}\label{lem: coloring lemma of dense linear order}
For any cardinal $\lambda$, let $I_\lambda$ be a $\lambda^+$-saturated $\{<\}$-structure such that $I_\lambda\models\Th(\mathbb{Q},<)$. Then for any $c:I_\lambda\to \lambda$, there exist $\lambda'<\lambda$ and $q,r\in I_\lambda$ such that $q<r$, and $c^{-1}(\lambda')$ is dense in $(q,r)$.
\begin{proof}
 Choose any  $c:I_\lambda\to\lambda$. To get a contradiction, assume that there do not exist $\lambda'<\lambda$ and $q,r\in I_\lambda$ such that $q<r$ and $c^{-1}(\lambda')$ is dense in $(q,r)$. We construct $(q_ir_i)_{i<\lambda}$ such that $q_i<q_j<r_j<r_i$ for all $i<j<\lambda$, and $(q_i,r_i)\cap c^{-1}(i)=\emptyset$ for all $i<\lambda$. Choose any $q'_0<r'_0$. By the assumption, $c^{-1}(0)$ is not dense in $(q'_0,r'_0)$. So we can find $q_0<r_0$ such that $ (q_0,r_0)\cap c^{-1}(0)=\emptyset$. Suppose that we have constructed $(q_i,r_i)_{i<\lambda_0}$ for some $\lambda_0<\lambda$. By the saturation, we can find $q'_{\lambda_0},r'_{\lambda_0}\in I_\lambda$ such that $q_i<q'_{\lambda_0}<r'_{\lambda_0}<r_i$ for all $i<\lambda_0$. Since $c^{-1}(\lambda_0)$ is not dense in $(q'_{\lambda_0},r'_{\lambda_0})$, there exist $q_{\lambda_0},r_{\lambda_0}\in I_\lambda$ such that $q'_{\lambda_0}<q_{\lambda_0}<r_{\lambda_0}<r'_{\lambda_0}$ and $(q_{\lambda_0},r_{\lambda_0})\cap c^{-1}(\lambda_0)=\emptyset$. By repeating this we can complete $(q_ir_i)_{i<\lambda}$. By the saturation, we can find $d\in I_\lambda$ such that $q_i<d<r_i$ for all $i<\lambda$. But then $d\notin c^{-1}(i)$ for all $i<\lambda$, we have a contradiction.
\end{proof}
\end{lemma}

\begin{lemma}\label{lem: monochromatic extendability of NGP}
The $\CL^*$-structure on $\mathbb{Q}\times\mathbb{Q}$ in \Cref{prop: k-grid -> 2-grid} is monochromatically extendable.
\begin{proof}
Choose any infinite cardinal $\lambda$. Let $I_\lambda$ be a $\lambda^+$-saturated $\{<\}$-structure such that $I_\lambda\models\Th(\mathbb{Q},<)$. Let $\{a_i\}_{i<\kappa}$ be an enumeration of $I_\lambda$. Let $I_\kappa$ be a $\kappa^+$-saturated $\{<\}$-structure such that $I_\kappa\models\Th(\mathbb{Q},<)$. Note that $\lambda<\kappa$. Define an $\CL^*$-structure on $I_\lambda\times I_\kappa$ as in \Cref{prop: k-grid -> 2-grid}. Choose any coloring $c:I_\lambda\times I_\kappa\to \lambda$. For each $i<\kappa$, define a coloring $c_i:I_\kappa\to \lambda$ such that $c_i(b)=c(a_i,b)$. Then by applying \Cref{lem: coloring lemma of dense linear order} repeatedly, we can construct $(q_i,r_i,\lambda_i)_{i<\kappa}$ such that
\begin{itemize}
\item[(i)] $\lambda_i<\lambda$ and $q_i<q_j<r_j<r_i$ for all $i<j<\kappa$, 
\item[(ii)] $c_i^{-1}(\lambda_i)$ is dense in $(q_i,r_i)$ for all $i<\kappa$.
\end{itemize}
By the $\kappa^+$-saturation, there exist $q,r\in I_\kappa$ such that $q_i<q<r<r_i$ for all $i<\kappa$. Thus $c^{-1}_i(\lambda_i)$ is dense in $(q,r)$ for all $i<\kappa$.

Define a coloring $c':I_\lambda\to \lambda$ such that $c'(a_i)=\lambda_i$. By \Cref{lem: coloring lemma of dense linear order}, there exist $\lambda'<\lambda$ and $q',r'\in I_\lambda$ such that $q'<r'$, $c'^{-1}(\lambda')$ is dense in $(q',r')$. Then we can find an $\CL^*$-embedding $f$ from $\mathbb{Q}\times\mathbb{Q}$ into $(q',r')\times (q,r)\subseteq I_\lambda\times I_\kappa$ such that $c(f(x,y))=\lambda'$ for all $(x,y)\in\mathbb{Q}\times\mathbb{Q}$.
\end{proof}
\end{lemma}

\begin{proposition}\label{prop: NGP is D_n-var}
$D_\text{NGP}\in\mathfrak{D}_{n\text{-var}}$.
\begin{proof}
We use the same notation as in \Cref{prop: k-grid -> 2-grid}. By \Cref{lem: monochromatic extendability of NGP}, the $\CL^*$-structure on $\mathbb{Q}\times\mathbb{Q}$ is monochromatically extendable. Let $X:=\{(i,i):i\in\mathbb{Q}\}$ and $Y:=\{(i,-i):i\in\mathbb{Q}\}$. It is easy to check that $X$ is cofinal. Thus $(\CL^*\!,\mathbb{Q}\times\mathbb{Q},X,Y)$ satisfies the $n$-variable theorem by \Cref{prop: k-grid -> 2-grid} and $D_{\text{NGP}}=D^{\CL^*\!\!,\,\mathbb{Q}\times\mathbb{Q}}_{X,Y}$.
\end{proof}
\end{proposition}

\begin{proposition}\label{prop: NPMk is D_n-var}
$D_{\text{NPM}^{(k)}}\in\mathfrak{D}_{n\text{-var}}$ for each $1<k<\omega$.
\begin{proof}
Let $<$ be a binary relation symbol, $E$ a $k$-ary relation symbol, $P,Q$ unary relation symbols, $\CL^*:=\{<,E\}$, and $\CL^*_{P,Q}:=\CL^*\cup\{P,Q\}$. Let $\CK:=\{A_i\}_{i<\omega}$ be a class of finite $\CL^*_{P,Q}$-structures such that
\begin{itemize}
\item[(i)] $E$ is irreflexive and symmetric in $A_i$,
\item[(ii)] $<$ is a linear order in $A_i$,
\item[(iii)] $P$ is a clique in $A_i$,
\item[(iv)] $Q$ is an anticlique (i.e., $\models\neg E(x_0,...,x_{k-1})$ for all $x_0,...,x_{k-1}\in Q$) in $A_i$,
\item[(v)] $P\cap Q=\emptyset$ in $A_i$,
\item[(vi)] $E(x_0,...,x_{k-1})$ for all distinct $x_0,...,x_{k-2}\in P$ and $x_{k-1}\in Q$ in $A_i$,
\end{itemize}
for each $i<\omega$, and 
\begin{itemize}
\item[(vii)] if $A$ is a finite $\CL^*_{P,Q}$-structure satisfying (i), (ii), (iii), (iv), (v), and (vi), then there exists $i<\omega$ such that $A_i\sim_{\CL^*_{P,Q}}A$,
\item[(viii)] $A_i\not\sim_{\CL^*_{P,Q}}A_j$ for all $i<j<\omega$.
\end{itemize}
For finite $\CL^*_{P,Q}$-structures $A$, $A'$  satisfying (i), (ii), (iii), (iv), (v), and (vi), we define the following notations.
\begin{itemize}
\item[(ix)] For $X,Y\subseteq A$, we write $X<Y$ if $x<y$ for all $x\in X$ and $y\in Y$.
\item[(x)] $A\oplus_< A'$ is an $\CL^*_{P,Q}$-structure on the disjoint union of $A$ and $A'$ such that 
\begin{itemize}
\item[$\ast$] $A<A'$,
\item[$\ast$] $(A\oplus_<A')|_A\sim_{\CL^*_{P,Q}} A$ and $(A\oplus_<A')|_{A'}\sim_{\CL^*_{P,Q}} A'$,
\item[$\ast$] for all distinct $x_0,...,x_{k-1}$, if $\{x_0,...,x_{k-1}\}\subseteq A\cup A'$, $\{x_0,...,x_{k-1}\}\not\subseteq A$, and $\{x_0,...,x_{k-1}\}\not\subseteq A'$, then $E(x_0,...,x_{k-1})$ if and only if they satisfy one of the following conditions.
\begin{itemize}
\item[$\ast\ast$] $P(x_i)$ for all $i<k$,
\item[$\ast\ast$] $Q(x_i)$ and $\bigwedge_{j\neq i}P(x_j)$ for some $i<k$.
\end{itemize}
\end{itemize}
\item[(xi)] $A^-$ is the substructure of $A$ on $A\setminus\{a\}$ for the $<$-maximal element $a$ of $A$.
\end{itemize}
Note that $A\oplus_<A'$ still satisfies (i), (ii), (iii), (iv), (v), and (vi).

We construct a chain $(\CI_i)_{i<\omega}$ of $\CL^*_{P,Q}$-structures as follows. $\CI_0:=A_0$. Suppose that  we have constructed $\CI_{2n}$. Let $\CK_{2n}:=\{B^{2n}_i\}_{i<d_{2n}}$ be a set of $\CL^*_{P,Q}$-structures such that 
\begin{itemize}
\item[(xii)] $d_{2n}<\omega$
\item[(xiii)] $B_i^{2n-}\subseteq \CI_{2n}$ for each $i<d_{2n}$,
\item[(xiv)] for each $i<d_{2n}$, there exists $j<\omega$ such that $B_i^{2n}\sim_{\CL^*_{P,Q}}A_j$,
\item[(xv)] for any $\CL^*_{P,Q}$-structure $B$ satisfying (i), (ii), (iii), (iv), (v), and (vi), if $B^-\subseteq \CI_{2n}$, then there exists $i<d_{2n}$ such that $B\sim_{\CL^*_{P,Q}}B^{2n}_i$
\item[(xvi)] if $i<j<d_{2n}$, then $B^{2n-}_i\neq B^{2n-}_j$ or $B^{2n}_i\not\sim_{\CL^*_{P,Q}}B^{2n}_j$.
\end{itemize}
Let $\CI_{2n+1}$ be the $\CL^*_{P,Q}$-structure on $\CI_{2n}\cup\{b^{2n}_i\}_{i<d_{2n}}$, where $b^{2n}_i$ is a new single element, such that
\begin{itemize}
\item[(xvii)] $\CI_{2n}<b^{2n}_0<\cdots<b^{2n}_{d_{2n}-1}$,
\item[(xviii)] $B^{2n-}_ib^{2n}_i\sim_{\CL^*_{P,Q}}B^{2n}_i$ for each $i<d_{2n}$,
\item[(xix)] for all distinct $x_0,...,x_{k-1}$, if $\{x_0,...,x_{k-1}\}\subseteq \CI_{2n}\cup\{b^{2n}_i\}_{i<d_{2n}}$, $\{x_0,...,x_{k-1}\}\not\subseteq \CI_{2n}$, and $\{x_0,...,x_{k-1}\}\not\subseteq B^{2n-}_ib^{2n}_i$ for all $i<d_{2n}$, then $E(x_0,...,x_{k-1})$ if and only if they satisfy one of the following conditions.
\begin{itemize}
\item[$\ast$] $P(x_i)$ for all $i<k$,
\item[$\ast$] $Q(x_i)$ and $\bigwedge_{j\neq i}P(x_j)$ for some $i<k$.
\end{itemize}
\end{itemize}
Let $\CI_{2n+2}:=\CI_{2n+1}\oplus_< A_0 \oplus_<\cdots\oplus_<A_{n+1}$. Let $\CI:=\bigcup_{n<\omega}\CI_n$, $\CI^*:=\CI|_{\CL^*}$, and $X:=P(\CI)$. Choose any $Y\subseteq Q(\CI)$ such that $|Y|=k$. Clearly $X$ is cofinal.

We show that $\CI^*$ is monochromatically extendable. Fix a cardinal $\lambda$ and choose any $(\lambda+2^\omega)^+$-saturated $\{<,E\}$-structure $\CJ\models\Th(\CI^*)$. Let $c$ be a $\lambda$-coloring on $\CJ$.

\medskip

\noindent\underline{Claim.} There exist $\lambda'<\lambda$, $\delta<\lambda$, and $(r_i)_{i\le\delta}\subseteq\CJ$ such that 
\begin{itemize}
\item[(xx)] $r_i<r_j$ for all $i< j\le\delta$,
\item[(xxi)] for all $s_0,...,s_n\in\CJ$, with $r_\delta<s_0<\cdots<s_n$, there exists $s'$ such that
\begin{itemize}
\item[$\ast$] $s'\equiv^{\rm qf}_{s_{<n}}s_n$,
\item[$\ast$] $s'\equiv^{\rm qf}_{r_{<\delta}}r_\delta$,
\item[$\ast$] $c(s')=\lambda'$.
\end{itemize}
\end{itemize}

\smallskip

\noindent\underline{Proof of Claim.} Suppose not. Then by the choice of $\CI^*$ and $\CJ$, we can construct a sequence of finite tuples $(\bar s^i:=(s^i_0,...,s^i_{n_i}))_{i<\lambda}\subseteq\CJ$ such that
\begin{itemize}
\item[(xxii)] $s^i_{k_0}<s^i_{k_1}$ for all $i<\lambda$ and $k_0<k_1\le n_i$,
\item[(xxiii)] $s^i_k<s^{i'}_{k'}$ for all  $k\le n_i$, $k'\le n_{i'}$ and $i<i'<\lambda$,
\item[(xxiv)] $s^{i}_{n_{i}}\equiv^{\rm qf}_{\bar s^{<i} s^i_{<n_i}}s^{i'}_{n_{i'}}$ for all $i<i'<\lambda$,
\item[(xxv)] for all $i<\lambda$, there is no $s'\equiv^{\rm qf}_{\bar s^{<i} s^i_{<n_i}}s^i_{n_i}$ such that $c(s')=i$.
\end{itemize}
Since $\CJ$ is $(\lambda+2^\omega)^+$-saturated, we can find $s$ such that $s\equiv^{\rm qf}_{\bar s^{<i} s^i_{<n_i}}s^{i}_{n_{i}}$ for all $i<\lambda$. By (xxv), $c(s)\neq i$ for all $i<\lambda$. This yields a contradiction. $\dashv$

\medskip

Let $\lambda'$ and $(r_i)_{i\le\delta}$ be given in Claim. Then we can find an $\CL^*$-embedding $f:\CI^*\to\CJ$ such that $f(i)\equiv^{\rm qf}_{r_{<\delta}}r_\delta$ and $c(f(i))=\lambda'$ for all $i\in\CI^*$. Since $\lambda$ is arbitrary, $\CI^*$ is monochromatically extendable. It is easy to check that $(\CL^*\!,\CI^*\!,X,Y)$ satisfies the remaining conditions for the $n$-variable theorem as well, and that $D_{\text{NPM}^{(k)}}=D^{\CL^*\!\!,\,\CI^*}_{X,Y}\!$.
\end{proof}
\end{proposition}

\begin{proposition}\label{prop: NWP is D_n-var}
$D_\text{NWP}\in\mathfrak{D}_{n\text{-var}}$.
\begin{proof}
As we mentioned in \Cref{def: path antichain descending comb}, $(2^2)^\omega$ can be regarded as $4^\omega$. Let $\CL_4:=\CL_0\cup\{\lhd_i:i<4\}$, with the usual
interpretations on $4^{\le\omega}$.
We write $\sim_4$ for equality of quantifier-free $\CL_4$-types.
Let $\Delta_\unlhd,\Delta_0,\Delta_1,\Delta_2,$ and $\Delta_3$ be quaternary relation symbols and $\CL_{\Delta,4}:=\{\Delta_\unlhd,\Delta_0,\Delta_1,\Delta_2,\Delta_3\}$. We give an $\CL_{\Delta,4}$-structure on $4^\omega$ by interpreting each symbol in $\CL_{\Delta,4}$ as follows.
\begin{itemize}
\item[(i)] $\Delta_\unlhd(\eta_0,\eta_1,\eta_2,\eta_3)$ if and only if $\eta_0\wedge\eta_1\unlhd\eta_2\wedge\eta_3$ in $4^{\le\omega}$.
\item[(ii)] For $i<4$, $\Delta_i(\eta_0,\eta_1,\eta_2,\eta_3)$ if and only if $(\eta_0\wedge\eta_1)^\frown{\la i\ra}\unlhd\eta_2\wedge\eta_3$ in $4^{\le\omega}$.
\end{itemize}
For $\bar{\eta}\in 4^{\le\omega}$, let ${\bf 0}(\bar{\eta})\in 4^{\le\omega}$ be the tuple obtained by replacing $\eta$ with $\eta^\frown{\la0\ra^\omega}$ for all $\unlhd$-maximal $\eta$ in $\bar{\eta}$ with $l(\eta)<\omega$.
It is easy to check that for $\bar{\eta},\bar{\nu}\in 4^\omega$ and $\bar\xi\in4^{\le\omega}$,
\begin{itemize}
\item[(iii)] $\bar{\eta}\sim_{\Delta,4}\bar{\nu}$ if and only if $\cl(\bar{\eta})\sim_4\cl(\bar{\nu})$,
\item[(iv)] $\bar{\xi}\sim_4 {\bf 0}(\bar{\xi})$.
\end{itemize}

First we show that the $\CL_{\Delta,4}$-structure on $4^\omega$ has the modeling property. Fix $(a_\eta)_{\eta\in 4^\omega}$. For each $\eta\in 4^{<\omega}$, let $a_\eta:=a_{\bf0(\eta)}$.
By \cite{KK11}, the $\CL_4$-structure on $4^{\le\omega}$ has the modeling property.  So we can find $\CL_4$-indiscernible $(b_\eta)_{\eta\in 4^{\le\omega}}$ $\CL_4$-locally based on $(a_\eta)_{\eta\in 4^{\le\omega}}$. It is easy to check that $(b_\eta)_{\eta\in 4^\omega}$ is $\CL_{\Delta,4}$-indiscernible and $\CL_{\Delta,4}$-locally based on $(a_\eta)_{\eta\in 4^\omega}$ by using (iii) and (iv). This proves that the $\CL_{\Delta,4}$-structure has the modeling property.

Let $4^\omega_\text{fin}:=\{\eta\in 4^\omega:\eta(i)\neq0\text{ for only finitely many }i<\omega\}$. We can give an $\CL_{\Delta,4}$-structure on $4^\omega_\text{fin}$ in the same way as above. Since $\age(4^\omega_\text{fin})=\age(4^\omega)$, the $\CL_{\Delta,4}$-structure on $4^\omega_\text{fin}$ also has the modeling property.

\medskip

\noindent\underline{Claim.} $4^\omega_\text{fin}$ is monochromatically extendable.

\smallskip

\noindent\underline{Proof of Claim.} Fix a cardinal $\lambda$ and choose any infinite cardinal $\kappa>\max\{\lambda,\omega\}$ such that $\cf(\kappa)=\kappa$. Clearly $\age(4^\kappa)=\age(4^\omega_\text{fin})$. Let $c$ be a $\lambda$-coloring on $4^\kappa$. Then by the same argument of \Cref{lem: coloring lemma of dense linear order}, we can find $\lambda'<\lambda$ and $\xi\in 4^{<\kappa}$ such that 
\begin{itemize}
\item[(v)] for any $\nu\in 4^{<\kappa}$ with $\xi\lhd\nu$, there exists $\nu'\in 4^\kappa$ such that $\nu\lhd\nu'$ and $c(\nu')=\lambda'$.
\end{itemize}
Thus for each $\eta\in 4^\omega_\text{fin}$, there exists $\nu'_\eta\in 4^\kappa$ such that $\xi^\frown \eta\lhd \nu'_\eta$ and $c(\nu'_\eta)=\lambda'$. Define a map $f:4^\omega_\text{fin}\to 4^\kappa$ such that $f(\eta)=\nu'_\eta$. Then $f$ is an $\CL_{\Delta,4}$-embedding and $c(f(\eta))=\lambda'$ for all $\eta\in 4^\omega_\text{fin}$. Since $\lambda$ is arbitrary, this proves that $4^\omega_\text{fin}$ is monochromatically extendable. $\dashv$

\medskip

 Let $\eta^*$, $\nu^*_0$, $\nu^*_1$, and $\nu^*_2$ be maps from $\omega$ to $4$ such that 
\[\eta^*(2i)=2\text{ and }\eta^*(2i+1)=3\text{ for each }i<\omega,\] 
\[\nu^*_0(2i)=1\text{ and }\nu^*_0(2i+1)=3\text{ for each }i<\omega,\] 
\[\nu^*_1(i)=1\text{ for each }i<\omega,\] 
\[\nu^*_2(i)=3\text{ for each }i<\omega,\] 
and let 
\[X:=\{{\bf0}(({\eta^*}_{|_{2i}})^\frown{\la 0\ra}):i<\omega\}\cup\{{\bf0}(({\eta^*}_{|_{2i+1}})^\frown{\la 1\ra}):i<\omega\},\]
\[Y_0:=\{{\bf0}(({\nu^*_0}_{|_{2i}})^\frown{\la 0\ra}):i<\omega\}\cup\{{\bf0}(({\nu^*_0}_{|_{2i+1}})^\frown{\la 2\ra}):i<\omega\},\]
\[Y_1:=\{{\bf0}(({\nu^*_1}_{|_{i}})^\frown{\la 0\ra}):i<\omega\},\]
\[Y_2:=\{{\bf0}(({\nu^*_2}_{|_{i}})^\frown{\la 2\ra}):i<\omega\}.\]
Let $\bar{Y}:=(Y_0,Y_1,Y_2)$.
It is easy to check $X$ is cofinal, $(\CL_{\Delta,4},4^\omega_\text{fin},X,\bar{Y})$ satisfies the $n$-variable theorem, and $D_\text{NWP}=D_{X,\bar{Y}}^{\CL_{\Delta,4},\,4^\omega_\text{fin}}$.
\end{proof}
\end{proposition}

\subsection{The classes of simple theories and NBTP theories}
Let $t_i$ be a unary relation symbol for each $i<\omega$
and $\unlhd^-$, $<_{ds}$, $<_{ll}$, $<_{rv}$ binary relation symbols.
Let $\Delta^-_\unlhd$, $\Delta^-_{lex}$, and $\Delta^-_l$
be quaternary relation symbols, and let $\Delta^-_{\lhd_i}$
be a six-ary relation symbol for each $i<\omega$.
\begin{itemize}
\item[]
$\CL_{ds}:=\{t_i\}_{i<\omega}\cup\{\unlhd^-\}
\cup\{\Delta^-_{\lhd_i}\}_{i<\omega}
\cup\{\Delta^-_\unlhd,\Delta^-_l\}$.
\item[]
$\CL_{kr^*}:=\{<_{ds},<_{ll},<_{rv}\}
\cup\{\Delta^-_{lex},\Delta^-_\unlhd,\Delta^-_l\}$.
\item[]
$\CL_{kr}:=\{<_{ds},<_{ll},<_{rv}\}
\cup\{\Delta^-_{lex},\Delta^-_\unlhd\}$.
\end{itemize}

\begin{notation}\label{notation: level closure}
Let $\kappa$ be an infinite cardinal, $\lambda$ a cardinal,
and $\wedge_l$ a binary function symbol.
On $\lambda^{<\kappa}$, we interpret $\wedge_l$ by
\[
\eta\wedge_l\nu:=
\begin{cases}
\nu|_{l(\eta)} & \text{if }l(\eta)\le l(\nu),\\
\nu & \text{otherwise}.
\end{cases}
\]
On $\Tau_\kappa$, we interpret $\wedge_l$ by
\[
\eta\wedge_l\nu:=
\begin{cases}
\nu|_{[l(\eta),\kappa)} & \text{if }l(\eta)\ge l(\nu),\\
\nu & \text{otherwise}.
\end{cases}
\]
\end{notation}

\begin{notation}\label{notation: ds kr language}
We give an $\CL_{ds}$-structure, an $\CL_{kr^*}$-structure, and an $\CL_{kr}$-structure on $\Tau_\omega$ by giving an interpretation of each symbol as follows.
\begin{itemize}
\item[(i)] By $\eta\in t_i$, we mean $t(\eta)=i$. 
\item[(ii)] By $\eta\unlhd^-\nu$, we mean $\eta^-\unlhd\nu$ and $\eta\perp\nu$.
\item[(iii)] By $\eta<_{ds}\nu$, we mean $\eta\lex\nu$, $\eta\neq\nu$, and that $\{\eta,\nu\}$ forms a direct sibling set.
\item[(iv)] By $\eta<_{ll}\nu$, we mean $\eta<_{lex}\nu$ and that $\{\eta,\nu\}$ forms a strict left-leaning path.
\item[(v)] By $\eta<_{rv}\nu$, we mean $\eta\lex\nu$ and that $\{\eta,\nu\}$ forms a strict right-veering path.
\item[(vi)] By $(\eta_0,\eta_1,\eta_2,\eta_3)\in\Delta^-_\unlhd$, we mean $\eta^-_0\wedge\eta^-_1\unlhd\eta^-_2\wedge\eta^-_3$.
\item[(vii)] By $(\eta_0,\eta_1,\eta_2,\eta_3)\in\Delta^-_{lex}$, we mean $\eta^-_0\wedge\eta^-_1\lex\eta^-_2\wedge\eta^-_3$.
\item[(viii)] By $(\eta_0,\eta_1,\eta_2,\eta_3)\in\Delta^-_l$, we mean $\eta^-_0\wedge\eta^-_1<_l\eta^-_2\wedge\eta^-_3$.
\item[(ix)]
By $(\eta_0,\eta_1,\eta_2,\eta_3,\eta_4,\eta_5)\in\Delta^-_{\lhd_i}$, we mean
$(\eta^-_0\wedge\eta^-_1)\wedge_l\eta^-_2
\lhd_i
(\eta^-_3\wedge\eta^-_4)\wedge_l\eta^-_5.
$
\end{itemize}
\end{notation}

\smallskip

We will argue that the $\CL_{ds}$-structure, the $\CL_{kr^*}$-structure, and the $\CL_{kr}$-structure  on $\Tau_\omega$ have the modeling property. First we recall the modeling property of $0$-fti. The idea of $0$-fti was introduced by D{\v z}amonja and Shelah \cite{DS04} under the name $0$-fbti on $2^{<\omega}$, and was later developed by Kim and Kim \cite{KK11} for $n^{<\omega}$ for an arbitrary natural number $n$. 
For $n\in\omega$, let 
\begin{itemize}
\item[] $\CL^n_{0\text{-fti}}:=\{\wedge,\unlhd,\wedge_l,<_l\}\cup\{\lhd_i\}_{i<n}$\vspace{-4pt}
\end{itemize}
and\vspace{-4pt}
\begin{itemize}
\item[] $\CL_{0\text{-fti}}:=\{\wedge,\unlhd,\wedge_l,<_l\}\cup\{\lhd_i\}_{i<\omega}$.\vspace{2pt}
\end{itemize}
 Then we can give an $\CL^n_{0\text{-fti}}$-structure on $n^{<\omega}$ and $\CL_{0\text{-fti}}$-structures on $\omega^{<\omega}$ and $\Tau_\omega$ with the interpretations given above. Note that $\age_{\CL_{0\text{-fti}}}(\omega^{<\omega})=\age_{\CL_{0\text{-fti}}}(\Tau_\omega)$.

\begin{fact}\cite[Proposition 2.9]{KK11}\label{fact: modeling property of 0-fti on n^<omega}
The $\CL^n_\text{0-fti}$-structure on $n^{<\omega}$ has the modeling property for each $n<\omega$.
\end{fact}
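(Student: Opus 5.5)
The plan is to reduce the modeling property to a Ramsey statement about $n^{<\omega}$ and then obtain that Ramsey statement from Milliken's tree theorem, followed by a standard compactness argument.

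First, the structure has to be understood. Since $\wedge$ and $\wedge_l$ are in $\CL^n_{0\text{-fti}}$, the substructure generated by a finite $A\subseteq n^{<\omega}$ is the closure of $A$ under meets and under restriction to the levels occupied by $A$. So it is a finite set $\la A\ra$ that is meet-closed and "level-closed": it lives on a finite set of levels $\ell_0<\cdots<\ell_{m-1}$, and it contains $\nu|_{\ell_j}$ for every $\nu\in\la A\ra$ and every $\ell_j\le l(\nu)$. Quantifier-free $\CL^n_{0\text{-fti}}$-types of tuples are then determined by the isomorphism type of this closure, including which direction $i<n$ each node takes at each meet (this is recorded by the relations $\lhd_i$). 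The first step is to check the following correspondence. Let $S\subseteq n^{<\omega}$ be a strong subtree of height $m$ that is \emph{full*n-ary*}: each node at a non-top level has exactly one extension in each of the $n$ immediate directions, with levels $\ell_0<\cdots<\ell_{m-1}$. Then there is a canonical isomorphism $n^{<m}\to S$, and every $\CL^n_{0\text{-fti}}$-embedding of a finite substructure $B\subseteq n^{<m}$ into $n^{<\omega}$ extends to such a strong-subtree isomorphism. Conversely, every strong-subtree isomorphism restricts to an embedding. Hence embeddings $B\to n^{<\omega}$ are coded by pairs (a strong subtree of height $m$, a fixed position of $B$ inside $n^{<m}$).

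Second, take $\CL$, $T$, $\mathbb{M}$, $A$ and $(a_\eta)_{\eta\in n^{<\omega}}$. By compactness it suffices to show the following. For each finite set $\Delta$ of $\CL(A)$-formulas and each $m$, there is an embedding $g:n^{<\omega}\to n^{<\omega}$ such that for every finite qf-type $p$ realized in $n^{<m}$, all tuples $\bar\eta\models p$ with $\bar\eta\subseteq n^{<m}$ (indeed all such tuples in $n^{<\omega}$) have the same $\Delta$-type of $(a_{g(\bar\eta)})$. To get this, I color each strong subtree $S$ of height $m$ by the finite datum listing, for each tuple $\bar\eta$ from $n^{<m}$, the $\Delta$-type of $a$ evaluated at the image of $\bar\eta$ under $n^{<m}\cong S$. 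Milliken's tree theorem, applied to the tree $n^{<\omega}$ with finitely many colors, yields a full strong subtree $S^\ast\cong n^{<\omega}$ all of whose height-$m$ strong subtrees receive the same color. Any two tuples with the same qf-type both lie in height-$m$ strong subtrees of $S^\ast$ in the same canonical position, so they receive the same $\Delta$-type.

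Finally, a compactness/Ehrenfeucht–Mostowski argument produces $(b_\eta)$. Take the type in variables $(x_\eta)_{\eta\in n^{<\omega}}$ saying that tuples of equal qf-type have equal $\CL(A)$-types, together with all formulas true of every $(a_{\bar\nu})$ with $\bar\nu$ of the given qf-type. This type is finitely satisfiable by the previous step. A realization is $\CL^n_{0\text{-fti}}$-indiscernible over $A$ and is locally based on $(a_\eta)$, which is what (i) and (ii) of \Cref{def: generalized indiscernible} require.

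The step I expect to be the main obstacle is the first one: verifying carefully that the language $\CL^n_{0\text{-fti}}$ is exactly strong enough for qf-types to coincide with "position inside a strong subtree". This needs $<_l$ and $\wedge_l$ to force level-preservation across different branches, and $\lhd_i$ to force the $i$-th successor to go to the $i$-th successor. It also needs checking that Milliken's theorem applies in this full $n$-ary form; here I would invoke the version for strong subtrees of finitely branching trees, as in the Halpern–Läuchli/Milliken framework, rather than reprove it.
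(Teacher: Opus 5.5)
The paper gives no argument for this Fact; it is quoted from \cite[Proposition 2.9]{KK11}. Your route is a sound, self-contained proof: read quantifier-free types as positions inside strong subtrees, apply Milliken's theorem to a finite colouring of the height-$m$ strong subtrees, then use compactness. The compactness step is fine. However, the extension claim in your first step is false as stated. Not every embedding of a finite substructure $B\subseteq n^{<m}$ into $n^{<\omega}$ extends to the canonical isomorphism $\phi_S\colon n^{<m}\to S$ of a height-$m$ strong subtree. For example, $B=\{\la 0\ra\}$ is a substructure of $n^{<2}$ and $\la 0\ra\mapsto\emptyset$ is an $\CL^n_{0\text{-fti}}$-embedding, yet $\phi_S(\emptyset)$ would have to lie strictly below $\emptyset$. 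Similarly, $\{\emptyset,\la 1,0,0,0,0\ra\}\subseteq n^{<6}$ embeds onto $\{\emptyset,\la 1\ra\}$, leaving no room for the four intermediate levels. The claim holds only when $B$ meets every level of $n^{<m}$ (it is then downward closed), so positions must be normalized before colouring.

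Here is the normalization, which is exactly what your second step uses. Let $C$ be the substructure generated by a $p$-tuple $\bar\eta$, with levels $\ell_0<\cdots<\ell_{k-1}$. These are the lengths of the $\eta_i$ and of the meets $\eta_i\wedge\eta_{i'}$, so $k\le p+\binom{p}{2}$.
\begin{itemize}
\item[(a)] $C$ has a single node of length $\ell_0$.
\item[(b)] A node $u\in C$ of length $\ell_j$ has at most one extension in $C$ of length $\ell_{j+1}$ through each $u^\frown\la i\ra$, since two such extensions would meet in $C$ at a length strictly between $\ell_j$ and $\ell_{j+1}$.
\item[(c)] $C$ contains the restrictions of its elements to each $\ell_j$.
\end{itemize}
Complete $C$ level by level, choosing arbitrary extensions in the missing directions. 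This gives a strong subtree $S\supseteq C$ with level set $\{\ell_j\}_{j<k}$, and $\phi_S^{-1}(\nu)=(\nu(\ell_0),\ldots,\nu(\ell_{j-1}))$ for $\nu\in C$ of length $\ell_j$. An isomorphism of generated substructures preserves $<_l$, $\wedge_l$ and every $\lhd_i$. Hence the tuple $\phi_S^{-1}(\bar\eta)\in n^{<k}$ depends only on the quantifier-free type of $\bar\eta$; this is your canonical position.

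Now take $m\ge p+\binom{p}{2}$ and extend $S$ upward to height $m$. The canonical isomorphism $g\colon n^{<\omega}\to S^\ast$ is an embedding with $\phi_{g(S)}=g\circ\phi_S$. With these observations your colouring and compactness steps go through unchanged. (For $n=0$ the structure is a single point and Milliken's theorem is not needed.)
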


\begin{lemma}\label{lem: modeling property of 0-fti tau_omega}
The $\CL_{0\text{-fti}}$-structure on $\Tau_\omega$ has the modeling property.
\begin{proof}
By compactness and \Cref{fact: modeling property of 0-fti on n^<omega},  the $\CL_{0\text{-fti}}$-structure on $\omega^{<\omega}$ has the modeling property.
By \Cref{fact: age(I)=age(J)}, the $\CL_{0\text{-fti}}$-structure on $\Tau_\omega$ has the modeling property.
\end{proof}
\end{lemma}

\begin{lemma}\label{lem: ds-sim => 1-sim}
Let $\bar{\eta}:=(\eta_0,...,\eta_{n-1})$ and $\bar{\nu}:=(\nu_0,...,\nu_{n-1})$ be tuples in $\Tau_\omega$. Then $\bar{\eta}\sim_{ds}\bar{\nu}$, if and only if $\bar{\eta}^-\sim_{0\text{-fti}}\bar{\nu}^-$ and $t(\eta_i)=t(\nu_i)$ for all $i<n$, where $\bar{\eta}^-:=(\eta^-_0,...,\eta^-_{n-1})$ and $\bar{\nu}^-:=(\nu^-_0,...,\nu^-_{n-1})$.
\begin{proof}
Clear.
\end{proof}
\end{lemma}

\begin{notation}
Let $(a_\eta)_{\eta\in\omega^{<\omega}}$ be an $\omega^{<\omega}$-indexed set of parameters and $\bar{\eta}:=(\eta_0,...,\eta_{n-1})\in(\omega^{<\omega})^n$. By $\bar{a}_{\bar{\eta}}$, we mean $(a_{\eta_0},...,a_{\eta_{n-1}})$. We can give the similar notation for $\Tau_\omega$-indexed sets of parameters.
\end{notation}

The main idea of the proof of the following proposition comes from D\v{z}amonja and Shelah \cite{DS04}.

\begin{proposition}\label{prop: modeling property of ds-indiscernibility}
The $\CL_{ds}$-structure on $\Tau_\omega$ has the modeling property.
\begin{proof}
Choose any $\Tau_\omega$-indexed set of parameters $(a_\eta)_{\eta\in\Tau_\omega}$. Let $\Tau^-_\omega:=\{\eta\in\Tau_\omega:l(\eta)>0\}$. For each $\eta\in\Tau^-_\omega$, let $b_\eta:=(a_{\eta^\frown\la i\ra})_{i<\omega}$. By applying \Cref{fact: age(I)=age(J)} and \Cref{lem: modeling property of 0-fti tau_omega}, we can find $\CL_{0\text{-fti}}$-indiscernible $(c_\eta)_{\eta\in\Tau^-_\omega}$ which is $\CL_{0\text{-fti}}$-locally based on $(b_\eta)_{\eta\in\Tau^-_\omega}$. Note that each $c_\eta$ is of the form $(c^i_\eta)_{i<\omega}$. For each $\eta\in\Tau_\omega$, let $d_\eta:=c^{t(\eta)}_{\eta^-}$.

First we show that $(d_\eta)_{\eta\in\Tau_\omega}$ is ds-indiscernible. Choose any $\bar{\eta}:=(\eta_0,...,\eta_{n-1}),\bar{\nu}:=(\nu_0,...,\nu_{n-1})$ in $\Tau_\omega$. Suppose $\bar{\eta}\sim_{ds}\bar{\nu}$. Then by \Cref{lem: ds-sim => 1-sim}, we have $\bar{c}_{\bar{\eta}^-}\equiv \bar{c}_{\bar{\nu}^-}$. In particular, $c^{t(\eta_0)}_{\eta^-_0}...c^{t(\eta_{n-1})}_{\eta^-_{n-1}}\equiv c^{t(\eta_0)}_{\nu^-_0}...c^{t(\eta_{n-1})}_{\nu^-_{n-1}}$. Since $\bar{\eta}\sim_{ds}\bar{\nu}$, we have $t(\eta_i)=t(\nu_i)$ for each $i<n$, and hence $c^{t(\eta_0)}_{\eta^-_0}...c^{t(\eta_{n-1})}_{\eta^-_{n-1}}\equiv c^{t(\nu_0)}_{\nu^-_0}...c^{t(\nu_{n-1})}_{\nu^-_{n-1}}$. Thus $d_{\eta_0}...d_{\eta_{n-1}}\equiv d_{\nu_0}...d_{\nu_{n-1}}$.

Now we show that $(d_\eta)_{\eta\in\Tau_\omega}$ is $\CL_{ds}$-locally based on $(a_\eta)_{\eta\in\Tau_\omega}$. Suppose $\models\varphi(d_{\eta_0},...,d_{\eta_{n-1}})$. Then $\models\varphi(c^{t(\eta_0)}_{\eta^-_0},...,c^{t(\eta_{n-1})}_{\eta^-_{n-1}})$. Since $(c_\eta)_{\eta\in\Tau^-_\omega}$ is $\CL_{0\text{-fti}}$-locally based on $(b_\eta)_{\eta\in\Tau^-_\omega}$, there exists $\bar{\nu}:=(\nu_0,...,\nu_{n-1})$ such that $(\nu_0,...,\nu_{n-1}) \sim_{0\text{-fti}} (\eta^-_0,...,\eta^-_{n-1})$ and $\models\varphi(b^{t(\eta_0)}_{\nu_0},...,b^{t(\eta_{n-1})}_{\nu_{n-1}})$. For each $i<n$, let $\mu_i:=\nu_i ^\frown \la t(\eta_i)\ra$. Then by \Cref{lem: ds-sim => 1-sim}, $\bar{\mu}\sim_{ds}\bar{\eta}$. Clearly $\models\varphi(a_{\mu_0},...,a_{\mu_{n-1}})$.
\end{proof}
\end{proposition}

\begin{notation}
For a cardinal $\kappa$ and $I\subseteq \kappa$, we put $\Tau_\kappa|_I:=\{\eta\in\Tau_\kappa:l(\eta)\in I,\eta(i)=0\text{ for all }i\notin I\}$.
\end{notation}

\begin{definition}
$(a_\eta)_{\eta\in\Tau_\omega}$ is said to be {\it level-s-indiscernible} if it is s-indiscernible and $(a_\eta)_{\eta\in \Tau_\omega|_I}\equiv(a_\eta)_{\eta\in \Tau_\omega|_J}$ for all $I,J\subseteq\omega$ with $|I|=|J|<\omega$.
\end{definition}

\begin{lemma}\label{lem: s-sim ds-sim => krstar-sim => kr-sim}
If $\bar{\eta}\sim_s\bar{\nu}$ or $\bar{\eta}\sim_{ds}\bar{\nu}$, then $\bar{\eta}\sim_{kr^*}\bar{\nu}$. If $\bar{\eta}\sim_{kr^*}\bar{\nu}$, then $\bar{\eta}\sim_{kr}\bar{\nu}$.
\end{lemma}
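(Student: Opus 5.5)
The plan is to show that each atomic $\CL_{kr^*}$-relation, and hence each atomic $\CL_{kr}$-relation, evaluated on a tuple drawn from $\bar\eta$, is determined by the quantifier-free $\CL_s$-type of $\bar\eta$, and separately by its quantifier-free $\CL_{ds}$-type. Since all three languages are relational on $\Tau_\omega$ (we view them as structures without function symbols), $\bar\eta\sim\bar\nu$ in a language means exactly that every atomic formula of that language holds of a subtuple of $\bar\eta$ iff it holds of the corresponding subtuple of $\bar\nu$. So it suffices to express each of $<_{ds},<_{ll},<_{rv},\Delta^-_{lex},\Delta^-_\unlhd,\Delta^-_l$ as a Boolean combination of quantifier-free formulas, once in $\CL_s$ and once in $\CL_{ds}$. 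The second claim, that $\sim_{kr^*}$ implies $\sim_{kr}$, is then trivial because $\CL_{kr}\subseteq\CL_{kr^*}$.

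For the $\CL_s$ case I will use that $\eta^-$ is the unique immediate predecessor of $\eta$. Relations involving $\eta^-_i\wedge\eta^-_j$ can then be rewritten in terms of $\eta_i\wedge\eta_j$ together with levels:
\begin{itemize}
\item[$\ast$] If $\eta_i\perp\eta_j$, then $\eta_i^-\wedge\eta_j^-=\eta_i\wedge\eta_j$ unless $\eta_i\wedge\eta_j$ equals $\eta_i^-$ or $\eta_j^-$.
\item[$\ast$] Whether $\eta_i\wedge\eta_j=\eta_i^-$ is detected by the relation $\eta_i\wedge\eta_j\unlhd\eta_i$ together with the level predicates $P_\alpha$, since it holds exactly when $l(\eta_i\wedge\eta_j)=l(\eta_i)+1$.
\item[$\ast$] Comparisons among the elements $\eta_i\wedge\eta_j$ under $\unlhd$, $\lex$, and $<_l$ are given by $\CL_s$-atomic data, using the $P_\alpha$ for $<_l$.
\end{itemize}
This case analysis covers $\Delta^-_\unlhd$, $\Delta^-_{lex}$, and $\Delta^-_l$. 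For $<_{ds}$, the condition $\eta^-=\nu^-$ says that $\eta\wedge\nu$ lies one level above both, which is again determined by levels and $\wedge$. The relations $<_{ll}$ and $<_{rv}$ say that $\eta^-\frown\la j\ra\lhd\nu$ with $j<t(\eta)$, respectively $j>t(\eta)$; equivalently, $\eta^-\lhd\nu$ and $\eta\perp\nu$, with $\nu\lex\eta$ or $\eta\lex\nu$. Both are therefore determined by $\lex$, $\unlhd$, $\wedge$, and levels.

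For the $\CL_{ds}$ case I will invoke \Cref{lem: ds-sim => 1-sim}: $\bar\eta\sim_{ds}\bar\nu$ gives $\bar\eta^-\sim_{0\text{-fti}}\bar\nu^-$ and $t(\eta_i)=t(\nu_i)$ for all $i$. The relations $\Delta^-_\unlhd$ and $\Delta^-_l$ are atomic statements about $\bar\eta^-$ in $\{\wedge,\unlhd,<_l\}$. The relation $\Delta^-_{lex}$ compares $\xi=\eta_0^-\wedge\eta_1^-$ with $\zeta=\eta_2^-\wedge\eta_3^-$. Either $\xi\lhd\zeta$, which is recoverable from $\unlhd$, or $\xi\perp\zeta$; in the latter case $\lex$ is decided by which successors of $\xi\wedge\zeta$ lie below $\xi$ and $\zeta$. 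Those successors are recorded by the relations $\lhd_i$ applied to $\wedge_l$-terms, which is precisely what $\Delta^-_{\lhd_i}$ and the $\CL_{0\text{-fti}}$-type provide. The relations $<_{ds}$, $<_{ll}$, and $<_{rv}$ are determined by whether $\eta^-=\nu^-$, or whether $\eta^-\frown\la j\ra\lhd\nu$ or $\eta^-\frown\la j\ra\lhd\nu^-$ (via $\lhd_j$ and $\wedge_l$ in the $0$-fti type), together with a comparison of $j$ against $t(\eta)$, which is given by the $t_i$.

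The main obstacle is the bookkeeping of degenerate cases. These include $\eta_i=\eta_j$, comparable pairs, and cases where the meet coincides with an immediate predecessor. They also include boundary situations in $\Tau_\omega$, which is ill-founded with levels running downward, so I must take care that $\eta^-$ always exists there (it does, since $\eta^-$ is the restriction to $[l(\eta)+1,\omega)$). I expect the $\Delta^-_{lex}$ case in the $\CL_{ds}$ setting to be the most delicate step. If it does not go through as sketched, the first fallback is to recover $\lex$ from the $\lhd_i$-information on $(\xi\wedge\zeta)\wedge_l\xi$ and $(\xi\wedge\zeta)\wedge_l\zeta$ via $\Delta^-_{\lhd_i}$.
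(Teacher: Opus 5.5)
Your atom-by-atom verification is correct and is exactly the routine check the paper leaves implicit; the lemma is stated there without proof, and $\sim_{kr^*}\Rightarrow\sim_{kr}$ is immediate from $\CL_{kr}\subseteq\CL_{kr^*}$.

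One slip needs fixing in your unpacking of $<_{ll}$ and $<_{rv}$. Because the definition builds in $\eta\lex\nu$, $\eta<_{ll}\nu$ actually means $\nu^-{}^\frown\la j\ra\lhd\eta$ for some $j<t(\nu)$; your version forces $\nu\lex\eta$. The relation $\eta<_{rv}\nu$ means $\eta^-{}^\frown\la j\ra\lhd\nu$ for some $j>t(\eta)$. Since these $\lhd$'s are strict, your ``equivalently'' must also add $l(\eta)<l(\nu)$, resp.\ $l(\nu)<l(\eta)$, to exclude siblings. These corrected conditions are still read off from the $P_i$ in the $\CL_s$ case. In the $\CL_{ds}$ case they are exactly $\Delta^-_{\lhd_j}(\nu,\nu,\nu,\eta,\eta,\eta)$, resp.\ $\Delta^-_{\lhd_j}(\eta,\eta,\eta,\nu,\nu,\nu)$, combined with the $t_i$, so your conclusion stands.
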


\begin{notation}
For a finite tuple $\bar\eta:=(\eta_0,...,\eta_{n-1})\in\Tau_\omega$ with $\eta_0\lex\cdots\lex\eta_{n-1}$, we define $\bar\eta^{nor}$ as follows.
\begin{itemize}
\item[(i)] $(\eta_0)^{nor}:=(\la 0\ra^{l(\eta_0)})$.
\item[(ii)] If $n>1$, then let $(\eta'_1,...,\eta_{n-1}'):=(\eta_1,...,\eta_{n-1})^{nor}$ and $d_i:=l(\eta_i\wedge\eta_0)-1$. Put 
 \[
\eta''_i=
\begin{cases}
 \la 0\ra^{l(\eta_0)}& \text{ if } i=0\\
\eta'_i & \text{ if } \eta_0\lhd\eta_i\\
\bigg(\eta'_i\setminus\Big\lbrace
\Big(d_i,\eta'_i(d_i)\Big)\Big\rbrace\bigg)\cup\Big\lbrace\Big(d_i,\eta'_i(d_i)+1\Big)\Big\rbrace & \text{ otherwise}
\end{cases}
\]
and $(\eta_0,...,\eta_{n-1})^{nor}:=(\eta''_0,...,\eta''_{n-1})$.
\end{itemize}
\end{notation}

\begin{lemma}\label{lem: s-similarity of left-normal}
$\bar\eta\sim_s\bar\eta^{nor}$.
\begin{proof}
Clear.
\end{proof}
\end{lemma}

\begin{lemma}\label{lem: level-s = krstar}
Let $(a_\eta)_{\eta\in\Tau_\omega}$ be a $\Tau_\omega$-indexed set of parameters. Then $(a_\eta)_{\eta\in\Tau_\omega}$ is kr$^*\!$-indiscernible if and only if it is level-s-indiscernible.
\begin{proof}
Suppose that $(a_\eta)_{\eta\in\Tau_\omega}$ is kr$^*\!$-indiscernible. Then by \Cref{lem: s-sim ds-sim => krstar-sim => kr-sim}, it is s-indiscernible. It is easy to check that $\Tau_\omega|_I\sim_{kr^*}\Tau_\omega|_J$ for any $I,J\subset\omega$ with $|I|=|J|<\omega$. Thus $(a_\eta)_{\eta\in\Tau_\omega}$ is level-s-indiscernible.

Conversely, suppose that $(a_\eta)_{\eta\in\Tau_\omega}$ is level-s-indiscernible. Let $\bar{\eta}\sim_{kr^*}\bar{\nu}$ for some $\bar{\eta}:=(\eta_0,...,\eta_{n-1})$ and $\bar{\nu}:=(\nu_0,...,\nu_{n-1})$. By \Cref{lem: s-similarity of left-normal}, it is enough to show that $\bar a_{\bar{\eta}^{nor}}\equiv \bar a_{\bar\nu^{nor}}$. Let 
\[
\begin{aligned}
I&:=\{k<\omega:k=l(\mu)-1\text{ for some }
\mu\in\cl((\bar{\eta}^{nor})^-)\},\\
J&:=\{k<\omega:k=l(\mu)-1\text{ for some }
\mu\in\cl((\bar{\nu}^{nor})^-)\}.
\end{aligned}
\]
Then $\bar\eta^{nor}\subseteq\Tau_\omega|_I$, $\bar\nu^{nor}\subseteq\Tau_\omega|_J$, and $|I|=|J|$. Let $f$ be the bijective $\CL_{kr^*}$-embedding from $\Tau_\omega|_I$ to $\Tau_\omega|_J$.  Note that such a bijection is unique and $f(\bar\eta^{nor})=\bar\nu^{nor}$. Since we assume that $(a_\eta)_{\eta\in\Tau_\omega}$ is level-s-indiscernible, we have
$\bar a_{\bar\eta^{nor}}\equiv\bar a_{\bar\nu^{nor}}$.
\end{proof}
\end{lemma}

\begin{proposition}\label{prop: modeling property of krstar-indiscernibility}
The $\CL_{kr^*}$-structure on $\Tau_\omega$ has the modeling property.
\begin{proof}
Let $(a_\eta)_{\eta\in\Tau_\omega}$ be a $\Tau_\omega$-indexed set of parameters. By \Cref{prop: modeling property of ds-indiscernibility}, we can find ds-indiscernible $(b_\eta)_{\eta\in\Tau_\omega}$ which is ds-locally based on $(a_\eta)_{\eta\in\Tau_\omega}$. Choose a sufficiently large cardinal $\kappa$ with $\cf(\kappa)=\kappa$. Since $\age_{ds}(\Tau_\omega)=\age_{ds}(\Tau_\kappa)$, we can extend $(b_\eta)_{\eta\in\Tau_\omega}$ to $(b_\eta)_{\eta\in\Tau_\kappa}$ by \Cref{fact: age(I)=age(J)}. By the modeling property, there exists s-indiscernible $(c_\eta)_{\eta\in\Tau_\kappa}$ which is s-locally based on $(b_\eta)_{\eta\in\Tau_\kappa}$. By using the argument in \cite[Lemma 5.10]{KR20}, we  can find a level-s-indiscernible tree $(d_\eta)_{\eta\in\Tau_\omega}$ such that for each $I\subseteq \omega$ with $|I|<\omega$, there exists $J\subseteq\kappa$ with $|J|=|I|$ such that $(d_\eta)_{\eta\in\Tau_\omega|_I}\equiv (c_\eta)_{\eta\in\Tau_\kappa|_J}$. By \Cref{lem: level-s = krstar}, $(d_\eta)_{\eta\in\Tau_\omega}$ is kr$^*\!$-indiscernible. Note that $\Tau_\omega|_I\sim_{kr^*}\Tau_\kappa|_J$ for all $I\subseteq \omega$ and $J\subseteq\kappa$ with $|I|=|J|<\omega$. Thus by \Cref{lem: s-sim ds-sim => krstar-sim => kr-sim}, $(d_\eta)_{\eta\in\Tau_\omega}$ is kr$^*\!$-locally based on $(a_\eta)_{\eta\in\Tau_\omega}$.
\end{proof}
\end{proposition}

\begin{proposition}\label{prop: simple is D_n-var}
$D_\text{simple}\in\mathfrak{D}_{n\text{-var}}$.
\begin{proof}
Choose any uncountable cardinal $\kappa$ such that $\cf(\kappa)=\kappa$.
Consider the $\CL_{{kr}^*}$-structure on 
\[
\CI^*:=\{\eta\in\omega^{<\kappa}:l(\eta)\text{ is a successor ordinal}\}.
\]
Then $\CI^*$-indiscernibles have the modeling property by \Cref{fact: age(I)=age(J)}. We show $\CI^*$ is monochromatically extendable. Fix a cardinal $\lambda$. Let $\mu:=\max\{\kappa,\lambda\}$ and $\delta:=(2^\mu)^+$. Let 
\[
\delta^{<\delta}_\text{succ}:=\{\nu\in\delta^{<\delta}: l(\nu)\text{ is a successor ordinal}\}.
\]
Choose any $c:\delta^{<\delta}_\text{succ}\to \lambda$. For each $\eta$, there exist $\lambda_\eta<\lambda$ and increasing $(m_{\eta,i})_{i<\omega}$ such that $c(\eta^\frown\la m_{\eta,i}\ra)=\lambda_\eta$ for all $i<\omega$. Let $d$ be a map from $\delta^{<\delta}$ to $\lambda$, sending $\eta$ to $\lambda_\eta$.
Let $g$ be a map from $\omega^{<\delta}$ to $\delta^{<\delta}$ such that
\[
g(\xi)=
\begin{cases}
\emptyset & \text{ if }\xi=\emptyset
\\
g(\xi')^\frown\la m_{g(\xi'),i}\ra & \text{ if }\xi=\xi'^\frown\la i\ra\text{ for some }\xi'\text{ and }i<\omega
\\
\bigcup_{\xi'\lhd\xi}g(\xi') &\text{ if }\xi\text{ is limit}
\end{cases}
\]
 Let
\[
\omega^{<\delta}_\text{supp}:=\{\xi\in\omega^{<\delta}:\, |\{i<l(\xi):\xi(i)\neq 0\}|\le\mu\}.
\]
For each $\alpha<\delta$, let 
\[
P_\alpha:=\{\xi\in \omega^{<\delta}_\text{supp}:l(\xi)=\alpha\}.
\]
Then $|P_\alpha|<\delta$ for each $\alpha<\delta$. Let $Q:=\{\alpha<\delta:\cf(\alpha)=\mu^+\}$. Then $Q$ is a stationary subset of $\delta$.  

\medskip

\noindent\underline{Claim.} For each $\alpha \in Q$, there exist $\xi_\alpha\in\omega^{<\delta}_\text{supp}$ and $\lambda_\alpha<\lambda$ such that 
\begin{itemize}
\item[(i)] $l(\xi_\alpha)<\alpha$,
\item[(ii)] for each $\xi\in\omega^{<\delta}_\text{supp}$ with $\xi_\alpha\unlhd\xi$ and $l(\xi)<\alpha$, there exists $\xi'\rhd\xi$ such that $\xi'\in P_{\alpha}$ and $(d\circ g)(\xi')=\lambda_\alpha$.
\end{itemize}

\smallskip

\noindent\underline{Proof of Claim.} Suppose not. Since $\cf(\alpha)=\mu^+=\cf(\mu^+)$ and $\lambda<\mu^+$, we can construct $(\xi_i)_{i<\lambda}$ such that $\xi_i\lhd\xi_j$ and $l(\xi_i)<\alpha$ for all $i<j<\lambda$, and $(d\circ g)(\xi')\neq i$ for all $\xi_i\lhd\xi'\in P_\alpha$.  Since $l(\bigcup_{i<\lambda}\xi_i)<\alpha$, there exists $\xi'\in P_\alpha$ such that $\xi_i\lhd \xi'$ for all $i<\lambda$. Thus $(d\circ g)(\xi')\neq i$ for all $i<\lambda$. But it is not possible. $\dashv$

\medskip

Let $h$ be a map from $Q$ to $\delta$, sending $\alpha$ to $l(\xi_\alpha)$. Then $h(\alpha)<\alpha$ for all $\alpha\in Q$. By Fodor's lemma, there exists a stationary $Q_0\subseteq Q$ and $\alpha_0$ such that $h(\alpha)=\alpha_0$ for all $\alpha\in Q_0$. Note that $|Q_0|=\delta$ since $\cf(\delta)=\delta$. Since $|P_{\alpha_0}|<\delta$ and $\xi_\alpha\in P_{\alpha_0}$ for all $\alpha\in Q_0$, there exist $Q_1\subseteq Q_0$, $\xi_1\in\omega^{<\delta}_\text{supp}$, and $\lambda_1<\lambda$ such that $|Q_1|=\delta$, and $\xi_\alpha=\xi_1 \wedge \lambda_\alpha=\lambda_1$ for all $\alpha\in Q_1$.
Let $\{\beta_i\}_{i<\delta}$ be the increasing enumeration of $Q_1$.

Now we construct $f:\CI^*\to\delta^{<\delta}_\text{succ}$ such that 
\begin{itemize}
\item[(iii)] $g(\xi_1)\lhd f(\eta)$ for all $\eta\in\CI^*$,
\item[(iv)] $l(f(\eta))=\beta_{l(\eta)-1}+1$ for all $\eta\in \CI^*$,
\item[(v)] $c(f(\eta))=\lambda_1$ for all $\eta\in \CI^*$.
\end{itemize}
 Choose any $\xi_\emptyset\in P_{\beta_0}$ with $\xi_1\lhd\xi_\emptyset$, $(d\circ g)(\xi_\emptyset)=\lambda_1$, and let $f(\la i\ra):=g(\xi_\emptyset^\frown\la i \ra)$ for each $i<\omega$. Choose $\eta\in\omega^{<\kappa}\setminus\{\emptyset\}$. 
 Suppose we have constructed $f(\eta)$. Then there exists $\xi_\eta\in P_{\beta_{l(\eta)}}$ such that $f(\eta)\lhd g(\xi_\eta)$ and $(d\circ g)(\xi_\eta)=\lambda_1$. Let $f(\eta^\frown\la i\ra):=g(\xi_\eta^\frown\la i\ra)$ for each $i<\omega$. 
 Now suppose $l(\eta)$ is limit and we have constructed $f(\eta')$ for all $\eta'\lhd\eta$. Then there exists $\xi_{\eta}\in P_{\beta_{l(\eta)}}$ such that $\bigcup_{\eta'\lhd\eta}f(\eta')\lhd g(\xi_\eta)$ and $(d\circ g)(\xi_\eta)=\lambda_1$. For each $i<\omega$, let $f(\eta^\frown\la i\ra):=g(\xi_\eta^\frown\la i\ra)$.
  Then $f$ is an $\CL_{kr^*}$-embedding from $\CI^*$ to $\delta^{<\delta}_\text{succ}$ such that $c(f(\eta))=\lambda_1$ for all $\eta\in\CI^*$. This proves the monochromatic extendability of $\CI^*$.

 Let 
$X:=\{\la0\ra^\alpha:\alpha<\kappa\text{ and }\alpha\text{ is a successor ordinal}\}
$ 
and $Y:=\{\la i\ra:i<\omega\}$. Then $(\CL_{kr^*}\!,\CI^*\!,X,Y)$ satisfies the $n$-variable theorem and
$D_\text{simple}=D^{\CL_{kr^*}\!,\CI^*}_{X,Y}$.
\end{proof}
\end{proposition}

\begin{proposition}\label{prop: modeling property of kr-indiscernibility}
The $\CL_{kr}$-structure on $\Tau_\omega$ has the modeling property.
\begin{proof}
For each $n,m<\omega$, let \[\Tau_n^m:=\{\eta\in\Tau_\omega:\text{ran}(\eta)\subseteq m, l(\eta)\le n, \eta(i)=0 \text{ for all }i\ge n\}.\]
We can regard $\Tau_n^m$ as an $\CL_{kr}$-substructure of $\Tau_\omega$. By compactness and \Cref{prop: modeling property of krstar-indiscernibility},
 it is enough to show that for each $n,m<\omega$, there exists an $\CL_{kr}$-embedding $f$ from $\Tau_n^m$ to $\Tau_\omega$ such that $f(\bar{\eta})\sim_{kr^*} f(\bar{\nu})$ for any $\bar{\eta}\sim_{kr}\bar{\nu}$. Fix $0<n,m<\omega$. Let $g$ be an $\{\unlhd,\{\lhd_i\}_{i<m},\wedge\}$-embedding from $\Tau^m_n$ to $\Tau_\omega$ such that $\eta\lex\nu$ if and only if $g(\eta)<_l g(\nu)$. We can find such an embedding by using the arguments in  \cite[Lemma 3.1]{KK11} and \cite[Theorem 16]{TT12}. Let $f$ be a map from $\Tau_n^m$ to $\Tau_\omega$ such that 
 \[
f(\eta)=
\begin{cases}
g(\eta^-)^\frown \la t(\eta)\ra & \text{ if } \eta\neq\la 0\ra^n\\
g(\la 0\ra^n) & \text{ if } \eta=\la 0\ra^n.
\end{cases}
\]
Then $f$ satisfies all conditions we want. 
\end{proof}
\end{proposition}

\begin{proposition}\label{prop: NBTP is D_n-var}
$D_\text{NBTP}\in\mathfrak{D}_{n\text{-var}}$.
\begin{proof}
Let $\CI^*$ be the $\CL_{kr}$-structure on $\omega^{<\omega}\setminus\{\emptyset\}$. Then $\age_{kr}(\Tau_\omega)=\age_{kr}(\CI^*)$ and hence $\CI^*$ has the modeling property by \Cref{fact: age(I)=age(J)} and \Cref{prop: modeling property of kr-indiscernibility}. We show that $\CI^*$ is monochromatically extendable. Fix a cardinal $\lambda$ and choose any $\kappa>\max\{\lambda,\omega\}$ with $\cf(\kappa)=\kappa$. Let $\CJ^*$ be the $\CL_{kr}$-structure on $\kappa^{<\kappa}_\text{succ}:=\{\eta\in\kappa^{<\kappa}:l(\eta)\text{ is a successor}\}$. Let $c$ be a $\lambda$-coloring on $\CJ^*$. For each $\eta\in\kappa^{<\kappa}$, let $c_0(\eta)$ be an ordinal $\alpha<\lambda$ such that $|\{i<\kappa:c(\eta^\frown\la i\ra)=\alpha\}|=\kappa$. Then $c_0$ is a $\lambda$-coloring on $\kappa^{<\kappa}$. By the same argument in \cite[Remark 3.22 and Corollary 3.23(a)]{AKL21}, there exist $\alpha_0<\lambda$ and an $\eta_0\in\kappa^{<\kappa}$ such that for all $\eta\rhd\eta_0$, there exists $\nu\in\kappa^{<\kappa}$ with $\eta\lhd\nu$ such that $c_0(\nu)=\alpha_0$. 

We recursively construct  $f_0:\omega^{<\omega}\to\kappa^{<\kappa}$ and $f:\CI^*\to\kappa^{<\kappa}_\text{succ}$ as follows. Choose any $f_0(\emptyset)\in\kappa^{<\kappa}$ such that $\eta_0\lhd f_0(\emptyset)$ and $c_0(f_0(\emptyset))=\alpha_0$. Choose an increasing $(m_i)_{i<\omega}\subseteq \kappa$ such that $c(f_0(\emptyset)^\frown\la m_i\ra)=\alpha_0$ and for each $i<\omega$, let $f(\la i\ra)=f_0(\emptyset)^\frown\la m_i\ra$.
For each $\eta\in\omega^{<\omega}$, suppose that we have defined $f_0(\eta)$ and $f(\eta^\frown\la i\ra)$ for all $i<\omega$. For each $i<\omega$, choose any $f_0(\eta^\frown\la i\ra)\in\kappa^{<\kappa}$ such that $f(\eta^\frown\la i\ra)\lhd f_0(\eta^\frown\la i\ra)$ and $c_0(f_0(\eta^\frown\la i\ra))=\alpha_0$. For each $i<\omega$, choose any increasing $(m_{i,j})_{j<\omega}\subseteq\kappa$ such that $c(f_0(\eta^\frown\la i\ra)^\frown\la m_{i,j}\ra)=\alpha_0$ for each $j<\omega$. Let $f(\eta^\frown\la i\ra^\frown\la j\ra)=f_0(\eta^\frown\la i\ra)^\frown\la m_{i,j}\ra$ for each $i,j<\omega$. Then the resulting map $f:\CI^*\to\kappa^{<\kappa}_\text{succ}$ is an $\CL_{kr}$-embedding and $c(f(\eta))=\alpha_0$ for all $\eta\in\CI^*$. Since $\lambda$ is arbitrary, $\CI^*$ is monochromatically extendable.

Let $X:=\{\langle 0\rangle^n{}^\frown\langle1\rangle:n<\omega\}$, $Y_0:=\{\langle1\rangle^n{}^\frown\langle0\rangle:n<\omega\}$, and $Y_1:=\{\langle i\rangle:i<\omega\}$.
Then $(\CL_{kr},\CI^*\!,X,(Y_0,Y_1))$ satisfies the $n$-variable theorem and $D_\text{NBTP}=D^{\CL_{kr},\CI^*}_{X,(Y_0,Y_1)}$.
\end{proof}
\end{proposition}

The definition of $\mathfrak{D}_{n\text{-var}}$ is preserved under interpretations in the following sense. This also gives the corresponding result for $T^{eq}$.

\begin{proposition}\label{prop: bi-interpretable}
If $T'$ is interpretable in $T$ and $T\in D$ for some $D\in\mathfrak{D}_{n\text{-var}}$, then $T'\in D$. In particular,  $T\in D$ if and only if $T^{eq}\in D$.
\begin{proof}
Suppose $T'\notin D$. Let $D=D^{\CL^*\!\!\!,\,\CI^*}_{X,\bar{Y}}$, and choose $\varphi(x,y)$ and an $\CI^*_{\CL^*}$-indiscernible $(a_\eta)_{\eta\in\CI^*}$ witnessing $T'\notin D$. Let $\psi(x',y')$ be the formula in the language of $T$ obtained from $\varphi(x,y)$ by the interpretation, and choose representatives $(b_\eta)_{\eta\in\CI^*}$ of $(a_\eta)_{\eta\in\CI^*}$. By the modeling property, there exists an $\CI^*_{\CL^*}$-indiscernible $(c_\eta)_{\eta\in\CI^*}$ which is $\CL^*$-locally based on $(b_\eta)_{\eta\in\CI^*}$. Then $\{\psi(x',c_\eta)\}_{\eta\in X}$ is consistent and $\{\psi(x',c_\eta)\}_{\eta\in Y}$ is inconsistent for each $Y\in\bar{Y}$. Thus $\psi(x',y')$ witnesses $T\notin D$ with $(c_\eta)_{\eta\in\CI^*}$.
\end{proof}
\end{proposition} 

\begin{remark}
The ordered field of real numbers $(\mathbb{R},<,+,\cdot)$ and the rotation group $(\mathrm{SO}(3,\mathbb{R}),\cdot)$ are bi-interpretable \cite{PPS00}, but only $(\mathbb{R},<,+,\cdot)$ is o-minimal. Thus, by \Cref{prop: bi-interpretable}, the class of complete o-minimal theories does not belong to $\mathfrak{D}_{n\text{-var}}$.
\end{remark}

\begin{remark}
It is natural to ask whether the previous proposition can be extended to hyperimaginaries. Recall that, although $M^{heq}$ is well-defined, obtaining an analogue of $T^{eq}$ for hyperimaginaries within first-order logic is more delicate. One possible way to deal with this is to work in positive logic. In this setting, to a collection $\mathcal E$ of type-definable equivalence relations, one can associate a positive theory $T^{\mathcal E}$ where the corresponding hyperimaginaries are represented as additional sorts \cite{DK22}. In particular, this yields a version of $T^{heq}$ in positive logic. Some results concerning the modeling property for certain index structures have been developed in positive logic \cite{Kam24}. However, a general framework for the modeling property for arbitrary index structures, of the form needed in our definition of $\mathfrak{D}_{n\text{-var}}$, does not seem to be available yet. It would be interesting to develop such a framework in positive logic and investigate whether the previous proposition admits an analogue for hyperimaginaries.
\end{remark}

\section{$n$-variable theorems}\label{sec: n-var thm}

\begin{notation}
By a {\it single variable}, we mean a variable $x$ with $|x|=1$.
For an $n$-tuple of single variables $\bar{x}:=(x_0,...,x_{n-1})$, an $n$-tuple of parameters $\bar{a}:=(a_0,...,a_{n-1})$, a single variable $y$, and $i<n$,
let 
\begin{itemize}
\item[(i)] $\check{x}^i:=(x_0,...,x_{i-1},x_{i+1},...,x_{n-1})$,
\item[(ii)] $\bar{x}^{i/y}:=(x_0,...,x_{i-1},y,x_{i+1},...,x_{n-1})$,
\item[(iii)] $\bar{a}^{i/y}:=(a_0,...,a_{i-1},y,a_{i+1},...,a_{n-1})$.
\end{itemize}
\end{notation}

\begin{lemma}\label{lem: 1-var lemma}
Let $D^{\CL^*\!\!\!,\,\CI^*}_{X,\bar{Y}}\!\!\in\mathfrak{D}_{n\text{-var}}$. Let $\CL$ be a language, $T$ an $\CL$-theory, and $\mathbb{M}\models T$ a sufficiently saturated model. If  there exist $\varphi(x,y)\in\CL$ and $\CI^*_{\CL^*}$-indiscernible $(a_\eta)_{\eta\in \CI^*}\!$ in $\mathbb{M}$ such that
\begin{itemize}
\item[(i)] $\varphi(x,y)$ witnesses $\neg D^{\CL^*\!\!\!,\,\CI^*}_{X,\bar{Y}}$ with $(a_\eta)_{\eta\in \CI^*}$,
\item[(ii)] $|x|=1$,
\end{itemize}
then there exists $b\models\{\varphi(x,a_\eta)\}_{\eta\in X}$ such that $\dim(b/(a_\eta)_{\eta\in\CI^*}\!)=1$.
\begin{proof}
It is enough to show that $\{\varphi(x,a_\eta)\}_{\eta\in X}$ has infinitely many realizations. Suppose not. Then there exists a finite subset $X_0$ of $X$ such that 
\[
\bigcap_{\eta\in X_0}\varphi(\mathbb{M},a_\eta)=\bigcap_{\eta\in X}\varphi(\mathbb{M},a_\eta).
\]
Let $W$, $X'$ and $Y'$ be subsets of $\CI^*$ satisfying the condition (vii) in \Cref{def: n-var quadruple}. There exists $X'_0\subseteq X'$ such that $X'_0\sim_{\CL^*}X_0$. Since
\[
\bigcap_{\eta\in X'_0}\varphi(\mathbb{M},a_\eta)=\bigg(\bigcap_{\eta\in X'_0}\varphi(\mathbb{M},a_\eta)\bigg)\cap\varphi(\mathbb{M},a_{\nu})
\]
for each $\nu\in Y'$, $\{\varphi(x,a_\nu)\}_{\nu\in Y'}$ is consistent. This yields a contradiction.
\end{proof}
\end{lemma}

\begin{theorem}\label{thm: n-var theorem}\!{\rm($n$-variable theorem)}
Let $D^{\CL^*\!\!\!,\,\CI^*}_{X,\bar{Y}}\!\!\in\mathfrak{D}_{n\text{-var}}$. Let $\CL$ be a language, $T$ a complete $\CL$-theory, and $\mathbb{M}\models T$ a sufficiently saturated model. If $T\notin D^{\CL^*\!\!\!,\,\CI^*}_{X,\bar{Y}}$, then there exist $\varphi(\bar{x},y)\in\CL$ and $\CI^*_{\CL^*}$-indiscernible $(a_\eta)_{\eta\in \CI^*}\!$ in $\mathbb{M}$ such that
\begin{itemize}
\item[(i)] $\varphi(\bar{x},y)$ witnesses $\neg D^{\CL^*\!\!\!,\,\CI^*}_{X,\bar{Y}}$ with $(a_\eta)_{\eta\in \CI^*}$,
\item[(ii)] there exists $\bar{b}\models\{\varphi(\bar{x},a_\eta)\}_{\eta\in X}$ such that $\dim(\bar{b}/(a_\eta)_{\eta\in\CI^*}\!)=|\bar{x}|$.
\end{itemize}
\begin{proof}
Suppose $T\notin D^{\CL^*\!\!\!,\,\CI^*}_{X,\bar{Y}}$. Then there exist $\varphi(\bar{x},y)\in\CL$ and $(a_\eta)_{\eta\in\CI^*}\!\!\subseteq\mathbb{M}$ such that
\begin{itemize}
\item[(iii)] $(a_\eta)_{\eta\in\CI^*}$ is $\CI^*_{\CL^*}$-indiscernible,
\item[(iv)] $\{\varphi(\bar{x},a_\eta)\}_{\eta\in X}$ is consistent,
\item[(v)] $\{\varphi(\bar{x},a_\nu)\}_{\nu\in Y}$ is inconsistent for each $Y\in\bar{Y}$.
\end{itemize}
If there exists $\bar{b}\models\{\varphi(\bar{x},a_\eta)\}_{\eta\in X}$ such that $\dim(\bar{b}/(a_\eta)_{\eta\in\CI^*}\!)=|\bar{x}|$, then there is nothing to prove. Suppose not and let $n:=|\bar{x}|$, $\bar{x}:=(x_0,...,x_{n-1})$. Then $n>1$ by \Cref{lem: 1-var lemma}. Choose any $\bar{b}\models\{\varphi(\bar{x},a_\eta)\}_{\eta\in X}$. Then $\bar{b}$ can be written of the form $(b_0,...,b_{n-1})$ where $|b_i|=1$ for each $i<n$. Since $\dim(\bar{b}/(a_\eta)_{\eta\in\CI^*}\!)<|\bar{x}|$, there exist $i<n$, $\theta\in\CL$, $(\nu_0,...,\nu_{l-1})\in \CI^*$, and $k<\omega$ such that 
\begin{itemize}
\item[(vi)] $\models \theta(b_i,\check{b}^i,a_{\nu_0},...,a_{\nu_{l-1}})\wedge \exists^{\le k}x\theta(x,\check{b}^i,a_{\nu_0},...,a_{\nu_{l-1}})$.
\end{itemize}
Let 
\begin{itemize}
\item[(vii)] $\varphi'(\bar{x},y,y''_0,...,y''_{l-1}):=\varphi(\bar{x},y)\wedge\theta(x_i,\check{x}^i,y''_0,...,y''_{l-1})$.
\end{itemize}
Then $\{\varphi'(b_0,...,b_{i-1},x,b_{i+1},...,b_{n-1},a_\xi,a_{\nu_0},...,a_{\nu_{l-1}})\}_{\xi\in X}$ has only finitely many realizations. Thus there exists a finite subset $X_0$ of $X$ such that
\[
\bigcap_{\xi\in X_0}\varphi'(b_0,...,b_{i-1},\mathbb{M},b_{i+1},...,b_{n-1},a_\xi,a_{\nu_0},...,a_{\nu_{l-1}})
\]\vspace{-8pt}
\[\;\;\;\;\;=\bigcap_{\xi\in X}\varphi'(b_0,...,b_{i-1},\mathbb{M},b_{i+1},...,b_{n-1},a_\xi,a_{\nu_0},...,a_{\nu_{l-1}}).
\]
Let $\bar{\nu}:=(\nu_0,...,\nu_{l-1})$ and $\bar{a}_{\bar{\nu}}:=(a_{\nu_0},...,a_{\nu_{l-1}})$. Note that 
\begin{itemize}
\item[(viii)]
\makebox[\linewidth][c]{$\displaystyle
\exists x\!\! \bigwedge_{\xi\in X_0}\!\!\! \varphi'(\bar{b}^{i/x}\!,a_\xi,\bar{a}_{\bar{\nu}})\wedge\forall x\bigg(\Big(\bigwedge_{\xi\in X_0}\!\! \varphi'(\bar{b}^{i/x}\!,a_\xi,\bar{a}_{\bar{\nu}})\wedge\varphi'(\bar{b}^{i/x}\!,a_{\xi'},\bar{a}_{\bar{\nu}})\Big)\!\leftrightarrow\!\! \bigwedge_{\xi\in X_0}\!\! \varphi'(\bar{b}^{i/x}\!,a_\xi,\bar{a}_{\bar{\nu}}) \!\bigg)
$}
\end{itemize}
for all $\xi'\in X\setminus X_0$. Let $\{\eta_0,...,\eta_{m-1}\}$ be an enumeration of $X_0$, $\bar{\eta}:=(\eta_0,...,\eta_{m-1})$, and $\bar{a}_{\bar{\eta}}:=(a_{\eta_0},...,a_{\eta_{m-1}})$. Let $\bar{y}':=(y'_0,...,y'_{m-1})$ and $\bar{y}'':=(y''_0,...,y''_{l-1})$ be tuples of variables. Put
\begin{itemize}
\item[(ix)] \makebox[\linewidth][c]{$\displaystyle
\begin{aligned}[t]
&\varphi''(\check{x}^i,y,\bar{y}',\bar{y}'')
\\
&\quad :=\exists x\!\bigwedge_{j<m}\!
\varphi'(\bar{x}^{i/x}\!,y'_j,\bar{y}'')
\wedge\forall x\bigg(
\Big(
\bigwedge_{j<m}\!\varphi'(\bar{x}^{i/x}\!,y'_j,\bar{y}'')
\wedge\varphi'(\bar{x}^{i/x}\!,y,\bar{y}'')
\Big)
\leftrightarrow
\bigwedge_{j<m}\!\varphi'(\bar{x}^{i/x}\!,y'_j,\bar{y}'')
\bigg).
\end{aligned}
$}
\end{itemize}
Then by (viii), we have 
\begin{itemize}
\item[(x)] $\models\varphi''(\check{b}^i,a_{\xi'},\bar{a}_{\bar{\eta}},\bar{a}_{\bar{\nu}})$ for each $\xi'\in X\setminus X_0$. 
\end{itemize}
\smallskip
Since $D^{\CL^*\!\!\!,\,\CI^*}_{X,\bar{Y}}\!\!\in\mathfrak{D}_{n\text{-var}}$, there exists an $\CL^*$-embedding $f:\CI^*\to\CI^*$ such that 
\begin{itemize}
\item[(xi)] $f(\CI^*)\cap \bar{\eta}\bar{\nu}=\emptyset$,
\item[(xii)] $\bar{\xi}\sim_{\CL^*}\!\bar{\xi}'\;\Rightarrow\; \bar{\xi}\bar{\eta}\bar{\nu}\sim_{\CL^*}\!\bar{\xi}'\bar{\eta}\bar{\nu}$ for all $\bar{\xi},\bar{\xi}'\in f(\CI^*)$,
\item[(xiii)] $(f(\CI^*)\cap X)\sim_{\CL^*}^\text{fin}X$.
\end{itemize} 
Let $a'_\xi:=a_{f(\xi)}\bar{a}_{\bar{\eta}}\bar{a}_{\bar{\nu}}$ for each $\xi\in\CI^*$.
Then $(a'_\xi)_{\xi\in\CI^*}$ is $\CI^*_{\CL^*}\!$-indiscernible by (xii). 

We prove that $\varphi''(\check{x}^i;y,\bar{y}',\bar{y}'')$ witnesses $\neg D^{\CL^*\!\!\!,\,\CI^*}_{X,\bar{Y}}$ with $(a'_\xi)_{\xi\in\CI^*}$. First we show that $\{\varphi''(\check{x}^i;a'_\xi)\}_{\xi\in X}$ is consistent. Choose any finite subset $X_1$ of $X$. By compactness, it is enough to show that $\{\varphi''(\check{x}^i;a'_\xi)\}_{\xi\in X_1}$ is consistent. By (xi), (xii), and (xiii), there exists $X'_1\subset X\setminus X_0$ such that $X'_1\bar{\eta}\bar{\nu}\sim_{\CL^*}\!\!f(X_1)\bar{\eta}\bar{\nu}$. Thus it is enough to show that $\{\varphi''(\check{x}^i;a_\xi,\bar{a}_{\bar{\eta}},\bar{a}_{\bar{\nu}})\}_{\xi\in X'_1}$ is consistent. But it is clear by (x).

Now we show that $\{\varphi''(\check{x}^i;a'_\xi)\}_{\xi\in Y}$ is inconsistent for each $Y\in\bar{Y}$. Suppose $\{\varphi''(\check{x}^i;a'_\xi)\}_{\xi\in Y}$ is realized by $\check{c}^i:=(c_0,...,c_{i-1},c_{i+1},...,c_{n-1})$. Let $\bar{c}^{i/x}:=(c_0,...,c_{i-1},x,c_{i+1},...,c_{n-1})$ for a variable $x$.
Then for each $\xi\in Y$, we have
\begin{itemize}
\item[(xiv)]\makebox[\linewidth][c]{$\displaystyle
\exists x\!\! \bigwedge_{j<m}\!\! \varphi'(\bar{c}^{i/x}\!,a_{\eta_j},\bar{a}_{\bar{\nu}})\wedge\forall x \bigg(\Big( \bigwedge_{j<m}\!\varphi'(\bar{c}^{i/x}\!,a_{\eta_j},\bar{a}_{\bar{\nu}})\wedge \varphi'(\bar{c}^{i/x}\!,a_{f(\xi)},\bar{a}_{\bar{\nu}})\! \Big)\!\leftrightarrow\! \bigwedge_{j<m}\!\varphi'(\bar{c}^{i/x}\!,a_{\eta_j},\bar{a}_{\bar{\nu}})\bigg).
$}
\end{itemize}
Thus there exists $c_i$ such that $\models\varphi'(\bar{c}^{i/c_i}\!,a_{f(\xi)},\bar{a}_{\bar{\nu}})$ for each $\xi\in Y$, where $\bar{c}^{i/c_i}=(c_0,...,c_{n-1})$. By the choice of $\varphi'$, $\{\varphi(\bar{x},a_\xi)\}_{\xi\in f(Y)}$ is consistent. It is a contradiction since $f(Y)\sim_{\CL^*}Y$. Thus $\varphi''$ witnesses $\neg D^{\CL^*\!\!\!,\,\CI^*}_{X,\bar{Y}}$ with $(a'_\xi)_{\xi\in\CI^*}$.

Note that the length of the free variable part of the new witness of $\neg D^{\CL^*\!\!\!,\,\CI^*}_{X,\bar{Y}}$ is $n-1$. So we can repeat this process until we get a witness of $\neg D^{\CL^*\!\!\!,\,\CI^*}_{X,\bar{Y}}$ satisfying (ii), by \Cref{lem: 1-var lemma}.
\end{proof}
\end{theorem}

\subsection{Comparing with one-variable theorems}

When a dividing line is defined by the existence of a formula
$\varphi(x,y)$ satisfying certain conditions, we say that it satisfies
a {\it one-variable theorem} if such a formula can always be chosen
with $|x|=1$. One-variable theorems are known for various dividing lines.

\begin{fact}
Let $T$ be a complete theory.
\begin{itemize}
\item[(i)] \cite{She90}\label{fact: one-var TP}
If $T$ has TP, then there exists a formula $\varphi(x,y)$ with $|x|=1$ that witnesses TP.
\item[(ii)] \cite{CR16}\label{fact: one-var TP1}
If $T$ has TP$_1$, then there exists a formula $\varphi(x,y)$
with $|x|=1$ that witnesses TP$_1$.
\item[(iii)] \cite{Che13}\label{fact: one-var TP2}
If $T$ has TP$_2$, then there exists a formula $\varphi(x,y)$
with $|x|=1$ that witnesses TP$_2$.
\item[(iv)] \cite{Ram19}\label{fact: one-var SOP1}
If $T$ has SOP$_1$, then there exists a formula $\varphi(x,y)$
with $|x|=1$ that witnesses SOP$_1$.
\item[(v)] \cite{AKL21}\label{fact: one-var ATP}
If $T$ has ATP, then there exists a formula $\varphi(x,y)$
with $|x|=1$ that witnesses ATP.
\end{itemize}
\end{fact}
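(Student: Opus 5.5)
Since this Fact collects known results, my plan is to recover all five items from one scheme: induction on $|x|$, applied to an indiscernible witness. I am fairly confident about the shape of the scheme, but I expect the tree-shaped items to need more than it gives directly.

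\emph{Setup.} Suppose $\varphi(x,y)$ with $|x|=n>1$ witnesses the property. By the modeling property (for TP, TP$_1$ and ATP on $\omega^{<\omega}$ with the strong or s-indiscernibility of \cite{TT12,KKS14}; for TP$_2$ on a mutually indiscernible array), I may assume the parameters $(a_\eta)$ are indiscernible in the relevant sense. Write $x=(x_0,\bar x_1)$ with $|x_0|=1$.

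\emph{Dichotomy.} I consider the projected formula $\psi(\bar x_1;y):=\exists x_0\,\varphi(x_0,\bar x_1,y)$. The consistent parts of the configuration remain consistent for $\psi$, so there are two cases.
\begin{itemize}
\item[(a)] If the relevant inconsistent sets, such as sibling pairs, incomparable pairs or rows, are still inconsistent for $\psi$, then $\psi$ witnesses the property with fewer free variables.
\item[(b)] Otherwise, by indiscernibility, every such inconsistent set becomes consistent for $\psi$. I then pick $\bar b_1$ realizing $\psi$ along a suitably chosen consistent set which also meets the inconsistent configuration. The fibre formula $\varphi(x_0;\bar b_1,y)$ now carries the inconsistency, and I want it to witness the property with $|x_0|=1$. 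To get this, I would re-index a sub-configuration so that the parameters $a_\eta\bar b_1$ again form the required shape.
\end{itemize}

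\emph{Items (i) and (iii).} For TP, the re-indexing in (b) is easy by Shelah's dividing argument. I would use the fact that if $\tp(ab/C)$ divides, then $\tp(a/C)$ or $\tp(b/aC)$ divides, together with the characterization of simplicity via dividing of $1$-types. For TP$_2$, Chernikov's approach \cite{Che13} handles case (b). There one uses a mutually indiscernible array made indiscernible over $\bar b_1$, and the key combinatorial input is a lemma saying that an inp-pattern for a tuple yields one for a single coordinate or for the remaining coordinates over it, with bounded depth.

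\emph{Items (ii), (iv), (v): the main obstacle.} For TP$_1$ \cite{CR16}, SOP$_1$ \cite{Ram19} and ATP \cite{AKL21}, case (b) is where I expect the real difficulty. The point $\bar b_1$ chosen along one path or antichain must be made compatible with the tree structure on the whole tree, not just on one branch. I would first try a tree-Ramsey argument: use the modeling property to make the tree strongly indiscernible over $\bar b_1$, while keeping the consistency of the $\bar b_1$-fibres along paths (for ATP, antichains), and then locate a subtree isomorphic to $\omega^{<\omega}$ on which the original inconsistency pattern survives. For SOP$_1$ I would use instead the characterization via Kim-dividing and the symmetry of Kim-independence, following Ramsey, because the tree argument does not seem to close directly in that case.
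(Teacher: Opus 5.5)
This Fact is not proved in the paper; it only lists results quoted from \cite{She90,CR16,Che13,Ram19,AKL21}. For (i), (iii) and (iv) you also defer to those proofs: local character of dividing plus left transitivity of dividing for TP, Chernikov's sub-multiplicativity of burden for TP$_2$, and Ramsey's Kim-independence argument for SOP$_1$. That is acceptable. For (ii) and (v), however, your only argument is the dichotomy, and its case (b) does not go through as described.

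The problem is how $\bar b_1$ is chosen. You take it from a realization of a single consistent set (one path, or one antichain). After the parameters $(\bar b_1a_\eta)$ are made indiscernible, the fibre formula $\varphi(x_0;\bar x_1y)$ must be consistent on every finite set $\mathcal{L}_0$-similar to a finite piece of the consistency set. So $\bar b_1$ must realize
\[
\Big\{\exists x_0\bigwedge_{\eta\in C}\varphi(x_0,\bar x_1;a_\eta)\;:\;C\text{ a finite path (resp.\ finite antichain) of a full copy of the tree}\Big\}.
\]
Case (b) gives you nothing towards this. Knowing that $\psi=\exists x_0\varphi$ is consistent on each incomparable (resp.\ comparable) pair says nothing about one $\bar b_1$ serving all paths (resp.\ antichains) at once. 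Even a $\bar b_1$ realizing $\psi$ at every node need not admit, for each path, one $x_0$ common to the whole path.

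The tree-Ramsey step cannot repair this, because it presupposes that the fibres are already consistent along all paths. Suppose you extract a tree strongly indiscernible over $\bar b_1$ and locally based on the original. The fibre on a finite path of the new tree is guaranteed consistent only if it was consistent on \emph{every} similar finite path of the old tree. A path other than the one $\bar b_1$ came from may well have an inconsistent fibre, and then every path of the new tree may too.

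If the displayed type is inconsistent, compactness only gives finitely many paths or antichains whose collapsed formulas $\exists x_0\bigwedge_{i<m}\varphi(x_0,\bar x_1;y_i)$ are jointly inconsistent. That is an inconsistency pattern of a different shape, with whole finite paths or antichains as parameters. Converting such a pattern back into TP$_1$ or ATP for a formula in $\bar x_1$ is the substantial part of \cite{CR16} and \cite{AKL21}. For TP$_1$, I believe this is where the equivalence of weak $k$-TP$_1$ with TP$_1$ enters; for ATP, the coloring arguments on $\kappa^{<\kappa}$. Without an ingredient of this kind, your scheme does not establish (ii) or (v).
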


The one-variable theorems for OP and IP \cite{She90}, together
with the characterizations in
\Cref{prop: stable is D_n-var} and \ref{prop: NIP is D_n-var},
yield the following. 

\begin{fact}\cite[combined with
\Cref{prop: stable is D_n-var} and \ref{prop: NIP is D_n-var}]{She90}
Let $T$ be a complete theory.
\begin{itemize}
\item[(vi)]
If $T$ is unstable, then there exists a formula $\varphi(x,y)$
with $|x|=1$ that witnesses $\neg D^{\CL^*\!\!,\,\CI^*}_{X,Y}$,
where $(\CL^*\!,\CI^*\!,X,Y)$ is the configuration given in
the proof of \Cref{prop: stable is D_n-var}.
\item[(vii)]
If $T$ has IP, then there exists a formula $\varphi(x,y)$
with $|x|=1$ that witnesses
$\neg D^{\CL^*\!,\,\omega\times\omega}_{X,Y}$,
where $(\CL^*\!,\omega\times\omega,X,Y)$ is the configuration
given in the proof of \Cref{prop: NIP is D_n-var}.
\end{itemize}
\end{fact}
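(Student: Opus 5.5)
We derive both items by taking Shelah's one-variable witness and passing it through the translations already used to place $D_\text{stable}$ and $D_\text{NIP}$ in $\mathfrak{D}_{n\text{-var}}$. The point to check is that these translations never change the free variable part of the formula. Once a formula $\psi(x,\bar y)$ with $|x|=1$ has the required positive configuration on some index set, the indiscernibility requirement in \Cref{def: D_n-var} costs nothing. We apply the modeling property of $\CI^*$ to the parameters, exactly as in the proof of \Cref{prop: bi-interpretable}. The resulting indiscernible family is $\CL^*$-locally based on the original one, so every finite subset of $X$ still gives a consistent set of instances, and every $Y$ still gives an inconsistent set of instances. By compactness this yields a witness of $\neg D^{\CL^*\!,\CI^*}_{X,Y}$ with the same formula $\psi$.

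For (vi), suppose $T$ is unstable. By \cite{She90} some $\varphi(x,y)$ has the order property, and since OP is symmetric under passing to $\varphi^{opp}$ we may assume $|x|=1$ with $x$ on the free side. By compactness we get $(a_q)_{q\in\mathbb{Q}}$ satisfying (iii) in the proof of \Cref{lem: equiv cond stable}. That proof then replaces $\varphi$ by $\varphi'(x,y_0,y_1)=\varphi(x,y_0)\wedge\neg\varphi(x,y_1)$, which has the same free variable $x$, so $|x|=1$ still holds. It also produces $(b_{n,i})$ witnessing conditions (i) and (ii) of that lemma. Next we re-index by the $\CL^*$-structure on $\mathbb{Q}\times2$ from \Cref{prop: stable is D_n-var}, using the same interleaving of $p_n<q_n<r_n$ with $q$ a rational and $i$ the colour. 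This gives $\{\varphi'(x,b_\eta)\}_{\eta\in X}$ consistent and $\{\varphi'(x,b_\eta)\}_{\eta\in Y}$ inconsistent. The modeling-property step above then finishes the argument.

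For (vii), suppose $T$ has IP. Shelah's one-variable theorem for IP gives $\varphi(x,y)$ with $|x|=1$ having IP; again IP is symmetric, so $x$ can be taken to be the free variable. We then follow Day's argument \cite{Day25} for the positive characterization of IP via $\mathbf{c}_2$, and from there \Cref{prop: NIP is D_n-var}. The main obstacle is verifying that Day's new formula is a Boolean combination of instances of $\varphi$ over new parameters in the same variable $x$, of the shape $\bigwedge\varphi(x,y_j)\wedge\bigwedge\neg\varphi(x,y'_j)$, with no new free variables introduced. I expect this to hold, because the construction only encodes the colours $C_0,C_1$ by choosing positive or negated instances, as in \Cref{lem: equiv cond stable}. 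If it turns out that Day's argument does alter the variables, the fallback is to redo the encoding by hand. The independence pattern of a single-variable $\varphi$ already realizes, on $\omega\times\mathbf{c}_2$, a colouring in which consistency along functions $f$ and the inconsistency pattern $q$ within a row are both expressed by conjunctions of $\varphi^{\pm}(x,\cdot)$. We would then transfer to the $\omega\times\omega$ structure of \Cref{prop: NIP is D_n-var}, since its age matches, and finish with the same modeling-property step.
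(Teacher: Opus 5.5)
Your proposal is correct and follows the route the paper intends: Shelah's one-variable theorems for OP and IP, passed through the translations behind \Cref{prop: stable is D_n-var} and \Cref{prop: NIP is D_n-var} (the formula $\varphi(x,y_0)\wedge\neg\varphi(x,y_1)$ plus the modeling property), none of which changes the free variable $x$. In (vii) you need not rely on how Day's argument goes, because your fallback works directly: in row $i$, assign the parameter $c_{u_i}c_{v_i}$ or $c_{v_i}c_{w_i}$ according to colour, with all indices distinct. Then any choice of one entry per row gives disjoint positive and negative index sets, so it is consistent by IP, while two differently coloured entries of the same row are inconsistent.
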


For configurations of the form introduced in \Cref{sec: D-n-var},
a one-variable theorem provides a witnessing formula $\varphi(x,y)$ with $|x|=1$ 
such that the set of formulas required to be consistent has a
realization $b$ which is not algebraic over the entire parameter
set $A$, by \Cref{lem: 1-var lemma}.
Thus the algebraic dimension of this realization equals the
number of free variables ($|x|=\dim(b/A)=1$).

A quadruple satisfying the $n$-variable theorem also provides
a witnessing formula $\varphi(\bar{x},y)$ such that the set of
formulas required to be consistent has a realization $\bar b$
whose algebraic dimension over the entire parameter set $A$
equals the number of free variables ($|\bar{x}|=\dim(\bar b/A)$).
The difference is that this number need not be one.
In this sense, the $n$-variable theorem is a weaker form of
a one-variable theorem.

\begin{question}
Garc\'ia and Mennuni \cite[Question~4.2]{GM22} ask whether
one-variable theorems for dividing lines defined via posets
can be proved in a uniform way.
In view of the discussion above, \Cref{def: n-var quadruple}
partially addresses their question by giving conditions
where the $n$-variable theorem holds.
It would be interesting to see whether this definition can be
strengthened further so that the corresponding dividing lines
satisfy a one-variable theorem.
\end{question}

\subsection{Customizing $\mathfrak{D}_{n\text{-var}}$}

Note that the proof of \Cref{thm: n-var theorem} uses only conditions (iii) and (vii) in \Cref{def: n-var quadruple}. The preservation results in the following sections also use different parts of this definition. We can adjust the conditions depending on the preservation problem or the dividing lines we want to cover. We may strengthen the conditions to use more powerful techniques in solving preservation problems, or weaken them to include more dividing lines.

For example, allowing $\bar{Y}$ to be countably infinite enables us to include the class of NPM theories \cite{Bai24} in the framework, while retaining the $n$-variable theorem. However, the arguments in \Cref{sec: T gt}, \ref{sec: H-structure lovely pair}, \ref{sec: T^G}, and \ref{sec: generic derivation fields} do not apply directly, since they rely on the compactness in \Cref{prop: n-var theorem exchange}, which uses the finiteness of $\bar{Y}$.

\begin{table}[H]
\centering
\small
\renewcommand{\arraystretch}{1.3}
\setlength{\tabcolsep}{4pt}
\begin{tabular}{@{}p{0.39\linewidth}cccl@{}}
\hline
\noalign{\vskip 2.5pt}
Result
& \shortstack{Uses the\\$n$-variable theorem}
& \shortstack{Modeling\\property}
& $|\bar{Y}|<\omega$
& \shortstack[l]{Other\\conditions} \\
\noalign{\vskip 0pt}
\hline
\noalign{\vskip 0pt}
$n$-variable theorem
& N/A & No & No & (iii), (vii) \\

Preservation under generic trivialization
& Yes & Yes & Yes & (ii), (iii), (iv), (v), (vii) \\

Preservation under $H$-structure and lovely pair expansions
& Yes & Yes & Yes & (ii), (iii), (vii) \\

\raggedright Preservation under generic subspace expansions
& Yes & Yes & Yes & (ii), (iii), (vii) \\

Preservation for $\mathrm{ACF}_T$
& Yes & Yes & No & (ii), (iii), (vii) \\

Preservation under generic derivation
& No & Yes & Yes & (iii) \\
\noalign{\vskip 0.5pt}
\hline
\end{tabular}

\medskip

\caption{Conditions in \Cref{def: n-var quadruple} used in the proofs.}
\label{tab: conditions used}
\end{table}

\section{Generic trivializations}\label{sec: T gt}

From \Cref{sec: T gt} to \Cref{sec: generic derivation fields}, we show that dividing lines in
$\mathfrak{D}_{n\text{-var}}$ are preserved under various
model-theoretic constructions. We begin with generic trivializations, showing that
a geometric theory and its generic trivialization lie on
the same side of each dividing line in
$\mathfrak{D}_{n\text{-var}}$.

\begin{proposition}\label{prop: n-var theorem exchange}
Let $D^{\CL^*\!\!\!,\,\CI^*}_{X,\bar{Y}}\!\!\in\mathfrak{D}_{n\text{-var}}$. Let $\CL$ be a language, $T$ a complete $\CL$-theory, and $\mathbb{M}\models T$ a sufficiently saturated model. Suppose that $\acl$ in $T$ satisfies the exchange property. Then the following are equivalent.
\begin{itemize}
\item[(i)] $T\notin D^{\CL^*\!\!\!,\,\CI^*}_{X,\bar{Y}}\!$.
\item[(ii)] there exist $\varphi(x,y)\in \CL$ and $\CI^*_{\CL^*}\!$-indiscernible $(a_\eta)_{\eta\in\CI^*}$ such that 
\begin{itemize}
\item[$\ast$] $\dim(\{\varphi(x,a_\eta)\}_{\eta\in X})=|x|$,
\item[$\ast$] $\dim(\{\varphi(x,a_\nu)\}_{\nu\in Y})=0$ for all $Y\in\bar{Y}$. 
\end{itemize}
\item[(iii)] there exist $\varphi(x,y)\in \CL$ and $\CI^*_{\CL^*}\!$-indiscernible $(a_\eta)_{\eta\in\CI^*}$ such that 
\begin{itemize}
\item[$\ast$] $\dim(\{\varphi(x,a_\eta)\}_{\eta\in X})=|x|$,
\item[$\ast$] $\dim(\{\varphi(x,a_\nu)\}_{\nu\in Y})<|x|$ for all $Y\in\bar{Y}$. 
\end{itemize}
\end{itemize}
\begin{proof}
The implication from (i) to (ii) follows from \Cref{thm: n-var theorem}. From (ii) to (iii) is trivial.  Suppose (iii). Let $n:=|x|$. Then $\varphi$ can be written of the form $\varphi(x_0,...,x_{n-1},y)$ where $|x_i|=1$ for all $i<n$. Consider 
\[
\Phi((x_\xi)_{\xi\in\omega^{\le n}},y):=\{\varphi(x_{\xi_1},...,x_{\xi_{n}},y):\emptyset\lhd\xi_1\lhd\cdots\lhd\xi_n\}\cup\{x_\eta\neq x_\nu\}_{\eta\neq\nu}.
\] 
Then $\bigcup_{\eta\in X}\Phi( (x_\xi)_{\xi\in\omega^{\le n}}, a_\eta)$ is consistent. Fix $Y\in\bar{Y}$ and suppose $\bigcup_{\nu\in Y}\Phi( (x_\xi)_{\xi\in\omega^{\le n}}, a_\nu)$ is realized by $(b_\xi)_{\xi\in\omega^{\le n}}$. By applying the s-modeling property, we may assume that $(b_\xi)_{\xi\in\omega^{\le n}}$ is s-indiscernible over $(a_\nu)_{\nu\in Y}$. Then for any $\emptyset\lhd\xi_1\lhd\cdots\lhd\xi_n$, $b_{\xi_i}\notin\acl((a_\nu)_{\nu\in Y} b_{\xi_{<i}})$ for all $0<i\le n$. By the exchange property, we have $\dim(b_{\xi_1}...b_{\xi_n}/(a_\nu)_{\nu\in Y})=n=|x|$, which yields a contradiction. Thus $\bigcup_{\nu\in Y}\Phi( (x_\xi)_{\xi\in\omega^{\le n}}, a_\nu)$ is inconsistent for all $Y\in \bar{Y}$. Since $\bar{Y}$ is finite, compactness gives a finite conjunction of formulas in $\Phi$ witnessing
$\neg D^{\CL^*\!\!\!,\,\CI^*}_{X,\bar{Y}}$
with $(a_\eta)_{\eta\in\CI^*}$. Thus $T\notin D^{\CL^*\!\!\!,\,\CI^*}_{X,\bar{Y}}\!$.
\end{proof}
\end{proposition}

\begin{theorem}\label{thm: T^gt preservation}
Let $\CL$ be a language, $T$ be a geometric $\CL$-theory, and $D^{\CL^*\!\!\!,\,\CI^*}_{X,\bar{Y}}\!\!\in\mathfrak{D}_{n\text{-var}}$. Then $T\in D^{\CL^*\!\!\!,\,\CI^*}_{X,\bar{Y}}$ if and only if $T^{gt}\in\! D^{\CL^*\!\!\!,\,\CI^*}_{X,\bar{Y}}$.
\begin{proof}
Suppose $T^{gt}\!\notin \!D^{\CL^*\!\!\!,\,\CI^*}_{X,\bar{Y}}\!$. Choose any sufficiently saturated model $(\mathbb{M},H(\mathbb{M}))\models T^{ind}\!$. By the construction of $T^{gt}$ and \Cref{thm: n-var theorem}, there exist $\varphi(x,y)\in \CL$ and $\CI^*_{\CL^*}\!$-indiscernible $(a_\eta)_{\eta\in\CI^*}\!\subseteq H(\mathbb{M})$ such that
\begin{itemize}
\item[(i)] $\dim_\CL(\{\varphi(x,a_\eta)\wedge H(x)\}_{\eta\in X})=|x|$,
\item[(ii)] $\{\varphi(x,a_\nu)\wedge H(x)\}_{\nu\in Y}$ is inconsistent for all $Y\in\bar{Y}$.
\end{itemize} 
Thus $\dim_\CL(\{\varphi(x,a_\eta)\}_{\eta\in X})=|x|$ modulo $T$. We show that $\dim_\CL(\{\varphi(x,a_\nu)\}_{\nu\in Y})<|x|$ modulo $T$ for each $Y\in\bar{Y}$. If $\dim_\CL(\{\varphi(x,a_\nu)\}_{\nu\in Y})=|x|$ for some $Y\in\bar{Y}$, then by the density property and compactness, there exists $d$ such that $d\models \{\varphi(x,a_\nu)\wedge H(x)\}_{\nu\in Y}$. It is a contradiction and hence $\dim_\CL(\{\varphi(x,a_\nu)\}_{\nu\in Y})<|x|$ for all $Y\in\bar{Y}$. Thus $T\notin D^{\CL^*\!\!\!,\,\CI^*}_{X,\bar{Y}}\!$ by \Cref{prop: n-var theorem exchange}.

\medskip

Now suppose $T\notin D^{\CL^*\!\!\!,\,\CI^*}_{X,\bar{Y}}\!$ and let $(\mathbb{M},H(\mathbb{M}))\models T^{ind}\!$ be a sufficiently saturated model. Then by \Cref{thm: n-var theorem}, there exist $\varphi(x,y)\in\CL$ and $\CL^*$-indiscernible $(a_\eta)_{\eta\in\CI^*}\!\subseteq\mathbb{M}$ in $\CL$ such that
\begin{itemize}
\item[(iii)] $\dim_\CL(\{\varphi(x,a_\eta)\}_{\eta\in X})=|x|$,
\item[(iv)] $\{\varphi(x,a_\nu)\}_{\nu\in Y}$ is inconsistent for each $Y\in\bar{Y}$.
\end{itemize}

\smallskip

Since $\CI^*$ is monochromatically extendable over $\mathbbof{X}:=\{\eta\in\CI^*:\la\eta\ra\sim_{\CL^*}\la\nu\ra\text{ for some }\nu\in X\}$, there exists  $\CI^\dagger$ with $\age_{\CL^*}(\CI^\dagger)=\age_{\CL^*}(\CI^*)$ such that for any coloring $c:\CI^\dagger\to |T|$, there exists an $\CL^*$-embedding $f:\CI^*\to\CI^\dagger$ such that $c(f(\eta))=c(f(\nu))$ for $\eta,\nu\in\mathbbof{X}$ with $\la\eta\ra\sim_{\CL^*}\la\nu\ra$. Since $Y\subseteq\mathbbof{X}$ for each $Y\in\bar Y$, every type of the form $\{\varphi(x,a_\eta)\}_{\eta\in Z}$ required to be  consistent or inconsistent is a subset of $\{\varphi(x,a_\eta)\}_{\eta\in\mathbbof{X}}$.
So we may assume $\mathbbof{X}=\CI^*$. We can find $\CL^*$-indiscernible $(\dot a_\eta)_{\eta\in\CI^\dagger}$ which is locally based on $(a_\eta)_{\eta\in\CI^*}$. By applying the density property repeatedly (by pulling singletons $\dot a\in \bigcup_{\eta\in\CI^\dagger} \dot a_\eta$ into $\acl_\CL(H(\mathbb{M}))$ one-by-one), we may assume $(\dot a_\eta)_{\eta\in\CI^\dagger}\!\subseteq\acl_\CL(H(\mathbb{M}))$. Note that
\begin{itemize}
\item[(v)] $\dim_\CL(\{\varphi(x,\dot a_\eta)\}_{\eta\in \dot X})=|x|$ for any finite subset  $\dot X$ of $\CI^\dagger$ such that  $\dot X\sim X'$ for some $X'\subseteq X$,
\item[(vi)] for each $Y\in\bar Y$, there exists a finite subset $Y'$ of $Y$ such that $\{\varphi(x,\dot a_\nu)\}_{\nu\in \dot Y}$ is inconsistent for any $\dot Y\subseteq\CI^\dagger$ with $\dot Y\sim Y'$.
\end{itemize}

\smallskip

By weak monochromaticity, we may assume that there exists $m<\omega$ such that $m:=|\dot a_\eta|$ for all $\eta\in\CI^\dagger$. Then $\dot a_\eta$ is of the form $(\dot a^\eta_0,...,\dot a^\eta_{m-1})$ for each $\eta\in\CI^\dagger$. For each $\eta\in \CI^\dagger$, there exist $\dot\psi^{\eta}_{0}(y_0,z^{\eta}_{0}),...,\dot\psi^\eta_{m-1}(y_{m-1},z^{\eta}_{m-1})\in\CL$ and $\dot b^{\eta}_{0},...,\dot b^{\eta}_{m-1}\in H(\mathbb{M})$ such that $\models\dot\psi^{\eta}_{j}(\dot a^\eta_j,\dot b^{\eta}_{j})$ and $\dot\psi^\eta_j(y_j,\dot b^\eta_j)$ is algebraic for each $\eta\in\CI^\dagger$ and $j<m$. 

\smallskip

Since $\CI^\dagger$ is a monochromatic extension  in $|T|$, there exists an $\CL^*$-embedding $f:\CI^*\to\CI^\dagger$ such that 
\begin{itemize}
\item[(vii)] $\dot\psi^{f(\eta)}_j\!=\dot\psi^{f(\nu)}_j$ and $|\dot \psi^{f(\eta)}_j(\mathbb{M},\dot b^{f(\eta)}_j)|=|\dot \psi^{f(\nu)}_j(\mathbb{M},\dot b^{f(\nu)}_j)|$  for any $\la\eta\ra\sim\la\nu\ra$ and any $j<m$.
\end{itemize}
Let $z'$ be a variable with $|z'|=\max\{|z^{f(\eta)}_j|:\eta\in\CI^*,j<m\}$. Then $|z'|<\omega$ by the weak monochromaticity and (vii). By adding dummy variables and parameters, we may assume that $\dot\psi^{f(\eta)}_j$ is of the form $\dot \psi^{f(\eta)}_j(y_j,z')$  and $|\dot b^{f(\eta)}_j|=|z'|$ for each $\eta\in\CI^*$ and $j<m$. 
For each $\eta\in\CI^*$ and $j<m$, let $\ddot a_\eta:=(\ddot a^\eta_0,...,\ddot a^\eta_{m-1}):=\dot a_{f(\eta)}$, $\ddot b^\eta_j:=\dot b^{f(\eta)}_j$, and $\ddot\psi^\eta_j(y_j,z'):=\dot\psi^{f(\eta)}_j(y_j,z')$. 

\smallskip

Summarizing so far, we have
\begin{itemize}
\item[(viii)] $(\ddot a_\eta=(\ddot a^\eta_0,...,\ddot a^\eta_{m-1}))_{\eta\in\CI^*}$ is $\CL^*$-indiscernible in $\CL$,
\item[(ix)] $(\ddot b^\eta_j)^{\eta\in\CI^*}_{j<m}\subseteq H(\mathbb{M})$ and $|\ddot b^\eta_j|=|\ddot b^{\eta'}_{j'}|=|z'|$ for all $\eta,\eta'\in\CI^*$ and $j,j'<m$,
\item[(x)] $\models\ddot\psi^\eta_j(\ddot a^\eta_j,\ddot b^\eta_j)$ and $\ddot \psi^\eta_j(y_j,\ddot b^\eta_j)$ is algebraic for each $\eta\in\CI^*$ and $j<m$,
\item[(xi)] $\ddot \psi^\eta_j=\ddot\psi^\nu_j$ if $\la \eta\ra\sim\la\nu\ra$ for all $\eta,\nu\in\CI^*$ and $j<m$,
\item[(xii)] $\dim_\CL(\{\varphi(x,\ddot a_\eta)\}_{\eta\in X})=|x|$,
\item[(xiii)] $\{\varphi(x,\ddot a_\nu)\}_{\nu\in Y}$ is inconsistent for each $Y\in \bar{Y}$.
\end{itemize}
Let $\ddot b_\eta:=(\ddot b^\eta_0,...,\ddot b^\eta_{m-1})$ for each $\eta\in\CI^*$.
By (vii) and applying the modeling property, we may assume additionally that
\begin{itemize}
\item[(xiv)] $(\ddot a_\eta\ddot b_\eta)_{\eta\in\CI^*}$ is $\CL^*$-indiscernible in $\CL_H$.
\end{itemize}
By the weak monochromaticity, there exist $l<\omega$ and $\eta_0,...,\eta_{l-1}\in\CI^*$, such that 
\begin{itemize}
\item[(xv)] $\la\eta_{k}\ra\not\sim\la\eta_{k'}\ra$ for any $k<k'<l$, 
\item[(xvi)] for each $\eta\in\CI^*$, there exists $k<l$ such that $\la\eta_{k}\ra\sim\la\eta\ra$.
\end{itemize}
Let $z^0,...,z^{l-1}$ be variables with $|z^k|=|z'|$ for each $k<l$. Let $\ddot z:=(z^0,...,z^{l-1})$. For each $j<m$, let $\ddot \psi_j(y_j,\ddot z):=\bigvee_{k<l}\ddot\psi^{\eta_k}_j(y_j,z^k)$. By adding dummy parameters in $H(\mathbb{M})$, we may assume $|\ddot b^\eta_j|=|\ddot z|$ and $(\ddot b^\eta_j)_{j<m}^{\eta\in\CI^*}$ satisfies 
\begin{itemize}
\item[(xvii)] $\models \ddot\psi_j(\ddot a^\eta_j,\ddot b^\eta_j)$ and $\ddot\psi_j(y_j,\ddot b^\eta_j)$ is algebraic for each $\eta$ and $j$.
\end{itemize}
We may still assume $(\ddot a_\eta \ddot b_\eta)_{\eta\in\CI^*}$ is $\CL^*$-indiscernible in $\CL_H$.

\smallskip

Recall that $y$ is of the form $(y_0,...,y_{m-1})$ and $\ddot b_\eta=(\ddot b^\eta_0,...,\ddot b^\eta_{m-1})$ for each $\eta$.
Let $\ddot z_0,...,\ddot z_{m-1}$ be variables with $|\ddot z_j|=|\ddot z|$ for each $j<m$. Let $\ddot z:=(\ddot z_0,...,\ddot z_{m-1})$. 
Let 
\[
\varphi'(x,y,\ddot z):=\varphi(x,y)\wedge\bigwedge_{j<m}\ddot\psi_j(y_j,\ddot z_j).
\]
Then by the argument in \Cref{prop: n-var theorem exchange} using the s-modeling property, we have
\begin{itemize}
\item[(xviii)] $\dim_\CL(\{\varphi'(x,\ddot a_\eta,\ddot b_\eta)\}_{\eta\in X})=|x|$,
\end{itemize}
There exists $N\in\omega$ such that $|\ddot \psi_j(\mathbb{M},\ddot b^\eta_j)|\le N$ for all $\eta$ and $j$. For each $\eta$ and $j$, allowing duplication, choose any $\ddot a^\eta_{j,0},...,\ddot a^\eta_{j,N-1}\in\mathbb{M}$ such that $\{\ddot a^\eta_{j,0},...,\ddot a^\eta_{j,N-1}\}=\ddot\psi_j(\mathbb{M},\ddot b^\eta_j)$ and $\ddot a^\eta_{j,0}=\ddot a^\eta_j$. Let $g_0$ be the map from $m$ to $N$ such that $g_0(j)=0$ for all $j<m$. For each $g:m\to N$ and $\eta\in\CI^*$, let 
\[
D^g_\eta:=\varphi'(\mathbb{M},\ddot a^\eta_{0,g(0)},...,\ddot a^\eta_{m-1,g(m-1)},\ddot b_\eta).
\]
For a set $G$ of maps from $m$ to $N$, let 
\[
D^G_\eta:=\bigcap_{g\in G} D^g_\eta \cap \bigcap_{g\notin G}  (\mathbb{M}\setminus \! D^g_\eta).
\]
For each $G$ with $\dim_\CL(D^G_\eta)=|x|$, choose any $c^G_\eta\in H(\mathbb{M})\cap D^G_\eta$. We can find such $c^G_\eta$ by the density property. Then $D^G_\eta$ is defined by 
\begin{itemize}
\item[(xix)]\makebox[\linewidth][c]{$\displaystyle
 \forall y\bigg(\bigwedge_{j<m}\ddot\psi_j(y_j,\ddot{b}^\eta_j)\rightarrow \left(\varphi'(x,y,\ddot b_\eta)\leftrightarrow\varphi'(c^G_\eta,y,\ddot b_\eta)\right)\bigg).
$}
\end{itemize}
Thus $D^G_\eta$ is definable over $H(\mathbb{M})$ if $\dim_\CL(D^G_\eta)=|x|$. Let 
\[
D'_\eta:=\bigcup_{g_0\in G, \dim_\CL(D^G_\eta)=|x|} D^G_\eta.
\]
Then $D'_\eta\subset D^{g_0}_\eta=\varphi'(\mathbb{M},\ddot a_\eta,\ddot b_\eta)=\varphi(\mathbb{M},\ddot a_\eta)$ and $\dim_\CL(D^{g_0}_\eta\setminus D'_\eta)<|x|$. Let 
\[
\mathbbof X:=\{\eta\in\CI^*:\exists \eta'\in X \text{ such that }\la\eta'\ra\sim\la\eta\ra\}.
\]
Note that if $\eta\in{\mathbbof X}$, then $D'_\eta$ is defined by a finite disjunction of $\CL(H(\mathbb{M}))$-formulas of the form in (xix).  Since the number of the $\CL(H(\mathbb{M}))$-formulas in the disjunction is bounded, we may assume that these numbers for $D'_\eta$ and $D'_\nu$ are the same for all $\eta,\nu\in{\mathbbof X}$. So there exists $\varphi''(x,w)\in\CL$ and $(a''_\eta)_{\eta\in\CI^*}\subseteq H(\mathbb{M})$ such that
\begin{itemize}
\item[(xx)] $\dim_\CL(\varphi(x,{\ddot a_\eta})\wedge\neg\varphi''(x,a''_\eta))<|x|$ and $\varphi''(\mathbb{M},a''_\eta)\subseteq\varphi(\mathbb{M},\ddot a_\eta)$ for each $\eta\in{\mathbbof X}$.
\end{itemize}

\smallskip

Since $T$ is geometric, we may assume that $(a''_\eta)_{\eta\in\CI^*}$ is $\CL^*$-indiscernible by applying the $\CL^*$-modeling property to $(\ddot a_\eta,a''_\eta)_{\eta\in\CI^*}$ in $\CL_H$. 
 Now we show that $\varphi''$ witnesses $\neg D^{\CL^*\!\!\!,\,\CI^*}_{X,\bar{Y}}$ with $(a''_\eta)_{\eta\in\CI^*}$ modulo $T^{gt}$. Clearly $\{\varphi''(x,a''_\nu)\}_{\nu\in Y}$ is inconsistent in $H(\mathbb{M})$ for all $Y\in\bar{Y}$ since $\varphi''(\mathbb{M},a''_\nu)\subseteq\varphi(\mathbb{M},\ddot a_\nu)$. It is enough to show that $\{\varphi''(x,a''_\eta)\}_{\eta\in X}$ is consistent in $H(\mathbb{M})$. By (xx) and the argument using compactness and s-modeling property, we have $\dim_\CL(\{\varphi''(x,a''_\eta)\}_{\eta\in X})=|x|$. By density property and compactness, there exists $h\in H(\mathbb{M})$ realizing $\{\varphi''(x,a''_\eta)\}_{\eta\in X}$.
\end{proof}
\end{theorem}

Applying \Cref{thm: T^gt preservation} to the dividing lines considered in \Cref{sec: D-n-var}, we obtain the following preservation results. Some of these results have already been proved, as indicated below.

\begin{corollary}\label{cor: new T^gt}
Let $T$ be a geometric theory.
\begin{itemize}
\item[(i)] $T$ is stable if and only if $T^{gt}$ is stable
(previously proved in \cite{BV14}).
\item[(ii)] $T$ is simple if and only if $T^{gt}$ is simple
(previously proved in \cite{BV14} by Itay Kaplan).
\item[(iii)] $T$ is NIP if and only if $T^{gt}$ is NIP
(previously proved in \cite{BV14}).
\item[(iv)] $T$ is NTP$_1$ if and only if $T^{gt}$ is NTP$_1$.
\item[(v)] $T$ is NTP$_2$ if and only if $T^{gt}$ is NTP$_2$
(previously proved in \cite{BV14}).
\item[(vi)] $T$ is NATP if and only if $T^{gt}$ is NATP.
\item[(vii)] $T$ is NCTP if and only if $T^{gt}$ is NCTP.
\item[(viii)] $T$ is NBTP if and only if $T^{gt}$ is NBTP.
\item[(ix)] $T$ is NWP if and only if $T^{gt}$ is NWP.
\item[(x)] $T$ is NGP if and only if $T^{gt}$ is NGP.
\item[(xi)] For every $1<k<\omega$, $T$ is NPM$^{(k)}$
if and only if $T^{gt}$ is NPM$^{(k)}$.
\end{itemize}
\end{corollary}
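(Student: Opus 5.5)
The plan is to obtain each item by applying \Cref{thm: T^gt preservation} to the corresponding dividing line. For each listed class we need $D\in\mathfrak{D}_{n\text{-var}}$, and this was established in \Cref{sec: D-n-var}:
\begin{itemize}
\item stability by \Cref{prop: stable is D_n-var};
\item simplicity by \Cref{prop: simple is D_n-var};
\item NIP by \Cref{prop: NIP is D_n-var};
\item NTP$_1$ by \Cref{prop: NTP1 is D_n-var};
\item NTP$_2$ by \Cref{prop: NTP2 is D_n-var};
\item NATP by \Cref{prop: NATP is D_n-var};
\item NCTP by \Cref{prop: NCTP is D_n-var};
\item NBTP by \Cref{prop: NBTP is D_n-var};
\item NWP by \Cref{prop: NWP is D_n-var};
\item NGP by \Cref{prop: NGP is D_n-var};
\item NPM$^{(k)}$ by \Cref{prop: NPMk is D_n-var}.
\end{itemize}
In each of these propositions, the class of theories named in the item is shown to equal $D^{\CL^*\!,\CI^*}_{X,\bar Y}$ for a quadruple satisfying the $n$-variable theorem.

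Given this, for a geometric $T$ and one of these $D$, \Cref{thm: T^gt preservation} gives directly $T\in D$ if and only if $T^{gt}\in D$. Translating membership in $D$ back to the named property gives each item (i)--(xi).

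The only point needing care is that the identifications $D_{\text{name}}=D^{\CL^*\!,\CI^*}_{X,\bar Y}$ are stated for arbitrary complete theories. This matters because $T^{gt}$ is a complete theory in the language of induced $\CL$-definable relations, and that language differs from $\CL$. The identifications place no restriction on the language, so they apply to $T^{gt}$ as well. No further obstacle is expected; the substantive work lies entirely in \Cref{thm: T^gt preservation} and the membership propositions.
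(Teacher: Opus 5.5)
Your proposal is correct and is exactly the paper's argument. The paper obtains the corollary by applying \Cref{thm: T^gt preservation} to each dividing line shown to lie in $\mathfrak{D}_{n\text{-var}}$ in \Cref{sec: D-n-var}, and your remark that those membership identifications hold for arbitrary complete theories (hence also for $T^{gt}$ in its induced language) is a harmless extra check.
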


\section{$H$-structure and lovely pair expansions}\label{sec: H-structure lovely pair}

We show that lovely pair expansions and $H$-structure expansions
preserve every dividing line in $\mathfrak{D}_{n\text{-var}}$.
The proof uses only properties shared by the two expansions,
so we can treat both cases together.

\begin{notation}
Let $P$ be a unary predicate symbol, $\bar{x}:=(x_0,...,x_{n-1})$ an $n$-tuple of variables with $|x_i|=1$ for all $i<n$, $\bar{a}:=(a_0,...,a_{n-1})$ an $n$-tuple of parameters with $|a_i|=1$ for all $i<n$, $y$ a single variable, and $b$ a parameter with $|b|=1$.
\begin{itemize}
\item[(i)] By $P(\bar{x})$, we mean $\bigwedge_{i<n}\!P(x_i)$. 
\item[(ii)] By $\neg P(\bar{x})$, we mean $\bigwedge_{i<n}\!\!\neg P(x_i)$. 
\end{itemize}
As in \Cref{sec: n-var thm}, 
\begin{itemize}
\item[(iii)] $\check{x}^i:=(x_0,...,x_{i-1},x_{i+1},...,x_{n-1})$,
\item[(iv)] $\check{a}^i:=(a_0,...,a_{i-1},a_{i+1},...,a_{n-1})$,
\item[(v)] $\bar{x}^{i/y}:=(x_0,...,x_{i-1},y,x_{i+1},...,x_{n-1})$,
\item[(vi)] $\bar{x}^{i/b}:=(x_0,...,x_{i-1},b,x_{i+1},...,x_{n-1})$,
\item[(vii)] $\bar{a}^{i/y}:=(a_0,...,a_{i-1},y,a_{i+1},...,a_{n-1})$.
\item[(viii)] $\bar{a}^{i/b}:=(a_0,...,a_{i-1},b,a_{i+1},...,a_{n-1})$.
\end{itemize}
\end{notation}

\begin{remark}\label{rmk: in the proof of n-var thm}
Let us recall the strategy of the proof of \Cref{thm: n-var theorem}. If $\varphi(\bar x;y)$ witnesses $\neg D^{\CL^*\!\!\!,\,\CI^*}_{X,\bar{Y}}$ with $(a_\eta)_{\eta\in\CI^*}$ and $\dim(\{\varphi(x,a_\eta)\}_{\eta\in X})<|x|$, then we can find a witness $\varphi''(\check{x}^i;y,\bar y',\bar y'')$ of $\neg D^{\CL^*\!\!\!,\,\CI^*}_{X,\bar{Y}}$ such that $\varphi''(\check{x}^i;y,\bar y',\bar y'')\rightarrow\exists x_i\varphi(\bar x;y)$. 

Using this, we obtain the following.
Let $P$ be a unary predicate symbol in $\CL$. If $\varphi(\bar{x}_0,\bar{x}_1;y)\wedge
\neg P(\bar{x}_0)\wedge P(\bar{x}_1)$ witnesses
$\neg D^{\CL^*\!\!\!,\,\CI^*}_{X,\bar{Y}}$ and $\dim(\{\varphi(\bar x_0,\bar x_1;a_\eta)\wedge \neg P(\bar x_0)\wedge P(\bar x_1)\}_{\eta\in X})<|\bar x_0|+|\bar x_1|$, then we can find a witness $\varphi^*(\bar{x}^*_0,\bar{x}^{*}_1;\bar{y}^*)$ of $\neg D^{\CL^*\!\!\!,\,\CI^*}_{X,\bar{Y}}$ such that 
\begin{itemize}
\item[(i)] $\bar{x}^*_0$ and $\bar x^{*}_1$ are subtuples of $\bar{x}_0$ and $\bar{x}_1$, respectively, 
\item[(ii)] $\bar{x}^*_0$ or $\bar x^{*}_1$ is a proper subtuple of $\bar{x}_0$ or $\bar{x}_1$, respectively, 
\item[(iii)] $\varphi^*(\bar{x}^*_0,\bar x^*_1;\bar{y}^*)\rightarrow \neg P(\bar{x}^*_0)\wedge P(\bar{x}^*_1)$.
\end{itemize}
Note that in this case, $\varphi^*(\bar{x}^*_0,\bar x^*_1;\bar{y}^*)\wedge \neg P(\bar{x}^*_0)\wedge P(\bar{x}^*_1)$ also witnesses $\neg D^{\CL^*\!\!\!,\,\CI^*}_{X,\bar{Y}}$.
\end{remark}

\begin{proposition}\label{prop: n-var theorem 1-predicate}
Let $D^{\CL^*\!\!\!,\,\CI^*}_{X,\bar{Y}}\!\!\in\mathfrak{D}_{n\text{-var}}$. Let $\CL$ be a language, $T$ a complete $\CL$-theory, and $\mathbb{M}\models T$ a sufficiently saturated model. Assume that $\CL$ has a unary predicate symbol $P$. If $T\notin D^{\CL^*\!\!\!,\,\CI^*}_{X,\bar{Y}}$, then there exist $\varphi(\bar{x},\bar{x}',y)\in\CL$ and $\CI^*_{\CL^*}$-indiscernible $(a_\eta)_{\eta\in \CI^*}\!$ such that
\begin{itemize}
\item[(i)] $\varphi'(\bar{x},\bar{x}',y):=\varphi(\bar{x},\bar{x}',y)\wedge \neg P(\bar{x})\wedge P(\bar{x}')$ witnesses $\neg D^{\CL^*\!\!\!,\,\CI^*}_{X,\bar{Y}}$ with $(a_\eta)_{\eta\in \CI^*}$,
\item[(ii)] there exists $\bar{b}\bar{b}'\models\{\varphi'(\bar{x},\bar{x}',a_\eta)\}_{\eta\in X}$ such that 
\begin{itemize}
\item[$\ast$] $\dim_\CL(\bar{b}\bar{b}'/(a_\eta)_{\eta\in\CI^*}\!)=|\bar{x}|+|\bar{x}'|$,
\item[$\ast$] $\dim_\CL(\bar{b}/P(\mathbb{M})(a_\eta)_{\eta\in\CI^*}\!)=|\bar{x}|$.
\end{itemize}
\end{itemize}
\begin{proof}
By \Cref{thm: n-var theorem}, there exists $\varphi(\bar{x}^*\!,y)$ witnessing $\neg D^{\CL^*\!\!\!,\,\CI^*}_{X,\bar{Y}}$ with $\CI^*$-indiscernible $(a_\eta)_{\eta\in\CI^*}$ such that $\{\varphi(\bar{x}^*\!,a_\eta)\}_{\eta\in X}$ is realized by some $\bar{b}^*$ with $\dim_\CL(\bar{b}^*\!/(a_\eta)_{\eta\in\CI^*}\!)=|\bar{x}^*|$. There are two disjoint subtuples $\bar{b}$ and $\bar{b}'$ of $\bar{b}^*$ such that $\bar{b}\cup\bar{b}'=\bar{b}^*$ and $\models \neg P(\bar{b})\wedge P(\bar{b}')$. Let $\bar{x}$ and $\bar{x}'$ be the subtuples in $\bar{x}^*$ corresponding to $\bar{b}$ and $\bar{b}'$ in $\bar{b}^*$. Then $\{\varphi(\bar{x},\bar{x}',a_\eta)\}_{\eta\in X}$ is realized by $\bar{b}\bar{b}'$ and $\dim_\CL(\bar{b}\bar{b}'/(a_\eta)_{\eta\in\CI^*}\!)=|\bar{x}|+|\bar{x}'|$. If $\dim_\CL(\bar{b}/(a_\eta)_{\eta\in\CI^*}\!\cup P(\mathbb{M}))=|\bar{x}|$, then there is nothing to prove. So we assume $\dim_\CL(\bar{b}/(a_\eta)_{\eta\in\CI^*}\!\cup P(\mathbb{M}))<|\bar{x}|$. 

Let $|\bar{x}|=n$, $|\bar{x}'|=m$, $\bar{x}=(x_0,...,x_{n-1})$, and $\bar{x}'=(x'_0,...,x'_{m-1})$. There exist $i<n$, $\theta\in\CL$, $l,q,r\in\omega$, $h_0,...,h_{l-1}\in P(\mathbb{M})$, and $\nu_0,...,\nu_{q-1}\in\CI^*$ such that 
\begin{itemize}
\item[(iii)] $\models\theta(b_i,\check{b}^i,\bar{b}',h_0,...,h_{l-1},a_{\nu_0},...,a_{\nu_{q-1}})\wedge\exists^{\le r}x\theta(x,\check{b}^i,\bar{b}',h_0,...,h_{l-1},a_{\nu_0},...,a_{\nu_{q-1}})$.
\end{itemize}
Let $\bar{x}'':=(x''_0,...,x''_{l-1})$, $\bar{y}'':=(y''_0,...,y''_{q-1})$, and
\begin{itemize}
\item[(iv)] $\varphi'(\bar{x},\bar{x}',\bar{x}'',y,\bar{y}''):=\varphi(\bar{x},\bar{x}',y)\wedge\theta(x_i,\check{x}^i,\bar{x}', \bar{x}'',\bar{y}'')$.
\end{itemize}
Let $\bar{h}:=(h_0,...,h_{l-1})$, $\bar{\nu}:=(\nu_0,...,\nu_{q-1})$, and $\bar{a}_{\bar{\nu}}=(a_{\nu_0},...,a_{\nu_{q-1}})$. Then the partial type \[\{\varphi'(b_0,...,b_{i-1},x,b_{i+1},...,b_{n-1},\bar{b}',\bar{h},a_\xi,\bar{a}_{\bar{\nu}})\}_{\xi\in X}\] has only finitely many realizations. Thus there exists a finite subset $X_0$ of $X$ such that 
\[
\bigcap_{\xi\in X_0}\varphi'(b_0,...,b_{i-1},\mathbb{M},b_{i+1},...,b_{n-1},\bar{b}',\bar{h},a_\xi,\bar{a}_{\bar{\nu}})
\]\vspace{-8pt}
\[\;\;\;\;\;=\bigcap_{\xi\in X}\varphi'(b_0,...,b_{i-1},\mathbb{M},b_{i+1},...,b_{n-1},\bar{b}',\bar{h},a_\xi,\bar{a}_{\bar{\nu}}).
\]
Note that 
\begin{itemize}
\item[(v)] for all $\xi'\in X\setminus X_0$, we have
\[\exists x\!\!\!\! \bigwedge_{\xi\in X_0}\!\!\! \varphi'(\bar{b}^{i/x}\!\!,\bar{b}'\!,\bar{h},a_\xi,\bar{a}_{\bar{\nu}})\wedge\forall x\!\!\left(\!\!\!\left(\!\bigwedge_{\xi\in X_0}\!\!\! \varphi'(\bar{b}^{i/x}\!\!,\bar{b}'\!,\bar{h},a_\xi,\bar{a}_{\bar{\nu}})\wedge\varphi'(\bar{b}^{i/x}\!\!,\bar{b}'\!,\bar{h},a_{\xi'}\!,\bar{a}_{\bar{\nu}})\!\!\right)\!\!\leftrightarrow\!\!\!\! \bigwedge_{\xi\in X_0}\!\!\!\varphi'(\bar{b}^{i/x}\!\!,\bar{b}'\!,\bar{h},a_\xi,\bar{a}_{\bar{\nu}}) \!\!\right)\!\!.
\]
\end{itemize}
Let $\{\eta_0,...,\eta_{p-1}\}$ be an enumeration of $X_0$ and $\bar{y}':=(y'_0,...,y'_{p-1})$ be a tuple of variables. Put
\begin{itemize}
\item[(vi)] $\varphi''(\check{x}^i,\bar{x}',\bar{x}'',y,\bar{y}',\bar{y}''):=$\vspace{-2pt}
\[\exists x\!\! \bigwedge_{j<p}\!\! \varphi'(\bar{x}^{i/x}\!\!,\bar{x}'\!,\bar{x}''\!\!,y'_j,\bar{y}'')\wedge\forall x\!\!\left(\!\!\!\left(\bigwedge_{j<p}\!\! \varphi'(\bar{x}^{i/x}\!\!,\bar{x}'\!,\bar{x}''\!\!,y'_j,\bar{y}'')\wedge\varphi'(\bar{x}^{i/x}\!\!,\bar{x}'\!,\bar{x}''\!,y,\bar{y}'')\!\!\right)\!\!\leftrightarrow\!\!\bigwedge_{j<p}\!\varphi'(\bar{x}^{i/x}\!\!,\bar{x}'\!,\bar{x}''\!,y'_j,\bar{y}'') \!\!\right)\!\!.
\]
\end{itemize}
Let $\bar{\eta}:=(\eta_0,...,\eta_{p-1})$ and $\bar{a}_{\bar{\eta}}:=(a_{\eta_0},...,a_{\eta_{p-1}})$.
Then by (v), we have
\begin{itemize}
\item[(vii)] $\models \varphi''(\check{b}^i,\bar{b}',\bar{h}, a_{\xi'},\bar{a}_{\bar{\eta}},\bar{a}_{\bar{\nu}})$ for each $\xi'\in X\setminus X_0$.
\end{itemize}

Since $(\CL^*\!,\CI^*\!,X,\bar{Y})$ satisfies the $n$-variable theorem, there exists an $\CL^*$-embedding $f:\CI^*\to\CI^*$ such that
\begin{itemize}
\item[(viii)] $f(\CI^*)\cap\bar{\eta}\bar{\nu}=\emptyset$,
\item[(ix)] $\bar{\xi}\sim_{\CL^*}\!\bar{\xi}'\;\Rightarrow\; \bar{\xi}\bar{\eta}\bar{\nu}\sim_{\CL^*}\!\bar{\xi}'\bar{\eta}\bar{\nu}$ for all $\bar{\xi},\bar{\xi}'\in f(\CI^*)$,
\item[(x)] $(f(\CI^*)\cap X)\sim_{\CL^*}^\text{fin}X$.
\end{itemize}
For each $\xi\in\CI^*$, let $a'_\xi:=a_{f(\xi)}\bar{a}_{\bar{\eta}}\bar{a}_{\bar{\nu}}$. Then $(a'_\xi)_{\xi\in\CI^*}$ is $\CI^*_{\CL^*}$-indiscernible by (ix).

We prove that $\varphi''(\check{x}^i,\bar{x}',\bar{x}'';y,\bar{y}',\bar{y}'')$ witnesses $\neg D^{\CL^*\!\!\!,\,\CI^*}_{X,\bar{Y}}\!$ with $(a'_\xi)_{\xi\in\CI^*}$. Choose any finite subset $X_1\subseteq X$. By (viii), (ix), and (x), there exists $X'_1\subseteq X\setminus X_0$ such that
\begin{itemize}
\item[(xi)] $X'_1\bar{\eta}\bar{\nu}\sim_{\CL^*} f(X_1)\bar{\eta}\bar{\nu}$.
\end{itemize}
 By (vii), $\{\varphi''(\check{x}^i,\bar{x}',\bar{x}'';a_{\xi'},\bar{a}_{\bar{\eta}},\bar{a}_{\bar{\nu}})\}_{\xi'\in X'_1}$ is consistent. So $\{\varphi''(\check{x}^i,\bar{x}',\bar{x}'';a_{f(\xi)},\bar{a}_{\bar{\eta}},\bar{a}_{\bar{\nu}})\}_{\xi\in X_1}$ is consistent by (xi). Thus $\{\varphi''(\check{x}^i,\bar{x}',\bar{x}'';a'_\xi)\}_{\xi\in X_1}$ is consistent and  hence $\{\varphi''(\check{x}^i,\bar{x}',\bar{x}'';a'_\xi)\}_{\xi\in X}$ is consistent by compactness.
 
Now we show that $\{\varphi''(\check{x}^i,\bar{x}',\bar{x}'';a'_\xi)\}_{\xi\in Y}$ is inconsistent for each $Y\in \bar{Y}$. To get a contradiction, suppose that $\{\varphi''(\check{x}^i,\bar{x}',\bar{x}'';a'_\xi)\}_{\xi\in Y}$ is realized by $\check{c}^i\bar{c}'\bar{c}''$. Then for each $\xi\in Y$, we have
\begin{itemize}
\item[(xii)] 
\[
\exists x\!\! \bigwedge_{j<p}\!\! \varphi'(\bar{c}^{i/x}\!\!,\bar{c}'\!,\bar{c}''\!\!,a_{\eta_j},\bar{a}_{\bar{\nu}})\wedge\forall x \!\!\left(\!\!\!\left(\! \bigwedge_{\,j<p}\!\varphi'(\bar{c}^{i/x}\!\!,\bar{c}'\!,\bar{c}''\!\!,a_{\eta_j},\bar{a}_{\bar{\nu}})\wedge \varphi'(\bar{c}^{i/x}\!\!,\bar{c}'\!,\bar{c}''\!\!,a_{f(\xi)},\bar{a}_{\bar{\nu}})\!\! \right)\!\!\leftrightarrow\!\! \bigwedge_{j<p}\!\!\varphi'(\bar{c}^{i/x}\!\!,\bar{c}'\!,\bar{c}''\!\!, a_{\eta_j},\bar{a}_{\bar{\nu}})\!\! \right)\!\!.
\]
\end{itemize}
Thus there exists $c_i$ such that $\models \varphi'(\bar{c}^{i/c_i}\!,\bar{c}',\bar{c}''\!; a_{f(\xi)},\bar{a}_{\bar{\nu}})$ for each $\xi\in Y$, where $\bar{c}^{i/c_i}=(c_0,...,c_{n-1})$. By the choice of $\varphi'$, $\{\varphi(\bar{x},\bar{x}';a_{f(\xi)})\}_{\xi\in Y}$ is consistent. Since $f$ is an $\CL^*$-embedding, $\{\varphi(\bar{x},\bar{x}';a_\xi)\}_{\xi\in Y}$ is consistent. It is a contradiction. Thus $\{\varphi''(\check{x}^i,\bar{x}',\bar{x}'';a'_\xi)\}_{\xi\in Y}$ is inconsistent for each $Y\in\bar{Y}$ and hence $\varphi''(\check{x}^i,\bar{x}',\bar{x}'';y,\bar{y}',\bar{y}'')$ witnesses $\neg D^{\CL^*\!\!\!,\,\CI^*}_{X,\bar{Y}}$ with $(a'_\xi)_{\xi\in\CI^*}$.

Note that $\varphi''(\check{x}^i,\bar{x}',\bar{x}'';y,\bar{y}',\bar{y}'')\wedge \neg P(\check{x}^i)\wedge P(\bar{x}'\bar{x}'')$ witnesses $\neg D^{\CL^*\!\!\!,\,\CI^*}_{X,\bar{Y}}$ with $(a'_\xi)_{\xi\in\CI^*}$ by (vii). Also note that the length of the free variable part in $\neg P$ of the new witness is $n-1$.
Thus we can repeat this process until we get a witness of $\neg D^{\CL^*\!\!\!,\,\CI^*}_{X,\bar{Y}}$ satisfying (i) and (ii), by \Cref{rmk: in the proof of n-var thm}.
\end{proof}
\end{proposition}

\begin{remark}
Note that the same argument in \Cref{prop: n-var theorem 1-predicate} works for a definable partition of $\mathbb{M}^1$, as follows.

Let $D^{\CL^*\!\!\!,\,\CI^*}_{X,\bar{Y}}\!\!\in\mathfrak{D}_{n\text{-var}}$. Let $\CL$ be a language, $T$ a complete $\CL$-theory, and $\mathbb{M}\models T$ a sufficiently saturated model. Assume that $\emptyset$-definable sets $P_0,...,P_{l-1}$ form a partition of $\mathbb{M}^1$ (i.e., for all $m\in\mathbb{M}^1$, there exists $k<l$ such that $\models P_k(m)$, and there do not exist $m$ and $k<k'<l$ such that $\models P_k(m)\wedge P_{k'}(m)$). If $T\notin D^{\CL^*\!\!\!,\,\CI^*}_{X,\bar{Y}}$, then there exist $\varphi(\bar{x}_0,...,\bar{x}_{l-1},y)\in\CL$ and $\CI^*_{\CL^*}$-indiscernible $(a_\eta)_{\eta\in \CI^*}\!$ such that
\begin{itemize}
\item[(i)] $\varphi'(\bar{x}_0,...,\bar{x}_{l-1},y):=\varphi(\bar{x}_0,...,\bar{x}_{l-1},y)\wedge  P_0(\bar{x}_0)\wedge\cdots\wedge P_{l-1}(\bar{x}_{l-1})$ witnesses $\neg D^{\CL^*\!\!\!,\,\CI^*}_{X,\bar{Y}}$ with $(a_\eta)_{\eta\in \CI^*}$,
\item[(ii)] there exists $\bar{b}_0...\bar{b}_{l-1}\models\{\varphi'(\bar{x}_0,...,\bar{x}_{l-1},a_\eta)\}_{\eta\in X}$ such that 
\begin{itemize}
\item[$\ast$] $\dim_\CL(\bar{b}_0...\bar{b}_{l-1}/(a_\eta)_{\eta\in\CI^*}\!)=|\bar{x}_0|+\cdots+|\bar{x}_{l-1}|$,
\item[$\ast$] $\dim_\CL(\bar{b}_k/P_{>k}(\mathbb{M})(a_\eta)_{\eta\in\CI^*}\!)=|\bar{x}_k|$ for each $k<l$.
\end{itemize}
\end{itemize}
\end{remark}

\begin{lemma}\label{lem: weak monochromatic 1}
For $\psi_0(x,a_0),...,\psi_{n-1}(x,a_{n-1})\in\CL(\mathbb{M})$, there exist $\psi(x,y)\in\CL$ and $a'_0,...,a'_{n-1}$ such that $\psi_i(\mathbb{M},a_i)=\psi(\mathbb{M},a'_i)$ for all $i<n$.
\begin{proof}
For each $f:n\to 2$, let 
\[
D_f:=\bigcap_{f(i)=0}\psi_i(\mathbb{M},a_i) \cap \bigcap_{f(i)=1} \Big(\mathbb{M}\setminus \psi_i(\mathbb{M},a_i)\Big).
\]
If $D_f\neq\emptyset$, choose any $c_f\in D_f$. Then $D_f$ is defined by $\bigwedge_{i<n}(\psi_i(x,a_i)\leftrightarrow\psi_i(c_f,a_i))$. Note that $\psi_i(\mathbb{M},a_i)=\bigcup_{f(i)=0}D_f$. Thus $\psi_i(x,a_i)$ can be defined by a finite disjunction of formulas of the form $\bigwedge_{i<n}(\psi_i(x,a_i)\leftrightarrow\psi_i(c_f,a_i))$. By allowing duplication, we may assume that the numbers of the formulas in the disjunctions for $\psi_0,...,\psi_{n-1}$ are the same.
\end{proof}
\end{lemma}

\begin{theorem}\label{thm: T_P T^ind preservation}
Let $T$ be a geometric theory and $D^{\CL^*\!\!\!,\,\CI^*}_{X,\bar{Y}}\!\!\in\mathfrak{D}_{n\text{-var}}$. If $T^*$ is $T_P$ or $T^{ind}$, then $T\in D^{\CL^*\!\!\!,\,\CI^*}_{X,\bar{Y}}$ if and only if $T^*\in D^{\CL^*\!\!\!,\,\CI^*}_{X,\bar{Y}}$. 
\begin{proof}
If $T\notin D^{\CL^*\!\!\!,\,\CI^*}_{X,\bar{Y}}$, then $T^*\notin D^{\CL^*\!\!\!,\,\CI^*}_{X,\bar{Y}}$ since $T^*$ is an expansion of $T$. Suppose $T^*\notin D^{\CL^*\!\!\!,\,\CI^*}_{X,\bar{Y}}$. By \Cref{prop: n-var theorem 1-predicate}, there exist $\CI^*$-indiscernible $(a_\eta)_{\eta\in\CI^*}$ in $\CL_H$, $\varphi(\bar{x},\bar{x}',y)\in\CL_H$ with $\bar{x}:=(x_0,...,x_{n-1})$, $\bar{x}':=(x'_0,...,x'_{m-1})$, and $|x_i|=|x'_j|=1$ for all $i<n$, $j<m$ such that
\begin{itemize}
\item[(i)] $\varphi'(\bar{x},\bar{x}',y):=\varphi(\bar{x},\bar{x}',y)\wedge\neg H(\bar{x})\wedge H(\bar{x}')$ witnesses $\neg D^{\CL^*\!\!\!,\,\CI^*}_{X,\bar{Y}}$ with $(a_\eta)_{\eta\in\CI^*}$,
\item[(ii)] there exists $\bar{b}\bar{b}'\models\{\varphi'(\bar{x},\bar{x}',a_\eta)\}_{\eta\in X}$ such that \vspace{2pt}
\begin{itemize}
\item[$\ast$] $\dim_{\CL_H}(\bar{b}\bar{b}'/(a_\eta)_{\eta\in \CI^*}\!)=n+m$,
\item[$\ast$] $\dim_{\CL_H}(\bar{b}/H(\mathbb{M})(a_\eta)_{\eta\in \CI^*}\!)=n$. \vspace{2pt}
\end{itemize}
\end{itemize}
By \Cref{fact: H-independent tuples}, $\CI^*$-modeling property, and the weak monochromaticity, we may assume that each $a_\eta$ is $H$-independent.
For each $\eta\in\CI^*$, let 
\[
Z^0_\eta:=\{\bar{c}\bar{c}'\in\mathbb{M}:\models\varphi(\bar{c},\bar{c}',a_\eta)\wedge \neg H(\bar{c})\wedge H(\bar{c}'),\dim_{\CL}(\bar{c}\bar{c}'/a_\eta)=n+m,\; \dim_{\CL}(\bar{c}/a_\eta H(\mathbb{M}))=n\}
\]
and
\[
Z^1_\eta:=\{\bar{c}\bar{c}'\in\mathbb{M}:\models \neg\varphi(\bar{c},\bar{c}',a_\eta)\wedge \neg H(\bar{c})\wedge H(\bar{c}'),\dim_{\CL}(\bar{c}\bar{c}'/a_\eta)=n+m,\; \dim_{\CL}(\bar{c}/a_\eta H(\mathbb{M}))=n\}.
\]
Note that 
\begin{itemize}
\item[(iii)] $Z^0_\eta$ and $Z^1_\eta$ are type definable,
\item[(iv)] $\bar{b}_0\bar{b}'_0a_\eta$ is $H$-independent and $\qftp_{\{H\}}(\bar{b}_0\bar{b}'_0a_\eta)=\qftp_{\{H\}}(\bar{b}_1\bar{b}'_1a_\eta)$,
\end{itemize}
for all $\bar{b}_0\bar{b}'_0,\bar{b}_1\bar{b}'_1\in Z^0_\eta\cup Z^1_\eta$.
By weak monochromaticity, there exist $l<\omega$ and $\eta_0,...,\eta_{l-1}\in\CI^*$ such that 
\begin{itemize}
\item[(v)] $\eta_i\not\sim_{\CL^*}\eta_j$ for all $i<j<l$,
\item[(vi)] for each $\eta\in\CI^*$, there exists $i<l$ such that $\eta\sim_{\CL^*}\eta_i$.
\end{itemize}
By (iii), (iv), \Cref{fact: H-independent tuples}, and compactness, there exist $\psi_0(\bar{x},\bar{x}'\!,y),...,\psi_{l-1}(\bar{x},\bar{x}'\!,y)\in\CL$ such that
\begin{itemize}
\item[(vii)] $Z^0_{\eta_i}\subseteq\psi_i(\mathbb{M}^{n+m}\!,a_{\eta_i})$,
\item[(viii)] $Z^1_{\eta_i}\cap \psi_i(\mathbb{M}^{n+m}\!,a_{\eta_i})=\emptyset$,
\end{itemize}
for each $i<l$.
 By \Cref{lem: weak monochromatic 1}, there exist $\psi(\bar{x},\bar{x}'\!,z)$ and $a'_{\eta_0},...,a'_{\eta_{l-1}}$ such that $\psi_i(\mathbb{M}^{n+m}\!,a_{\eta_i})=\psi(\mathbb{M}^{n+m}\!,a'_{\eta_i})$ for each $i<l$.  By (vi), there exists $f:\CI^*\to l$ such that $\eta\sim_{\CL^*}\eta_{f(\eta)}$ for each $\eta\in\CI^*$. Note that 
\begin{itemize}
\item[(ix)] $f(\eta)=f(\eta')$ if $\eta\sim_{\CL^*}\eta'$.
\end{itemize}
 For each $\eta\in\CI^*$, let $\sigma_\eta$ be an $\CL_H$-automorphism sending $a_\eta$ to $a_{\eta_{f(\eta)}}$. For each $\eta\in\CI^*$, let $a'_\eta:=\sigma_\eta^{-1}(a'_{\eta_{f(\eta)}})$. Note that for all $\eta\in\CI^*$,
\begin{itemize}
\item[(x)] $a_\eta a'_\eta\equiv a_{\eta_{f(\eta)}}a'_{\eta_{f(\eta)}}$,
\item[(xi)] $\psi_{f(\eta)}(\mathbb{M}^{n+m}\!,a_{\eta})=\psi(\mathbb{M}^{n+m}\!,a'_{\eta})$,
\item[(xii)] $Z^0_\eta\subseteq \psi_{f(\eta)}(\mathbb{M}^{n+m}\!,a_\eta)$,
\item[(xiii)] $Z^1_\eta \cap \psi_{f(\eta)}(\mathbb{M}^{n+m}\!,a_\eta)=\emptyset$.
\end{itemize}
Let 
$\Psi((x_\xi)_{\xi\in \omega^{\le n+m}},z):=\{\psi(x_{\xi_1},...,x_{\xi_{n+m}},z):\emptyset\lhd \xi_1\lhd\cdots\lhd\xi_{n+m}\}\cup\{x_\eta\neq x_\nu\}_{\eta\neq\nu}$.

\medskip

\noindent\underline{Claim 1.} $\bigcup_{\eta\in X'_0}\!\Psi((x_\xi)_{\xi\in \omega^{\le n+m}},a'_\eta)$ is consistent for all finite $X'_0\subseteq \CI^*$ such that $X'_0\sim_{\CL^*}\!X_0$ for some $X_0\subseteq X$.

\smallskip

\noindent {\it \!Proof of Claim 1.} Choose any finite $X'_0\subseteq \CI^*$ such that $X'_0\sim_{\CL^*}X_0$ for some $X_0\subseteq X$. Then by (ii) and the indiscernibility of $(a_\eta)_{\eta\in\CI^*}$, there exists $\bar{c}\bar{c}'\models\{\varphi(\bar{x},\bar{x}'\!,a_\eta)\wedge\neg H(\bar{x})\wedge H(\bar{x}')\}_{\eta\in X'_0}$ such that $\dim_{\CL_H}(\bar{c}\bar{c}'/(a_\eta)_{\eta\in X'_0})=n+m$ and $\dim_{\CL_H}(\bar{c}/H(\mathbb{M})(a_\eta)_{\eta\in X'_0})=n$. Note that for any $\CL_H$-automorphic image $\bar{d}\bar{d}'$ of $\bar{c}\bar{c}'$ over $(a_\eta)_{\eta\in X'_0}$, we have $\bar{d}\bar{d}'\models\{\varphi(\bar{x},\bar{x}'\!,a_\eta)\wedge\neg H(\bar{x})\wedge H(\bar{x}')\}_{\eta\in X'_0}$, $\dim_{\CL_H}(\bar{d}\bar{d}'/(a_\eta)_{\eta\in X'_0})=n+m$, and $\dim_{\CL_H}(\bar{d}/H(\mathbb{M})(a_\eta)_{\eta\in X'_0})=n$. So there exists $(c_\xi)_{\xi\in\omega^{\le n+m}}$ such that
\begin{itemize}
\item[(xiv)] $(c_{\xi_1},...,c_{\xi_{n+m}})\models\{\varphi(\bar{x},\bar{x}'\!,a_\eta)\wedge\neg H(\bar{x})\wedge H(\bar{x}')\}_{\eta\in X'_0}$,
\item[(xv)] $\dim_{\CL_H}(c_{\xi_1}...c_{\xi_{n+m}}/(a_\eta)_{\eta\in X'_0})=n+m$, 
\item[(xvi)] $\dim_{\CL_H}(c_{\xi_1}...c_{\xi_n}/H(\mathbb{M})(a_\eta)_{\eta\in X'_0})=n$,
\end{itemize} 
for each $\emptyset\lhd\xi_1\lhd\cdots\lhd\xi_{n+m}$. Thus $(c_{\xi_1},...,c_{\xi_{n+m}})\in\psi(\mathbb{M}^{n+m}\!,a'_\eta)$ for each $\eta\in X'_0$ and $\emptyset\lhd\xi_1\lhd\cdots\lhd\xi_{n+m}$. Thus $\bigcup_{\eta\in X'_0}\!\Psi((x_\xi)_{\xi\in \omega^{\le n+m}},a'_\eta)$ is consistent. $\dashv$

\medskip

By $\CI^*_{\CL^*}$-modeling property, we can find $\CI^*_{\CL^*}$-indiscernible $(a''_\eta a'''_\eta)_{\eta\in\CI^*}$ which is $\CI^*_{\CL^*}$-locally based on $(a_\eta a'_\eta)_{\eta\in\CI^*}$. We show $\psi(\bar{x},\bar{x}'\!,z)$ and $(a'''_\eta)_{\eta\in\CI^*}$ satisfy condition (iii) in \Cref{prop: n-var theorem exchange}, which implies $T\notin D^{\CL^*\!\!\!,\,\CI^*}_{X,\bar{Y}}\!$.
First we show that $\{\psi(\bar{x},\bar{x}'\!,a'''_\eta)\}_{\eta\in X}$ has a realization $\bar{c}\bar{c}'$ such that $\dim_\CL(\bar{c}\bar{c}'/\{a'''_\eta\}_{\eta\in X})=n+m$. If there is no realization $\bar{c}\bar{c}'$ of $\{\psi(\bar{x},\bar{x}'\!,a'''_\eta)\}_{\eta\in X}$ such that $\dim_\CL(\bar{c}\bar{c}'/\{a'''_\eta\}_{\eta\in X})=n+m$, then 
\[
\bigcup_{\eta\in X_0}\Psi((x_\xi)_{\xi\in \omega^{\le n+m}},a'''_\eta)
\]
is not consistent for some finite $X_0\subseteq X$. Since $(a'''_\eta)_{\eta\in\CI^*}$ is $\CI^*_{\CL^*}$-locally based on $(a'_\eta)_{\eta\in\CI^*}$, there is $X'_0\sim_{\CL^*}X_0$ such that 
\[
\bigcup_{\eta\in X'_0}\Psi((x_\xi)_{\xi\in \omega^{\le n+m}},a'_\eta)
\]
is not consistent. But it is a contradiction with Claim 1.

\smallskip

It only remains to show that $\dim_\CL(\{\psi(\bar{x},\bar{x}',a'''_\nu) \}_{\nu\in Y})<n+m$ for each $Y\in\bar{Y}$. Since $(a''_\eta a'''_\eta)_{\eta\in\CI^*}$ is $\CI^*_{\CL^*}$-locally based on $(a_\eta a'_\eta)_{\eta\in\CI^*}$, we have
 $\psi_{f(\eta)}(\mathbb{M}^{n+m}\!,a''_\eta)=\psi(\mathbb{M}^{n+m}\!,a'''_\eta)$ for each $\eta\in\CI^*$ by (ix) and (xi). Since $(a''_\eta a'''_\eta)_{\eta\in\CI^*}$ is $\CI^*_{\CL^*}$-locally based on $(a_\eta a'_\eta)_{\eta\in\CI^*}$ and $(a_\eta)_{\eta\in\CI^*}$ is already indiscernible, there exists an $\CL_H$-automorphism $\tau$ sending $(a_\eta)_{\eta\in\CI^*}$ to $(a''_\eta)_{\eta\in\CI^*}$. By (xiii), we have $\tau(Z^1_\eta)\cap \psi(\mathbb{M}^{n+m}\!,a'''_\eta)=\emptyset$ for each $\eta\in\CI^*$. Note that 
\[
\tau(Z^1_\eta):=\{\bar{c}\bar{c}'\in\mathbb{M}:\models \neg\varphi(\bar{c},\bar{c}',a''_\eta)\wedge \neg H(\bar{c})\wedge H(\bar{c}'),\dim_{\CL}(\bar{c}\bar{c}'/a''_\eta)=n+m,\; \dim_{\CL}(\bar{c}/a''_\eta H(\mathbb{M}))=n\}.
\]

 If $\dim_\CL(\{\psi(\bar{x},\bar{x}',a'''_\nu) \}_{\nu\in Y})=n+m$, then by density property, extension property, and compactness, we can find $\bar{c}\bar{c}'\models\{\psi(\bar{x},\bar{x}',a'''_\nu)\}_{\nu\in Y}$ such that $\bar c \bar c'\models\neg H(\bar{x})\wedge H(\bar{x}')$, $\dim_{\CL}(\bar{c}\bar{c}'/a'''_\nu)=n+m$, and $\dim_{\CL}(\bar{c}/H(\mathbb{M})a'''_\nu)=n$ for all $\nu\in Y$. Moreover, we may assume $\dim_{\CL}(\bar{c}\bar{c}'/a''_\nu a'''_\nu)=n+m$ and $\dim_{\CL}(\bar{c}/H(\mathbb{M})a''_\nu a'''_\nu)=n$ by applying the argument using the s-modeling property and compactness. For that $\bar{c}\bar{c}'$, we have $\bar{c}\bar{c}'\notin \tau(Z^1_\nu)$ for each $\nu\in Y$, and hence $\bar{c}\bar{c}'\models\{\varphi(\bar{x},\bar{x}',a''_\nu)\wedge \neg H(\bar{x})\wedge H(\bar{x}')\}_{\nu\in Y}$ and $\tau^{-1}(\bar{c}\bar{c}')\models \{\varphi(\bar{x},\bar{x}',a_\nu)\wedge \neg H(\bar{x})\wedge H(\bar{x}')\}_{\nu\in Y}$, a contradiction. Therefore $\dim_\CL(\{\psi(\bar{x},\bar{x}',a'''_\nu)\}_{\nu\in Y})<n+m$ and $T\notin D^{\CL^*\!\!\!,\,\CI^*}_{X,\bar{Y}}\!$ by \Cref{prop: n-var theorem exchange}.
\end{proof}
\end{theorem}

As in \Cref{sec: T gt}, applying \Cref{thm: T_P T^ind preservation} to the dividing lines considered in \Cref{sec: D-n-var} yields the following preservation results. For the results that have already been proved, we include the relevant references.

\begin{corollary}\label{cor: new T_P T^ind}
Let $T$ be a geometric theory.
\begin{itemize}
\item[(i)] $T$ is stable if and only if $T_P$ is stable, and if and only if $T^{ind}$ is stable
(follows from (ii) and (iii)).
\item[(ii)] \textls[-2]{$T$ is simple if and only if $T_P\!$ is simple,
and if and only if $T^{ind}\!$ is simple
(follows from (iv) and (v)).}
\item[(iii)] $T$ is NIP if and only if $T_P$ is NIP, and if and only if $T^{ind}$ is NIP (previously proved in \cite{BDO11,BV16}).
\item[(iv)] $T$ is NTP$_1$ if and only if $T_P$ is NTP$_1$,
and if and only if $T^{ind}$ is NTP$_1$
(previously proved in \cite{DK17}).
\item[(v)] $T$ is NTP$_2$ if and only if $T_P$ is NTP$_2$, and if and only if $T^{ind}$ is NTP$_2$
(previously proved in \cite{BK16}).
\item[(vi)] \textls[-1]{$T$ is NATP if and only if $T_P$ is NATP,
and if and only if $T^{ind}$ is NATP
(previously proved in \cite{AKLL25}).}
\item[(vii)] $T$ is NCTP if and only if $T_P$ is NCTP,
and if and only if $T^{ind}$ is NCTP.
\item[(viii)] $T$ is NBTP if and only if $T_P$ is NBTP, and if and only if $T^{ind}$ is NBTP.
\item[(ix)] $T$ is NWP if and only if $T_P$ is NWP,
and if and only if $T^{ind}$ is NWP.
\item[(x)] $T$ is NGP if and only if $T_P$ is NGP,
and if and only if $T^{ind}$ is NGP.
\item[(xi)] For each $1<k<\omega$, $T$ is NPM$^{(k)}$
if and only if $T_P$ is NPM$^{(k)}$, and if and only if $T^{ind}$ is NPM$^{(k)}$.
\end{itemize}
\end{corollary}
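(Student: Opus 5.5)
The corollary is a direct specialization of \Cref{thm: T_P T^ind preservation}, exactly as \Cref{cor: new T^gt} was of \Cref{thm: T^gt preservation}. For each dividing line listed, I would use the proposition of \Cref{sec: D-n-var} that places it in $\mathfrak{D}_{n\text{-var}}$:
\begin{itemize}
\item \Cref{prop: stable is D_n-var} for stability;
\item \Cref{prop: simple is D_n-var} for simplicity;
\item \Cref{prop: NIP is D_n-var} for NIP;
\item \Cref{prop: NTP1 is D_n-var} for NTP$_1$;
\item \Cref{prop: NTP2 is D_n-var} for NTP$_2$;
\item \Cref{prop: NATP is D_n-var} for NATP;
\item \Cref{prop: NCTP is D_n-var} for NCTP;
\item \Cref{prop: NBTP is D_n-var} for NBTP;
\item \Cref{prop: NWP is D_n-var} for NWP;
\item \Cref{prop: NGP is D_n-var} for NGP;
\item \Cref{prop: NPMk is D_n-var} for NPM$^{(k)}$.
\end{itemize}
Each of these gives a quadruple $(\CL^*\!,\CI^*\!,X,\bar Y)$ satisfying the $n$-variable theorem with $D=D^{\CL^*\!,\CI^*}_{X,\bar Y}$. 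Since $T$ is geometric, \Cref{thm: T_P T^ind preservation} applies with $T^*=T_P$ and with $T^*=T^{ind}$. This gives $T\in D\iff T_P\in D$ and $T\in D\iff T^{ind}\in D$, which is every clause (iii)--(xi) verbatim.

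For (i) and (ii), the statement says they follow from the other items, so I would also record that derivation. By \Cref{fact: relation between tree properties}, simple is equivalent to NTP$_1\wedge$NTP$_2$: the forward direction is (ii) of that fact, and the converse is Shelah's theorem that TP implies TP$_1$ or TP$_2$. So (ii) follows from (iv) and (v). Similarly, stable is equivalent to NIP and simple, using \Cref{fact: relation between tree properties}(i) together with the classical converse that an unstable NIP theory has the strict order property and hence is not simple. So (i) follows from (ii) and (iii). Even without these equivalences, (i) and (ii) also follow directly from the theorem via \Cref{prop: stable is D_n-var} and \Cref{prop: simple is D_n-var}, so either route suffices.

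The only point needing care is that the hypotheses of \Cref{thm: T_P T^ind preservation} are exactly ``$T$ geometric'' and ``$D\in\mathfrak{D}_{n\text{-var}}$''. Both are supplied, so there is no genuine obstacle. The substantive work is already contained in the propositions verifying membership in $\mathfrak{D}_{n\text{-var}}$ (notably monochromatic extendability for simple theories and NPM$^{(k)}$) and in the theorem itself. The parenthetical citations to earlier proofs are only historical remarks and need no argument.
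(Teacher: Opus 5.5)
Your proposal is correct and is the paper's argument: the corollary comes from applying \Cref{thm: T_P T^ind preservation} (for $T^*=T_P$ and $T^*=T^{ind}$) to each dividing line shown to lie in $\mathfrak{D}_{n\text{-var}}$ in \Cref{sec: D-n-var}. Your extra derivation of (i) and (ii) from the other clauses, via Shelah's results that TP implies TP$_1$ or TP$_2$ and that an unstable NIP theory has SOP, is also valid; in the paper, though, those parentheticals are only historical remarks, and (i) and (ii) come directly from the theorem.
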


\begin{remark}
$\Th(\mathbb{R},<,+)$ is distal, but its lovely pair expansion $\Th(\mathbb{R},<,+,\mathbb{Q})$ is not distal \cite{HN17}. Thus, by \Cref{thm: T_P T^ind preservation}, the class of complete distal theories does not belong to $\mathfrak{D}_{n\text{-var}}$.
\end{remark}

\section{Vector spaces with a dense-codense generic subspace}\label{sec: T^G}

Fix a field $\mathbb{F}$ of characteristic $0$, its subring $R$, and a language $\CL\supseteq\CL_0:=\{+,0,\{\lambda\cdot\}_{\lambda\in\mathbb{F}}\}$. We denote $\hat{R}:=Frac(R)$. Let $T$ be a complete $\CL$-theory expanding the theory of vector spaces over $\mathbb{F}$ which has quantifier elimination in $\CL$ for which $\dcl_\CL=\acl_\CL=\span_\mathbb{F}$ and such that it eliminates the quantifier $\exists^\infty$. Let $G$ be a new unary predicate symbol. For each formula $\varphi(\bar{x})$ in $\CL_{R\text{-mod}}:=\{+,0,\{r\cdot\}_{r\in R}\}\subseteq\CL_0$, let $P_\varphi(\bar{x})$ be a new predicate. Let $\CL_G:=\CL\cup\{G\}\cup\{P_\varphi:\varphi\in\CL_{R\text{-mod}}\}$.

In \cite{BdV22}, Berenstein, d'Elb\'ee, and Vassiliev suggested an expansion $T^G_R$ of $T$ into $\CL_G$, satisfying the following first order conditions\footnote{They denote this theory by $T^G$.}.
\begin{itemize}
\item[(A)] $G$ is a proper $R$-submodule of the universe, and for all $\bar{a}$, $P_\varphi(\bar{a})$ if and only if $\bar{a}\in G$ and $G\models\varphi(\bar{a})$ as an $R$-module.
\item[(B)] If $\lambda_0,...,\lambda_{n-1}\in\mathbb{F}$ are $\hat{R}$-linearly independent, then for all $g_0,...,g_{n-1}\in G$, 
\[ \lambda_0g_0+\cdots+\lambda_{n-1}g_{n-1}=0\;\;\;\Rightarrow\;\;\bigwedge_{i<n}g_i=0.\]
\item[(C)] (Density property) for all $r\in R\setminus\{0\}$, $rG$ is dense in the universe. This is a first order property that can be axiomatized through the scheme: for every $\CL$-formula $\varphi(x,\bar{y})$, add the sentence $\exists^\infty x\varphi(x,\bar{y})\rightarrow \exists x(\varphi(x,\bar{y})\wedge rG(x))$;
\item[(D)] (Extension/co-density property) for any $\CL$-formulas $\varphi(x,\bar{y})$ and $\psi(x,\bar{y},\bar{z})$ and $n\ge 1$, the following sentence
\[ (\exists^\infty x\varphi(x,\bar y)\wedge \forall \bar{z} \exists^{\le n}x\psi(x,\bar y, \bar z))\rightarrow\exists x(\varphi(x,\bar y)\wedge\forall \bar z(G(\bar z)\rightarrow \neg \psi(x, \bar y, \bar z))).\]
\end{itemize}

In their original work, Berenstein, d'Elb\'ee, and Vassiliev \cite{BdV22} proved that this expansion preserves stability, NIP, NTP$_1$, NTP$_2$, and simplicity. Ahn, the author, Lee, and Lee \cite{AKLL25} later proved that it also preserves NATP. In this section, we will show that when $R$ is a subfield of $\mathbb{F}$, this expansion preserves every dividing line in $\mathfrak{D}_{n\text{-var}}$.

 Note that $T$ is geometric, $T^G_R$ is a dense/codense expansion of $T$, but $T^G_R$ is not the lovely pair or the $H$-structure expansion in general. If $R\subsetneq\mathbb{F}$, $(V,H)$ is a sufficiently saturated model of the $H$-structure expansion of $T$, and $G:=\span_R(H)$, then $(V,G)$ is a model of $T^G_R$ but not a model of the $H$-structure expansion or the lovely pair expansion, by the argument in \cite[Lemma 3.25, Lemma 3.26]{BdV22}. 
 
 But if we assume additionally $R$ is a subfield of $\mathbb{F}$, then by the same argument in \Cref{thm: T_P T^ind preservation}, we still can prove that $T\in D$ implies $T^G_R\in D$ for each $D\in\mathfrak{D}_{n\text{-var}}$. The only part we have to modify is \Cref{fact: H-independent tuples}.

\begin{definition}
We say a tuple $\bar a$ is {\it $G$-independent} if  $\dim_\CL(\bar a/G(\bar a))=\dim_\CL(\bar a/ G(V))$.
\end{definition}

\begin{remark}\label{rmk: G-independent tuple}
For any tuple $\bar a$, there exists $\bar b\in G(V)$ such that $\dim_{\CL}(\bar a/G(\bar{a})\bar b)=\dim_\CL(\bar{a}/G(V))$ so that $\bar a \bar b$ is $G$-independent.
\end{remark}

\begin{notation}
For each $\hat{R}$-independent tuple $\bar{\lambda}:=(\lambda_0,...,\lambda_{n-1})\in \mathbb{F}$ and $i<n$, let $G_{\bar\lambda}$ and $f_{\bar{\lambda},i}$ be a unary predicate symbol and a unary function symbol, respectively. We can expand $T^G_R$ to the language $\CL_G^+:=\CL_G\cup\{G_{\bar\lambda}:\bar\lambda\in\mathbb{F}, \hat{R}\text{-independent}\}\cup\{f_{\bar \lambda,i}:\bar\lambda=(\lambda_0,...,\lambda_{n-1})\in\mathbb{F}, \hat{R}\text{-independent}\}$ by interpreting the new symbols as follows.
\begin{itemize}
\item[(i)] $G_{\bar\lambda}(m)$ if $m=\bar\lambda\cdot\bar g$ for some $\bar g\in G^{|\bar\lambda |}$.
\item[(ii)] $f_{\bar\lambda,i}(m)=0$ if $\neg G_{\bar\lambda}(m)$.
\item[(iii)] If $G_{\bar\lambda}(m)$ and $m=\sum_{j<|\bar\lambda|}\lambda_jg_j$ for some $g_0,...,g_{|\bar\lambda|-1}\in G$, then $f_{\bar\lambda,i}(m)=g_i$.
\end{itemize}
We call such an expansion $T^{G+}_R$.
\end{notation}

\begin{rem/def}\label{rem/def: G-independent tuple}
If $\bar{a}$ is $G$-independent, then by rearranging, we may assume that $\bar{a}$ is of the form $(a_0,...,a_{n-1},a_n,...a_{m-1},a_m,...,a_{l-1})$ for some $n\le m\le l<\omega$ such that
\begin{itemize}
\item[(i)] $a_i\in G$ for all $i<n$,
\item[(ii)] $a_i\notin\span_\mathbb{F}(G(V)a_{<i})$  for all $n\le i<m$,
\item[(iii)] $a_i\in\span_\mathbb{F}(a_{<i})$ for all $m\le i<l$.
\end{itemize}

We say two $G$-independent tuples $\bar{a}$ and $\bar{b}$ have the same {\it independent code} if $\bar{a}$ and $\bar b$ are of the form $(a_0,...,a_{n-1},a_n,...a_{m-1},a_m,...,a_{l-1})$ and $(b_0,...,b_{n-1},b_n,...b_{m-1},b_m,...,b_{l-1})$ for some $n\le m\le l<\omega$ such that
\begin{itemize}
\item[(iv)] $a_i,b_i\in G$ for all $i<n$,
\item[(v)] $a_i\notin\span_\mathbb{F}(G(V)a_{<i})$ and $b_i\notin\span_\mathbb{F}(G(V)b_{<i})$ for all $n\le i<m$,
\item[(vi)] $a_i\in\span_\mathbb{F}(a_{<i})$ and $b_i\in\span_\mathbb{F}(b_{<i})$ for all $m\le i<l$.
\end{itemize}
\end{rem/def}

\begin{lemma}\label{lem: T^G quantifier elimination}
Assume $R$ is a subfield of $\mathbb{F}$. Suppose $\bar
a$ and $\bar b$ are $G$-independent having the same independent code. If $\tp_\CL(\bar{a})=\tp_\CL(\bar{b})$, then $\tp_{\CL_G^+}(\bar{a})=\tp_{\CL_G^+}(\bar{b})$.
\begin{proof}
Let $\bar{a}$ and $\bar{b}$ be of the form in \Cref{rem/def: G-independent tuple}.

\medskip

\noindent\underline{Claim 1.} $\la \bar{a}\ra_{\CL_G^+}=\span_\mathbb{F}(\bar{a})$ and $\la \bar{b}\ra_{\CL_G^+}=\span_\mathbb{F}(\bar{b})$.

\smallskip

\noindent{\it Proof of Claim 1.} By \cite[Lemma 3.10]{BdV22}, it is enough to show that $f_{\bar\lambda,i}(\bar\mu\cdot\bar{a})\in\span_\mathbb{F}(\bar{a})$ for all $\bar\mu\in\mathbb{F}$, $\hat{R}$-independent $\bar\lambda$, and $i<|\bar\lambda|$. If $\neg G_{\bar\lambda}(\bar\mu\cdot\bar a)$, then $f_{\bar\lambda,i}(\bar\mu\cdot\bar{a})=0\in\span_\mathbb{F}(\bar{a})$. Suppose $G_{\bar{\lambda}}(\bar\mu\cdot\bar a)$ and $\bar{\mu}\cdot\bar a=\bar{\lambda}\cdot\bar{g}$ for some $\bar{g}\in G$. Since $a_i\in\span_\mathbb{F}(a_{<i})$ for each $m\le i<l$, we may assume that $\mu_i=0$ for each $m\le i<l$. If $\mu_{m-1}\neq 0$, then $a_{m-1}={{\bar\lambda\cdot\bar g}\over{\mu_{m-1}}} -\sum_{ i<m-1}{\mu_i\over \mu_{m-1}}a_i $. But $a_{m-1}\notin\span_\mathbb{F}(G(V)a_{<m-1})$, so it is impossible. $\mu_{m-1}=0$. Similarly, $\mu_i=0$ for all $n\le i<m$. So 
\[\mu_0a_0+\cdots+\mu_{n-1}a_{n-1}=\bar{\lambda}\cdot\bar g\]
and by rearranging the tuple, we may assume that there is $n'\le n$ such that $\mu_i\notin\span_{\hat{R}}(\bar\lambda\mu_{<i})$ if and only if $i< n'$. By (B), $g_i\in\span_{\hat{R}}(a_{n'},...,a_{n-1})$ for each $i<|\bar{\lambda}|$. If $n'=n$, then $g_i=0$ for all $i$. Thus $f_{\bar\lambda,i}(\bar\mu\cdot\bar a)\in\span_\mathbb{F}(\bar{a})$ for all $i$. $\dashv$

\medskip

Since $T_R^{G+}$ has quantifier elimination \cite[Theorem 3.16]{BdV22}, it is enough to show that $\qftp_{\CL_G^+}(\span_\mathbb{F}(\bar a))=\qftp_{\CL_G^+}(\span_\mathbb{F}(\bar b))$. Note that every element of $\span_\mathbb{F}(\bar a)$ can be written of the form $\mu_0a_0+\cdots+\mu_{m-1}a_{m-1}$ since $a_i\in\span_\mathbb{F}(a_{<i})$ for all $m\le i<l$. Thus the lemma can be proved by showing that the $\CL$-isomorphism from $\span_\mathbb{F}(\bar{a})$ to $\span_\mathbb{F}(\bar{b})$ sending $\mu_0a_0+\cdots+\mu_{m-1}a_{m-1}$ to $\mu_0b_0+\cdots+\mu_{m-1}b_{m-1}$ is an $\CL_G^+$-isomorphism.

\medskip

\noindent\underline{Claim 2.} $\models G(\mu_0a_0+\cdots+\mu_{m-1}a_{m-1})$ if and only if $\models G(\mu_0b_0+\cdots+\mu_{m-1}b_{m-1})$ for all $\bar\mu\in\mathbb{F}$.

\smallskip

\noindent{\it Proof of Claim 2.} Suppose $\mu_0a_0+\cdots+\mu_{m-1}a_{m-1}=g\in G(V)$. Since $a_i\notin\span_\mathbb{F}(G(V)a_{<i})$ for all $n\le i <m$, we have $\mu_i=0$ for all $n\le i<m$. So $\mu_0a_0+\cdots+\mu_{n-1}a_{n-1}=g$. By \cite[Lemma 3.7]{BdV22}, there exist $r_0,...,r_{n-1}\in\hat{R}$ such that $r_0a_0+\cdots+r_{n-1}a_{n-1}=g$. Thus we have
\[\mu_0a_0+\cdots+\mu_{n-1}a_{n-1}=r_0a_0+\cdots+r_{n-1}a_{n-1}.\]
Since $\tp_\CL(\bar{a})=\tp_\CL(\bar{b})$, we also have
\[\mu_0b_0+\cdots+\mu_{n-1}b_{n-1}=r_0b_0+\cdots+r_{n-1}b_{n-1}.\]
Thus $\mu_0b_0+\cdots+\mu_{m-1}b_{m-1}=\mu_0b_0+\cdots+\mu_{n-1}b_{n-1}=r_0b_0+\cdots+r_{n-1}b_{n-1}\in G(V)$ since $\hat{R}=R$ and $G(V)$ is an $R$-module. $\dashv$

\medskip

\noindent\underline{Claim 3.} $\models P_\varphi(\bar\mu_0\cdot \bar a,...,\bar\mu_{k-1}\cdot\bar a)$ if and only if $\models P_\varphi(\bar\mu_0\cdot \bar b,...,\bar{\mu}_{k-1}\cdot\bar b)$ for all $\bar{\mu}_0,...,\bar{\mu}_{k-1}\in\mathbb{F}$ and $\varphi\in\CL_{R\text{-mod}}$.

\smallskip

\noindent{\it Proof of Claim 3.} Suppose $\models P_\varphi(\bar\mu_0\cdot \bar a,...,\bar\mu_{k-1}\cdot\bar a)$. Then $\models G(\bar\mu_i\cdot \bar{a})$ for all $i<k$ and $G\models \varphi(\bar\mu_0\cdot\bar a,...,\bar\mu_{k-1}\cdot \bar a)$. By Claim 2, $\models G(\bar\mu_i\cdot \bar b)$ for all $i<k$. Since $R$ is a field, $G\prec_{\CL_{R\text{-mod}}}V$, and hence $V\models\varphi(\bar\mu_0\cdot\bar a,...,\bar\mu_{k-1}\cdot\bar a)$. Since $\CL_{R\text{-mod}}\subseteq\CL$, we have  $V\models\varphi(\bar\mu_0\cdot\bar b,...,\bar\mu_{k-1}\cdot\bar b)$ and hence $G\models\varphi(\bar\mu_0\cdot\bar b,...,\bar\mu_{k-1}\cdot\bar b)$. Thus $\models P_\varphi(\bar\mu_0\cdot\bar b,...,\bar\mu_{k-1}\cdot\bar b)$. $\dashv$.

\medskip

\noindent\underline{Claim 4.} $\models G_{\bar\lambda}(\mu_0a_0+\cdots+\mu_{m-1}a_{m-1})$ if and only if $\models G_{\bar\lambda}(\mu_0b_0+\cdots+\mu_{m-1}b_{m-1})$ for all $\bar\mu\in \mathbb{F}$ and $\hat{R}$-independent $\bar{\lambda}$.

\smallskip

\noindent{\it Proof of Claim 4.} Suppose $\mu_0a_0+\cdots+\mu_{m-1}a_{m-1}=\bar\lambda\cdot\bar{g}$ for some $\bar g\in G(V)$. By the same argument in Claim 1, we have $\mu_i=0$ for all $n\le i<m$, and $g_i\in\span_{\hat{R}}(a_0,...,a_{n-1})$ for each $i<|\bar\lambda|$. Thus there exists $(\bar{r}_i)_{i<|\bar\lambda|}$ such that
\[
\mu_0a_0+\cdots+\mu_{n-1}a_{n-1}=\sum_{i<|\bar\lambda|}\lambda_i(\bar{r}_i\cdot(a_0,...,a_{n-1})).\]
Since $\tp_\CL(\bar{a})=\tp_\CL(\bar{b})$, we also have
\[
\mu_0b_0+\cdots+\mu_{n-1}b_{n-1}=\sum_{i<|\bar\lambda|}\lambda_i(\bar{r}_i\cdot (b_0,...,b_{n-1})  ).\]
Since we assume $R$ is a field and $G$ is an $R$-module, $\bar{r}_i\cdot\bar b\in G$ for each $i<|\bar\lambda|$. Thus $\models G_{\bar{\lambda}}(\mu_0b_0+\cdots+\mu_{m-1}b_{m-1})$. $\dashv$

\medskip

Note that, as in Claim 1, $f_{\bar\lambda,i}(\mu_0a_0+\cdots+\mu_{m-1}a_{m-1})$ is always of the form $r_0a_0+\cdots+r_{n-1}a_{n-1}$ for some $r_0,...,r_{n-1}\in\hat{R}$.

\medskip

\noindent\underline{Claim 5.} $f_{\bar\lambda,i}(\mu_0a_0+\cdots+\mu_{m-1}a_{m-1})=r_0a_0+\cdots+r_{n-1}a_{n-1}$ if and only if $f_{\bar\lambda,i}(\mu_0b_0+\cdots+\mu_{m-1}b_{m-1})=r_0b_0+\cdots+r_{n-1}b_{n-1}$.

\smallskip

\noindent{\it Proof of Claim 5.} By Claim 4, we may assume $\models G_{\bar\lambda}(\mu_0a_0+\cdots+\mu_{m-1}a_{m-1})$ and $\mu_0a_0+\cdots+\mu_{m-1}a_{m-1}=\mu_0a_0+\cdots+\mu_{n-1}a_{n-1}=\sum_{i<|\bar\lambda|}\lambda_i(\bar{r}_i\cdot(a_0,...,a_{n-1}))$ for some $(\bar r_i)_{i<|\bar \lambda|}$ in $\hat{R}$. Note that $\bar r_i=(r_0,...,r_{n-1})$. Since $\tp_\CL(\bar a)=\tp_\CL(\bar b)$, we have $\models G_{\bar\lambda}(\mu_0b_0+\cdots+\mu_{m-1}b_{m-1})$ and $\mu_0b_0+\cdots+\mu_{m-1}b_{m-1}=\mu_0b_0+\cdots+\mu_{n-1}b_{n-1}=\sum_{i<|\bar\lambda|}\lambda_i(\bar{r}_i\cdot(b_0,...,b_{n-1}))$. Since $\bar\lambda$ is $\hat{R}$-independent, we have $f_{\bar\lambda,i}(\mu_0b_0+\cdots+\mu_{m-1}b_{m-1})=r_0b_0+\cdots+r_{n-1}b_{n-1}$. $\dashv$

\medskip

Thus $\la \bar a\ra_{\CL_G^+}$ and $\la \bar b\ra_{\CL_G^+}$ are $\CL_G^+$-isomorphic, and hence $\tp_{\CL_G^+}(\bar{a})=\tp_{\CL_G^+}(\bar b)$.
\end{proof}
\end{lemma}

\begin{lemma}\label{lem: G-independent tuple extend}
Let $\bar{a}$ be $G$-independent, $\bar{b}\in G(V)$ and $\bar{c}\in \mathbb{M}\setminus G(V)$. Suppose that $c_i\notin \acl(G(V)\bar{a}c_{<i})$ for all $i$. Then $\bar{a}\bar{b}\bar{c}$ is $G$-independent.
\end{lemma}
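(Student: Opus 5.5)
We must show $\dim_\CL(\bar a\bar b\bar c/G(\bar a\bar b\bar c))=\dim_\CL(\bar a\bar b\bar c/G(V))$. Since $\acl_\CL=\span_\mathbb{F}$, both sides are linear dimensions of the tuple over subspaces. The plan is to compute each side from the sequence $\bar a,\bar b,\bar c$ by additivity of dimension (exchange) and compare them term by term.

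First, the right-hand side. By additivity,
\[
\dim(\bar a\bar b\bar c/G(V))=\dim(\bar a/G(V))+\dim(\bar b/G(V)\bar a)+\dim(\bar c/G(V)\bar a\bar b).
\]
The middle term is $0$ because $\bar b\subseteq G(V)$. For the last term, the hypothesis $c_i\notin\acl(G(V)\bar a c_{<i})$ gives $\dim(\bar c/G(V)\bar a)=|\bar c|$. Adding $\bar b$ changes nothing, since $\bar b\subseteq G(V)$. So the right-hand side equals $\dim(\bar a/G(V))+|\bar c|$.

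Next, the left-hand side. Let $A:=G(\bar a\bar b\bar c)$ be the set of coordinates of $\bar a\bar b\bar c$ lying in $G$. It contains $G(\bar a)$ and $\bar b$, and since $\bar c\cap G(V)=\emptyset$, it equals $G(\bar a)\cup\bar b$. Then
\[
\dim(\bar a\bar b\bar c/A)=\dim(\bar a/A)+\dim(\bar b/A\bar a)+\dim(\bar c/A\bar a\bar b).
\]
The middle term is $0$ because $\bar b\subseteq A$. The last term is at most $|\bar c|$ trivially. It is also at least $\dim(\bar c/G(V)\bar a)=|\bar c|$, because $A\bar a\bar b\subseteq G(V)\bar a$ and dimension over a larger set is smaller. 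For the first term, we have
\[
\dim(\bar a/G(\bar a))\ge\dim(\bar a/G(\bar a)\bar b)\ge\dim(\bar a/G(V)).
\]
The outer two quantities are equal by the $G$-independence of $\bar a$, so $\dim(\bar a/A)=\dim(\bar a/G(V))$. Summing, the left-hand side equals $\dim(\bar a/G(V))+|\bar c|$, which matches the right-hand side.

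The only delicate point is the identification $G(\bar a\bar b\bar c)=G(\bar a)\cup\bar b$. It relies on the hypothesis $\bar c\in\mathbb{M}\setminus G(V)$, which is needed so that no $c_i$ is counted in the base. The remaining care is to use monotonicity of $\dim$ in the base set: a sandwich of inequalities is used for both $\bar a$ and $\bar c$, instead of trying to compute those terms exactly.
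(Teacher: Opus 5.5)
Your proof is correct. Additivity of $\dim$ in the pregeometry given by $\acl_\CL$ (linear span), together with the two monotonicity sandwiches, gives
\[
\dim(\bar a\bar b\bar c/G(\bar a\bar b\bar c))=\dim(\bar a/G(V))+|\bar c|=\dim(\bar a\bar b\bar c/G(V)),
\]
which is the required equality.

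The paper states this lemma without proof, so your computation supplies the routine verification it leaves implicit. One small remark: the hypothesis $\bar c\subseteq\mathbb{M}\setminus G(V)$ is already implied by $c_i\notin\acl(G(V)\bar a c_{<i})$. So the identification $G(\bar a\bar b\bar c)=G(\bar a)\cup\bar b$ does not depend on a separate assumption.
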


\begin{theorem}\label{thm: T^G_R preservation}
Let $\mathbb{F}$ be a field of characteristic zero,   $\CL\supseteq\CL_0:=\{+,0,\{\lambda\cdot\}_{\lambda\in\mathbb F}\}$, and $T$ a complete $\CL$-theory expanding the theory of vector spaces over $\mathbb{F}$ which has quantifier elimination in $\CL$ for which $\dcl_\CL=\acl_\CL=\span_\mathbb{F}$ and such that it eliminates the quantifier $\exists^\infty$.
Let $D^{\CL^*\!\!\!,\,\CI^*}_{X,\bar{Y}}\!\!\in\mathfrak{D}_{n\text{-var}}$ and assume that $R$ is a subfield of $\mathbb{F}$. Then $T\in D^{\CL^*\!\!\!,\,\CI^*}_{X,\bar{Y}}$ if and only if $T^G_R\in D^{\CL^*\!\!\!,\,\CI^*}_{X,\bar{Y}}$. 
\begin{proof}
It can be proved by the same argument in \Cref{thm: T_P T^ind preservation}, replacing the role of \Cref{fact: H-independent tuples} with \Cref{rmk: G-independent tuple}, \Cref{lem: G-independent tuple extend}, and \Cref{lem: T^G quantifier elimination}.
\end{proof}
\end{theorem}

\begin{corollary}
Let $T$ and $R$ be as above.
\begin{itemize}
\item[(i)] $T$ is stable if and only if $T^G_R$ is stable (previously proved in \cite{BdV22}).
\item[(ii)] $T$ is simple if and only if $T^G_R$ is simple (previously proved in \cite{BdV22}).
\item[(iii)] $T$ is NIP if and only if $T^G_R$ is NIP (previously proved in \cite{BdV22}).
\item[(iv)] $T$ is NTP$_1$ if and only if $T^G_R$ is NTP$_1$ (previously proved in \cite{BdV22}).
\item[(v)] $T$ is NTP$_2$ if and only if $T^G_R$ is NTP$_2$ (previously proved in \cite{BdV22}).
\item[(vi)] $T$ is NATP if and only if $T^G_R$ is NATP (previously proved in \cite{AKLL25}).
\item[(vii)] $T$ is NCTP if and only if $T^G_R$ is NCTP.
\item[(viii)] $T$ is NBTP if and only if $T^G_R$ is NBTP.
\item[(ix)] $T$ is NWP if and only if $T^G_R$ is NWP.
\item[(x)] $T$ is NGP if and only if $T^G_R$ is NGP.
\item[(xi)] For every $1<k<\omega$, $T$ is NPM$^{(k)}$
if and only if $T^G_R$ is NPM$^{(k)}$.
\end{itemize}
\end{corollary}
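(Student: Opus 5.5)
The corollary follows directly from \Cref{thm: T^G_R preservation} once each listed dividing line is known to be of the form $D^{\CL^*\!\!,\,\CI^*}_{X,\bar Y}$ for a quadruple satisfying the $n$-variable theorem. So the plan is to produce, for each item, the corresponding membership in $\mathfrak{D}_{n\text{-var}}$ from \Cref{sec: D-n-var}, and then apply the theorem with that quadruple. The standing hypotheses on $T$ are the ones stated before \Cref{thm: T^G_R preservation}: $T$ expands vector spaces over $\mathbb{F}$, has quantifier elimination in $\CL$, satisfies $\dcl_\CL=\acl_\CL=\span_\mathbb{F}$, and eliminates $\exists^\infty$. We also take $R$ to be a subfield of $\mathbb{F}$. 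No further properties of $T^G_R$ are needed beyond what that theorem already uses.

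Concretely, I would go through the items in order and cite the matching result.
\begin{itemize}
\item[$\ast$] (i) stable: \Cref{prop: stable is D_n-var}.
\item[$\ast$] (ii) simple: \Cref{prop: simple is D_n-var}.
\item[$\ast$] (iii) NIP: \Cref{prop: NIP is D_n-var}.
\item[$\ast$] (iv) NTP$_1$: \Cref{prop: NTP1 is D_n-var}.
\item[$\ast$] (v) NTP$_2$: \Cref{prop: NTP2 is D_n-var}.
\item[$\ast$] (vi) NATP: \Cref{prop: NATP is D_n-var}.
\item[$\ast$] (vii) NCTP: \Cref{prop: NCTP is D_n-var}.
\item[$\ast$] (viii) NBTP: \Cref{prop: NBTP is D_n-var}.
\item[$\ast$] (ix) NWP: \Cref{prop: NWP is D_n-var}.
\item[$\ast$] (x) NGP: \Cref{prop: NGP is D_n-var}.
\item[$\ast$] (xi) NPM$^{(k)}$ for each $1<k<\omega$: \Cref{prop: NPMk is D_n-var}.
\end{itemize}
In each case \Cref{thm: T^G_R preservation} gives $T\in D$ if and only if $T^G_R\in D$, which is the stated equivalence. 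The references to \cite{BdV22} and \cite{AKLL25} only record earlier proofs and play no role in the argument.

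I expect no real obstacle, since the content lies in \Cref{thm: T^G_R preservation} and in the verifications of \Cref{sec: D-n-var}. The only point to watch is that each propositional identification is an equality of classes, $D_{\text{name}}=D^{\CL^*\!\!,\,\CI^*}_{X,\bar Y}$, and not merely one inclusion. This matters most for NGP and NWP. For NGP, \Cref{prop: k-grid -> 2-grid} is used to reduce having a $k$-grid for some $k$ to a single configuration. For NWP, the definition quantifies over all $k$, and this is absorbed into the finite tuple $(Y_0,Y_1,Y_2)$. Since these equalities were established in \Cref{sec: D-n-var}, the corollary follows by citation.
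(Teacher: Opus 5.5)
Your proposal is correct and is exactly how the paper obtains the corollary: apply \Cref{thm: T^G_R preservation} to each of the memberships in $\mathfrak{D}_{n\text{-var}}$ established in \Cref{sec: D-n-var}. One small imprecision in your side remark, which does not affect the argument: for NWP, the quantification over $k$ is absorbed by compactness and indiscernibility, because each $Y\in\bar Y$ is infinite, whereas the three entries $(Y_0,Y_1,Y_2)$ account for different shapes of up-$1$-combs rather than for the choice of $k$.
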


\section{Algebraically closed fields with a distinguished subfield}\label{sec: ACFT}

Let $\CL$ be a language such that $\CL\supseteq\CL_\text{ring}:=\{+,-,\times,0,1\}$ and $T$ be a complete $\CL$-theory such that $T|_{\CL_\text{ring}}$ is a theory of a field. Let $P$ be a new predicate symbol and $\CL_P:=\CL\cup\{P\}$. 
For each $\CL$-formula $\theta(x)$, let $\theta^P(x)$ be the $\CL_P$-formula obtained by restricting every quantifier to $P$ and asserting $x\in P$.
Let $\text{ACF}_T$ be an $\CL_P$-theory such that
\begin{itemize}
\item[(i)] $\text{ACF}_T |_{\CL_\text{ring}}$ is the theory of algebraically closed fields,
\item[(ii)] for any $\mathbb{M}\models \text{ACF}_T$, $P(\mathbb{M})$ is an $\CL$-structure (closed under all function symbols in $\CL$) and $P(\mathbb{M})\models T$,
\item[(iii)] $f(\bar{a})=0$ for every $n$-ary function symbol $f\in \CL\setminus\CL_\text{ring}$ and $\bar{a}\in\mathbb{M}^n\setminus P(\mathbb{M})^n$,
\item[(iv)] $\mathbb{M}\models\neg R(\bar{a})$ for every $n$-ary relation symbol $R\in \CL\setminus\CL_\text{ring}$ and $\bar{a}\in\mathbb{M}^n\setminus P(\mathbb{M})^n$,
\item[(v)] the degree of the field extension of $\mathbb{M}$ over $P(\mathbb{M})$ is infinite.
\end{itemize}
This construction was studied by Keisler \cite{Kei64} and investigated further by d'Elb\'ee, Kaplan, and Neuhauser \cite{dKN21}. We show that this construction preserves every dividing line in $\mathfrak{D}_{n\text{-var}}$. 

\begin{fact}\cite{dKN21}
Let $A\subseteq\mathbb{M}$. If $a,a'\notin\acl_{\CL_\text{ring}}(P(\mathbb{M})A)$, then $a\equiv_{P(\mathbb{M})A}a'$.
\end{fact}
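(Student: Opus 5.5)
We need to prove the cited fact from dKN21: if $a,a'\notin\acl_{\CL_\text{ring}}(P(\mathbb{M})A)$ then $a\equiv_{P(\mathbb{M})A}a'$ in $\CL_P$. The approach is to build a partial elementary map fixing $P(\mathbb{M})A$ and sending $a$ to $a'$. The main tool will be a back-and-forth (or quantifier-elimination-type) description of types in $\text{ACF}_T$.

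\textbf{Step 1: a substructure criterion.} Let $F:=\acl_{\CL_\text{ring}}(P(\mathbb{M})A)$; this is an algebraically closed subfield containing $P(\mathbb{M})$. For a field $K$ with $P(\mathbb{M})\subseteq K$, the $\CL_P$-structure on $K$ is determined by its field structure together with the $\CL$-structure on $P(\mathbb{M})$. This holds because conditions (iii) and (iv) make the extra symbols trivial off $P$, and $P(K)=P(\mathbb{M})$. I would then show that two embeddings of such fields into $\mathbb{M}$ which agree on $P(\mathbb{M})$, which are field isomorphisms onto their images, and which have images containing $P(\mathbb{M})$ are partial elementary. To do this, I would argue that the system of field isomorphisms $\sigma:K\to K'$ between small algebraically closed subfields $K,K'$ of $\mathbb{M}$ containing $P(\mathbb{M})$ and fixing $P(\mathbb{M})$ pointwise has the back-and-forth property in the saturated model. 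Given $b$, extend $\sigma$ to $\acl(Kb)$. If $b\in K$ there is nothing to do. Otherwise $b$ is transcendental over $K$, and by saturation and the infinite transcendence degree of $\mathbb{M}$ over any small set (this uses $P(\mathbb{M})$ relatively algebraically closed issues only through $K\supseteq P(\mathbb{M})$), we choose $b'$ transcendental over $K'$ and extend by the unique isomorphism of algebraic closures.

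\textbf{Main obstacle.} The main obstacle is that $P(\mathbb{M})$ is large, not small. So rather than back-and-forth over all of $P(\mathbb{M})$, I would instead work relative to elementary substructures: fix a small $M_0\prec\mathbb{M}$ containing $A$. I would show that the type over $P(M_0)A$ determines the type over $P(\mathbb{M})A$ by using stable embeddedness and the linear disjointness of $\mathbb{M}$ and $P(\mathbb{M})$ over $P(M_0)$. I expect this linear-disjointness point to require the most care.

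\textbf{Step 2: applying the criterion.} Since $a,a'\notin F$, both are transcendental over the algebraically closed field $F$. Hence the identity on $F$ extends to a field isomorphism $F(a)^{alg}\to F(a')^{alg}$ sending $a\mapsto a'$ and fixing $F\supseteq P(\mathbb{M})A$. By Step 1 this map is elementary in $\CL_P$, so $a\equiv_{P(\mathbb{M})A}a'$.
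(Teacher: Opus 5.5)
\paragraph{Gap in Step 1.} Step~1 is never actually established. As stated, it asks for a back-and-forth system between ``small algebraically closed subfields containing $P(\mathbb{M})$''. That is incoherent, since $P(\mathbb{M})$ is in general not small. The forth step is justified by the infinite transcendence degree of $\mathbb{M}$ over small sets, which says nothing about the large field $K'$.

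You then replace this with a plan: pass to a small $M_0\prec\mathbb{M}$ and use stable embeddedness plus linear disjointness of $\mathbb{M}$ and $P(\mathbb{M})$ over $P(M_0)$. That plan is not carried out. The claim that ``the type over $P(M_0)A$ determines the type over $P(\mathbb{M})A$'' is at least as strong as the fact you are proving. Making it work would need the full relative quantifier elimination for $\mathrm{ACF}_T$, including forth steps into $P(\mathbb{M})$ and into $\acl_{\CL_\text{ring}}(P(\mathbb{M})K)$, which must be handled through the $\CL$-structure on $P$. So Step~2 applies a criterion that has not been proved.

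\paragraph{The missing observation.} None of this machinery is needed. By axioms (ii)--(iv), every field automorphism $\tau$ of $\mathbb{M}$ fixing $P(\mathbb{M})$ pointwise is an $\CL_P$-automorphism:
\begin{itemize}
\item[$\ast$] $\tau$ preserves $P$ setwise, by injectivity;
\item[$\ast$] the symbols of $\CL\setminus\CL_\text{ring}$ live on $P$ and are trivial off $P$.
\end{itemize}
Put $F:=\acl_{\CL_\text{ring}}(P(\mathbb{M})A)$. Choose transcendence bases $B\ni a$ and $B'\ni a'$ of $\mathbb{M}$ over $F$. They have the same cardinality, so there is a bijection $B\to B'$ with $a\mapsto a'$. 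This bijection induces an isomorphism $F(B)\cong F(B')$ over $F$. That isomorphism extends to a field automorphism of $\mathbb{M}=F(B)^{\mathrm{alg}}=F(B')^{\mathrm{alg}}$, which fixes $P(\mathbb{M})A$ pointwise and sends $a$ to $a'$. Hence $a\equiv_{P(\mathbb{M})A}a'$, and no saturation is used.

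\paragraph{Your back-and-forth also works as it stands.} Karp's back-and-forth lemma does not require the partial isomorphisms to have small domains. The forth step only needs some $b'\notin K'$ whenever $b\notin K$. You can guarantee this by adding the condition $\mathrm{trdeg}(\mathbb{M}/K)=\mathrm{trdeg}(\mathbb{M}/K')$ to the system, and that condition is preserved when you pass to $\acl(Kb)$ and $\acl(K'b')$. So the ``main obstacle'' was a misdiagnosis.
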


\begin{fact}\cite{dKN21}\label{fact: stably embedded ACF_T}
$P(\mathbb{M})$ is stably embedded. Moreover, every $\CL_P(\mathbb{M})$-definable set in $P(\mathbb{M})^n$ is defined in the $\CL$-structure $P(\mathbb{M})$ with parameters from $P(\mathbb{M})$. In other words, for each $\varphi(x)\in\CL_P(\mathbb{M})$, there exists an $\CL(P(\mathbb{M}))$-formula $\psi(x)$ such that $\models \forall x(\varphi(x)\wedge P(x) \leftrightarrow \psi^P(x))$.
\end{fact}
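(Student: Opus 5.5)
The plan is to derive both parts of Fact~\ref{fact: stably embedded ACF_T} from one automorphism-extension principle. The principle says that suitable partial $\CL$-automorphisms of the induced structure on $P(\mathbb{M})$ extend to $\CL_P$-automorphisms of $\mathbb{M}$. Throughout, $\mathbb{M}\models\mathrm{ACF}_T$ is a sufficiently saturated and strongly homogeneous model. Then $P(\mathbb{M})$, as an $\CL$-structure, is also saturated and homogeneous, since an $\CL$-type over a small subset of $P(\mathbb{M})$ translates into an $\CL_P$-type via $\theta\mapsto\theta^P$.

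\emph{Step 1 (extension of automorphisms).} Let $\sigma$ be an $\CL$-automorphism of $P(\mathbb{M})$. Then $\sigma$ is in particular a field automorphism of $K:=P(\mathbb{M})$. Choose a transcendence basis $B$ of $\mathbb{M}$ over $K$; by saturation and (v), $B$ is infinite of size $|\mathbb{M}|$. Extend $\sigma$ to $K(B)$ by fixing $B$ pointwise, and then to the algebraic closure $\mathbb{M}$ by the standard extension of isomorphisms to algebraic closures. The resulting field automorphism $\hat\sigma$ preserves $P$ setwise, and it preserves the $\CL$-structure on $P$. By (iii) and (iv), every symbol of $\CL\setminus\CL_\text{ring}$ is trivial off $P(\mathbb{M})^n$, so $\hat\sigma$ preserves these symbols as well. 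Hence $\hat\sigma$ is an $\CL_P$-automorphism.

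\emph{Step 2 (parameter-free part).} Apply Step~1 to tuples $\bar a,\bar b\in P(\mathbb{M})$ with the same $\CL$-type in $P(\mathbb{M})$. By homogeneity of $P(\mathbb{M})$, the map $\bar a\mapsto\bar b$ extends to an $\CL$-automorphism of $P(\mathbb{M})$, and Step~1 extends that to $\mathbb{M}$. So $\tp_{\CL_P}(\bar a)=\tp_{\CL_P}(\bar b)$. A standard compactness argument then yields, for each $\CL_P$-formula $\varphi(x)$ without parameters, an $\CL$-formula $\psi$ with $\models\forall x\,(\varphi(x)\wedge P(x)\leftrightarrow\psi^P(x))$.

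\emph{Step 3 (parameters).} Fix a parameter tuple $c\in\mathbb{M}$. Consider the prime ideal $I\subseteq K[\bar X]$ of polynomials over $K$ vanishing at $c$. Since $I$ is finitely generated, it has a field of definition $K_0=\mathbb{Q}(e)$ with $e$ a finite tuple in $K$; concretely, take the minimal field of definition of the locus of $c$ over $K$. Now let $\sigma$ be an $\CL$-automorphism of $K$ fixing $e$. Then $\sigma$ fixes $I$, so it extends to a field isomorphism $K(c)\to K(c)$ fixing $c$. Extending this further, as in Step~1 via a transcendence basis of $\mathbb{M}$ over $K(c)$ and algebraic closure, gives an $\CL_P$-automorphism of $\mathbb{M}$ that fixes $c$ and restricts to $\sigma$. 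Consequently, any $\CL_P(c)$-definable $D\subseteq P(\mathbb{M})^n$ is invariant under $\mathrm{Aut}_\CL(P(\mathbb{M})/e)$. By saturation and homogeneity of $P(\mathbb{M})$, together with Step~2 (which converts $D$ into a set type-definable in the $\CL$-structure $P(\mathbb{M})$ over $c$-related data), $D$ is $\CL$-definable in $P(\mathbb{M})$ over $e$. Stable embeddedness, and the ``moreover'' clause with parameters, follow.

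\emph{Main obstacle.} I expect the delicate point to be Step~3. There are two issues to check carefully:
\begin{itemize}
\item the existence of a finite field of definition $e\subseteq K$ for the locus of $c$, together with the fact that fixing $e$ makes $\sigma$ preserve $I$, so that the isomorphism $K(c)\cong K(c)$ exists; this is where linear disjointness and regularity issues could enter;
\item the passage from invariance under $\mathrm{Aut}(P(\mathbb{M})/e)$ to definability over $e$, since $D$ is a priori defined in $\mathbb{M}$ and not inside $P(\mathbb{M})$.
\end{itemize}
For the second issue, I would first try to use the dKN21 fact recalled above to show that $\tp_{\CL_P}(c/e)\vdash\tp_{\CL_P}(c/P(\mathbb{M}))$, and then apply the usual Chatzidakis--Hrushovski criterion for stable embeddedness.
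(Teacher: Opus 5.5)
The paper gives no proof of this Fact; it is simply quoted from \cite{dKN21}. Your automorphism-extension argument is therefore an independent, self-contained route, and its architecture is sound. An $\CL$-automorphism of $K=P(\mathbb{M})$ extends to a field automorphism of $\mathbb{M}$ through a transcendence basis and the algebraic closure, and conditions (iii)--(iv) make every such extension an $\CL_P$-automorphism. Steps~1 and~2 are fine; you do not need $|B|=|\mathbb{M}|$, and even $B=\emptyset$ works. The construction in Step~3 is also fine. The two obstacles you list, however, deserve different verdicts.

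The first obstacle is not real. You need no minimal field of definition, linear disjointness or regularity. By the Hilbert basis theorem write $I=(f_1,\dots,f_r)$ and let $e$ be the tuple of all coefficients of the $f_j$. If $\sigma$ fixes $e$, then $\sigma(f_j)=f_j$, so $\sigma(I)\subseteq I$ and $\sigma^{-1}(I)\subseteq I$. Hence $\sigma$ induces an automorphism of $K[\bar X]/I\cong K[c]$ that extends $\sigma$ and fixes $c$.

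The second obstacle is where your text is actually wrong. Step~2 only concerns parameter-free formulas and says nothing about $D=\varphi(\mathbb{M},c)\cap P(\mathbb{M})^n$ when $c\notin P(\mathbb{M})$, so it cannot ``convert'' $D$ into anything. The correct finish is a direct compactness argument.

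Suppose $\bar a,\bar b\in P(\mathbb{M})^n$ have the same $\CL$-type over $e$ in $P(\mathbb{M})$. Strong homogeneity of $P(\mathbb{M})$ gives $\sigma\in\mathrm{Aut}_\CL(P(\mathbb{M})/e)$ with $\sigma(\bar a)=\bar b$. This homogeneity is automatic if $\mathbb{M}$ is saturated in its own cardinality, since then so is $P(\mathbb{M})$. The extension of $\sigma$ fixing $c$ shows $\bar a\in D\iff\bar b\in D$.

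Hence for every complete $\CL$-type $p(x)$ over $e$, the realizations in $\mathbb{M}$ of $p^P:=\{\theta^P(x,e):\theta\in p\}$ lie either all in $D$ or all outside $D$. By saturation of $\mathbb{M}$ and compactness, some $\theta_p\in p$ satisfies $\theta_p^P(x,e)\vdash\varphi(x,c)$ or $\theta_p^P(x,e)\vdash\neg\varphi(x,c)$, accordingly. A finite subcover of the type space then yields an $\CL$-formula $\psi(x,e)$ with $D=\psi^P(\mathbb{M},e)$. This gives stable embeddedness and the ``moreover'' clause at once, and Step~2 becomes the special case $c=\emptyset$.

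Your fallback through the Chatzidakis--Hrushovski criterion would also work, because $\tp_{\CL_P}(c/e)\vdash\tp_{\CL_P}(c/P(\mathbb{M}))$ does hold. Any $c'\equiv_e c$ satisfies $f_j(c',e)=0$ and has the same transcendence degree over $P(\mathbb{M})$, which is a property of $\tp_{\CL_P}(c)$. So $I(c'/K)=I(c/K)$, and the argument of Step~1 gives an automorphism over $K$ sending $c$ to $c'$. That route only yields definability with parameters from $P(\mathbb{M})$ inside $\mathbb{M}$, though, so you would still need Step~2 for the moreover clause. The direct argument is shorter.
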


\begin{fact}\cite{AKLL25}\label{fact: lifting strong indiscernible ACF_T}
Suppose an $\CL^*$-structure $\CI^*$ has the modeling property.
If $(a_\eta)_{\eta\in\CI^*}$ is $\CI^*_{\CL^*}$-indiscernible over $C$ and $b\notin\acl_{\CL_\text{ring}}(P(\mathbb{M})(a_\eta)_{\eta\in \CI^*}C )$, then $(a_\eta)_{\eta\in\CI^*}$ is $\CI^*_{\CL^*}$-indiscernible over $Cb$.
\end{fact}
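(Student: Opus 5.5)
The plan is to prove the Fact directly from the homogeneity statement of \cite{dKN21} recalled just before it, combined with a back-and-forth on automorphisms of $\mathbb{M}$. The modeling property of $\CI^*$ should not be needed; only the indiscernibility hypothesis and the fact that $P(\mathbb{M})$ is setwise invariant under $\CL_P$-automorphisms will be used.

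Fix $I,J\subseteq\CI^*$ with $I\sim_{\CL^*}J$, with the correspondence given by the isomorphism of quantifier-free types. We must show $\tp_{\CL_P}((a_\eta)_{\eta\in I}/Cb)=\tp_{\CL_P}((a_\eta)_{\eta\in J}/Cb)$.

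\begin{enumerate}
\item By compactness (types of infinite tuples are determined by their finite subtuples), it suffices to treat finite $I$ and $J$. Write $\bar a_I$ and $\bar a_J$ for the corresponding tuples.
\item Since $(a_\eta)_{\eta\in\CI^*}$ is $\CI^*_{\CL^*}$-indiscernible over $C$, we have $\bar a_I\equiv_C\bar a_J$. So there is $\sigma\in\operatorname{Aut}_{\CL_P}(\mathbb{M}/C)$ with $\sigma(\bar a_I)=\bar a_J$.
\item Let $A:=\bar a_J C$. The hypothesis gives $b\notin\acl_{\CL_\text{ring}}(P(\mathbb{M})(a_\eta)_{\eta\in\CI^*}C)$, and this set contains $P(\mathbb{M})A$, so $b\notin\acl_{\CL_\text{ring}}(P(\mathbb{M})A)$.
\item Since $b\notin\acl_{\CL_\text{ring}}(P(\mathbb{M})\bar a_I C)$, and $\sigma$ preserves $P(\mathbb{M})$ setwise, fixes $C$, and commutes with $\acl_{\CL_\text{ring}}$, we get $\sigma(b)\notin\acl_{\CL_\text{ring}}(P(\mathbb{M})A)$.
\item By the Fact of \cite{dKN21}, $b\equiv_{P(\mathbb{M})A}\sigma(b)$. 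So there is $\tau\in\operatorname{Aut}_{\CL_P}(\mathbb{M}/P(\mathbb{M})A)$ with $\tau(\sigma(b))=b$.
\item The composite $\tau\sigma$ fixes $C$, sends $\bar a_I$ to $\tau(\bar a_J)=\bar a_J$, and sends $b$ to $b$. Hence $\bar a_I b\equiv_C\bar a_J b$, as required.
\end{enumerate}

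The only delicate point is step~4: checking that non-algebraicity of $b$ over the whole parameter set transfers to $\sigma(b)$ over the \emph{image} set $P(\mathbb{M})\bar a_J C$. This goes through because $\sigma$ is an $\CL_P$-automorphism, so it maps $P(\mathbb{M})$ onto itself, and because $b$ is non-algebraic over the full family $(a_\eta)_{\eta\in\CI^*}$, not just over $\bar a_J$.
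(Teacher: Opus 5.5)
The paper gives no proof of this statement; it is quoted from \cite{AKLL25}, so there is no internal argument to compare with. Your proposal is a correct, self-contained proof.

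The structure is sound. You take $\sigma\in\operatorname{Aut}(\mathbb{M}/C)$ with $\sigma(\bar a_I)=\bar a_J$ (with $C$ small, as is implicit). You then observe that both $b$ and $\sigma(b)$ lie outside $\acl_{\CL_{\mathrm{ring}}}(P(\mathbb{M})\bar a_J C)$. This is exactly where non-algebraicity over the whole family $(a_\eta)_{\eta\in\CI^*}$, rather than over $\bar a_I$ alone, is used. Finally you apply the homogeneity fact of \cite{dKN21} with $A=\bar a_J C$. You are also right that the modeling property of $\CI^*$ plays no role: the argument works for an arbitrary index structure.

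One small repair is needed in step 5. Equality of types over the large set $P(\mathbb{M})A$ does not by itself produce an automorphism fixing $P(\mathbb{M})A$ pointwise, since strong homogeneity of the monster model is only available over small sets. You do not need this. Restrict the conclusion to $\sigma(b)\equiv_{A} b$ and take $\tau\in\operatorname{Aut}(\mathbb{M}/A)$, which exists because $A=\bar a_J C$ is small. Alternatively, drop $\tau$ entirely and write
\[
\tp(\bar a_I b/C)=\tp(\sigma(\bar a_I)\sigma(b)/C)=\tp(\bar a_J\sigma(b)/C)=\tp(\bar a_J b/C),
\]
where the last equality holds because $\sigma(b)\equiv_{\bar a_J C} b$.
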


\begin{theorem}\label{thm: ACF_T preservation}
Let $T$ be a complete theory of fields, possibly with an additional structure. If $D^{\CL^*\!\!\!,\,\CI^*}_{X,\bar{Y}}\!\!\in\mathfrak{D}_{n\text{-var}}$,  then $T\in D^{\CL^*\!\!\!,\,\CI^*}_{X,\bar{Y}}$ if and only if ACF$_T\in D^{\CL^*\!\!\!,\,\CI^*}_{X,\bar{Y}}$.
\begin{proof}

The implication from right to left is clear. Suppose $\text{ACF}_T\notin D^{\CL^*\!\!\!,\,\CI^*}_{X,\bar{Y}}$. Then by \Cref{prop: n-var theorem 1-predicate}, we have $\varphi(\bar{x},\bar{x}',y)$ and $\CI^*$-indiscernible $(a_\eta)_{\eta\in\CI^*}$ such that
\begin{itemize}
\item[(i)] $\varphi(\bar{x},\bar{x}',y)\wedge \neg P(\bar{x})\wedge P(\bar{x}')$ witnesses $\neg D^{\CL^*\!\!\!,\,\CI^*}_{X,\bar{Y}}$ with $(a_\eta)_{\eta\in\CI^*}$,
\item[(ii)] there exist $\bar{b}\bar{b}'\models \{\varphi(\bar{x},\bar{x}',a_\eta)\wedge \neg P(\bar{x})\wedge P(\bar{x}')\}_{\eta\in X}$ such that
\begin{itemize}
\item[$\ast$] $\dim(\bar{b}\bar{b}'/(a_\eta)_{\eta\in\CI^*}\!)=|\bar{x}|+|\bar{x}'|$,
\item[$\ast$] $\dim(\bar{b}/P(\mathbb{M})(a_\eta)_{\eta\in\CI^*}\!)=|\bar{x}|$.
\end{itemize}
\end{itemize}

Fix any $\bar{b}\bar{b}'$ satisfying (ii). By \Cref{fact: lifting strong indiscernible ACF_T}, $(a_\eta)_{\eta\in\CI^*}$ is $\CI^*$-indiscernible over $\bar{b}$. Thus $(\bar ba_\eta)_{\eta\in\CI^*}$ is $\CI^*$-indiscernible and hence we may assume $|\bar{x}|=0$. By \Cref{fact: stably embedded ACF_T}, indiscernibility of $(a_\eta)_{\eta\in\CI^*}$, weak monochromaticity of $\CI^*$, the $\CI^*$-modeling property, and \Cref{lem: weak monochromatic 1}, there is an $\CL$-formula $\psi(\bar{x}',y')$ and $\CI^*$-indiscernible $(a'_\eta)_{\eta\in\CI^*}\!\subseteq P(\mathbb{M})$ such that $\psi^P(\bar{x}',y')$ witnesses $\neg D^{\CL^*\!\!\!,\,\CI^*}_{X,\bar{Y}}$ with $(a'_\eta)_{\eta\in\CI^*}$. Thus $\psi$ witnesses $T\notin D^{\CL^*\!\!\!,\,\CI^*}_{X,\bar{Y}}$.
\end{proof}
\end{theorem}

\begin{corollary}\label{cor: ACF_T}
Let $T$ be a complete theory of fields, possibly with additional structure.
\begin{itemize}
\item[(i)] $T$ is stable if and only if $\text{ACF}_T$ is stable (previously proved in \cite{dKN21}).
\item[(ii)] $T$ is simple if and only if $\text{ACF}_T$ is simple (previously proved in \cite{dKN21}).
\item[(iii)] $T$ is NIP if and only if $\text{ACF}_T$ is NIP (previously proved in \cite{dKN21}).
\item[(iv)] $T$ is NTP$_1$ if and only if $\text{ACF}_T$ is NTP$_1$ (previously proved in \cite{AKLL25}).
\item[(v)] $T$ is NTP$_2$ if and only if $\text{ACF}_T$ is NTP$_2$ (previously proved in \cite{AKLL25}).
\item[(vi)] $T$ is NATP if and only if $\text{ACF}_T$ is NATP (previously proved in \cite{AKLL25}).
\item[(vii)] $T$ is NCTP if and only if $\text{ACF}_T$ is NCTP.
\item[(viii)] $T$ is NBTP if and only if $\text{ACF}_T$ is NBTP.
\item[(ix)] $T$ is NWP if and only if $\text{ACF}_T$ is NWP.
\item[(x)] $T$ is NGP if and only if $\text{ACF}_T$ is NGP.
\item[(xi)] For each $1<k<\omega$, $T$ is NPM$^{(k)}$
if and only if $\text{ACF}_T$ is NPM$^{(k)}$.
\end{itemize}
\end{corollary}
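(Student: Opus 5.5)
The plan is to derive every item of the corollary at once from \Cref{thm: ACF_T preservation}. That theorem says that for any $D\in\mathfrak{D}_{n\text{-var}}$ and any complete theory of fields $T$, possibly with additional structure, $T\in D$ if and only if $\text{ACF}_T\in D$. So it suffices to know that each class listed in (i)--(xi) is of the form $D^{\CL^*\!\!\!,\,\CI^*}_{X,\bar Y}$ for a quadruple satisfying the $n$-variable theorem. This is exactly what \Cref{sec: D-n-var} establishes, as follows:
\begin{itemize}
\item \Cref{prop: stable is D_n-var} handles $D_\text{stable}$ and \Cref{prop: simple is D_n-var} handles $D_\text{simple}$.
\item \Cref{prop: NIP is D_n-var}, \Cref{prop: NTP1 is D_n-var}, \Cref{prop: NTP2 is D_n-var}, \Cref{prop: NATP is D_n-var} and \Cref{prop: NCTP is D_n-var} handle $D_\text{NIP}$, $D_{\text{NTP}_1}$, $D_{\text{NTP}_2}$, $D_\text{NATP}$ and $D_\text{NCTP}$.
\item \Cref{prop: NBTP is D_n-var}, \Cref{prop: NWP is D_n-var}, \Cref{prop: NGP is D_n-var} and \Cref{prop: NPMk is D_n-var} handle $D_\text{NBTP}$, $D_\text{NWP}$, $D_\text{NGP}$ and $D_{\text{NPM}^{(k)}}$ for each $1<k<\omega$.
\end{itemize}

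Concretely, I would go item by item. For each dividing line I fix the quadruple $(\CL^*,\CI^*,X,\bar Y)$ produced in the corresponding proposition, and I record the identity $D=D^{\CL^*\!\!\!,\,\CI^*}_{X,\bar Y}$ proved there. Then I plug this $D$ into \Cref{thm: ACF_T preservation}. No further hypotheses need to be checked, since the theorem only assumes that $T$ is a complete theory of fields and that $D\in\mathfrak{D}_{n\text{-var}}$. In particular, unlike the geometric constructions of the previous sections, no geometricity assumption arises here.

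The only points needing a little care are the two cases where the earlier identification itself had a subtlety, and I would point to them explicitly:
\begin{itemize}
\item For NGP, the definition quantifies over all $k$-grids. The single configuration used in \Cref{prop: NGP is D_n-var} captures this class because of the reduction in \Cref{prop: k-grid -> 2-grid} from $k$-grids to $2$-grids.
\item For NWP, $\bar Y$ consists of three sets. This is harmless, since \Cref{def: n-var quadruple} allows any finite $\bar Y$, and \Cref{thm: ACF_T preservation} does not even use the finiteness of $\bar Y$ (cf.\ \Cref{tab: conditions used}).
\end{itemize}

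I expect no genuine obstacle. The substantive work lies in \Cref{thm: ACF_T preservation} and in the membership propositions, so the corollary is a bookkeeping step.
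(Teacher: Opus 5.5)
Your proposal is correct and is the same argument the paper uses: the corollary follows by applying \Cref{thm: ACF_T preservation} to each dividing line that \Cref{sec: D-n-var} shows lies in $\mathfrak{D}_{n\text{-var}}$, and no further hypotheses need to be checked. Your side remarks are harmless. The three-set $\bar Y$ for NWP causes no trouble (the theorem does not use finiteness of $\bar Y$, cf.\ \Cref{tab: conditions used}), and the NGP case is handled by simply citing its membership proposition.
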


The following phenomenon is observed by Yvon Bossut.

\begin{remark}[Bossut]
Let $D\in\mathfrak{D}_{n\text{-var}}$. If $T\in D$ for some complete theory $T$ of fields of characteristic $p$, possibly with extra structure, then $\mathrm{ACF}_p\in D$.
\end{remark}

This observation is particularly interesting since it is hard to imagine a dividing line containing no theory of fields. We wonder whether it can be generalized further.

\begin{question}
Does every $D\in\mathfrak{D}_{n\text{-var}}$ contain $D_{\text{stable}}$?
\end{question}

If so, $D_{\text{stable}}$ would be the least element of $\mathfrak{D}_{n\text{-var}}$ under inclusion. Otherwise, we can also ask the following.

\begin{question}
Is there $D\in\mathfrak{D}_{n\text{-var}}$ that contains no theory of fields?
\end{question}

\section{Generic derivation on algebraically bounded fields}\label{sec: generic derivation fields}

We recall basic notions about generic derivation from \cite{FT26} and \cite{KK26}. 
In this section, we fix $\CL\supseteq\CL_\text{ring}$ and a complete $\CL$-theory $T$ whose restriction to $\CL_\text{ring}$ is a theory of fields with characteristic $0$.

\begin{definition}
We say $T$ is {\it algebraically bounded} if
\[a\in\acl_{\CL}(KB)\;\;\Leftrightarrow\;\; \text{trdeg}(a|K(B))=0\]
for any $K\prec\mathcal{K}^*\models T$, $a\in \mathcal{K}^*$, and $B\subseteq\mathcal{K}^*$.

For a field $F$, we say a map $f:F\to F$ is a {\it derivation} if it satisfies
\begin{itemize}
\item[(i)] $f(x+y)=f(x)+f(y)$,
\item[(ii)] $f(xy)=f(x)y+xf(y)$.
\end{itemize}
\end{definition}

\begin{notation}
Let $\delta$ be a new unary function symbol, $\CL^\delta:=\CL\cup\{\delta\}$, and \[T^\delta:=T\cup\{\forall xy(\delta(x+y)=\delta(x)+\delta(y)),\forall xy(\delta(xy)=\delta(x)y+x\delta(y))\}.\]
\end{notation}

\begin{definition}
Let $K\models T^\delta$.
We say $\delta$ is {\it generic} if for all $\CL(K)$-definable $X\subseteq K^{r+1}$, if the projection  of $X$ onto the first $r$ coordinates has dimension $r$, then there is $a\in K$ such that $(a,\delta(a),...,\delta^r(a))\in X$. Let $T^\delta_g:=T^\delta+\text{``}\delta\text{ is generic"}$. If $T$ is algebraically bounded, then the genericity condition
is first-order axiomatizable \cite{FT26}.
\end{definition}

\begin{remark}
Let $K\prec\mathcal{K}^*\models T$. Note that if $T$ is algebraically bounded, then $\acl_\CL$ has the exchange property over $K$. That is, $a\in\acl_\CL(KBc)\setminus\acl_\CL(KB)$ implies $c\in\acl_\CL(KBa)$ for all $a,c\in \mathcal{K}^*$ and $B\subseteq \mathcal{K}^*$.
\end{remark}

\begin{fact}\cite{FT26}\label{fact: generic derivative}
Let $T$ be a complete algebraically bounded theory of fields of characteristic $0$, possibly with additional structure. Suppose $(K,\delta)\prec(\mathcal K^*,\delta)\models T_g^\delta$.
For every $\CL^\delta(K)$-formula $\varphi(\bar{x})$, there is an $\CL(K)$-formula $\psi(\bar{x}_0,...,\bar{x}_r)$ such that
\[(\mathcal{K}^*,\delta)\models \forall \bar{x}(\varphi(\bar{x})\leftrightarrow \psi(\bar{x},\delta(\bar{x}),...,\delta^r(\bar{x}))).\]
\end{fact}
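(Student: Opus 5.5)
The plan is to reduce the statement to a type-level statement and then apply compactness. For a tuple $\bar a$ in a model of $T^\delta_g$, write $\nabla_\infty(\bar a):=(\delta^i(\bar a))_{i<\omega}$. The core claim is the following: if $(\mathcal K^*,\delta)$ is sufficiently saturated, $K\prec\mathcal K^*$, and $\tp_\CL(\nabla_\infty(\bar a)/K)=\tp_\CL(\nabla_\infty(\bar b)/K)$, then $\tp_{\CL^\delta}(\bar a/K)=\tp_{\CL^\delta}(\bar b/K)$. Granting this, the statement follows by a standard compactness argument. The $\CL^\delta(K)$-formula $\varphi(\bar x)$ is then implied, modulo the elementary diagram of $(K,\delta)$ together with $T^\delta_g$, by the partial type consisting of those formulas $\psi(\bar x,\delta(\bar x),\dots,\delta^r(\bar x))$ with $\psi\in\CL(K)$ that hold of tuples satisfying $\varphi$, and the same is true for $\neg\varphi$. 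Compactness yields finitely many such formulas, and since these sets of formulas are closed under finite conjunction, one $\psi$ with a single bound $r$ suffices.

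To prove the core claim, I would run a back-and-forth argument between two saturated models (or inside one). The back-and-forth system consists of partial maps $f:A\to B$ with the following properties: $A$ and $B$ are $\delta$-closed subfields containing $K$, each is $\acl_\CL$-closed over $K$, and $f$ is a partial $\CL$-elementary map commuting with $\delta$. The starting map is $\nabla_\infty(\bar a)\mapsto\nabla_\infty(\bar b)$, first extended to the generated $\delta$-fields and then to their algebraic closures.

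Closing under $\acl_\CL$ is harmless. By algebraic boundedness, $\acl_\CL$ over $K$ coincides with field-theoretic algebraic closure. In characteristic $0$, a derivation extends uniquely to an algebraic extension, and the derivative of an algebraic element is a rational function of that element and of the coefficients of its minimal polynomial together with their derivatives. Hence the $\CL$-elementary extension of $f$ to the algebraic closure automatically commutes with $\delta$.

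The forth step is the heart of the proof and the step I expect to be the main obstacle. Given $c\notin A$, let $r$ be least such that $\delta^r(c)$ is algebraic over $A(c,\dots,\delta^{r-1}c)$, taking $r=\omega$ if no such index exists. Let $p$ be the $\CL$-type of $(c,\dots,\delta^r c)$ over $A$, and transport it along $f$ to a type $f(p)$ over $B$. Because $c,\dots,\delta^{r-1}c$ are algebraically independent over $A$, every formula in $f(p)$ has a projection onto the first $r$ coordinates of dimension $r$. The genericity axiom then provides, for each formula in $f(p)$, some $d$ with $(d,\dots,\delta^r d)$ satisfying it, and saturation provides a single $d$ with $(d,\dots,\delta^r d)\models f(p)$. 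In the case $r=\omega$, I would apply the same argument to all finite stages, using compactness.

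It remains to check that the full prolongation of $d$ has the correct $\CL$-type. Differentiating the algebraic relation satisfied by $\delta^r c$ expresses each $\delta^{r+k}c$ as a rational function over $A$ of $c,\dots,\delta^r c$, and the same identities hold for $d$ over $B$. Since field operations are $\CL$-terms, $\tp_\CL(\nabla_\infty(c)/A)$ is determined by $p$. Therefore $f$ extends $\CL$-elementarily to $A\langle c\rangle_\delta\to B\langle d\rangle_\delta$, and one then closes under $\acl_\CL$ as described above. The back step is symmetric.

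The delicate points are, first, verifying the dimension hypothesis needed to apply genericity to each formula in the transported type, and second, checking that uniqueness of the extension of derivations to algebraic extensions is compatible with the extra $\CL$-structure; the latter is where algebraic boundedness is used.
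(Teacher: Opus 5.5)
The paper does not prove this statement. It is imported as a Fact from \cite{FT26}, so there is no in-paper argument to compare your proposal against.

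Your argument is correct. It is the standard route to this kind of relative quantifier elimination. First, a back-and-forth between $\delta$-closed, $\acl_\CL$-closed subfields containing $K$ shows that $\tp_\CL(\nabla_\infty(\bar a)/K)$ determines $\tp_{\CL^\delta}(\bar a/K)$. Second, the usual separation-by-compactness lemma is applied to the set of formulas $\psi(\bar x,\delta\bar x,\dots,\delta^r\bar x)$ with $\psi\in\CL(K)$, which is closed under Boolean combinations.

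Three points deserve a line each when you write it up.

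First, you run the back-and-forth in a saturated elementary extension. That extension satisfies the genericity axiom only because genericity is first-order for algebraically bounded $T$, as the paper notes citing \cite{FT26}. So this is a further use of algebraic boundedness, alongside the identification of $\acl_\CL$ over sets containing $K$ with relative field-theoretic algebraic closure. That identification is needed both for the dimension hypothesis and for the unique extension of $\delta$ to $\acl_\CL(A)$.

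Second, the dimension transfer you flag is easy. The condition that $x_0,\dots,x_{r-1}$ be $\acl_\CL$-independent over $A$ is a partial $\CL(A)$-type contained in $p$, so $f(p)$ forces independence over $B$. Any realization of $f(p)$ in the saturated $\CL$-reduct then gives a point of dimension $r$ in the projection of every formula of $f(p)$. No definability of dimension is needed for this step.

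Third, your partial maps are $\CL$-elementary over the model $K$, not just $\CL$-embeddings. Hence you need neither quantifier elimination nor model completeness of $T$. Correspondingly, the $\psi$ you obtain is an arbitrary $\CL(K)$-formula, which is exactly the form in which the paper states and later uses the fact.
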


\begin{notation}
For a variable $x$ and $r<\omega$, $\nabla^r(x):=(x,\delta(x),...,\delta^r(x))$. Let $\delta^0(x):=x$ and $\nabla^0(x):=x$.
\end{notation}

\begin{lemma}\label{lem: generic derivative}
Assume $T$ is algebraically bounded. Suppose $(\mathcal{K},\delta)$ is a saturated model of $T^\delta_g$. Let $\kappa$ be a small cardinal and $\Gamma((x_i)_{i<\kappa})$ a partial type in $\CL$, over a small set $A\subseteq \mathcal{K}$. Suppose that there exist $(b_i)_{i<\kappa}\models\Gamma$ and $n<\omega$ such that $b_i\notin\acl_\CL(Ab_{<i})$ for all $i<n$. Then there exists $(c_i)_{i<\kappa}\models\Gamma$ such that $c_i=\delta^i(c_0)$ for all $i\le n$.
\begin{proof}
Let $n<\omega$. It is enough to show that $\Gamma(x_0,\delta(x_0),...,\delta^n(x_0),(x_i)_{n<i<\kappa})$ is consistent. Let $\Gamma_0$ be a finite subset of $\Gamma$ and $\varphi:=\bigwedge \Gamma_0$. We may assume $\varphi$ is of the form $\varphi(x_0,...,x_n,x_{i_0},...,x_{i_{m-1}})$ for some $n<i_0<\cdots<i_{m-1}<\kappa$.

 By the assumption, $\exists x_n x_{i_0}...x_{i_{m-1}} \varphi(x_0,...,x_n,x_{i_0},...,x_{i_{m-1}})$ has dimension $n$. Thus there exists $c$ such that $\nabla^n(c)\models \exists x_{i_0}...x_{i_{m-1}} \varphi(x_0,...,x_n,x_{i_0},...,x_{i_{m-1}})$. There exists $(c_{i_k})_{k<m}$ such that $\nabla^n(c)(c_{i_k})_{k<m}\models \varphi(x_0,...,x_n,x_{i_0},...,x_{i_{m-1}})$. By compactness, $\Gamma(x_0,\delta(x_0),...,\delta^n(x_0),(x_i)_{n<i<\kappa})$ is consistent.
\end{proof}
\end{lemma}
\begin{theorem}\label{thm: T^delta_g preservation}
Assume $T$ is algebraically bounded. If $T\in D$ for some $D\in\mathfrak{D}_{n\text{-var}}$, then every (some) completion of $T^\delta_g$ belongs to $D$.
\begin{proof}
Assume $D\in\mathfrak{D}_{n\text{-var}}$ and $T\in D$. Suppose $(K,\delta)\prec(\mathcal K^*,\delta)\models T_g^\delta$ and $(\mathcal{K}^*,\delta)$ is $|K|^+$-saturated. To get a contradiction, suppose that $Th_{\CL^\delta}(\mathcal{K}^*\!,\delta)\notin D$. By \Cref{fact: generic derivative}, there exist $n<\omega$, $r_0,...,r_{n-1}<\omega$, 
\[
\psi(x_0^0,...,x_0^{r_0},...,x_{n-1}^0,...,x_{n-1}^{r_{n-1}},y)\in\CL,
\] 
and indiscernible $(a_\eta)_{\eta\in\CI^*}$ over $K$ with $|a_\eta|=|y|$ and $|x_i^j|=1$ for all 
$i<n$, $j\le r_i$, such that \[\psi(\nabla^{r_0}(x_0),...,\nabla^{r_{n-1}}(x_{n-1}),y)\] witnesses $Th_{\CL^\delta}(\mathcal{K}^*\!,\delta)\notin D$ with $(a_\eta)_{\eta\in\CI^*}$. Since we assume $T\in D$, $r_i\neq 0$ for some $i<n$. We may assume that $n$ is minimal. That is, there is no witness of $Th_{\CL^\delta}(\mathcal{K}^*\!,\delta)\notin D$ of the form
\[\psi'(\nabla^{s_0}(x_0),...,\nabla^{s_{n'-1}}(x_{{n'-1}}),y)\]
for $\psi'\in\CL$ and $n'<n$.
 Define a linear order $\sqsubset$ on $\omega^{n}$ by $(s_0,...,s_{n-1})\sqsubset(r_0,...,r_{n-1})$ if there exists $i< n$ such that $s_i<r_i$ and $s_j=r_j$ for all $j>i$. Since there is no infinite descending $\sqsubset$-chain in $\omega^n$, we may assume that $(r_0,...,r_{n-1})$ is minimal with respect to $\sqsubset$. That is, there is no witness of $Th_{\CL^\delta}(\mathcal{K}^*\!,\delta)\notin D$ of the form
\[\psi'(\nabla^{s_0}(x_0),...,\nabla^{s_{n-1}}(x_{{n-1}}),y)\]
for $\psi'\in\CL$ and $(s_0,...,s_{n-1}) \sqsubset (r_0,...,r_{n-1})$.

\medskip

\noindent\underline{Claim.}
There exists $(b_0^0,...,b_0^{r_0},...,b_{n-1}^0,...,b_{n-1}^{r_{n-1}})\models\{\psi(\bar{x},a_\eta)\}_{\eta\in X}$ such that \[b_i^j\notin\acl_\CL((a_\eta)_{\eta\in X} (b_{i'}^{j'})_{i'<i}^{j'\le r_{i'}} (b_i^{j'})^{j'<j})\] for all $i<n$ and $j<r_i$.

\smallskip

\noindent{\it Proof of Claim.}
Suppose not. Choose any $(\nabla^{r_0}(b_0),...,\nabla^{r_{n-1}}(b_{n-1}))\models\{\psi(\bar{x},a_\eta)\}_{\eta\in X}$. By algebraic boundedness, there exist $i<n$, $j<r_i$, $d<\omega$, a finite subset $X_0$ of $X$, and a polynomial \[
p(x_0^0,...,x_0^{r_0},...,x_{i-1}^0
,...,x_{i-1}^{r_{i-1}},x_i^0,...,x_i^j)\in  K((a_\eta)_{\eta\in X_0})[\bar{x}']\]
of the form 
\[p(\bar{x}')=\sum_{0\le k\le d} q_k(x_0^0,...,x_0^{r_0},...,x_{i-1}^0
,...,x_{i-1}^{r_{i-1}},x_i^0,...,x_i^{j-1}  )(x_i^j)^k\]
such that
\[
(\nabla^{r_0}(b_0),...,\nabla^{r_{n-1}}(b_{n-1}))\models  p(\bar{x}')=0 \wedge q_{d}(\bar{x}'')\neq 0,
\]
where
\[
\bar{x}'=(x_0^0,...,x_0^{r_0},...,x_{i-1}^0
,...,x_{i-1}^{r_{i-1}},x_i^0,...,x_i^j)\]
and
\[\bar{x}''=(x_0^0,...,x_0^{r_0},...,x_{i-1}^0,...,x_{i-1}^{r_{i-1}},x_i^0,...,x_i^{j-1}).\]

Then by the cofinality, $\psi'(\nabla^{r_0}(x_0),...,\nabla^{r_{n-1}}(x_{n-1}),y)$ witnesses $Th_{\CL^\delta}(\mathcal{K}^*\!,\delta)\notin D$ where
\[
\psi'(\bar{x},y):=\psi(\bar{x},y)\wedge p(\bar{x}')=0\wedge q_d(\bar{x}'')\neq 0.
\]
Let $p^{-1}(\bar{x}')=p(\bar{x}')$ and
\[
p^{k}(\bar{x}'):=\sum_{k<k'\le d}\!\! k'\times\cdots\times(k'-k)q_{k'}(\bar{x}'')(x^j_i)^{k'-k-1}
\] 
for each $0\le k<d$.
By applying $\delta$, we obtain a sequence of $\{+,\cdot,\delta\}\cup K((a_\eta)_{\eta\in X_0})$-terms $(t^{k}(\bar{x}'))_{0\le k<d}$ such that
\[
\delta^{r_i-j}(p^{k-1}(\bar{x}'))=t^{k}(\bar{x}')+p^{k}(\bar{x}')\delta^{r_i-j}(x^j_i)
\]
for each $0\le k<d$.
Note that there is no subterm $\delta^u(x_i^v)$ in $t^{k}(\bar{x}')$ such that $u+v\ge r_i$. So there is no subterm $\delta^u(x_i)$ in $t^{k}(\nabla^{r_0}(x_0),...,\nabla^{r_{i-1}}(x_{i-1}),\nabla^j(x_i))$ such that $u\ge r_i$.  Then for each $0\le k<d$, we have
\[
\delta^{r_i}(x_i)=-{t^{k}(\nabla^{r_0}(x_0),...,\nabla^{r_{i-1}}(x_{i-1}),\nabla^j(x_i))  
\over 
p^{k}(\nabla^{r_0}(x_0),...,\nabla^{r_{i-1}}(x_{i-1}),\nabla^j(x_i))
}
\]
when $p^{k-1}(\nabla^{r_0}(x_0),...,\nabla^{r_{i-1}}(x_{i-1}),\nabla^j(x_i))=0$ and $p^{k}(\nabla^{r_0}(x_0),...,\nabla^{r_{i-1}}(x_{i-1}),\nabla^j(x_i))\neq 0$.
Note that $p^{d-1}(\bar{x}')=d!\times q_{d}(\bar{x}'')$. Thus
\[
\Big( p(\bar{x}')=0 \wedge q_{d}(\bar{x}'')\neq 0\Big) \vdash \bigvee_{0\le k<d}\left(
 \bigwedge_{-1\le k'<k}\hspace{-10pt} p^{k'}(\bar{x}')= 0 \wedge p^{k}(\bar{x}')\neq 0
 \right).
\]
For each $0\le k<d$, let 
\[
P_{k}(\bar{x}'):=
\bigg( p(\bar{x}')=0\; \wedge \; q_{d}(\bar{x}'')\neq 0 \bigg)
\wedge \hspace{-5pt}\bigwedge_{-1\le k'<k}\hspace{-5pt}\Big( p^{k'}(\bar{x}')= 0 \;\wedge\; p^{k}(\bar{x}')\neq 0\Big).
\]
Then the definable sets $\{P_{k}(\bar{x}')\}_{0\le k<d}$ are disjoint,
\[\Big\lbrace \psi'(\bar{x},a_\eta)\Big\rbrace_{\eta\in X_0}\vdash\bigvee_{0\le k<d}P_{k}(\bar{x}'),
\]
and 
\[
P_{k}(\nabla^{r_0}(x_0),...,\nabla^{r_{i-1}}(x_{i-1}),\nabla^j(x_i)) \;\vdash\;  \delta^{r_i}(x_i)=-{t^{k}(\nabla^{r_0}(x_0),...,\nabla^{r_{i-1}}(x_{i-1}),\nabla^j(x_i))  
\over 
p^{k}(\nabla^{r_0}(x_0),...,\nabla^{r_{i-1}}(x_{i-1}),\nabla^j(x_i))
}
\]
for each $0\le k<d$.
Consequently, we have
\[\Big\lbrace \psi'\!\left(\overline{\nabla}(\bar{x})^{\delta^{r_i}(x_i)}_x,a_\eta\!\right)\!\!\Big\rbrace_{\!\eta\in X_0}\!\vdash\!\!\bigvee_{0\le k<d}\!\!  
\left(P_{k}\big(\overline{\nabla}(\bar{x}')\big)\wedge \delta^{r_i}(x_i)=-{t^{k}\big(\overline{\nabla}(\bar{x}')\big)  
\over 
p^{k}\big(\overline{\nabla}(\bar{x}')\big)}
\right),
\]
where 
\[
\overline{\nabla}(\bar{x}'):=(\nabla^{r_0}(x_0),...,\nabla^{r_{i-1}}(x_{i-1}),\nabla^j(x_i))\]
and
\[
\overline{\nabla}(\bar{x})^{\delta^{r_i}(x_i)}_x:=(\nabla^{r_0}(x_0),...,\nabla^{r_{i-1}}(x_{i-1}),\nabla^{r_i-1}(x_i),x,\nabla^{r_{i+1}}(x_{i+1}),...,\nabla^{r_{n-1}}(x_{n-1})).
\] 
Let $\psi^*(x_0,...,x_{n-1},y)$ be an $\CL\cup\{\delta\}$-formula given by
\[
\psi^*:=\exists x\left(\psi'\bigg(\overline{\nabla}(\bar{x})^{\delta^{r_i}(x_i)}_x,y\bigg)
\wedge\bigwedge_{\eta\in X_0} \psi'\bigg(\overline{\nabla}(\bar{x})^{\delta^{r_i}(x_i)}_x,a_\eta\bigg)
\wedge\bigvee_{0\le k<d}
\bigg(P_{k}\big(\hspace{1pt}\overline{\nabla}(\bar{x}')\big)\wedge x=-{t^{k}\big(\overline{\nabla}(\bar{x}')\big)  
\over 
p^{k}\big(\overline{\nabla}(\bar{x}')\big)}
\bigg)\right).
\]
Note that 
\[
\psi^*(x_0,...,x_{n-1},y):=\psi^{**}(\nabla^{s_0}(x_0),...,\nabla^{s_{i-1}}(x_{i-1}),\nabla^{r_i-1}(x_i),\nabla^{r_{i+1}}(x_{i+1}),...,\nabla^{r_{n-1}}(x_{n-1}),y)
\]
for some $\CL$-formula $\psi^{**}$ and $s_0,...,s_{i-1}\in\omega$. Since $(s_0,...,s_{i-1},r_i-1,r_{i+1},...,r_{n-1})\sqsubset(r_0,...,r_{n-1})$ and we assume the minimality of $(r_0,...,r_{n-1})$, it is enough to show that $\psi^*$ witnesses $Th_{\CL^\delta}(\mathcal{K}^*\!,\delta)\notin D$. 

Since $X$ is cofinal in $\CI^*$, there exists an $\CL^*$-embedding $f:\CI^*\to \CI^*$ such that
\begin{itemize}
\item[(i)] $f(\CI^*)\cap X_0=\emptyset$,
\item[(ii)] $\bar{\eta}\sim_{\CL^*}\bar{\nu} \Rightarrow \bar{\eta} X_0\sim_{\CL^*}\bar{\nu}X_0$ for all $\bar{\eta},\bar{\nu}\in f(\CI^*)$ with $|\bar{\eta}|=|\bar{\nu}|<\omega$,
\item[(iii)] $(f(\CI^*)\cap X)\sim_{\CL^*}^\text{fin}X$.
\end{itemize}
For each $\eta\in\CI^*$, let $a^*_\eta:=a_{f(\eta)}$. Clearly $(a^*_\eta)_{\eta\in\CI^*}$ is indiscernible and \[\psi^*(x_0,...,x_{n-1},a^*_\eta)\vdash\psi(\nabla^{r_0}(x_0),...,\nabla^{r_{n-1}}(x_{n-1}),a^*_\eta)\] for each $\eta\in\CI^*$. Thus $\{\psi^*(x_0,...,x_{n-1},a^*_\nu)\}_{\nu\in Y}$ is inconsistent for each $Y\in \bar{Y}$. Let $X_1$ be a finite subset of $X$. By compactness, it is enough to show that $\{\psi^*(x_0,...,x_{n-1},a^*_\eta)\}_{\eta\in X_1}$ is consistent. Equivalently, it is enough to show that $\{\psi^*(x_0,...,x_{n-1},a_\eta)\}_{\eta\in f(X_1)}$ is consistent.

By (ii) and (iii), there exists $X'_1\subseteq f(\CI^*)\cap X$ such that  $X'_1X_0\sim_{\CL^*}f(X_1)X_0$. Since $X'_1X_0\subseteq X$, there exist $b_0,...,b_{n-1}$ such that $\models\{\psi'(\nabla^{r_0}(b_0),...,\nabla^{r_{n-1}}(b_{n-1}),a_\eta)\}_{\eta\in X'_1 X_0}$. By the choice of $\psi^*$,  we have $\models\{\psi^*(b_0,...,b_{n-1},a_\eta)\}_{\eta\in X'_1}$. Note that $\{\psi^*(x_0,...,x_{n-1},a_\eta)\}_{\eta\in X'_1}$ is definable over $K(a_\eta)_{\eta\in X'_1X_0}$ and $(a_\eta)_{\eta\in X'_1}\equiv_{K(a_\eta)_{\eta\in X_0}}(a_\eta)_{\eta\in f(X_1)}$. There exists an automorphism over $K(a_\eta)_{\eta\in X_0}$  sending $(a_\eta)_{\eta\in X'_1}$ to $(a_\eta)_{\eta\in f(X_1)}$. Thus $\models\{\psi^*(x_0,...,x_{n-1},a_\eta)\}_{\eta\in f(X_1)}$ is consistent. This proves the claim. $\dashv$

\medskip

Since we assume $T\in D$, by the same argument in \Cref{prop: n-var theorem exchange}, there exist $Y\in\bar{Y}$ and 
\[(b_0^0,...,b_0^{r_0},...,b_{n-1}^0,...,b_{n-1}^{r_{n-1}})\models\{\psi(\bar{x},a_\eta)\}_{\eta\in Y}\] 
such that 
\begin{itemize}
\item[(iv)] \makebox[\linewidth][c]{$\displaystyle
  b_i^j\notin\acl((a_\eta)_{\eta\in Y} (b_{i'}^{j'})_{i'<i}^{j'\le r_{i'}} (b_i^{j'})^{j'<j})
$}
\end{itemize}
 for all $i<n$ and $j<r_i$. Let 
\[I:=\{\xi\in\omega^{< r_0+\cdots+r_{n-1}+n+1}:\emptyset\lhd\xi\text{ and } \xi(r_0+\cdots+r_i+i)=0 \text{ for all }i<n\}\]
and
\[I_i:=\{\xi\in I: \xi\unlhd \la0\ra^{r_0+\cdots+r_i+i+1}\text{ or } \la0\ra^{r_0+\cdots+r_i+i+1}\lhd \xi\}\]
for each $i<n$.
Let
\[
\Psi((x_\xi)_{\xi\in I},y):=\{\psi(x_{\xi_0},...,x_{\xi_{r_0+\cdots+r_{n-1}+n-1}},y):\emptyset\lhd\xi_0\lhd\cdots\lhd\xi_{r_0+\cdots+r_{n-1}+n-1}\}\cup\{x_\xi\neq x_{\xi'}\}_{\xi <_{ds} \xi'}
\]
and
\[
\Psi_i((x_\xi)_{\xi\in I_i},y):=\{\psi(x_{\xi_0},...,x_{\xi_{r_0+\cdots+r_{n-1}+n-1}},y):\emptyset\lhd\xi_0\lhd\cdots\lhd\xi_{r_0+\cdots+r_{n-1}+n-1}\}\cup\{x_\xi\neq x_{\xi'}\}_{\xi<_{ds}\xi'}
\]
for each $i<n$.
Then $\bigcup_{\eta\in Y}\Psi((x_\xi)_{\xi\in I},a_\eta)$ is consistent by (iv). In particular, $\bigcup_{\eta\in Y}\Psi_0((x_\xi)_{\xi\in I_0},a_\eta)$ is realized by $(b_\xi)_{\xi\in I_0}$ such that $b_{\la0\ra^i}\notin\acl((a_\eta)_{\eta\in Y}b_{\la0\ra^{<i}})$ for all $0<i\le r_0$. By \Cref{lem: generic derivative}, there exists $(c_\xi)_{\xi\in I_0}\models \bigcup_{\eta\in Y}\Psi_0((x_\xi)_{\xi\in I_0},a_\eta)$ such that $c_{\la 0\ra^i}=\delta^{i-1}(c_{\la0\ra})$ for all $0<i< r_0+2$. By s-modeling property, we may assume that $(c_\xi)_{\xi\unrhd \la0\ra^{r_0+1}}$ is s-indiscernible over $(a_\eta)_{\eta\in Y}\nabla^{r_0}c_{\la 0\ra}$. Thus \[c_{\la0\ra^{r_0+2+j}}\notin\acl((a_\eta)_{\eta\in Y}\nabla^{r_0}(c_{\la 0\ra})c_{\la0\ra^{<r_0+2+j}})\] for all $j<r_1$. By \Cref{lem: generic derivative} again, there exists $(c'_\xi)_{\xi\in I_1}\models \cup_{\eta\in Y}\Psi_1((x_\xi)_{\xi\in I_1},a_\eta)$ such that $c'_{\la0\ra^i}=c_{\la0\ra^i}$ for all $0<i<r_0+2$ and $c'_{\la0\ra^i}=\delta^{i-r_0-2}(c'_{\la0\ra^{r_0+2}})$ for all $r_0+1<i<r_0+r_1+3$. By repeating this, we can find a realization of $\bigcup_{\eta\in Y}\{\psi(\nabla^{r_0}x_0,...,\nabla^{r_{n-1}}x_{n-1},a_\eta)\}$ which yields a contradiction.
\end{proof}
\end{theorem}

\begin{corollary}
Let $T$ be a complete algebraically bounded theory of fields of characteristic $0$, possibly with additional structure.
\begin{itemize}
\item[(i)] $T$ is stable if and only if every (some) completion of $T^\delta_g$ is stable
(previously proved in \cite{FT26}).
\item[(ii)] $T$ is simple if and only if every (some)  completion of $T^\delta_g$ is simple
(previously proved in \cite{LSM25}).
\item[(iii)] $T$ is NIP if and only if every (some)  completion of $T^\delta_g$ is NIP
(previously proved in \cite{FT26}).
\item[(iv)] $T$ is NTP$_1$ if and only if every (some)  completion of  $T^\delta_g$ is NTP$_1$
(previously proved in \cite{LSM25}).
\item[(v)] $T$ is NTP$_2$ if and only if every (some)  completion of  $T^\delta_g$ is NTP$_2$
(previously proved in \cite{KK26}).
\item[(vi)] $T$ is NATP if and only if every (some)  completion of  $T^\delta_g$ is NATP
(previously proved in \cite{KK26}).
\item[(vii)] $T$ is NCTP if and only if every (some)  completion of  $T^\delta_g$ is NCTP.
\item[(viii)] $T$ is NBTP if and only if every (some)  completion of  $T^\delta_g$ is NBTP.
\item[(ix)] $T$ is NWP if and only if  every (some)  completion of $T^\delta_g$ is NWP.
\item[(x)] $T$ is NGP if and only if  every (some)  completion of $T^\delta_g$ is NGP.
\item[(xi)] For each $1<k<\omega$, $T$ is NPM$^{(k)}$
if and only if every (some) completion of  $T^\delta_g$ is NPM$^{(k)}$.
\end{itemize}
\end{corollary}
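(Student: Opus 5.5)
The corollary will follow by combining \Cref{thm: T^delta_g preservation} with the membership results of \Cref{sec: D-n-var}, together with an easy argument for the converse direction. First, for each listed dividing line I will name the corresponding element of $\mathfrak{D}_{n\text{-var}}$:
\begin{itemize}
\item $D_\text{stable}$ by \Cref{prop: stable is D_n-var};
\item $D_\text{simple}$ by \Cref{prop: simple is D_n-var};
\item $D_\text{NIP}$ by \Cref{prop: NIP is D_n-var};
\item $D_{\text{NTP}_1}$ by \Cref{prop: NTP1 is D_n-var};
\item $D_{\text{NTP}_2}$ by \Cref{prop: NTP2 is D_n-var};
\item $D_\text{NATP}$ by \Cref{prop: NATP is D_n-var};
\item $D_\text{NCTP}$ by \Cref{prop: NCTP is D_n-var};
\item $D_\text{NBTP}$ by \Cref{prop: NBTP is D_n-var};
\item $D_\text{NWP}$ by \Cref{prop: NWP is D_n-var};
\item $D_\text{NGP}$ by \Cref{prop: NGP is D_n-var};
\item $D_{\text{NPM}^{(k)}}$ by \Cref{prop: NPMk is D_n-var}.
\end{itemize}
So in every item it suffices to prove, for an arbitrary $D\in\mathfrak{D}_{n\text{-var}}$ and an arbitrary completion $T'$ of $T^\delta_g$, that $T\in D$ if and only if $T'\in D$.

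The forward direction, $T\in D\Rightarrow T'\in D$ for every completion $T'$, is exactly \Cref{thm: T^delta_g preservation}. For the converse I will argue that $T'$ is an expansion of $T$. Indeed, $T$ is complete and $T^\delta_g\supseteq T$, so $T'|_{\CL}=T$. Now suppose $T\notin D=D^{\CL^*\!\!\!,\,\CI^*}_{X,\bar Y}$, witnessed by $\varphi(x,y)\in\CL$ and an $\CL$-indiscernible $(a_\eta)_{\eta\in\CI^*}$. Embed this data into a monster model of $T'$. Consistency and inconsistency of $\{\varphi(x,a_\eta)\}_{\eta\in Z}$ depend only on the $\CL$-type, so they are unchanged in $T'$.

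The only point needing care is that condition (i) in \Cref{def: D_n-var} requires $\CI^*$-indiscernibility in the full language $\CL^\delta$. I will handle it as in \Cref{prop: bi-interpretable}. Using the modeling property of $\CI^*$, I take an $\CL^\delta$-indiscernible $(c_\eta)_{\eta\in\CI^*}$ that is $\CL^*$-locally based on $(a_\eta)_{\eta\in\CI^*}$. Each consistency or inconsistency condition on finitely many instances is a formula true of $(a_\eta)$ on every $\CL^*$-similar copy, by the $\CL$-indiscernibility of $(a_\eta)$, so it transfers to $(c_\eta)$. Consistency of the infinite set indexed by $X$ then follows by compactness. The inconsistency conditions also transfer: inconsistency for $Y$ is witnessed by a finite $Y_0\subseteq Y$, and local basedness carries this finite inconsistency over to $(c_\eta)$. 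Hence $\varphi$ witnesses $T'\notin D$.

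Finally, ``every (some)'' comes for free. Both directions hold for an arbitrary completion $T'$, so the condition $T'\in D$ holds for one completion if and only if it holds for all of them. I expect no real obstacle here, since all the substance lies in \Cref{thm: T^delta_g preservation} and the membership propositions. The one step worth stating explicitly is the upgrade of $\CL$-indiscernibility to $\CL^\delta$-indiscernibility via the modeling property.
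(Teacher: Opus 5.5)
Your proof is correct and is essentially the paper's: the forward direction is \Cref{thm: T^delta_g preservation} applied to the memberships established in \Cref{sec: D-n-var}. The converse is the trivial expansion direction, which the paper leaves implicit (as in its other preservation theorems, ``since $T^*$ is an expansion of $T$''), and your explicit upgrade of $\CL$-indiscernibility to $\CL^\delta$-indiscernibility via the modeling property, exactly as in \Cref{prop: bi-interpretable}, is the right way to make that step rigorous.
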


\section{$n$-variable theorems for higher arity cases}\label{sec: higher arity}

We now show that our framework and several of the results from the previous sections also apply to some higher-arity dividing lines.

\begin{definition}\label{def: higher-arity cofinality}
Let $\CL^*$ be a language, $\CI^*$ an $\CL^*$-structure, and $k$ a natural number.
\begin{itemize}
\item[(i)] We say $\CI^*$ is {\it $k$-monochromatic} if  $\{\la \eta \ra\subseteq \CI^*: \eta\in \CI^*, |\eta|=k\}$ has only one element up to $\CL^*$-isomorphism.
\item[(ii)] We say $\CI^*$ is {\it $k$-weakly monochromatic} if  $\{\la \eta \ra\subseteq \CI^*: \eta\in \CI^*, |\eta|=k\}$ has only finitely many elements up to $\CL^*$-isomorphism.
\item[(iii)] Let $J\subseteq(\CI^*)^k$. We say $\CI^*$ is {\it $k$-monochromatically extendable} over $J$ if for any cardinal $\lambda$, there exists $\CI^\dagger$ such that $\age_{\CL^*}(\CI^\dagger)=\age_{\CL^*}(\CI^*)$ and for any coloring $c:(\CI^\dagger)^k\to\lambda$, there exists an $\CL^*$-embedding $f:\CI^*\to\CI^\dagger$ such that $c(f(\bar\eta))=c(f(\bar\nu))$ for $\bar\eta,\bar\nu\in J$ with $\la\bar\eta\ra\sim_{\CL^*}\la\bar\nu\ra$. When $J=(\CI^*)^k$, we simply say that $\CI^*$ is
$k$-monochromatically extendable. We call such $\CI^\dagger$ a {\it $k$-monochromatic extension} of $\CI^*$ over $J$ in $\lambda$. 
\item[(iv)] We say a subset $X$ of $(\CI^*)^k$ is {\it cofinal} in $\CI^*$ if for any finite subset $Z$ of $\CI^*$, there exists an $\CL^*$-embedding $f:\CI^*\to\CI^*$ such that
\begin{itemize}
\item[$\ast$] $f(\CI^*)\cap Z=\emptyset$,
\item[$\ast$] $\bar{\eta}\sim_{\CL^*}\bar{\nu} \Rightarrow \bar{\eta} Z\sim_{\CL^*}\bar{\nu}Z$ for all $\bar{\eta},\bar{\nu}\in f(\CI^*)$ with $|\bar{\eta}|=|\bar{\nu}|<\omega$,
\item[$\ast$] $(f(\CI^*)^k\cap X)\sim_{\CL^*}^\text{fin}X$.
\end{itemize}
\end{itemize}
\end{definition}

\begin{definition}\label{def: higher-arity n-var quadruple}
Let $\CL^*$ be a language, $\CI^*$ an $\CL^*$-structure, $k\in\omega$, $X$ a subset of $(\CI^*)^k$, and $\bar{Y}$ a finite set of subsets of $(\CI^*)^k$. We say a quintuple $(\CL^*\!\!,\,\CI^*\!\!,\,k,X,\bar{Y})$ satisfies the {\it $n$-variable theorem} if
\begin{itemize}
\item[(i)] $\CI^*$ has the modeling property,
\item[(ii)] $\CI^*$ is $k$-weakly monochromatic,
\item[(iii)] $X$ is cofinal in $\CI^*$,
\item[(iv)] $\CI^*$ is $k$-monochromatically extendable over $\mathbbof{X}:=\{\bar\eta\in\CI^*:\la\bar\eta\ra\sim_{\CL^*}\la\bar\nu\ra\text{ for some }\bar\nu\in X\}$,
\item[(v)] $Y\subseteq\mathbbof{X}$ for each $Y\in\bar{Y}$,
\item[(vi)] $|Y|>1$ for each $Y\in\bar{Y}$,
\item[(vii)] there exists a disjoint union $W\subseteq(\CI^*)^k$ of $X'$ and $Y'$ such that 
\begin{itemize}
\item[$\ast$] $X'\sim^\text{fin}_{\CL^*}X$ and $Y'\sim^\text{fin}_{\CL^*}Y$ for some $Y\in\bar{Y}$,
\item[$\ast$] $\{\nu\}\cup X'\sim^\text{fin}_{\CL^*}X$ for each $\nu\in Y'$.
\end{itemize}
\end{itemize}
\end{definition}

\begin{notation}
Let $\CL^*$ be a language and $\CI^*$ an $\CL^*$-structure. 
For an $\CI^*$-indexed set of parameters $(a_\eta)_{\eta\in\CI^*}$ and a tuple $\bar{\eta}:=(\eta_0,...,\eta_{k-1})\in(\CI^*)^k$, we put $\bar{a}_{\bar{\eta}}:=(a_{\eta_0},...,a_{\eta_{k-1}})$.
\end{notation}

\begin{definition}\label{def: D^h_n-var}
For a given quintuple $(\CL^*\!\!,\,\CI^*\!\!,\,k,X,\bar{Y})$ satisfying the $n$-variable theorem, let
$D^{\CL^*\!\!\!,\,\CI^*}_{k,X,\bar{Y}}$ be the class of all complete theories $T$ such that there do not exist $\varphi(x,\bar{y}):=\varphi(x,y_0,...,y_{k-1})$ in the language of $T$ and $(a_\eta)_{\eta\in\CI^*}$ in a model of $T$ such that
\begin{itemize}
\item[(i)] $(a_\eta)_{\eta\in\CI^*}$ is $\CI^*_{\CL^*}$-indiscernible,
\item[(ii)] $\{\varphi(x,\bar{a}_{\bar{\eta}})\}_{\bar{\eta}\in X}$ is consistent,
\item[(iii)] $\{\varphi(x,\bar{a}_{\bar{\nu}})\}_{\bar{\nu}\in Y}$ is inconsistent for each $Y\in \bar{Y}$.
\end{itemize}

Let $\mathfrak{D}^h_{n\text{-var}}$ be a class of classes of complete theories such that
\[
\mathfrak{D}^h_{n\text{-var}}:=\left\lbrace D^{\CL^*\!\!\!,\,\CI^*}_{k,X,\bar{Y}}: (\CL^*\!\!,\,\CI^*\!\!,k,X,\bar{Y})\text{ satisfies the }n\text{-variable theorem} \right\rbrace.
\]
\end{definition}

\begin{remark}
$\mathfrak{D}_{n\text{-var}}\subseteq\mathfrak{D}^h_{n\text{-var}}$.
\end{remark}

\subsection{NOP$_k$, NFOP$_k$, NIP$_k$, and ${l\over k}$NP$_k$}\label{subsection: l over kNPk}

\begin{notation}
Let $(I,<)$ be a linearly ordered set and $k\in \omega$. For $X\subseteq I^k$ and $0\le l\le k$, let 
\[
\overline{X}^l\!:=\!\bigg\lbrace\!(i_0,...,i_{k-1})\!\in\! I^k: \exists(j_0,...,j_{k-1})\!\in\! X \Big(\forall k'\big(k'<k-l\Rightarrow i_{k'}=j_{k'}\big)\wedge \forall k'\big( k'\ge k-l\Rightarrow i_{k'}\le j_{k'}\big)\!\Big) \!\bigg\rbrace.
\]
We call $\overline{X}^l$ the {\it $l$-th down closure\footnote{The name follows \cite[Definition A.1]{Han25b}.} of $X$}. Note that $\overline{\emptyset}^l=\emptyset$, $\overline{X}^0=X$, and $\overline{ \overline{X}^l}^{l'}=\overline{X}^l$ for $l'<l$. $X$ is said to be {\it $l$-down closed} if $\overline{X}^l=X$. 
We say $X$ is {\it simple $l$-down closed} if there exist finitely many points $(j^0_0,...,j^0_{k-1}),...,(j^{m-1}_0,...,j^{m-1}_{k-1})$ in $I^k$ such that 
\[
X=\bigg\lbrace(i_0,...,i_{k-1})\in I^k: \exists m'<m \Big(\forall k'\big(k'<k-l\Rightarrow i_{k'}=j^{m'}_{k'}\big)\wedge\forall k' \big( k'\ge k-l\Rightarrow i_{k'}\le j^{m'}_{k'}\big)\Big) \bigg\rbrace.
\]
\end{notation}

\vspace{-22pt}

\begin{figure}[H]
\centering
\includegraphics[width=\linewidth]{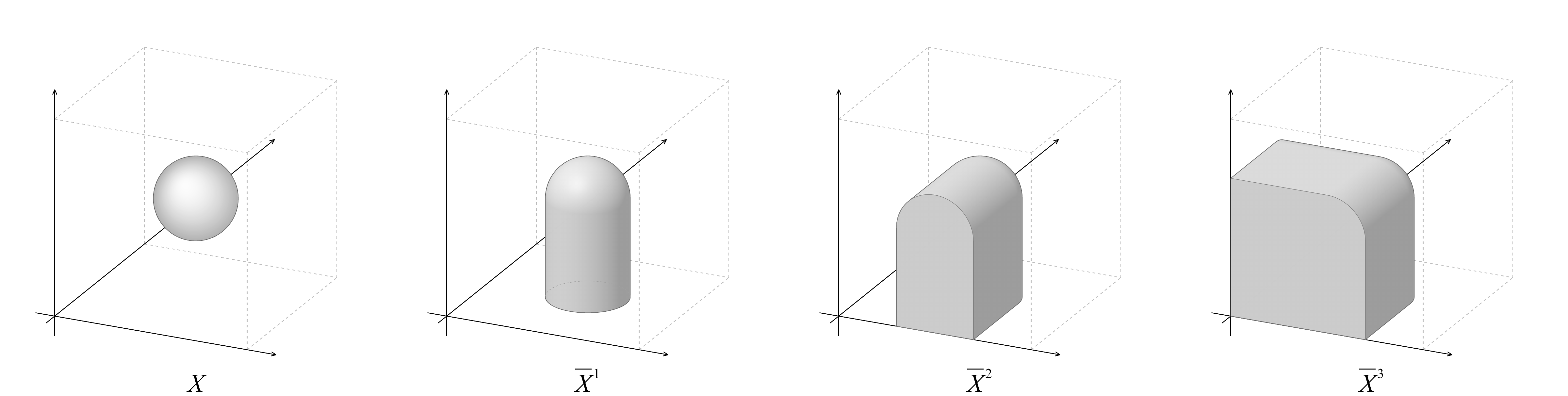}
\vspace{-17pt}
\caption{\footnotesize $X$ and its $l$-th down closures in $(0,1)^3$}
\label{fig: down closures}
\end{figure}

\begin{definition}
Let $0<k<\omega$ and $0\le l \le k$. We say a complete theory $T$ has {\it${l\over k}$P$_k$} if there exist $\varphi(x,y_0,...,y_{k-1})$ and $(a^0_i)_{i<\omega},...,(a^{k-1}_i)_{i<\omega}$ such that for any $X\subseteq \omega^k$, there exists $b_X$ such that 
\[
\models \varphi(b_X,a^0_{i_0},...,a^{k-1}_{i_{k-1}})\;\text{ if and only if }\;(i_0,...,i_{k-1})\in\overline{X}^{k-l}.
\]
We say $T$ is {\it ${l\over k}$NP$_k$} if it does not have ${l\over k}$P$_k$.
\end{definition}
\begin{remark}\label{rmk: m/nPn}
Let $T$ be a complete theory. Let $1<k<\omega$.
\begin{itemize}
\item[(i)] $T$ has OP if and only if $T$ has ${0\over 1}$P$_1$.
\item[(ii)] $T$ has OP$_2$ if and only if $T$ has ${0\over 2}$P$_2$.
\item[(iii)] $T$ has IP if and only if $T$ has ${1\over 1}$P$_1$.
\item[(iv)] $T$ has IP$_k$ if and only if $T$ has ${k\over k}$P$_k$.
\item[(v)] $T$ has FOP$_k$ if and only if $T$ has ${k-1\over k}$P$_k$.
\end{itemize}
\end{remark}

We can extend the definition of OP and OP$_2$ regarding \Cref{rmk: m/nPn} (i) and (ii), as follows.

\begin{definition}
Let $0<k<\omega$. We say $T$ has {\it OP$_k$} if it has ${0\over k}$P$_k$. We say $T$ is {\it NOP$_k$} if it does not have OP$_k$.
\end{definition}

\begin{proposition}\label{prop: l+1 k Pk => l k Pk / l k+1 Pk+1 => l+1 k Pk} 
Let $0<k<\omega$ and $0\le l< k$. Let $T$ be a complete theory.
\begin{itemize}
\item[(i)] If $T$ has ${l+1\over k}$P$_{k}$, then it has ${l\over k}$P$_{k}$.
\item[(ii)] If $T$ has ${l\over k+1}$P$_{k+1}$, then it has ${l+1\over k}$P$_k$.
\end{itemize}
\begin{proof}
(i) Let $\varphi(x,y_0,...,y_{k-1})$, $(a^0_i)_{i<\omega},...,(a^{k-1}_i)_{i<\omega}$ witness ${l+1\over k}$P$_{k}$. Then there exists $(b_X)_{X\subseteq \omega^{k}}$ such that
\[
\models \varphi(b_X,a^0_{i_0},...,a^{k-1}_{i_{k-1}})\;\text{ if and only if }\;(i_0,...,i_{k-1})\in\overline{X}^{k-(l+1)}.
\]
For each $X\subseteq\omega^{k}$, let $b^*_X:=b_{\overline{X}^{k-l}}$. It is easy to check that 
\[
\models \varphi(b^*_X,a^0_{i_0},...,a^{k-1}_{i_{k-1}})\;\text{ if and only if }\;(i_0,...,i_{k-1})\in\overline{X}^{k-l}.
\]

\medskip

\noindent(ii) Suppose $T$ has ${l\over k+1}$P$_{k+1}$. Let $\mathbb{Q}_+$ be the set of all positive rational numbers. By compactness, there exist $\varphi(x,y_0,...,y_k)$, $(a^0_q)_{q\in \mathbb{Q}_+},...,(a^k_q)_{q\in\mathbb{Q}_+}$, and $(b_X)_{X\subseteq\mathbb{Q}_+^{k+1}}$ such that
\[
\models \varphi(b_X,a^0_{q_0},...,a^{k}_{q_{k}})\;\text{ if and only if }\;(q_0,...,q_{k})\in\overline{X}^{k+1-l}.
\]
for each $X\subseteq\mathbb{Q}_+^{k+1}$. For each $q\in\mathbb{Q}_+$ and $0<k'<k$, let $c^{k'}_q:=a^{k'-1}_q$ and $c^0_q:=a^{k-1}_qa^k_{1\over q}$. For each $0<k'<k$, let $z_{k'}:=y_{k'-1}$, $z_0:=y_{k-1}y_k$, and $\psi(x,z_0,...,z_{k-1}):=\varphi(x,y_0,...,y_k)$. We show $\psi(x,z_0,...,z_{k-1})$ witnesses ${l+1\over k}$P$_k$ with $(c^0_q)_{q\in\mathbb{Q}_+},...,(c^{k-1}_q)_{q\in\mathbb{Q}_+}$. For each $X\subseteq\mathbb{Q}_+^k$, let 
\[
X':=\{(q_0,...,q_k)\in\mathbb{Q}_+^{k+1}:(q_{k-1},q_0,...,q_{k-2})\in X, q_k={1\over q_{k-1}}\}.
\]
It is easy to check that
\[
\models \psi(b_{X'},c^0_{q_0},...,c^{k-1}_{q_{k-1}})\;\text{ if and only if }\;(q_0,...,q_{k-1})\in\overline{X}^{k-(l+1)}.
\]
Thus $T$ has ${l+1\over k}$P$_k$.
\end{proof}
\end{proposition}

\begin{figure}[H]
\vspace{-15pt}
\[
\scalebox{0.88}{
\begin{tikzpicture}
[>={stealth[length=1pt,width=6pt]}, x=1.85cm, y=0.9cm]
\node (OP) at (0,0) {OP};
\node (OPfake) at (0.11,-0.05) {};
\node (IP) at (5,0) {IP};
\node (IPfake) at (4.89,-0.05) {};
\node (IPfake2) at (4.9,0.02) {};
\node (IPfake3) at (4.9,0.1) {};
\node (OP2) at (0,1) {OP$_2$};
\node (OP2fake) at (0.19,0.91) {};
\node (FOP2) at (2.5,1) {FOP$_2$};
\node (IP2) at (5,1) {IP$_2$};
\node (IP2fake) at (4.87,1.06) {};
\node (IP2fake2) at (4.87,1.14) {};
\node (OP3)   at (0,2) {OP$_3$};
\node (1/3P3)   at (1.66,2) {${1\over3}$P$_3$};
\node (FOP3)   at (3.33,2) {FOP$_3$};
\node (IP3)   at (5,2) {IP$_3$};
\node (OP4)   at (0,3) {OP$_4$};
\node (1/4P4)   at (1.25,3) {${1\over4}$P$_4$};
\node (2/4P4)   at (2.5,3) {${2\over4}$P$_4$};
\node (FOP4)   at (3.75,3) {FOP$_4$};
\node (IP4)   at (5,3) {IP$_4$};
\node (OP5)   at (0,4) {OP$_5$};
\node (1/5P5)   at (1,4) {${1\over5}$P$_5$};
\node (2/5P5)   at (2,4) {${2\over5}$P$_5$};
\node (3/5P5)   at (3,4) {${3\over5}$P$_5$};
\node (FOP5)   at (4,4) {FOP$_5$};
\node (IP5)   at (5,4) {IP$_5$};
\node (OP6)   at (0,5) {OP$_6$};
\node (1/6P6)   at (0.83,5) {${1\over6}$P$_6$};
\node (2/6P6)   at (1.66,5) {${2\over6}$P$_6$};
\node (3/6P6)   at (2.5,5) {${3\over6}$P$_6$};
\node (4/6P6)   at (3.33,5) {${4\over6}$P$_6$};
\node (FOP6)   at (4.17,5) {FOP$_6$};
\node (IP6)   at (5,5) {IP$_6$};
\node (OP7)   at (0,6) {OP$_7$};
\node (1/7P7)   at (0.71,6) {${1\over7}$P$_7$};
\node (2/7P7)   at (1.42,6) {${2\over7}$P$_7$};
\node (3/7P7)   at (2.14,6) {${3\over7}$P$_7$};
\node (4/7P7)   at (2.85,6) {${4\over7}$P$_7$};
\node (5/7P7)   at (3.54,6) {${5\over7}$P$_7$};
\node (FOP7)   at (4.29,6) {FOP$_7$};
\node (IP7)   at (5,6) {IP$_7$};
\node (OP8)   at (0,7) {OP$_8$};
\node (1/8P8)   at (0.625,7) {${1\over8}$P$_8$};
\node (2/8P8)   at (1.25,7) {${2\over8}$P$_8$};
\node (3/8P8)   at (1.875,7) {${3\over8}$P$_8$};
\node (4/8P8)   at (2.5,7) {${4\over8}$P$_8$};
\node (5/8P8)   at (3.125,7) {${5\over8}$P$_8$};
\node (6/8P8)   at (3.75,7) {${6\over8}$P$_8$};
\node (FOP8)   at (4.375,7) {FOP$_8$};
\node (IP8)   at (5,7) {IP$_8$};
\node (dots)   at (2.5,7.95) {$\vdots$};

\draw[->] (IPfake) -- (OPfake);
\draw[->] (IP2) -- (FOP2);
\draw[->] (FOP2) -- (OP2);
\draw[->] (IP3) -- (FOP3);
\draw[->] (FOP3) -- (1/3P3);
\draw[->] (1/3P3) -- (OP3);
\draw[->] (IP4) -- (FOP4);
\draw[->] (FOP4) -- (2/4P4);
\draw[->] (2/4P4) -- (1/4P4);
\draw[->] (1/4P4) -- (OP4);
\draw[->] (IP5) -- (FOP5);
\draw[->] (FOP5) -- (3/5P5);
\draw[->] (3/5P5) -- (2/5P5);
\draw[->] (2/5P5) -- (1/5P5);
\draw[->] (1/5P5) -- (OP5);
\draw[->] (IP6) -- (FOP6);
\draw[->] (FOP6) -- (4/6P6);
\draw[->] (4/6P6) -- (3/6P6);
\draw[->] (3/6P6) -- (2/6P6);
\draw[->] (2/6P6) -- (1/6P6);
\draw[->] (1/6P6) -- (OP6);
\draw[->] (IP7) -- (FOP7);
\draw[->] (FOP7) -- (5/7P7);
\draw[->] (5/7P7) -- (4/7P7);
\draw[->] (4/7P7) -- (3/7P7);
\draw[->] (3/7P7) -- (2/7P7);
\draw[->] (2/7P7) -- (1/7P7);
\draw[->] (1/7P7) -- (OP7);
\draw[->] (IP8) -- (FOP8);
\draw[->] (FOP8) -- (6/8P8);
\draw[->] (6/8P8) -- (5/8P8);
\draw[->] (5/8P8) -- (4/8P8);
\draw[->] (4/8P8) -- (3/8P8);
\draw[->] (3/8P8) -- (2/8P8);
\draw[->] (2/8P8) -- (1/8P8);
\draw[->] (1/8P8) -- (OP8);
\draw[->] (OP2) -- (OP);
\draw[->] (OP2fake) -- (IPfake2);
\draw[->] (FOP2) -- (IPfake3);
\draw[->] (IP2) -- (IP);
\draw[->] (OP3) -- (OP2);
\draw[->] (OP3) -- (FOP2);
\draw[->] (1/3P3) -- (FOP2);
\draw[->] (1/3P3) -- (IP2fake);
\draw[->] (FOP3) -- (IP2fake2);
\draw[->] (IP3) -- (IP2);
\draw[->] (OP4) -- (OP3);
\draw[->] (OP4) -- (1/3P3);
\draw[->] (1/4P4) -- (1/3P3);
\draw[->] (1/4P4) -- (FOP3);
\draw[->] (2/4P4) -- (FOP3);
\draw[->] (2/4P4) -- (IP3);
\draw[->] (FOP4) -- (IP3);
\draw[->] (IP4) -- (IP3);
\draw[->] (OP5) -- (OP4);
\draw[->] (OP5) -- (1/4P4);
\draw[->] (1/5P5) -- (1/4P4);
\draw[->] (1/5P5) -- (2/4P4);
\draw[->] (2/5P5) -- (2/4P4);
\draw[->] (2/5P5) -- (FOP4);
\draw[->] (3/5P5) -- (FOP4);
\draw[->] (3/5P5) -- (IP4);
\draw[->] (FOP5) -- (IP4);
\draw[->] (IP5) -- (IP4);
\draw[->] (OP6) -- (OP5);
\draw[->] (OP6) -- (1/5P5);
\draw[->] (1/6P6) -- (1/5P5);
\draw[->] (1/6P6) -- (2/5P5);
\draw[->] (2/6P6) -- (2/5P5);
\draw[->] (2/6P6) -- (3/5P5);
\draw[->] (3/6P6) -- (3/5P5);
\draw[->] (3/6P6) -- (FOP5);
\draw[->] (4/6P6) -- (FOP5);
\draw[->] (4/6P6) -- (IP5);
\draw[->] (FOP6) -- (IP5);
\draw[->] (IP6) -- (IP5);
\draw[->] (OP7) -- (OP6);
\draw[->] (OP7) -- (1/6P6);
\draw[->] (1/7P7) -- (1/6P6);
\draw[->] (1/7P7) -- (2/6P6);
\draw[->] (2/7P7) -- (2/6P6);
\draw[->] (2/7P7) -- (3/6P6);
\draw[->] (3/7P7) -- (3/6P6);
\draw[->] (3/7P7) -- (4/6P6);
\draw[->] (4/7P7) -- (4/6P6);
\draw[->] (4/7P7) -- (FOP6);
\draw[->] (5/7P7) -- (FOP6);
\draw[->] (5/7P7) -- (IP6);
\draw[->] (FOP7) -- (IP6);
\draw[->] (IP7) -- (IP6);
\draw[->] (OP8) -- (OP7);
\draw[->] (OP8) -- (1/7P7);
\draw[->] (1/8P8) -- (1/7P7);
\draw[->] (1/8P8) -- (2/7P7);
\draw[->] (2/8P8) -- (2/7P7);
\draw[->] (2/8P8) -- (3/7P7);
\draw[->] (3/8P8) -- (3/7P7);
\draw[->] (3/8P8) -- (4/7P7);
\draw[->] (4/8P8) -- (4/7P7);
\draw[->] (4/8P8) -- (5/7P7);
\draw[->] (5/8P8) -- (FOP7);
\draw[->] (5/8P8) -- (5/7P7);
\draw[->] (6/8P8) -- (FOP7);
\draw[->] (6/8P8) -- (IP7);
\draw[->] (FOP8) -- (IP7);
\draw[->] (IP8) -- (IP7);
\end{tikzpicture}
}
\]
\vspace{-20pt}
\caption{
\footnotesize  
${l\over k+1}$P$_{k+1}$ $\rightarrow$ ${l+1\over k}$P$_k$,\; ${l+1\over k}$P$_{k}$ $\rightarrow$ ${l\over k}$P$_k$}
\label{fig: m/nPn}
\end{figure}

\begin{remark}
Thus we have \Cref{fig: m/nPn} and implications OP$_k \rightarrow$ IP$_{\lfloor{k\over 2}\rfloor}$.
\Cref{fig: m/nPn} and this implication can be compared with Figure B.3 in \cite{Ald25} and the first diagram on page 200 of \cite{Ald25}, respectively.
In his thesis \cite{Ald25}, Abd Aldaim introduces the {\it two-sided functional order property FOP$_{k_1,k_2}$}, another common generalization of FOP$_k$ and IP$_k$. Although its definition is quite different from that of ${l\over k}P_k$, the diagram of implications among the properties FOP$_{k_1,k_2}$ is quite similar to \Cref{fig: m/nPn}. Understanding the relation between these two families would be an interesting direction for future work.
\end{remark}

\begin{notation}\label{notation: universal m-down closed subset}
Fix $1<k<\omega$ and $0\le l\le k$. Put $I:=\{q\in\mathbb{Q}:0<q<1\}$ and let $(X_i)_{i<\omega}$ be a sequence of simple $(k-l)$-down closed subsets of $I^k$ such that for any $N<\omega$ and a simple $(k-l)$-down closed subset $X$ of $I^k$, there exists $i<\omega$ such that $i>N$ and $X_i=X$. Let ${l\over k}X_k$ be a subset of $\mathbb{Q}^k$ given by 
\[
{l\over k}X_k:=\bigcup_{i<\omega}\Big((-i,i,...,i)+X_i\Big),
\] 
where $(-i,i,...,i)+X_i:=\{(x_0-i,x_1+i,...,x_{k-1}+i):(x_0,...,x_{k-1})\in X_i\}$. Let ${l\over k}\hat X_k\subseteq \mathbb{Q}^k$ be the $(k-l)$-th down closure of ${l\over k}X_k$.
\end{notation}

\begin{figure}[H]
\centering
\includegraphics[scale=0.65]
{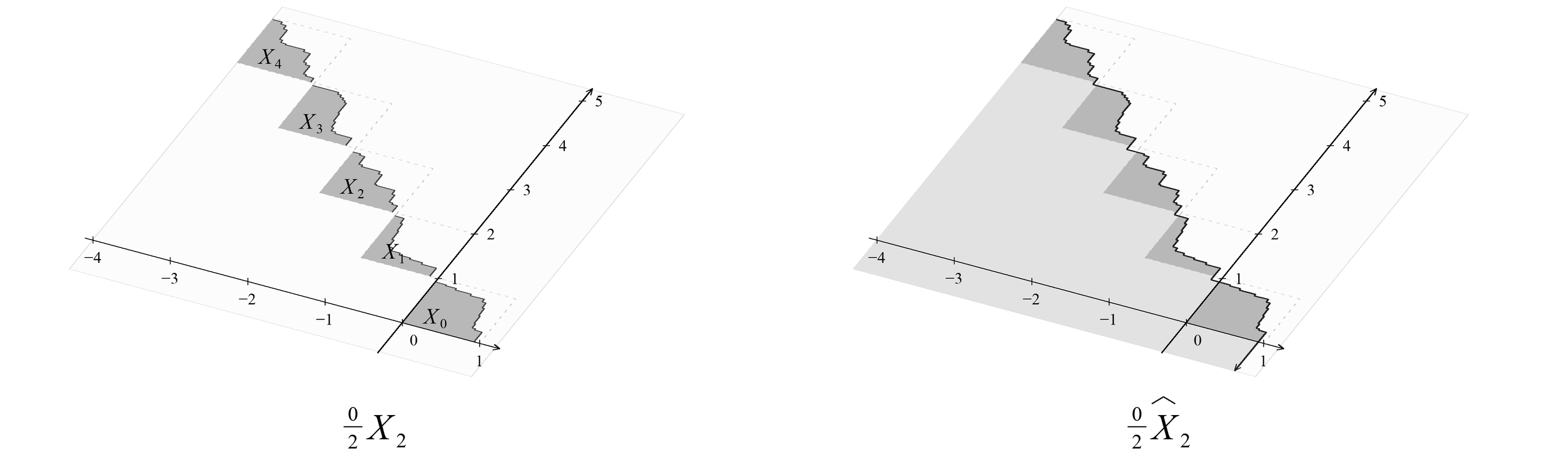}
\vspace{-10pt}
\caption{\footnotesize ${l\over k}X_k$ and ${l\over k}\hat X_k$ when $k=2, l=0$}
\label{fig: l over k hat X k}
\end{figure}

The following statement is immediate from the choice of ${l\over k}\hat X_k$ and mutual indiscernibility of sequences indexed by $k\times \mathbb{Q}$.

\begin{lemma}\label{lem: m/nPn characterized by s-c n-straight pattern}
Let $1<k<\omega$ and $0\le l\le k$. Let ${l\over k}\hat X_k$ be the $(k-l)$-down closed subset of $\mathbb{Q}^k$ defined in \Cref{notation: universal m-down closed subset}. Then the following are equivalent.
\begin{itemize}
\item[(i)] $T$ has ${l\over k}$P$_k$.
\item[(ii)] There exist $\varphi(x,y_0,...,y_{k-1})$ and mutually indiscernible $(a^0_q)_{q\in\mathbb{Q}},...,(a^{k-1}_q)_{q\in\mathbb{Q}}$ such that
\[
\Big\lbrace\varphi(x,a^0_{q_0},...,a^{k-1}_{q_{k-1}}):(q_0,...,q_{k-1})\in {l\over k}\hat X_k\Big\rbrace\cup\Big\lbrace\neg\varphi(x,a^0_{q_0},...,a^{k-1}_{q_{k-1}}):(q_0,...,q_{k-1})\notin {l\over k}\hat X_k\Big\rbrace
\]
is consistent.
\end{itemize}
\end{lemma}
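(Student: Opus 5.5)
We prove the two implications separately; the direction (ii)$\Rightarrow$(i) is the substantial one.

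For (i)$\Rightarrow$(ii), let $\varphi(x,y_0,\dots,y_{k-1})$ and $(a^0_i)_{i<\omega},\dots,(a^{k-1}_i)_{i<\omega}$ witness ${l\over k}$P$_k$. By compactness, replace each index set $\omega$ by $\mathbb{Q}$. The pattern is preserved: for every $X\subseteq\mathbb{Q}^k$ there is $b_X$ with $\models\varphi(b_X,a^0_{q_0},\dots,a^{k-1}_{q_{k-1}})$ iff $(q_0,\dots,q_{k-1})\in\overline{X}^{k-l}$. This works because any finite fragment of the required diagram involves only finitely many rationals, which embed order-preservingly into $\omega$. Finite pieces of the down closure pull back to finite pieces of down closures, since $\overline{X}^{k-l}$ restricted to a finite grid $F_0\times\cdots\times F_{k-1}$ is the down closure inside the grid of $X\cap\prod F_j$ (taken with the induced order). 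Next extract mutually indiscernible sequences $(a'^j_q)_{q\in\mathbb{Q}}$ that are locally based on the $(a^j_q)$, using the modeling property for $k\times\mathbb{Q}$. The type in (ii), with $\hat X={l\over k}\hat X_k$, holds in the original array for $b_{\hat X}$. Every finite fragment of it, evaluated on order-isomorphic copies of the indices in each coordinate, is again a fragment of a pattern realized by some $b_{X'}$: take $X'$ to be the image of $\hat X$ under the coordinatewise order isomorphism, using the previous observation about restrictions to grids. Local basedness therefore transfers consistency of each finite fragment to the primed array, and compactness gives (ii).

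For (ii)$\Rightarrow$(i), fix $\varphi$ and mutually indiscernible $(a^j_q)$ realizing the type for $\hat X:={l\over k}\hat X_k$. By compactness, (i) follows once we show that for every $N$ and every $(k-l)$-down closed $Z\subseteq N^k$ there is $b$ with $\models\varphi(b,a^0_{i_0},\dots,a^{k-1}_{i_{k-1}})$ iff $(i_0,\dots,i_{k-1})\in Z$ for all $i_j<N$. Such a $Z$ is simple $(k-l)$-down closed, being the down closure of its finitely many points. Choose rationals $0<p_0<\dots<p_{N-1}<1$ and let $X$ be the simple $(k-l)$-down closed subset of $I^k$ generated by the image of $Z$ under $i\mapsto p_i$ in each coordinate. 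By Notation~\ref{notation: universal m-down closed subset}, $X=X_i$ for arbitrarily large $i$. We pick one such $i$ and look at the block $B:=(-i+I)\times(i+I)^{k-1}$.

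The main obstacle is verifying that $\hat X\cap B$ equals the translate $(-i,i,\dots,i)+X_i$ and nothing else, meaning that taking down closures of the other blocks adds no points to $B$. Block $i'$ lies in first coordinate $(-i',-i'+1)$ and remaining coordinates $(i',i'+1)$. Down closure keeps the first $l$ coordinates fixed and lowers the last $k-l$. If $k-l<k$, the first coordinate is fixed, which separates blocks outright. If $l=0$, a point of block $i'$ can be lowered into $B$ only if $-i'+1>-i$, i.e.\ $i'\le i$, and also $i'+1>i$ on the other coordinates, i.e.\ $i'\ge i$; so $i'=i$. This is exactly why the first coordinate is translated by $-i$ and the rest by $+i$. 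Hence $\hat X\cap B=(-i,i,\dots,i)+X_i$. Finally, the $b$ realizing the type in (ii) satisfies $\varphi$ on the grid points $(-i+p_{i_0},i+p_{i_1},\dots)$ exactly when the corresponding index tuple lies in $Z$. By mutual indiscernibility, the sequences $(a^j_{\pm i+p_m})_{m<N}$ have the same joint type as any other increasing choices, so this realizes $Z$ on the prescribed indices. Compactness then gives ${l\over k}$P$_k$.
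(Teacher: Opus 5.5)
Your proof is correct and is essentially what the paper means when it calls the lemma immediate from the choice of ${l\over k}\hat X_k$ and mutual indiscernibility: you check that the translates $(-i,i,\dots,i)+X_i$ do not interact under the $(k-l)$-th down closure, and you use mutual indiscernibility to move realizations between the block $B$ and fixed indices. One minor slip: $\overline{X}^{k-l}\cap\prod_j F_j$ need not equal the down closure inside the grid of $X\cap\prod_j F_j$, since points of $X$ outside the grid also contribute. What is true is that $\overline{X}^{k-l}\cap\prod_j F_j$ is itself down closed in the grid, and that is all your compactness step uses.
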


\begin{notation}
Let $<,E$ be binary relation symbols and $P_0,...,P_{k-1}, C$ unary relation symbols, and $\CL_{{x\over k}\text{NP}_k}:=\{<,E,P_0,...,P_{k-1},C\}$. Define an $\CL_{{x\over k}\text{NP}_k}$-structure $\CI_{{x\over k}\text{NP}_k}$ on $k\times\mathbb{Q}\times\mathbb{Q}$ as follows.
\begin{itemize}
\item[(i)] $(i,q,r)<(i',q',r')$ if one of the following holds.
\begin{itemize}
\item[$\ast$] $i<i'$.
\item[$\ast$] $i=i'$ and $q<q'$.
\item[$\ast$] $i=i'$, $q=q'$, and $r<r'$.
\end{itemize}
\item[(ii)] $E((i,q,r),(i',q',r'))$ if $i=i'$ and $q=q'$.
\item[(iii)] $P_j((i,q,r))$ if $i=j$ for each $i,j<k$.
\item[(iv)] $C((i,q,r))$ if $r=0$ or $r={a\over b}$ for $a,b\in\mathbb{Z}$ such that $(a,b)=1$ and $a$ is even.
\end{itemize}
\end{notation}

\begin{lemma}
${\CI_{{x\over k}\text{NP}_k}}$-indiscernibles have the modeling property. 
\begin{proof}
 Let $\CL^-_{{x\over k}\text{NP}_k}:=\CL_{{x\over k}\text{NP}_k}\setminus\{C\}$ and $\CK^-_{{x\over k}\text{NP}_k}$ be the class of finite $\CL^-_{{x\over k}\text{NP}_k}$-structures satisfying
\begin{itemize}
\item[(i)] $<$ is a linear order,
\item[(ii)] $E$ is an equivalence relation,
\item[(iii)] If $E(x,z)$ and $x<y<z$, then $E(x,y)$,
\item[(iv)] $P_0,...,P_{k-1}$ form a partition on the universe,
\item[(v)] If $x\in P_i$, $y\in P_j$ and $i<j<k$, then $x<y$ and $\neg E(x,y)$.
\end{itemize}
It is easy to check that  $\CK^-_{{x\over k}\text{NP}_k}$ is a \Fraisse class with strong amalgamation property.
By \cite[Lemma 1.2, 1.3]{Che13} and \cite[Lemma A.2]{CPT19}, $\CK^-_{{x\over k}\text{NP}_k}$ is Ramsey. Let $\mathcal{K}_{{x\over k}\text{NP}_k}$ be the class of all finite $\CL_{{x\over k}\text{NP}_k}$-structures $A$ such that $A|_{\CL^-_{{x\over k}\text{NP}_k}}\!\!\!\in\mathcal{K}^-_{{x\over k}\text{NP}_k}$. Then by \cite[Theorem 1.3]{Bod14}, $\CK_{{x\over k}\text{NP}_k}$ is a \Fraisse class and satisfies the Ramsey property. Let $\mathbb{M}_{{x\over k}\text{NP}_k}$ be the \Fraisse limit of $\mathcal{K}_{{x\over k}\text{NP}_k}$. Then $\age(\mathbb{M}_{{x\over k}\text{NP}_k})=\age(\CI_{{x\over k}\text{NP}_k})$ and hence ${\CI_{{x\over k}\text{NP}_k}}$-indiscernibles have the modeling property.
\end{proof}
\end{lemma}

\begin{notation}
For $1<k<\omega$ and $0\le l \le k$, let 
\[
{l\over k}X^*_k:=\bigg\lbrace \Big( (0,q_0,0),...,(k-1,q_{k-1},0)\Big)\in (k\times\mathbb{Q}\times\mathbb{Q})^k:(q_0,...,q_{k-1})\in {l\over k}\hat X_k\bigg\rbrace
\]
\[\hspace{55pt}\cup\; \bigg\lbrace \Big( (0,q_0,{1\over2}),...,(k-1,q_{k-1},{1\over2})\Big)\in (k\times\mathbb{Q}\times\mathbb{Q})^k:(q_0,...,q_{k-1})\notin {l\over k}\hat X_k\bigg\rbrace.
\]
\end{notation}

\begin{lemma}\label{lem: cofinality of l/kX_k}
${l\over k}X^*_k$ is cofinal in $\CI_{{x\over k}\text{NP}_k}$.
\begin{proof}
For each $n<\omega$, choose any order isomorphisms $f^+_n:\mathbb{Q}\to(n,\infty)$ and $f^-_n:\mathbb{Q}\to(-\infty,-n)$. Let $g_n$ be a map from $k\times \mathbb{Q}\times\mathbb{Q}$ to $k\times\mathbb{Q}\times\mathbb{Q}$ sending $(0,q,r)$ to $(0,f^-_n(q),r)$ for each $q,r\in\mathbb{Q}$, and $(i,q,r)$ to $(i,f^+_n(q),r)$ for each $0<i<k$ and $q,r\in \mathbb{Q}$. Clearly $g_n$ is an $\CL_{{x\over k}\text{NP}_k}$-embedding. Choose a finite subset $Z$ of $k\times \mathbb{Q}\times\mathbb{Q}$. Then there exists $N<\omega$ such that $g_N(k\times\mathbb{Q}\times\mathbb{Q})\cap Z=\emptyset$. Clearly $\bar\eta Z\sim_{\CL_{{x\over k}\text{NP}_k}}\bar\nu Z$ for each $\bar\eta,\bar\nu\in g_N(k\times\mathbb{Q}\times\mathbb{Q})$ with $\bar\eta\sim_{\CL_{{x\over k}\text{NP}_k}}\bar\nu$. To show $g_N(k\times\mathbb{Q}\times\mathbb{Q})^k\cap {l\over k}X^*_k\sim^\text{fin}_{\CL_{{x\over k}\text{NP}_k}}{l\over k}X^*_k$, choose any finite subset $X_0$ of ${l\over k}X^*_k$. Then we can find a positive rational number $q$ such that $X_0$ is contained in
\[
X^q:=\bigg\lbrace\Big( (0,q_0,0),...,(k-1,q_{k-1},0)  \Big) \in (k\times\mathbb{Q}\times\mathbb{Q})^k:(q_0,...,q_{k-1})\in {l\over k}\hat X_k,\; -q<q_0,...,q_{k-1}<q \bigg\rbrace
\]
\[
\hspace{23pt}\cup \;\bigg\lbrace\hspace{-0.5pt}\Big(\hspace{-1pt} (0,q_0,\hspace{-1pt}{1\over 2}),...,(k-1,q_{k-1},\hspace{-1pt}{1\over 2})  \hspace{-0.5pt}\Big) \in (k\times\mathbb{Q}\times\mathbb{Q})^k:(q_0,...,q_{k-1})\notin {l\over k}\hat X_k,\; -q<q_0,...,q_{k-1}<q\bigg\rbrace.\]
 By using an order isomorphism from $(-q,q)$ to $(0,1)$, we can show that there exists a $(k-l)$-down closed subset $X'$ of $(0,1)^k$ such that \[
\bigg\lbrace\Big( (0,q_0,0),...,(k-1,q_{k-1},0)  \Big) \in (k\times\mathbb{Q}\times\mathbb{Q})^k:(q_0,...,q_{k-1})\in X',\; 0<q_0,...,q_{k-1}<1 \bigg\rbrace
\]
\[
\hspace{20pt}\cup \;\bigg\lbrace\Big( (0,q_0,{1\over 2}),...,(k-1,q_{k-1},{1\over 2})  \Big) \in (k\times\mathbb{Q}\times\mathbb{Q})^k:(q_0,...,q_{k-1})\notin X',\; 0<q_0,...,q_{k-1}<1\bigg\rbrace\]
has the same quantifier-free $\CL_{{x\over k}\text{NP}_k}$-type with $X^q$. Thus by the construction of ${l\over k}X^*_k$, there exists $X_1\subseteq g_N(k\times\mathbb{Q}\times\mathbb{Q})^k\cap {l\over k}X^*_k$ such that $X_1\sim_{\CL_{{x\over k}\text{NP}_k}}X_0$.
\end{proof}
\end{lemma}

\begin{notation}
${l\over k}\mathbbof{X}^*_k:=\{\bar\eta\in\CI_{{x\over k}NP_k}:\la\bar\eta\ra\sim\la\bar\nu\ra\text{ for some }\bar\nu\in{l\over k}X^*_k\}$.
\end{notation}

\begin{lemma}\label{lem: k-monochromatic extendable IxkNPk}
$\CI_{{x\over k}NP_k}$ is $k$-monochromatically extendable over ${l\over k}\mathbbof{X}^*_k$ for any $0<k<\omega$ and $0\le l\le k$.
\begin{proof}
Let $D$ be a unary predicate symbol. For any cardinal $\lambda$, let $I_\lambda$ be a $\lambda^+$-saturated $\{<,D\}$-structure such that $I_\lambda\models \Th(\mathbb{Q},<,D)$ where $D$ is a dense codense subset of $\mathbb{Q}$. Then $I_\lambda$ satisfies
\begin{itemize}
\item[(i)] for any  $(a_i)_{i<\lambda},(b_i)_{i<\lambda}\subseteq I_\lambda$ such that $a_i<b_j$ for all $i,j<\lambda$, there exist $d,e\in I_\lambda$ with $I_\lambda \models D(d)\wedge \neg D(e)$ such that $a_i<d,e<b_j$ for all $i,j<\lambda$. 
\end{itemize}

\medskip
 
\noindent\underline{Claim 1.} For any $c:I_\lambda\to \lambda$, there exist $\lambda'<\lambda$ and $a',b'\in I_\lambda$ such that $a'<b'$, and $c^{-1}(\lambda')\cap D(I_\lambda)$ is dense in $(a',b')$.

\smallskip

\noindent\underline{Proof of Claim 1.}
 Suppose not. We construct a sequence $(a_ib_i)_{i<\lambda}$ such that
\begin{itemize}
\item[(ii)] $a_i<a_j<b_j<b_i$ for all $i<j<\lambda$,
\item[(iii)] $(a_i,b_i)\cap c^{-1}(i)\cap D(I_\lambda)=\emptyset$ for each $i<\lambda$.
\end{itemize}
 Choose any $a'_0,b'_0\in I_\lambda$ with $a'_0<b'_0$. Then $c^{-1}(0)\cap D(I_\lambda)$ is not dense in $(a'_0,b'_0)$. Thus there exist $a_0<b_0$ such that $(a_0,b_0)\cap c^{-1}(0)\cap D(I_\lambda)=\emptyset$. Suppose we have constructed $(a_ib_i)_{i<\lambda_0}$ for some $\lambda_0<\lambda$. By (i), we can find $a'_{\lambda_0},b'_{\lambda_0}\in I_\lambda$ such that $a_i<a'_{\lambda_0}<b'_{\lambda_0}<b_i$ for each $i<\lambda_0$. Since $c^{-1}(\lambda_0)\cap D(I_\lambda)$ is not dense in $(a'_{\lambda_0},b'_{\lambda_0})$, we can find $a_{\lambda_0},b_{\lambda_0}\in I_\lambda$ such that $a'_{\lambda_0}<a_{\lambda_0}<b_{\lambda_0}<b'_{\lambda_0}$ and $(a_{\lambda_0},b_{\lambda_0})\cap c^{-1}(\lambda_0)\cap D(I_\lambda)=\emptyset$. By repeating this, we can complete $(a_ib_i)_{i<\lambda}$.
By (i), we can find $d\in I_\lambda$ such that $I_\lambda\models D(d)$ and $a_i<d<b_i$ for all $i<\lambda$. But it yields a contradiction with (iii).  $\dashv$

\medskip 
 
\noindent\underline{Claim 2.} For any $c:I_\lambda\to \lambda$, there exist $\lambda',\lambda''<\lambda$ and $a'',b''\in I_\lambda$ such that $a''<b''$, and $c^{-1}(\lambda')\cap D(I_\lambda)$ and $c^{-1}(\lambda'')\cap\neg D(I_\lambda)$ are dense in $(a'',b'')$.

\smallskip

\noindent\underline{Proof of Claim 2.} By Claim 1, we have $\lambda'<\lambda$ and $a'<b'$ such that $c^{-1}(\lambda')\cap D(I_\lambda)$ is dense in $(a',b')$. If Claim 2 is not true, then for any $\lambda''<\lambda$ and $a'',b''\in I_\lambda$ with $a'<a''<b''<b'$, there exist $a''',b'''\in I_\lambda$ such that $a''<a'''<b'''<b''$ and $(a''',b''')\cap c^{-1}(\lambda'')\cap \neg D(I_\lambda)=\emptyset$. Thus, by the same argument of Claim 1, we can construct a sequence $(a'_ib'_i)_{i<\lambda}$ such that 
\begin{itemize}
\item[(iv)] $a'<a'_i<a'_j<b'_j<b'_i<b'$ for all $i<j<\lambda$,
\item[(v)] $(a'_i,b'_i)\cap c^{-1}(i)\cap \neg D(I_\lambda)=\emptyset$ for each $i<\lambda$.
\end{itemize}
By (i) again, we can find $d\in I_\lambda$ such that $a'_i<d<b'_i$ for all $i<\lambda$ and $\neg D(d)$. But this yields a contradiction with (v). $\dashv$

\medskip

For any cardinal $\lambda$, let $J_\lambda$ be a $\lambda^+$-saturated $\{<\}$-structure such that $J_\lambda\models \Th(\mathbb{Q},<)$. Then by the same argument of Claim 1 or \Cref{lem: coloring lemma of dense linear order}, we have
\begin{itemize}
\item[(vi)] for any $c:J_\lambda\to \lambda$, there exist $\lambda'<\lambda$ and $q',r'\in J_\lambda$ such that $q'<r'$, and $c^{-1}(\lambda')$ is dense in $(q',r')$.
\end{itemize}

For an infinite cardinal $\lambda$,
let $\lambda_0:=\lambda$ and $\lambda_{i+1}:=2^{\max\{|J_{\lambda_i}|,|I_{\lambda_i}|\}}$ for each $i<\omega$. For each $\lambda$ and $0<k<\omega$, we can give an $\CL_{{x\over k}\text{NP}_k}$-structure on 
\[
\mathbbof I^\lambda_k:=\bigcup_{i<k}\{ (i,q,a):(q,a)\in  J_{\lambda_i}\times I_{\lambda_i}\}
\] 
such that
\begin{itemize}
\item[(vii)] $(i,q,a)<(i',q',a')$ if one of the following holds.
\begin{itemize}
\item[$\ast$] $i<i'$.
\item[$\ast$] $i=i'$ and $q<q'$.
\item[$\ast$] $i=i'$, $q=q'$, and $a<a'$.
\end{itemize}
\item[(viii)] $E((i,q,a),(i',q',a'))$ if $i=i'$ and $q=q'$.
\item[(ix)] $P_j((i,q,a))$ if $i=j$ for each $i,j<k$.
\item[(x)] $C((i,q,a))$ if $D(a)$.
\end{itemize}
Then $\age(\CI_{{x\over k}\text{NP}_k})=\age(\mathbbof{I}^\lambda_k)$. Note that ${l\over k}\mathbbof X^*_k\subseteq (k\times\mathbb{Q}\times\mathbb{Q})^k$ is a disjoint union of ${l\over k}\mathbbof X^{C}_k$ and ${l\over k}\mathbbof X^{\neg C}_k$, where
\[
{l\over k}\mathbbof X^{C}_k:=\bigg\lbrace\Big((0,x_0,y_0),...,(k-1,x_{k-1},y_{k-1})\Big):\CI_{{x\over k}\text{NP}_k}\!\models C((i,x_i,y_i))\text{ for all }i<k \bigg\rbrace
\]
\[
{l\over k}\mathbbof X^{\neg C}_k:=\bigg\lbrace\Big((0,x_0,y_0),...,(k-1,x_{k-1},y_{k-1})\Big):\CI_{{x\over k}\text{NP}_k}\!\models \neg C((i,x_i,y_i))\text{ for all }i<k \bigg\rbrace.
\]
Also note that ${l\over k}\mathbbof X^*_k$, ${l\over k}\mathbbof X^{C}_k$, and ${l\over k}\mathbbof X^{\neg C}_k$ do not depend on the choice of $l$.

\medskip

\noindent\underline{Claim 3.} For any infinite cardinal $\lambda$ and any $c:(\mathbbof I^\lambda_1)\to\lambda$, there exist $\lambda',\lambda''<\lambda$ and an $\CL_{{x\over 1}\text{NP}_1}$-embedding $f:\CI_{{x\over 1}\text{NP}_1}\to \mathbbof I^\lambda_1$ such that
\begin{itemize}
\item[(xi)] $c(f(0,x_0,y_0))=\lambda'$ for all $(0,x_0,y_0)\in {l\over 1}\mathbbof{X}^{C}_1$, 
\item[(xii)] $c(f(0,x_0,y_0))=\lambda''$ for all $(0,x_0,y_0)\in {l\over 1}\mathbbof{X}^{\neg C}_1$, 
\end{itemize}

\smallskip

\noindent\underline{Proof of Claim 3.} For each  $q\in J_\lambda$, let $c_q$ be a $\lambda$-coloring on $I_\lambda$ such that $c_{q}(a)=c(0,q,a)$. 
Then by Claim 2, for each $q\in J_\lambda$, there exist $a_{q},b_{q}\in I_\lambda$ and $\lambda'_{q},\lambda''_{q}<\lambda$ such that $c_{q}^{-1}(\lambda'_{q})\cap D(I_\lambda)$ and  $c_{q}^{-1}(\lambda''_{q})\cap \neg D(I_\lambda)$
are dense in $(a_{q},b_{q})$.
Let $c'$ be a $(\lambda\times\lambda)$-coloring on $J_\lambda$ such that $c'(q)=(\lambda'_{q},\lambda''_{q})$. By (vi), there exist $(\lambda',\lambda'')\in\lambda\times\lambda$ and $q',r'\in J_\lambda$  such that $q'<r'$, and $c'^{-1}(\lambda',\lambda'')$ is dense in $(q',r')$. 

By the choice of $J_\lambda$, we can find a $\{<\}$-embedding $f_0:\mathbb{Q}\to J_\lambda$ such that $q'<f_0(x)<r'$ and $c'(f_0(x))=(\lambda',\lambda'')$ for all $x\in\mathbb{Q}$. By the same argument above, for each $x\in\mathbb{Q}$, we can find a $\{<\}$-embedding $f_x:\mathbb{Q}\to I_\lambda$ such that 
\begin{itemize}
\item[(xiii)] $a_{f_0(x)}<f_x(y)<b_{f_0(x)}$ for all $y\in\mathbb{Q}$, 
\item[(xiv)] $c_{f_0(x)}(f_x(y))=\lambda'_{f_0(x)}=\lambda'$ and $I_\lambda\models D(f_x(y))$ for all $y\in\mathbb{Q}$ with $\CI_{{x\over k}\text{NP}_k}\models C(0,x,y)$,
\item[(xv)] $c_{f_0(x)}(f_x(y))=\lambda''_{f_0(x)}=\lambda''$ and $I_\lambda\models \neg D(f_x(y))$ for all $y\in\mathbb{Q}$ with $\CI_{{x\over k}\text{NP}_k}\models \neg C(0,x,y)$.
\end{itemize}
Let $f$ be a map from $1\times\mathbb{Q}\times\mathbb{Q}$ to $\mathbbof{I}^\lambda_1$ such that $f(0,x,y)=(0,f_0(x),f_x(y))$. Then $f$ is an $\CL_{{x\over 1}\text{NP}_1}$-embedding and satisfies (xi) and (xii).

\medskip

\noindent\underline{Claim 4.} For each $0<k<\omega$, an infinite cardinal $\lambda$, and $c:(\mathbbof I^\lambda_k)^k\to\lambda$, there exist $\lambda',\lambda''<\lambda$ and an $\CL_{{x\over k}\text{NP}_k}$-embedding $f:\CI_{{x\over k}\text{NP}_k}\to \mathbbof I^\lambda_k$ such that
\begin{itemize}
\item[(xvi)] $c(f(0,x_0,y_0),...,f(k-1,x_{k-1},y_{k-1}))=\lambda'$ for all $((0,x_0,y_0),...,(k-1,x_{k-1},y_{k-1}))\in {l\over k}\mathbbof{X}^{C}_k$, 
\item[(xvii)] $c(f(0,x_0,y_0),...,f(k-1,x_{k-1},y_{k-1}))=\lambda''$ for all $((0,x_0,y_0),...,(k-1,x_{k-1},y_{k-1}))\in {l\over k}\mathbbof{X}^{\neg C}_k$, 
\end{itemize}

\smallskip

\noindent\underline{Proof of Claim 4.}
We prove this using induction on $k$. The case of $k=1$ follows from Claim 3.
Suppose that we have proved the case of $k$ and  let $c:(\mathbbof I^\lambda_{k+1})^{k+1}\to \lambda$. For each $(q,a)\in J_{\lambda_k}\times I_{\lambda_k}$, let $c_{q,a}$ be a map from $(\mathbbof I^\lambda_k)^k$ to $\lambda$ such that 
\[
c_{q,a}\Big((0,q_0,a_0),...,(k-1,q_{k-1},a_{k-1})\Big)
=c\Big((0,q_0,a_0),...,(k-1,q_{k-1},a_{k-1}),(k,q,a)\Big).
\]
Let $c'$ be a coloring on $\mathbbof{I}^{\lambda_k}_1= 1\times J_{\lambda_k}\times I_{\lambda_k}$ such that $c'(0,q,a)=c_{q,a}$. Note that $c'$ is a $\lambda_k$-coloring. By Claim 3, there exist $c^+$, $c^-$, and an $\CL_{{x\over 1}\text{NP}_1}$-embedding $f': \CI_{{x\over 1}\text{NP}_1}\to\mathbbof I^{\lambda_k}_1$ such that
\begin{itemize}
\item[(xviii)] $c'(f'(0,x,y))=c^+$ if $(0,x,y)\in {l\over 1}\mathbbof{X}^{C}_1$, 
\item[(xix)] $c'(f'(0,x,y))=c^-$ if $(0,x,y)\in {l\over 1}\mathbbof{X}^{\neg C}_1$.
\end{itemize} 
Let $c''$ be a coloring on $(\mathbbof I^\lambda_k)^k$ such that 
\begin{multline*}
c''\big((0,q_0,a_0),...,(k-1,q_{k-1},a_{k-1})\big)
\\
=\Big(c^+\big((0,q_0,a_0),...,(k-1,q_{k-1},a_{k-1})\big),c^-\big((0,q_0,a_0),...,(k-1,q_{k-1},a_{k-1})\big)\Big).
\end{multline*}
Note that $c''$ is a $\lambda\times\lambda$-coloring. By the induction hypothesis, there exist $(\lambda^+_0,\lambda^+_1),(\lambda^-_0,\lambda^-_1)\in\lambda\times\lambda$ and an $\CL_{{x\over k}\text{NP}_k}$-embedding $f'':\CI_{{x\over k}\text{NP}_k}\to \mathbbof I^\lambda_k$ such that
\begin{itemize}
\item[(xx)] $c''(f''(0,x_0,y_0),...,f''(k-1,x_{k-1},y_{k-1}))=
(\lambda^+_0,\lambda^+_1)$ if $((0,x_0,y_0),...,(k-1,x_{k-1},y_{k-1}))\in {l\over k}\mathbbof{X}^{C}_k$, 
\item[(xxi)] $c''(f''(0,x_0,y_0),...,f''(k-1,x_{k-1},y_{k-1}))=
(\lambda^-_0,\lambda^-_1)$ if $((0,x_0,y_0),...,(k-1,x_{k-1},y_{k-1}))\in {l\over k}\mathbbof{X}^{\neg C}_k$, 
\end{itemize}
Define a map $f:\CI_{{x\over k+1}\text{NP}_{k+1}}\to \mathbbof I^\lambda_{k+1}$ such that 
\[
f(i,x,y)=
\begin{cases}
f''(i,x,y) & \text{if } i<k
\\
(k,q,a)&\text{if }i=k\text{ and }f'(0,x,y)=(0,q,a).
\end{cases}
\]
Then
\begin{itemize}
\item[(xxii)] $c(f(0,x_0,y_0),...,f(k,x_k,y_k))=\lambda^+_0$  for all $((0,x_0,y_0),...,(k,x_k,y_k))\in {l\over k+1}\mathbbof{X}^C_{k+1}$,
\item[(xxiii)] $c(f(0,x_0,y_0),...,f(k,x_k,y_k))=\lambda^-_1$ for all $((0,x_0,y_0),...,(k,x_k,y_k))\in {l\over k+1}\mathbbof{X}^{\neg C}_{k+1}$.
\end{itemize}
This proves Claim 4. $\dashv$

\smallskip

This completes the proof.
\end{proof}
\end{lemma}

\begin{notation}
Continuing the construction, let 
\[
Y^*_k:=\bigg\lbrace \Big( (0,0,0),...,(k-1,0,0)    \Big), \Big((0,0,{1\over 2}),...,(k-1,0,{1\over 2})   \Big) \bigg\rbrace\subseteq (k\times\mathbb{Q}\times\mathbb{Q})^k.
\]
\end{notation}

\begin{lemma}\label{lem: l/kCLk l/kCIk k l/kXk Yk satisfies nvariable theorem}
Let $g_1$ be the $\CL_{ {x\over k}\text{NP}_k}$-embedding given in \Cref{lem: cofinality of l/kX_k}. Then $\{\nu\}\cup g_1({l\over k}X^*_k) \sim^\text{fin}_{\CL_{ {x\over k}\text{NP}_k}} {l\over k}X^*_k$ for each $\nu\in Y^*_k$. Consequently, $(\CL_{{x\over k}\text{NP}_k},\CI_{{x\over k}\text{NP}_k}, k, {l\over k}X^*_k,Y^*_k)$ satisfies the $n$-variable theorem for each $1<k<\omega$ and $0\le l\le k$.
\begin{proof}
Let $\nu\in Y^*_k$. If $\nu:=\big( (0,0,0),...,(k-1,0,0)\big)$, then choose any simple $(k-l)$-down closed subset $X'\subseteq (-1,1)^k$ containing $(0,...,0)$. If $\nu:=\big( (0,0,{1\over 2}),...,(k-1,0,{1\over 2})\big)$, then choose  $X'$ such that $(0,...,0)\notin X'$. Let $X^*$ be a finite subset of $\{\nu\}\cup g_1({l\over k}X^*_k)$. Then $X^*$ is contained in 
\[
X^{**}\!:=\!\{\nu\}\cup\bigg\lbrace\!\Big( (0, q_0,0),...,(k-1,q_{k-1},0)\Big)\!:\!(q_0,...,q_{k-1})\in X''\!,\; -q\!<\!q_0\!<\!-1, 1\!<\!q_i\!<\!q\text{ for }0\!<\!i\!<\!k\bigg\rbrace
\]
\[
\hspace{52pt}\cup\;\bigg\lbrace\!\Big( (0, q_0,{1\over 2}),...,(k-1,q_{k-1},{1\over 2})\Big)\!:\!(q_0,...,q_{k-1})\notin X''\!,\;-q\!<\! q_0\!<\!-1, 1\!<\!q_i\!<\!q\text{ for }0\!<\!i\!<\!k\bigg\rbrace
\]
for some rational number $q>1$ and a simple $(k-l)$-down closed subset $X''$ of $(-q,-1)\times(1,q)\times\cdots\times(1,q)$. Let $X'''$ be the $(k-l)$-down closure of $X'\cup X''$ in $ (-q,1)\times (-1,q)\times \cdots\times (-1,q)$. Let 
\[
\hspace{-10pt}X^{***}\!:=\!\bigg\lbrace\!\Big( (0, q_0,0),...,(k-1,q_{k-1},0)\Big)\!:\!(q_0,...,q_{k-1})\in X''',\; -q\!<\!q_0\!<\!1, -1\!<\!q_i\!<\!q\text{ for }0\!<\!i\!<\!k\bigg\rbrace
\]
\[
\hspace{42pt}\cup\;\bigg\lbrace\!\Big( (0, q_0,{1\over 2}),...,(k-1,q_{k-1},{1\over 2})\Big)\!:\!(q_0,...,q_{k-1})\notin X''',\;-q\!<\! q_0\!<\!1, -1\!<\!q_i\!<\!q\text{ for }0\!<\!i\!<\!k\bigg\rbrace.
\]
Then $X^*\subseteq X^{***}$. By using order isomorphisms between countable dense linear orders without endpoints, there exists a simple $(k-l)$-down closed subset $X''''$ of $(0,1)^k$ such that
\[
X^{****}:=\bigg\lbrace\Big( (0, q_0,0),...,(k-1,q_{k-1},0)\Big):(q_0,...,q_{k-1})\in X'''',\; 0<q_i<1\text{ for }i<k\bigg\rbrace
\]
\[
\hspace{56pt}\cup\;\bigg\lbrace\Big( (0, q_0,{1\over 2}),...,(k-1,q_{k-1},{1\over 2})\Big):(q_0,...,q_{k-1})\notin X'''',\;0< q_i<1\text{ for }i<k\bigg\rbrace
\]
is $\CL_{{x\over k}\text{NP}_k}$-isomorphic to $X^{***}$. Note that $X^{****}$ can be $\CL_{{x\over k}\text{NP}_k}$-embedded into ${l\over k}X^*_k$, by the construction of ${l\over k}X^*_k$. In particular, $X^*$ can be $\CL_{{x\over k}\text{NP}_k}$-embedded into ${l\over k}X^*_k$. Thus $\{\nu\}\cup g_1({l\over k}X^*_k) \sim^\text{fin}_{\CL_{ {x\over k}\text{NP}_k}} {l\over k}X^*_k$.
\end{proof}
\end{lemma}

Let $D_{{l\over k}\text{NP}_k}$ be the class of ${l\over k}$NP$_k$ theories. Similarly, let $D_{\text{NOP}_k}$, $D_{\text{NFOP}_k}$, and $D_{\text{NIP}_k}$ be the classes of NOP$_k$, NFOP$_k$, and NIP$_k$ theories, respectively.

\begin{theorem}\label{thm: l-over-kP-k D-n-var}
$D_{{l\over k}\text{NP}_k}\in\mathfrak{D}^h_{n\text{-var}}$ for each $1<k<\omega$ and $0\le l \le k$. In particular,  $D_{\text{NOP}_k}$, $D_{\text{NFOP}_k}$, $D_{\text{NIP}_k}\in\mathfrak{D}^h_{n\text{-var}}$ for each $1<k<\omega$.
\begin{proof}
By \Cref{lem: l/kCLk l/kCIk k l/kXk Yk satisfies nvariable theorem}, it is enough to show that $D_{{l\over k}\text{NP}_k}=D^{\CL_{{x\over k}\text{NP}_k},\CI_{{x\over k}\text{NP}_k}}_{k,{l\over k}X^*_k, Y^*_k}$.

Suppose $T\notin D_{{l\over k}\text{NP}_k}$. Then by \Cref{lem: m/nPn characterized by s-c n-straight pattern}, we have $\varphi(x,y_0,...,y_{k-1})$ and mutually indiscernible $(a^0_q)_{q\in\mathbb{Q}},...,(a^{k-1}_q)_{q\in\mathbb{Q}}$ such that
\[
\big\lbrace\varphi(x,a^0_{q_0},...,a^{k-1}_{q_{k-1}}):(q_0,...,q_{k-1})\in {l\over k}\hat X_k\big\rbrace\cup\big\lbrace\neg
\varphi(x,a^0_{q_0},...,a^{k-1}_{q_{k-1}}):(q_0,...,q_{k-1})\notin {l\over k}\hat X_k\big\rbrace
\]
is consistent. By the mutual indiscernibility of $(a^0_q)_{q\in\mathbb{Q}},...,(a^{k-1}_q)_{q\in\mathbb{Q}}$ and the choice of ${l\over k}\hat X_k$, 
\[
\hspace{-30pt}\text{(i)}\hspace{20pt}\big\lbrace\varphi(x,a^0_{q_0},...,a^{k-1}_{q_{k-1}}):(q_0,...,q_{k-1})\in X\big\rbrace\cup\big\lbrace\neg
\varphi(x,a^0_{q_0},...,a^{k-1}_{q_{k-1}}):(q_0,...,q_{k-1})\notin X\big\rbrace
\]
is consistent for any $(k-l)$-down closed set $X\subseteq\mathbb{Q}^k$. Let 
\[
\psi(x,y^0_0y^1_0,...,y^0_{k-1}y^1_{k-1}):=\varphi(x,y^0_0,...,y^0_{k-1})\wedge\neg\varphi(x,y^1_0,...,y^1_{k-1}).
\] We can find sequences of rational numbers $(u_{(i,q)})_{i<k,q\in\mathbb{Q}}$, $(v_{(i,q)})_{i<k,q\in\mathbb{Q}}$, and $(w_{(i,q)})_{i<k,q\in\mathbb{Q}}$ such that
\begin{itemize}
\item[(ii)] $u_{(i,q_0)}<v_{(i,q_1)}<w_{(i,q_2)}$ for each $i<k$ and $q_0,q_1,q_2\in\mathbb{Q}$,
\item[(iii)] $u_{(i,q)}<u_{(i,q')}$ for $i<k$ and $q<q'$,
\item[(iv)] $v_{(i,q)}>v_{(i,q')}$ for $i<k$ and $q<q'$.
\end{itemize}
For each $(i,q,q')\in k\times \mathbb{Q}\times\mathbb{Q}$, let
\[
    b_{i,q,q'}= 
\begin{cases}
    a^i_{u_{(i,q)}}a^i_{v_{(i,q)}} & \text{if } \;\; q'=0\text{ or }q'={n\over m}\text{ with }(n,m)=1\text{ and }n:\text{even} \\
    a^i_{v_{(i,q)}}a^i_{w_{(i,q)}} & \text{otherwise} \end{cases}
\]

\medskip

\noindent\underline{Claim 1.} $\{\psi(x,b_{\nu_0},... ,b_{\nu_{k-1}})\}_{(\nu_0,...,\nu_{k-1})\in Y}$ is inconsistent for any $Y\sim_{\CL_{{x\over k}\text{NP}_k}} Y^*_k$.

\smallskip

\noindent{\it Proof of Claim 1.}
If $Y\sim_{\CL_{{x\over k}\text{NP}_k}} Y^*_k$, then $Y$ is of the form
\[
\bigg\lbrace\Big(  (0,q_0,q'_0),...,(k-1,q_{k-1},q'_{k-1})\Big),\Big(  (0,q_0,q''_0),...,(k-1,q_{k-1},q''_{k-1})\Big)\bigg\rbrace
\]
for some $(q_iq'_iq''_i)_{i<k}$ such that $\CI_{{x\over k}\text{NP}_k}\models C(i,q_i,q'_i)\wedge\neg C(i,q_i,q''_i)$ for all $i<k$. Thus $\{\psi(x,\bar b_{\bar\nu})\}_{\bar\nu\in Y}$ is of the form
\[
\bigg\lbrace
\psi(x,a^0_{u_{(0,q_0)}}a^0_{v_{(0,q_0)}},...,a^{k-1}_{u_{(k-1,q_{k-1})}}a^{k-1}_{v_{(k-1,q_{k-1})}}),
\psi(x,a^0_{v_{(0,q_0)}}a^0_{w_{(0,q_0)}},...,a^{k-1}_{v_{(k-1,q_{k-1})}}a^{k-1}_{w_{(k-1,q_{k-1})}})
\bigg\rbrace.
\]
It is equivalent to
\[
\bigg\lbrace
\varphi(x,a^0_{u_{(0,q_0)}},...,a^{k-1}_{u_{(k-1,q_{k-1})}}),\neg\varphi(x,a^0_{v_{(0,q_0)}},...,a^{k-1}_{v_{(k-1,q_{k-1})}}),
\]
\[
\hspace{22pt}
\varphi(x,a^0_{v_{(0,q_0)}},...,a^{k-1}_{v_{(k-1,q_{k-1})}}),
\neg\varphi(x,a^0_{w_{(0,q_0)}},...,a^{k-1}_{w_{(k-1,q_{k-1})}}),
\bigg\rbrace.
\]
Thus $\{\psi(x,b_{\nu_0},... ,b_{\nu_{k-1}})\}_{(\nu_0,...,\nu_{k-1})\in Y}$ is inconsistent. $\dashv$

\medskip

\noindent\underline{Claim 2.} If a finite subset $X^*$ of $(k\times \mathbb{Q}\times\mathbb{Q})^k$ can be $\CL_{{x\over k}\text{NP}_k}$-embedded into ${l\over k}X^*_k$, then the set of formulas $\{\psi(x,b_{\eta_0},...,b_{\eta_{k-1}})\}_{(\eta_0,...,\eta_{k-1})\in X^*}$ is consistent.

\smallskip

\noindent{\it Proof of Claim 2.} If $X^*$ can be $\CL_{{x\over k}\text{NP}_k}$-embedded into ${l\over k}X^*_k$, then there exist a positive rational number $q$, a $(k-l)$-down closed $X''\subseteq (-q,q)^k$, and $(f_i:(-q,q)^k\to(-q,q))_{i<k}$ such that 
\[
\CI_{{x\over k}\text{NP}_k}\models \bigwedge_{i<k}C(i, q_i,f_i(q_0,...,q_{k-1}))\;\text{ if }\;(q_0,...,q_{k-1})\in X'',
\]
\[
\CI_{{x\over k}\text{NP}_k}\models \bigwedge_{i<k}\neg C(i,q_i,f_i(q_0,...,q_{k-1}))\;\text{ if }\;(q_0,...,q_{k-1})\notin X'',
\]
and
\[
X^{**}\!:=\!\bigg\lbrace\!\Big( (0,q_0,f_0(\bar q) ),...,(k-1,q_{k-1},f_{k-1}(\bar q) )\Big)\!:\! \bar q=(q_0,...,q_{k-1})\in X'', -q<q_i<q\text{ for }i<k    \bigg\rbrace
\]
\[
\hspace{40pt}\cup\; \bigg\lbrace\!\Big( (0,q_0,f_0(\bar q) ),...,(k-1,q_{k-1},f_{k-1}(\bar q) )\Big)\!:\!\bar q=(q_0,...,q_{k-1})\notin X'', -q<q_i<q\text{ for }i<k    \bigg\rbrace
\]
contains $X^*$. $\{\psi(x,b_{\eta_0},...,b_{\eta_{k-1}})\}_{(\eta_0,...,\eta_{k-1})\in X^{**}}$ is of the form
\[
\bigg\lbrace\varphi(x,a^0_{u_{(0,q_0)}},...,a^{k-1}_{u_{(k-1,q_{k-1})}}):(q_0,...,q_{k-1})\in X''
\bigg\rbrace
\]
\[
\hspace{8pt}\cup\bigg\lbrace\varphi(x,a^0_{v_{(0,q_0)}},...,a^{k-1}_{v_{(k-1,q_{k-1})}}):(q_0,...,q_{k-1})\notin X''
\bigg\rbrace
\]
\[
\hspace{15pt}\cup\bigg\lbrace\neg\varphi(x,a^0_{v_{(0,q_0)}},...,a^{k-1}_{v_{(k-1,q_{k-1})}}):(q_0,...,q_{k-1})\in X''
\bigg\rbrace
\]
\[
\hspace{19pt}\cup\bigg\lbrace\neg\varphi(x,a^0_{w_{(0,q_0)}},...,a^{k-1}_{w_{(k-1,q_{k-1})}}):(q_0,...,q_{k-1})\notin X''
\bigg\rbrace
\]
Note that the $(k-l)$-th down closure of 
\[
\Big\lbrace (u_{(0,q_0)},...,u_{(k-1,q_{k-1})}):(q_0,...,q_{k-1})\in X''\Big\rbrace\cup \Big\lbrace (v_{(0,q_0)},...,v_{(k-1,q_{k-1})}):(q_0,...,q_{k-1})\notin X''\Big\rbrace
\]
does not contain any element of 
\[
\Big\lbrace (v_{(0,q_0)},...,v_{(k-1,q_{k-1})}):(q_0,...,q_{k-1})\in X''\Big\rbrace\cup \Big\lbrace (w_{(0,q_0)},...,w_{(k-1,q_{k-1})}):(q_0,...,q_{k-1})\notin X''\Big\rbrace.
\]
Thus $\{\psi(x,\bar b_{\bar\eta})\}_{\bar\eta\in X^{**}}$ is consistent by (i). In particular, $\{\psi(x,\bar b_{\bar\eta})\}_{\bar\eta\in X^{*}}$ is consistent. $\dashv$

\medskip

Let $(c_{i,q,q'})^{i<k}_{q,q'\in\mathbb{Q}}$ be an $\CL_{{x\over k}\text{NP}_k}$-indiscernible sequence locally based on $(b_{i,q,q'})^{i<k}_{q,q'\in\mathbb{Q}}$. By Claim 1 and Claim 2, $\{\psi(x,\bar{c}_{\bar\eta})\}_{\bar\eta\in Y^*_k}$ is inconsistent and $\{\psi(x,\bar{c}_{\bar\eta})\}_{\bar\eta\in {l\over k}X^*_k}$ is consistent. Thus $T\notin D^{\CL_{{x\over k}\text{NP}_k},\CI_{{x\over k}\text{NP}_k}}_{k,{l\over k}X^*_k, Y^*_k}$.

\medskip

Now we suppose $T\notin D^{\CL_{{x\over k}\text{NP}_k},\CI_{{x\over k}\text{NP}_k}}_{k,{l\over k}X^*_k, Y^*_k}$. Then there exist $\varphi(x,y_0,...,y_{k-1})$ and $\CL_{{x\over k}\text{NP}_k}$-indiscernible $(a_{i,q,q'})^{i<k}_{q,q'\in\mathbb{Q}}$ such that $\{\varphi(x,\bar a_{\bar\eta})\}_{\bar\eta\in {l\over k}X^*_k}$ is consistent and $\{\varphi(x,\bar a_{\bar\eta})\}_{\bar\eta\in Y^*_k}$ is inconsistent. Then 
\[
\bigg\lbrace \varphi(x,a_{0,q_0,0},...,a_{k-1,q_{k-1},0}): (q_0,...,q_{k-1})\in {l\over k}\hat X_k \bigg\rbrace
\]
\[
\hspace{17pt}\cup\;\bigg\lbrace \varphi(x,a_{0,q_0,{1\over2}},...,a_{k-1,q_{k-1},{1\over2}}): (q_0,...,q_{k-1})\notin {l\over k}\hat X_k \bigg\rbrace
\]
is consistent. Thus
\[
\bigg\lbrace \varphi(x,a_{0,q_0,0},...,a_{k-1,q_{k-1},0}): (q_0,...,q_{k-1})\in {l\over k}\hat X_k \bigg\rbrace
\]
\[
\hspace{17pt}\cup\;\bigg\lbrace \neg\varphi(x,a_{0,q_0,0},...,a_{k-1,q_{k-1},0}): (q_0,...,q_{k-1})\notin {l\over k}\hat X_k \bigg\rbrace
\]
is consistent. For each $i<k$ and $q\in\mathbb{Q}$, let $b^i_q:=a_{i,q,0}$. Then
\[
\Big\lbrace \varphi(x,b^0_{q_0},...,b^{k-1}_{q_{k-1}}): (q_0,...,q_{k-1})\in {l\over k}\hat X_k \Big\rbrace
\cup\Big\lbrace \neg\varphi(x,b^0_{q_0},...,b^{k-1}_{q_{k-1}}): (q_0,...,q_{k-1})\notin {l\over k}\hat X_k \Big\rbrace
\]
is consistent. Since $(a_{i,q,q'})^{i<k}_{q,q'\in\mathbb{Q}}$ is $\CL_{{x\over k}\text{NP}_k}$-indiscernible, $(b^0_q)_{q\in\mathbb{Q}},...,(b^{k-1}_q)_{q\in\mathbb{Q}}$ are mutually indiscernible. By \Cref{lem: m/nPn characterized by s-c n-straight pattern}, $T$ has ${l\over k}P_k$. Thus $T\notin D_{{l\over k}\text{NP}_k}$.
\end{proof}
\end{theorem}

Note that the characterization of ${l\over k}$P$_k$ in \Cref{lem: m/nPn characterized by s-c n-straight pattern} is a special form of a straight pattern. The following notation is taken from \cite{Bai24} and \cite{DM26}, with slight modifications for our setting.

\begin{definition}
Let $\CL^*$ be a language, $\CI^*$ an $\CL^*$-structure, and $n<\omega$. A pair $(\CC,\CI)$ is called a {\it straight $n$-pattern} in $\CI^*$ if $\CC$ and $\CI$ are both subsets of $(\CP((\CI^*)^n)\times \CP((\CI^*)^n))\setminus\{(\emptyset,\emptyset)\}$ and $X^+\cap X^-=\emptyset$ for all $(X^+,X^-)\in\CC\cup\CI$. 
\begin{itemize}
\item[(i)] We say a straight $n$-pattern $(\CC,\CI)$ is {\it complete} if $X^+\cup X^-=(\CI^*)^n$ for all $(X^+,X^-)\in\CC\cup\CI$.
\item[(ii)] We say a straight $n$-pattern $(\CC,\CI)$ is {\it positive} if $X^-\!\!=\emptyset$ for all $(X^+,X^-)\in\CC\cup\CI$.
\item[(iii)]  We say a straight $n$-pattern $(\CC,\CI)$ is {\it single-consistency} if $|\CC|=1$ and $|\CI|=0$.
\end{itemize}
We say a complete theory $T$ {\it exhibits} a straight $n$-pattern $(\CC,\CI)$ in $\CI^*$ if there exist $\varphi(x,y_0,...,y_{n-1})$ and $\CI^*$-indiscernible $(a_\eta)_{\eta\in\CI^*}$ such that 
\begin{itemize}
\item[(iv)] $\{\varphi(x,\bar{a}_{\bar{\eta}}):\bar\eta\in X^+\}\cup\{\neg\varphi(x,\bar{a}_{\bar{\eta}}):\bar{\eta}\in X^-\}$ is consistent for each $(X^+,X^-)\in\CC$,
\item[(v)] $\{\varphi(x,\bar{a}_{\bar{\eta}}):\bar\eta\in X^+\}\cup\{\neg\varphi(x,\bar{a}_{\bar{\eta}}):\bar{\eta}\in X^-\}$ is inconsistent for each $(X^+,X^-)\in\CI$.
\end{itemize}
\end{definition}

By \Cref{lem: m/nPn characterized by s-c n-straight pattern}, ${l\over k}$P$_k$ can be characterized by a single-consistency straight $k$-pattern. This leads to the following question of whether \Cref{thm: l-over-kP-k D-n-var} can be generalized further.

\begin{question}
Let $(\CC,\CI)$ be a single-consistency straight $n$-pattern in an $\CL^*$-structure $\CI^*$.  Let $D$ be the class of all complete theories that do not exhibit $(\CC,\CI)$. Does $D$ belong to $\mathfrak{D}^h_{n\text{-var}}$? Or, is there any sufficient condition for $\CI^*$ that makes $D$ belong to $\mathfrak{D}^h_{n\text{-var}}$?
\end{question}

\subsection{$n$-variable theorems for $\mathfrak{D}^h_{n\text{-var}}$}

\begin{lemma}\label{lem: 1-var lemma higher arity}
Let $D^{\CL^*\!\!\!,\,\CI^*}_{k,X,\bar{Y}}\!\!\in\mathfrak{D}^h_{n\text{-var}}$. Let $\CL$ be a language, $T$ an $\CL$-theory, and $\mathbb{M}\models T$ a sufficiently saturated model. If  there exist $\varphi(x,y_0,...,y_{k-1})\in\CL$ and $\CI^*_{\CL^*}$-indiscernible $(a_\eta)_{\eta\in \CI^*}\!$ in $\mathbb{M}$ such that
\begin{itemize}
\item[(i)] $\varphi(x,y_0,...,y_{k-1})$ witnesses $\neg D^{\CL^*\!\!\!,\,\CI^*}_{k,X,\bar{Y}}$ with $(a_\eta)_{\eta\in \CI^*}$,
\item[(ii)] $|x|=1$,
\end{itemize}
then there exists $b\models\{\varphi(x,\bar a_{\bar\eta})\}_{\bar\eta\in X}$ such that $\dim(b/(a_\eta)_{\eta\in\CI^*}\!)=1$.
\begin{proof}
It is enough to show that $\{\varphi(x,\bar{a}_{\bar{\eta}})\}_{\bar{\eta}\in X}$ has infinitely many realizations. Suppose not. Then there exists a finite subset $X_0$ of $X$ such that
\[
\bigcap_{\bar{\eta}\in X_0}\varphi(\mathbb{M},\bar{a}_{\bar{\eta}})
=
\bigcap_{\bar{\eta}\in X}\varphi(\mathbb{M},\bar{a}_{\bar{\eta}}).
\]

Let $W$, $X'$, and $Y'$ be subsets of $(\CI^*)^k$ satisfying condition (vii) in \Cref{def: higher-arity n-var quadruple}. There exists $X'_0\subseteq X'$ such that $X'_0\sim_{\CL^*}X_0$. Since 
\[
\bigcap_{\bar{\eta}\in X'_0}\varphi(\mathbb{M},\bar{a}_{\bar{\eta}})
=
\bigg(\bigcap_{\bar{\eta}\in X'_0}\varphi(\mathbb{M},\bar{a}_{\bar{\eta}})\bigg)
\cap\varphi(\mathbb{M},\bar{a}_{\bar{\nu}})
\]
for each $\bar{\nu}\in Y'$, $\{\varphi(x,\bar{a}_{\bar{\nu}})\}_{\bar{\nu}\in Y'}$ is consistent. This yields a contradiction.
\end{proof}
\end{lemma}

\begin{theorem}\label{thm: n-var theorem higher arity}
Let $D^{\CL^*\!\!\!,\,\CI^*}_{k,X,\bar{Y}}\!\!\in\mathfrak{D}^h_{n\text{-var}}$. Let $\CL$ be a language, $T$ a complete $\CL$-theory, and $\mathbb{M}\models T$ a sufficiently saturated model. If $T\notin D^{\CL^*\!\!\!,\,\CI^*}_{k,X,\bar{Y}}$, then there exist $\varphi(\bar{x},\bar{y}):=\varphi(\bar{x},y_0,...,y_{k-1})\in\CL$ and $\CI^*_{\CL^*}$-indiscernible $(a_\eta)_{\eta\in \CI^*}\!$ in $\mathbb{M}$ with $|y_0|=\cdots=|y_{k-1}|=|a_\eta|$ for all $\eta\in\CI^*$ such that
\begin{itemize}
\item[(i)] $\varphi(\bar{x},\bar{y})$ witnesses $\neg D^{\CL^*\!\!\!,\,\CI^*}_{k,X,\bar{Y}}$ with $(a_\eta)_{\eta\in \CI^*}$,
\item[(ii)] there exists $\bar{b}\models\{\varphi(\bar{x},\bar{a}_{\bar{\eta}})\}_{\bar{\eta}\in X}$ such that $\dim(\bar{b}/(a_\eta)_{\eta\in\CI^*}\!)=|\bar{x}|$.
\end{itemize}
\begin{proof}
Suppose $T\notin D^{\CL^*\!\!\!,\,\CI^*}_{k,X,\bar{Y}}$. Then there exist $n<\omega$, $\varphi(\bar{x},\bar{y}):=\varphi(x_0,...,x_{n-1},y_0,...,y_{k-1})\in\CL$, and $\CI^*_{\CL^*}$-indiscernible $(a_\eta)_{\eta\in \CI^*}\!$ in $\mathbb{M}$ with $|x_0|=\cdots=|x_{n-1}|=1$ and $|y_0|=\cdots=|y_{k-1}|=|a_\eta|$ for all $\eta\in\CI^*$ such that
\begin{itemize}
\item[(iii)] $\{\varphi(\bar{x},\bar{a}_{\bar{\eta}})\}_{\bar{\eta}\in X}$ is consistent,
\item[(iv)] $\{\varphi(\bar{x},\bar{a}_{\bar{\nu}})\}_{\bar{\nu}\in Y}$ is inconsistent for each $Y\in\bar{Y}$.
\end{itemize}
If there exists $\bar{b}\models\{\varphi(\bar{x},\bar{a}_{\bar{\eta}})\}_{\bar{\eta}\in X}$ such that $\dim(\bar{b}/(a_\eta)_{\eta\in\CI^*}\!)=n$, then there is nothing to prove. Suppose not. Then $n>1$ by \Cref{lem: 1-var lemma higher arity}. Choose any $\bar{b}\models\{\varphi(\bar{x},\bar{a}_{\bar{\eta}})\}_{\bar{\eta}\in X}$. Then $\bar{b}$ can be written of the form $(b_0,...,b_{n-1})$ where $|b_i|=1$ for each $i<n$. Since $\dim(\bar{b}/(a_\eta)_{\eta\in\CI^*}\!)<n$, there exist $i_0<n$, a natural number $l$, $\theta(x_{i_0},\check{x}^{i_0},\bar{y}'):=\theta(x_{i_0},\check{x}^{i_0},y'_0,...,y'_{l-1})\in\CL$ with $|y'_i|=|a_\nu|$ for each $i<l$ and $\nu\in\CI^*$, $\bar{\nu}:=(\nu_0,...,\nu_{l-1})\in \CI^*$, and $q<\omega$ such that 
\begin{itemize}
\item[(v)] $\models \theta(b_{i_0},\check{b}^{i_0},\bar{a}_{\bar{\nu}})\wedge \exists^{\le q}x\theta(x,\check{b}^{i_0},\bar{a}_{\bar{\nu}})$.
\end{itemize}
Let 
\begin{itemize}
\item[(vi)] $\begin{aligned}[t]
\varphi'(\bar{x},\bar{y},\bar{y}')
&:=\varphi'(x_0,...,x_{n-1},y_0,...,y_{k-1},y'_0,...,y'_{l-1})\\
&:=\varphi(x_0,...,x_{n-1},y_0,...,y_{k-1})
\wedge\theta(x_{i_0},\check{x}^{i_0},y'_0,...,y'_{l-1}).
\end{aligned}$
\end{itemize}

Then $\{\varphi'(b_0,...,b_{i_0-1},x,b_{i_0+1},...,b_{n-1},\bar{a}_{\bar{\xi}},\bar{a}_{\bar{\nu}})\}_{\bar{\xi}\in X}$ has only finitely many realizations. Thus there exists a finite subset $X_0$ of $X$ such that
\[
\bigcap_{\bar{\xi}\in X_0}\varphi'(b_0,...,b_{i_0-1},\mathbb{M},b_{i_0+1},...,b_{n-1},\bar{a}_{\bar{\xi}},\bar{a}_{\bar{\nu}})
\]\vspace{-8pt}
\[\;\;\;\;\;=\bigcap_{\bar{\xi}\in X}\varphi'(b_0,...,b_{i_0-1},\mathbb{M},b_{i_0+1},...,b_{n-1},\bar{a}_{\bar{\xi}},\bar{a}_{\bar{\nu}}).
\]
 Note that 
 \[\hspace{-10pt}\textrm{(vii)}\hspace{8pt}\exists x\!\!\! \bigwedge_{\bar{\xi}\in X_0}\!\!\! \varphi'(\bar{b}^{i_0/x}\!,\bar{a}_{\bar{\xi}},\bar{a}_{\bar{\nu}})\wedge\forall x\!\left(\!\!\left(\bigwedge_{\bar{\xi}\in X_0}\!\! \varphi'(\bar{b}^{i_0/x}\!,\bar{a}_{\bar{\xi}},\bar{a}_{\bar{\nu}})\wedge\varphi'(\bar{b}^{i_0/x}\!,\bar{a}_{\bar{\xi}'},\bar{a}_{\bar{\nu}})\!\right)\!\leftrightarrow\!\! \bigwedge_{\bar{\xi}\in X_0}\!\! \varphi'(\bar{b}^{i_0/x}\!,\bar{a}_{\bar{\xi}},\bar{a}_{\bar{\nu}}) \!\right)
\]
for all $\bar{\xi}'\in X\setminus X_0$. Let $\{\bar{\eta}_0,...,\bar{\eta}_{m-1}\}$ be an enumeration of $X_0$, $\bar{\bar{\eta}}:=(\bar{\eta}_0,...,\bar{\eta}_{m-1})$, and $\bar{\bar{a}}_{\bar{\bar{\eta}}}:=(\bar{a}_{\bar{\eta}_0},...,\bar{a}_{\bar{\eta}_{m-1}})$. Let $\bar{\bar{y}}'':=(\bar{y}''_0,...,\bar{y}''_{m-1})$  be a tuple of variables such that $|\bar{y}''_i|=k|a_\eta|$ for each $i<m$. Put
\begin{itemize}
\item[(viii)] $\varphi''(\check{x}^{i_0}\!\!,\bar{y},\bar{y}'\!,\bar{\bar{y}}'')$

\vspace{7pt}

$\hspace{-10pt}:=\varphi''(x_0,...,x_{i_0-1},x_{i_0+1},...,x_{n-1},y_0,...,y_{k-1},y'_0,...,y'_{l-1},\bar{y}''_0,...,\bar{y}''_{m-1})$

\vspace{-11pt}

\[
\hspace{8pt}:=\exists x\! \bigwedge_{i<m}\! \varphi'(\bar{x}^{i_0/x}\!,\bar{y}''_i,\bar{y}')\wedge\forall x \!\left(\!\!\left(\bigwedge_{i<m}\!\!\varphi'(\bar{x}^{i_0/x}\!,\bar{y}''_i,\bar{y}')\wedge\varphi'(\bar{x}^{i_0/x}\!,\bar{y},\bar{y}')\!\right)\!\leftrightarrow\!\!\bigwedge_{i<m}\!\varphi'(\bar{x}^{i_0/x}\!,\bar{y}''_i,\bar{y}')\!\right).
\]
\end{itemize}
Then by (vii), we have 
\begin{itemize}
\item[(ix)] $\models\varphi''(\check{b}^{i_0},\bar{a}_{\bar{\xi}'},\bar{a}_{\bar{\nu}},\bar{\bar{a}}_{\bar{\bar{\eta}}})$ for each $\bar{\xi}'\in X\setminus X_0$. 
\end{itemize}

\noindent Since $D^{\CL^*\!\!\!,\,\CI^*}_{k,X,\bar{Y}}\!\!\in\mathfrak{D}^h_{n\text{-var}}$, there exists an $\CL^*$-embedding $f:\CI^*\to\CI^*$ such that 
\begin{itemize}
\item[(x)] $f(\CI^*)\cap \bar{\nu}\bar{\bar{\eta}}=\emptyset$,
\item[(xi)] $\bar{\xi}\sim_{\CL^*}\!\bar{\xi}'\;\Rightarrow\; \bar{\xi}\bar{\nu}\bar{\bar{\eta}}\sim_{\CL^*}\!\bar{\xi}'\bar{\nu}\bar{\bar{\eta}}$ for all $\bar{\xi},\bar{\xi}'\in f(\CI^*)$,
\item[(xii)] $(f(\CI^*)^k\cap X)\sim_{\CL^*}^\text{fin}X$.
\end{itemize} 
Let $a'_\xi:=a_{f(\xi)}\bar{a}_{\bar{\nu}}\bar{\bar{a}}_{\bar{\bar{\eta}}}$ for each $\xi\in\CI^*$.
Then $(a'_\xi)_{\xi\in\CI^*}$ is $\CI^*_{\CL^*}\!$-indiscernible by (xi). For each $i<k$, let $\bar{y}'''_i:=(y_i,\bar{y}'_i,\bar{\bar{y}}''_i)$ be a tuple of variables such that $|\bar{y}'_i|=|\bar{y}'|$ and $|\bar{\bar{y}}''_i|=|\bar{\bar{y}}''|$. Put 

\smallskip

\begin{itemize}
\item[(xiii)] $\varphi'''(\check{x}^{i_0};\bar{y}'''_0\!,...,\bar{y}'''_{k-1})$

\vspace{5pt}

$\hspace{-12pt}:=\varphi'''(\check{x}^{i_0};y_0,\bar{y}'_0,\bar{\bar{y}}''_0,\;...\;,y_{k-1},\bar{y}'_{k-1},\bar{\bar{y}}''_{k-1})$

\vspace{-6pt}

\[\hspace{-128pt}:=\varphi''(\check{x}^{i_0};y_0,...,y_{k-1},\bar{y}'_0,\bar{\bar{y}}''_0)\wedge\!\!\!\bigwedge_{i<j<k}\!\!\!\bar{y}'_i=\bar{y}'_j\;\wedge\!\!\!\bigwedge_{i<j<k}\!\!\!\bar{\bar{y}}''_i=\bar{\bar{y}}''_j.\]
\end{itemize}

\smallskip

We prove that $\varphi'''(\check{x}^{i_0};\bar{y}'''_0\!,...,\bar{y}'''_{k-1})$ witnesses $\neg D^{\CL^*\!\!\!,\,\CI^*}_{k, X,\bar{Y}}$ with $(a'_\xi)_{\xi\in\CI^*}$. Choose any finite subset $X_1$ of $X$. By (x), (xi), and (xii), there exists $X'_1\subset X\setminus X_0$ such that $X'_1\bar{\nu}\bar{\bar{\eta}}\sim_{\CL^*}\!\!f(X_1)\bar{\nu}\bar{\bar{\eta}}$. Thus $\{\varphi'''(\check{x}^{i_0};\bar{a}'_{\bar{\xi}})\}_{\bar{\xi}\in X_1}$ is consistent by (ix). Thus $\{\varphi'''(\check{x}^{i_0};\bar{a}'_{\bar{\xi}})\}_{\bar{\xi}\in X}$ is consistent by compactness.

Now we show that $\{\varphi'''(\check{x}^{i_0};\bar a'_{\bar{\xi}})\}_{\bar{\xi}\in Y}$ is inconsistent for each $Y\in\bar{Y}$. Suppose $\{\varphi'''(\check{x}^{i_0};\bar a'_{\bar{\xi}})\}_{\bar{\xi}\in Y}$ is realized by $\check{c}^{i_0}:=(c_0,...,c_{i_0-1},c_{i_0+1},...,c_{n-1})$. Let $\bar{c}^{i_0/x}:=(c_0,...,c_{i_0-1},x,c_{i_0+1},...,c_{n-1})$ for a variable $x$.
Then for each $\bar{\xi}\in Y$, we have
\[\hspace{-13pt}\textrm{(xiv)}\hspace{8pt}\exists x\!\! \bigwedge_{i<m}\!\! \varphi'(\bar{c}^{i_0/x}\!,\bar{a}_{\bar{\eta}_i},\bar{a}_{\bar{\nu}})\wedge\forall x \!\left( \!\!\left( \bigwedge_{i<m}\!\varphi'(\bar{c}^{i_0/x}\!,\bar{a}_{\bar{\eta}_i},\bar{a}_{\bar{\nu}})\wedge \varphi'(\bar{c}^{i_0/x}\!,\bar{a}_{f(\bar{\xi})},\bar{a}_{\bar{\nu}})\!\! \right)\!\leftrightarrow\! \bigwedge_{i<m}\!\varphi'(\bar{c}^{i_0/x}\!,\bar{a}_{\bar{\eta}_i},\bar{a}_{\bar{\nu}})\!\! \right).
\]
Thus there exists $c_{i_0}$ such that $\models\varphi'(\bar{c}^{i_0/c_{i_0}}\!,\bar{a}_{f(\bar{\xi})},\bar{a}_{\bar{\nu}})$ for each $\bar{\xi}\in Y$, where $\bar{c}^{i_0/c_{i_0}}=(c_0,...,c_{n-1})$. By the choice of $\varphi'$, $\{\varphi(\bar{x},\bar{a}_{\bar{\xi}})\}_{\bar{\xi}\in f(Y)}$ is consistent. It is a contradiction since $f(Y)\sim_{\CL^*}Y$. Thus $\varphi'''$ witnesses $\neg D^{\CL^*\!\!\!,\,\CI^*}_{k,X,\bar{Y}}$ with $(a'_\xi)_{\xi\in\CI^*}$.

Note that the length of the free variable part of the new witness of $\neg D^{\CL^*\!\!\!,\,\CI^*}_{k,X,\bar{Y}}$ is $n-1$. So we can repeat this process until we get a witness of $\neg D^{\CL^*\!\!\!,\,\CI^*}_{k,X,\bar{Y}}$ satisfying (ii), by \Cref{lem: 1-var lemma higher arity}.
\end{proof}
\end{theorem}

\subsection{Preservation results for $\mathfrak{D}^h_{n\text{-var}}$}

\begin{proposition}\label{prop: n-var theorem exchange higher}
Let $D^{\CL^*\!\!\!,\,\CI^*}_{k,X,\bar{Y}}\!\!\in\mathfrak{D}^h_{n\text{-var}}$. Let $\CL$ be a language, $T$ a complete $\CL$-theory, and $\mathbb{M}\models T$ a sufficiently saturated model. Suppose that $\acl$ in $T$ satisfies the exchange property. Then the following are equivalent.
\begin{itemize}
\item[(i)] $T\notin D^{\CL^*\!\!\!,\,\CI^*}_{k,X,\bar{Y}}\!$.
\item[(ii)] there exist $\varphi(\bar{x},y_0,...,y_{k-1})\in \CL$ and $\CI^*_{\CL^*}\!$-indiscernible $(a_\eta)_{\eta\in\CI^*}$ such that 
\begin{itemize}
\item[$\ast$] $\dim(\{\varphi(\bar{x},\bar{a}_{\bar{\eta}})\}_{\bar{\eta}\in X})=|\bar{x}|$,
\item[$\ast$] $\dim(\{\varphi(\bar{x},\bar{a}_{\bar{\nu}})\}_{\bar{\nu}\in Y})=0$ for all $Y\in\bar{Y}$. 
\end{itemize}
\item[(iii)] there exist $\varphi(\bar{x},y_0,...,y_{k-1})\in \CL$ and $\CI^*_{\CL^*}\!$-indiscernible $(a_\eta)_{\eta\in\CI^*}$ such that 
\begin{itemize}
\item[$\ast$] $\dim(\{\varphi(\bar{x},\bar{a}_{\bar{\eta}})\}_{\bar{\eta}\in X})=|\bar{x}|$,
\item[$\ast$] $\dim(\{\varphi(\bar{x},\bar{a}_{\bar{\nu}})\}_{\bar{\nu}\in Y})<|\bar{x}|$ for all $Y\in\bar{Y}$. 
\end{itemize}
\end{itemize}
\begin{proof}
The implication from (i) to (ii) follows from \Cref{thm: n-var theorem higher arity}. From (ii) to (iii) is trivial.  Suppose (iii). Let $n:=|\bar{x}|$. Then $\varphi$ can be written of the form $\varphi(x_0,...,x_{n-1},\bar{y})$ where $|x_i|=1$ for all $i<n$. Consider 
\[
\Phi((z_\xi)_{\xi\in\omega^{\le n}},\bar{y}):=\{\varphi(z_{\xi_1},...,z_{\xi_{n}},\bar{y}):\emptyset\lhd\xi_1\lhd\cdots\lhd\xi_n\}\cup\{z_\xi\neq z_{\xi'}\}_{\xi\neq\xi'}.
\] 
Then $\bigcup_{\bar{\eta}\in X}\Phi( (z_\xi)_{\xi\in\omega^{\le n}}, \bar{a}_{\bar{\eta}})$ is consistent. Fix $Y\in\bar{Y}$ and suppose $\bigcup_{\bar{\nu}\in Y}\Phi( (z_\xi)_{\xi\in\omega^{\le n}}, \bar{a}_{\bar{\nu}})$ is realized by $(b_\xi)_{\xi\in\omega^{\le n}}$. By applying the s-modeling property, we may assume that $(b_\xi)_{\xi\in\omega^{\le n}}$ is s-indiscernible over $(\bar{a}_{\bar{\nu}})_{\bar{\nu}\in Y}$. Then for any $\emptyset\lhd\xi_1\lhd\cdots\lhd\xi_n$, $b_{\xi_i}\notin\acl((\bar{a}_{\bar{\nu}})_{\bar{\nu}\in Y} b_{\xi_{<i}})$ for all $0<i\le n$. By the exchange property, we have $\dim(b_{\xi_1}...b_{\xi_n}/(\bar{a}_{\bar{\nu}})_{\bar{\nu}\in Y})=n=|\bar{x}|$, which yields a contradiction. Thus $\bigcup_{\bar{\nu}\in Y}\Phi( (z_\xi)_{\xi\in\omega^{\le n}}, \bar{a}_{\bar{\nu}})$ is inconsistent for all $Y\in \bar{Y}$. Since $|\bar{Y}|$ is finite, there exists a finite conjunction of formulas in $ \Phi((z_\xi)_{\xi\in\omega^{\le n}},\bar{y})$ witnessing $\neg D^{\CL^*\!\!\!,\,\CI^*}_{k,X,\bar{Y}}\!$ with $(a_\eta)_{\eta\in\CI^*}$. Thus $T\notin D^{\CL^*\!\!\!,\,\CI^*}_{k,X,\bar{Y}}$.
\end{proof}
\end{proposition}

\begin{theorem}\label{thm: T^gt preservation higher}
Let $\CL$ be a language, $T$ be a geometric $\CL$-theory, and $D^{\CL^*\!\!\!,\,\CI^*}_{k,X,\bar{Y}}\!\!\in\mathfrak{D}^h_{n\text{-var}}$. Suppose $\acl_\CL=\dcl_\CL$. Then $T\in D^{\CL^*\!\!\!,\,\CI^*}_{k,X,\bar{Y}}$ if and only if $T^{gt}\in\! D^{\CL^*\!\!\!,\,\CI^*}_{k,X,\bar{Y}}$.
\begin{proof}
From left to right follows from \Cref{prop: n-var theorem exchange higher} and the argument in \Cref{thm: T^gt preservation}. 

For the converse, suppose $T\notin D^{\CL^*\!,\CI^*}_{k,X,\bar Y}$. Choose a sufficiently saturated model $(\mathbb{M},H(\mathbb{M}))\models T^{ind}$. Since $\mathbb{M}\models T$, there exist $\varphi(x,y_0,...,y_{k-1})\in \CL$ and $\CL^*$-indiscernible $(a_\eta)_{\eta\in\CI^*}$ in $\CL$ such that 
\begin{itemize}
\item[(i)] $\dim_{\CL}(\{\varphi(x,\bar a_{\bar\eta})\}_{\bar\eta\in X})=|x|$,
\item[(ii)] $\{\varphi(x,\bar a_{\bar\nu})\}_{\bar\nu\in Y}$ is inconsistent for each $Y\in\bar Y$.
\end{itemize}
We may assume there exists $m<\omega$ such that $|y_0|=\cdots=|y_{k-1}|=m$. Since $\CI^*$ is $k$-monochromatically extendable over $\mathbbof{X}$, we have $\CI^\dagger$ with $\age(\CI^*)=\age(\CI^\dagger)$ such that for any coloring $c:(\CI^\dagger)^k\to |T|$, there exists an $\CL^*$-embedding $f:\CI^*\to\CI^\dagger$ such that $c(f(\bar\eta))=c(f(\bar\nu))$ for any $\bar\eta,\bar\nu\in\mathbbof{X}$ with $\la\bar\eta\ra\sim\la\bar\nu\ra$. By the modeling property, we can find $\CL^*$-indiscernible $( a'_\eta)_{\eta\in\CI^\dagger}$ in $\CL$ such that 
\begin{itemize}
\item[(iii)] $\dim_\CL(\{\varphi(x,\bar{a}'_{\bar\eta})\}_{\bar\eta\in X'})=|x|$ for any finite subset  $X'$ of $(\CI^\dagger)^k$ such that  $X'\sim X''$ for some $X''\subseteq X$,
\item[(iv)] for each $Y\in\bar Y$, there exists a finite subset $Y'$ of $Y$ such that $\{\varphi(x,\bar{a}'_{\bar\nu})\}_{\bar\nu\in Y''}$ is inconsistent for any $Y''\subseteq(\CI^\dagger)^k$ with $Y''\sim Y'$.
\end{itemize}
By the density property, we may assume $( a'_\eta)_{\eta\in\CI^\dagger}\subseteq\acl_\CL(H(\mathbb{M}))=\dcl_\CL(H(\mathbb{M}))$. Note that $a'_\eta$ is of the form $( a'_{\eta,0},...,a'_{\eta,m-1})$ with $|a'_{\eta,0}|=\cdots=|a'_{\eta,m-1}|=1$. Let $y'$ be a variable with $|y'|=1$. Then for each $\eta\in\CI^\dagger$ and $j<m$, there exist $\psi_{\eta,j}\in\CL$ and $b_{\eta,j}\in H(\mathbb M)$ such that $\models\exists^!y'\psi_{\eta,j}(y',b_{\eta,j})\wedge\psi_{\eta,j}(a'_{\eta,j},b_{\eta,j})$.

 For each $\eta\in\CI^\dagger$, let $\bar\psi_\eta:=(\psi_{\eta,0},...,\psi_{\eta,m-1})$. We define a $|T|$-coloring $c$ on $(\CI^\dagger)^k$ by $c(\eta_0,...,\eta_{k-1})=(\bar\psi_{\eta_0},...,\bar\psi_{\eta_{k-1}})$ for each $\bar\eta\in(\CI^\dagger)^k$. There exists an $\CL^*$-embedding $f:\CI^*\to\CI^\dagger$ such that $c(f(\bar\eta))=c(f(\bar\nu))$ for all $\bar\eta,\bar\nu\in\mathbbof X$ with $\la\bar\eta\ra\sim\la\bar\nu\ra$. By applying the modeling property, we obtain an $\CL^*$-indiscernible $(a''_\eta)_{\eta\in\CI^*}$ in $\CL_H$ which is $\CL^*$-locally based on $(a'_{f(\eta)})_{\eta\in\CI^*}$ in $\CL_H$. Note that $a''_\eta$ is also of the form $(a''_{\eta,0},...,a''_{\eta,m-1})$ such that $|a''_{\eta,j}|=1$ for all $j<m$.

We claim that $\bar a''_{\bar\eta}\in \dcl_\CL(H(\mathbb{M}))$ for each $\bar\eta\in\mathbbof{X}$. Choose any $\bar\eta:=(\eta_0,...,\eta_{k-1})\in \mathbbof{X}$. Then $\bar a'_{f(\bar\eta)}$ is defined by $\bar\psi_{f(\eta_0)},...,\bar\psi_{f(\eta_{k-1})}$ with parameters in $H(\mathbb{M})$. If $\bar a''_{\bar\eta}$ is not defined by $\bar\psi_{f(\eta_0)},...,\bar\psi_{f(\eta_{k-1})}$ with parameters in $H(\mathbb M)$, then there exist $j<m$ and $k'<k$ such that
\begin{itemize}
\item[(v)] \makebox[\linewidth][c]{$\displaystyle 
\models \neg\exists z(H(z)\wedge\exists^! y'\psi_{f(\eta_{k'}),j}(y',z)\wedge \psi_{f(\eta_{k'}),j}(a''_{\eta_{k'},j},z)).$
}
\end{itemize}
Since $(a''_\eta)_{\eta\in\CI^*}$ is locally based on $(a'_{f(\eta)})_{\eta\in\CI^*}$, there exists $\la\bar\nu\ra\sim\la\bar\eta\ra$ such that $\bar a'_{f(\bar\nu)}$ satisfies (v). Note that $\bar\nu\in\mathbbof{X}$. Thus we have $c(f(\bar\nu))=c(f(\bar\eta))$, and hence $\bar a'_{f(\bar\nu)}$ is defined by $\bar\psi_{f(\eta_0)},...,\bar\psi_{f(\eta_{k-1})}$. This yields a contradiction with (v). Thus $\bar a''_{\bar\eta}\in \dcl_\CL(H(\mathbb{M}))$ for all $\bar\eta\in\mathbbof{X}$. If $\eta\in\CI^*$ and $\bar\eta\notin\mathbbof{X}$ for any $\bar\eta\in(\CI^*)^k$ containing $\eta$, then we replace $a''_\eta$ with any element in $H(\mathbb{M})$. Then $(a''_\eta)_{\eta\in\CI^*}\subseteq \dcl_\CL(H(\mathbb{M}))$.

Since $\CI^*$ is $k$-weakly monochromatic, it is $1$-weakly monochromatic. Thus there exist $d<\omega$ and $\{\chi_{e,j}(y',z)\}_{e<d,j<m}\in\CL$, $(c_{\eta,j})_{\eta\in\CI^*,j<m}$, and a surjection $g:\CI^*\to d$ such that
\begin{itemize}
\item[(vi)] $\models\exists^!y'\chi_{g(\eta),j}(y',c_{\eta,j})\wedge \chi_{g(\eta),j}(a''_{\eta,j},c_{\eta,j})$ for each $\eta\in\CI^*$ and $j<m$,
\end{itemize}
For each $j<m$, let 
\[
\chi_j(y';z_0,...,z_{d-1},s_0,...,s_{d-1},t_0,...,t_{d-1})
:=\bigvee_{e<d}\Big( s_e=t_e \wedge \chi_{e,j}(y',z_e)\Big)
\]
Then for each $\eta\in\CI^*$ and $j<m$, $c_{\eta,j}$ can be extended to $\bar c_{\eta,j}\in H(\mathbb{M})$ such that 
\[
\models \exists^!y' \chi_j(y';\bar c_{\eta,j})\wedge   \chi_j(a''_{\eta,j};\bar c_{\eta,j})
\]
Let $y'_0,...,y'_{m-1}$ be variables with $|y'_0|=\cdots=|y'_{m-1}|=1$. Let $u:=(z_0,...,z_{d-1},s_0,...,s_{d-1},t_0,...,t_{d-1})$ and
\[
\chi(y'_0,...,y'_{m-1};u_0,...,u_{m-1}):=\bigwedge_{j<m}\chi_j(y'_j,u_j).
\]
Then 
\[
\models\exists^! y'_0...y'_{m-1} \chi(y'_0,...,y'_{m-1};\bar{c}_{\eta,0},...,\bar c_{\eta,m-1})\wedge\chi(a''_\eta;\bar{c}_{\eta,0},...,\bar c_{\eta,m-1}).
\]
Recall that $|y_0|=\cdots=|y_{k-1}|=m$. Let $\bar c_\eta:=(\bar c_{\eta,0},...,\bar c_{\eta,m-1})$, $w:=(u_0,...,u_{m-1})$, and 
\[
\varphi'(x,w_0,...,w_{k-1}):=\exists y_0,...,y_{k-1}\Big(\varphi(x,y_0,...,y_{k-1})\wedge\bigwedge_{k'<k}\chi(y_{k'},w_{k'})\Big)
\]
Then we have
\begin{itemize}
\item[(vii)] $\dim_\CL( \{\varphi'(x,\bar{c}_{\eta_0},...,\bar{c}_{\eta_{k-1}})\}_{\bar{\eta}\in X'})=|x|$ for any finite $X'$ such that $X'\sim X''$ for some $X''\subseteq X$,
\item[(viii)] for each $Y\in \bar{Y}$, there exists a finite subset $Y'$ of $Y$ such that $\{\varphi'(x,\bar{c}_{\nu_0},...,\bar{c}_{\nu_{k-1}})\}_{\bar\nu\in Y''}$ is inconsistent for any $Y''$ with $Y''\sim Y'$.
\end{itemize}
By the modeling property, we may assume $(\bar c_\eta)_{\eta\in\CI^*}$ is $\CL^*$-indiscernible in $\CL_H$.
By the density property, $\{\varphi'(x,\bar{c}_{\eta_0},...,\bar{c}_{\eta_{k-1}})\}_{\bar\eta\in X}$ has a realization in $H(\mathbb{M})$. Since $(\bar c_\eta)_{\eta\in\CI^*}\subseteq H(\mathbb{M})$ and $\varphi'\in \CL$, we can conclude that $T^{gt}\notin D^{\CL^*\!,\CI^*}_{k,X,\bar Y}$.
\end{proof}
\end{theorem}

\begin{proposition}\label{prop: n-var theorem 1-predicate higher}
Let $D^{\CL^*\!\!\!,\,\CI^*}_{k,X,\bar{Y}}\!\!\in\mathfrak{D}^h_{n\text{-var}}$. Let $\CL$ be a language, $T$ a complete $\CL$-theory, and $\mathbb{M}\models T$ a sufficiently saturated model. Assume that $\CL$ has a unary predicate symbol $P$. If $T\notin D^{\CL^*\!\!\!,\,\CI^*}_{k,X,\bar{Y}}$, then there exist $\varphi(\bar{x},\bar{x}',y_0,...,y_{k-1})\in\CL$ and $\CI^*_{\CL^*}$-indiscernible $(a_\eta)_{\eta\in \CI^*}\!$ such that
\begin{itemize}
\item[(i)] $\varphi'(\bar{x},\bar{x}',\bar y):=\varphi(\bar{x},\bar{x}',\bar y)\wedge \neg P(\bar{x})\wedge P(\bar{x}')$ witnesses $\neg D^{\CL^*\!\!\!,\,\CI^*}_{k,X,\bar{Y}}$ with $(a_\eta)_{\eta\in \CI^*}$,
\item[(ii)] there exists $\bar{b}\bar{b}'\models\{\varphi'(\bar{x},\bar{x}',\bar a_{\bar\eta})\}_{\bar\eta\in X}$ such that 
\begin{itemize}
\item[$\ast$] $\dim_\CL(\bar{b}\bar{b}'/(a_\eta)_{\eta\in\CI^*}\!)=|\bar{x}|+|\bar{x}'|$,
\item[$\ast$] $\dim_\CL(\bar{b}/P(\mathbb{M})(a_\eta)_{\eta\in\CI^*}\!)=|\bar{x}|$.
\end{itemize}
\end{itemize}
\begin{proof}
By the same argument of \Cref{prop: n-var theorem 1-predicate}.
\end{proof}
\end{proposition}

\begin{theorem}\label{thm: T_P T^ind preservation higher}
Let $T$ be a geometric theory and $D^{\CL^*\!\!\!,\,\CI^*}_{k,X,\bar{Y}}\!\!\in\mathfrak{D}^h_{n\text{-var}}$. If $T^*$ is $T_P$ or $T^{ind}$, then $T\in D^{\CL^*\!\!\!,\,\CI^*}_{k,X,\bar{Y}}$ if and only if $T^*\in D^{\CL^*\!\!\!,\,\CI^*}_{k,X,\bar{Y}}$. 
\begin{proof}
Right to left is clear.

 Suppose $T^*\notin D^{\CL^*\!\!\!,\,\CI^*}_{k,X,\bar{Y}}$. By the $k$-monochromatic extendability, there exists $\CI^\dagger$ with $\age(\CI^\dagger)=\age(\CI^*)$ such that for any coloring $c:(\CI^\dagger)^k\to 2^{|T|}$, there exists an $\CL^*$-embedding $f:\CI^*\to \CI^\dagger$ such that $c(f(\bar\eta))=c(f(\bar\nu))$ for any $\bar\eta,\bar\nu\in\mathbbof{X}$ with $\la\bar\eta\ra\sim\la\bar\nu\ra$.
  By \Cref{prop: n-var theorem 1-predicate higher}, the exchange property of $T$, and the modeling property, there exist $\CL^*$-indiscernible $(a_\eta)_{\eta\in\CI^\dagger}$ in $\CL_H$, $\varphi(\bar{x},\bar{x}',y_0,...,y_{k-1})\in\CL_H$ with $\bar{x}:=(x_0,...,x_{n-1})$, $\bar{x}':=(x'_0,...,x'_{m-1})$, and $|x_i|=|x'_j|=1$ for all $i<n$ and $j<m$, such that
\begin{itemize}
\item[(i)] $\varphi'(\bar{x},\bar{x}',\bar y):=\varphi(\bar{x},\bar{x}',\bar y)\wedge \neg H(\bar{x})\wedge H(\bar{x}')$ witnesses $\neg D^{\CL^*\!\!\!,\,\CI^*}_{k,X,\bar{Y}}$ with $(a_\eta)_{\eta\in \CI^\dagger}$, that is
\begin{itemize}
\item[$\ast$] $\{\varphi'(\bar x,\bar x',\bar a_{\bar\eta})\}_{\bar\eta\in X'}$ is consistent if there exists $X''\subseteq X\subseteq \CI^*$ such that $X''\sim X'$,
\item[$\ast$] For each $Y\in\bar Y$, there exists a finite subset $Y''$ of $Y$ such that $\{\varphi'(\bar x,\bar x',\bar a_{\bar\nu})\}_{\bar\nu\in Y'}$ is inconsistent if $Y'\sim Y''$,
\end{itemize}
\item[(ii)] if $X'\subseteq (\CI^\dagger)^k$ and $X''\sim X'$ for some $X''\subseteq X$, then there exists $\bar{b}\bar{b}'\models\{\varphi'(\bar{x},\bar{x}',\bar a_{\bar\eta})\}_{\bar\eta\in X'}$ such that 
\begin{itemize}
\item[$\ast$] $\dim_{\CL}(\bar{b}\bar{b}'/(a_\eta)_{\eta\in\CI^\dagger}\!)=|\bar{x}|+|\bar{x}'|$,
\item[$\ast$] $\dim_{\CL}(\bar{b}/H(\mathbb{M})(a_\eta)_{\eta\in\CI^\dagger}\!)=|\bar{x}|$.
\end{itemize}
\end{itemize}
Recall that $\mathbbof{X}:=\{\bar\eta\in (\CI^*)^k:\bar\eta\sim\bar\nu\text{ for some }\bar\nu\in X\}$. Let
\[
\mathbbof{X}_1:=\{\eta\in\CI^*:\eta\in\bar\eta\text{ for some }\bar\eta\in\mathbbof{X}\}.
\]
Choose any $C\subseteq\mathbb{M}\!\setminus\!\! H(\mathbb{M})$ with $|C|\ge \omega$ such that
\begin{itemize}
\item[(iii)] $c\notin\acl_{\CL}(H(\mathbb{M})C\!\setminus\!\{c\})$ for each $c\in C$,
\item[(iv)] $(a_\eta)_{\eta\in\CI^\dagger}\subseteq\acl_\CL(H(\mathbb{M})C)$.
\end{itemize}
We may assume there exists $l<\omega$ such that  $|a_\eta|=|a_\nu|=l$ for all $\eta,\nu\in \CI^\dagger$. So $a_\eta$ is of the form $(a_{\eta,0},...,a_{\eta,l-1})$.
For any $\eta\in\CI^\dagger$, choose any finite tuples $\bar c_\eta\in C$ and $\bar h_\eta\in H(\mathbb{M})$ such that
\begin{itemize}
\item[(v)] $a_\eta\subseteq \acl_\CL(\bar c_\eta \bar h_\eta)$,
\item[(vi)] if $a_{\eta,l'}\in\acl_\CL(H(\mathbb{M}))$, then $a_{\eta,l'}\in \acl_\CL(\bar h_\eta)$ for each $l'<l$,
\item[(vii)] if $a_{\eta,l'}\in H(\mathbb{M})$, then $a_{\eta,l'}=h$ for some $h\in \bar{h}_\eta$, for each $l'<l$.
\end{itemize}
Note that $|\bar c_\eta\bar h_\eta|$'s may differ and not be bounded. Also note that 
\begin{itemize}
\item[(viii)] $a_{\eta_0}\bar c_{\eta_0} \bar h_{\eta_0}...a_{\eta_{d-1}}\bar c_{\eta_{d-1}} \bar h_{\eta_{d-1}}$ is $H$-independent for each $\eta_0,...,\eta_{d-1}\in\CI^\dagger$,
\end{itemize}
by (v), (vi), and (vii).
Define a coloring $c$ on $(\CI^\dagger)^k$ such that 
\[
c(\eta_0,...,\eta_{k-1})=\tp_{\CL_H}(a_{\eta_0}\bar c_{\eta_0}\bar h_{\eta_0},...,a_{\eta_{k-1}}\bar c_{\eta_{k-1}}\bar h_{\eta_{k-1}}).
\] 
Since $\CI^\dagger$ is a $k$-monochromatic extension of $\CI^*$ over $\mathbbof{X}$, there exists an $\CL^*$-embedding $f:\CI^*\to \CI^\dagger$ such that $c(f(\bar\eta))=c(f(\bar\nu))$ for all $\bar\eta,\bar\nu\in\mathbbof{X}$ with $\la\bar\eta\ra\sim\la\bar\nu\ra$.
For each $\eta\in\CI^*$, let $a'_\eta:=a_{f(\eta)}$, $\bar c'_\eta:=\bar c_{f(\eta)}$, and $\bar h'_\eta:=\bar h_{f(\eta)}$. Since $\CI^*$ is $k$-weakly monochromatic, there exists $N<\omega$ such that $|\bar c'_{\eta}\bar h'_{\eta}|<N$ for all $\eta\in\mathbbof{X}_1$. Thus we may assume $|\bar c'_\eta|=|\bar c'_\nu|$ and $|\bar h'_\eta|=|\bar h'_\nu|$ for any $\eta,\nu\in\mathbbof{X}_1$. For $\eta\in\CI^*\!\setminus\!\mathbbof{X}_1$, just replace $\bar c'_\eta$ and $\bar h'_\eta$ with any tuples in $C$ and $H(\mathbb{M})$, respectively, having the appropriate lengths. Thus we have $|\bar c'_\eta|=|\bar c'_\nu|$ and $|\bar h'_\eta|=|\bar h'_\nu|$ for any $\eta,\nu\in\CI^*$. For each $\eta\in\CI^*$, let $d'_\eta:=a'_\eta \bar c'_\eta \bar h'_\eta$. For each $i<k$, let $u_i$ and $v_i$ be variables with $|u_i|=|\bar c'_\eta|$ and $|v_i|=|\bar h'_\eta|$, let  $y'_i:=(y_i,u_i,v_i)$, and 
\[
\varphi''(\bar x,\bar x',y'_0,...,y'_{k-1}):=\varphi'(\bar x,\bar x',y_0,...,y_{k-1})\wedge \bigwedge_{i<k}(u_i=u_i\wedge v_i=v_i).
\]
By the modeling property, the argument using the s-modeling property, and the exchange property of $T$, we may assume
\begin{itemize}
\item[(ix)] $(d'_\eta)_{\eta\in\CI^*}$ is $\CL^*$-indiscernible in $\CL_H$,
\item[(x)] $d'_{\eta_0},...,d'_{\eta_{k-1}}$ is $H$-independent for each $(\eta_0,...,\eta_{k-1})\in\mathbbof{X}$,
\item[(xi)] there exists $\bar e\bar e'\models\{\varphi''(\bar x,\bar x',\bar d'_{\bar\eta})\}_{\bar\eta\in X}$ such that
\begin{itemize}
\item[$\ast$] $\dim_\CL(\bar e\bar e'/(d'_\eta)_{\eta\in\CI^*} )=|\bar x|+|\bar x'|$,
\item[$\ast$] $\dim_\CL(\bar e/H(\mathbb{M})(d'_\eta)_{\eta\in\CI^*})=|\bar x|$,
\end{itemize}
\item[(xii)] $\{\varphi''(\bar x,\bar x',\bar d'_{\bar\nu})\}_{\bar\nu\in Y}$ is inconsistent for each $Y\in\bar Y$.
\end{itemize}

\medskip

\noindent\underline{Claim 1.} For $\bar\eta,\bar\nu\in\mathbbof{X}$, if $\bar d'_{\bar\eta}\equiv_{\CL}\bar d'_{\bar\nu}$, then $\bar d'_{\bar\eta}\equiv_{\CL_H}\bar d'_{\bar\nu}$.

\smallskip

\noindent\underline{Proof of Claim 1.} By (x), it is enough to show that $\qftp_{\{H\}}(\bar d'_{\bar\eta})=\qftp_{\{H\}}(\bar d'_{\bar\nu})$. By the choice of $d'_\eta$, it is enough to show that $\qftp_{\{H\}}(\bar a'_{\bar\eta})=\qftp_{\{H\}}(\bar a'_{\bar\nu})$. Note that for each $\eta\in\CI^*$, $a'_\eta$ is of the form $(a'_{\eta,0},...,a'_{\eta,l-1})$ such that $|a'_{\eta,l'}|=1$ for each $l'<l$. Suppose $a'_{\eta_i,l'}\in H(\mathbb{M})$ for $i<k$ and $l'<l$. Then by (vii),  we have $a'_{\nu_i,l'}\in H(\mathbb{M})$ since $d'_{\eta_i}\equiv_\CL d'_{\nu_i}$. $\dashv$

\medskip

By Claim 1 and the $k$-weak monochromaticity, there exist $r<\omega$ and $\chi_0,...,\chi_{r-1}\in\CL$ such that
\begin{itemize}
\item[(xiii)] $\models \neg\exists y'_0...y'_{k-1}(\chi_i(y'_0,...,y'_{k-1})\wedge\chi_j(y'_0,...,y'_{k-1}))$ for $i<j<r$,
\item[(xiv)] for $\bar\eta\in\mathbbof{X}$, there exists $i<r$ such that $\models\chi_i(\bar d'_{\bar\eta})$,
\item[(xv)] for $\bar\eta,\bar\nu\in\mathbbof{X}$, $\models\bigwedge_{i<r}(\chi_i(\bar d'_{\bar\eta})\leftrightarrow\chi_i(\bar d'_{\bar\nu}))$ if and only if $\bar d'_{\bar\eta}\equiv_{\CL_H}\bar d'_{\bar\nu}$.
\end{itemize}
For each $\bar\eta\in(\CI^*)^k$, let 
\[
Z^0_{\bar\eta}:=\{\bar{e}\bar{e}'\in\mathbb{M}:\models\varphi''(\bar{e},\bar{e}',\bar d'_{\bar\eta})\wedge \neg H(\bar{e})\wedge H(\bar{e}'),\dim_{\CL}(\bar{e}\bar{e}'/\bar d'_{\bar\eta})=n+m,\; \dim_{\CL}(\bar{e}/ H(\mathbb{M})\bar d'_{\bar\eta})=n\}
\]
and
\[
Z^1_{\bar\eta}:=\{\bar{e}\bar{e}'\in\mathbb{M}:\models \neg\varphi''(\bar{e},\bar{e}',\bar d'_{\bar\eta})\wedge \neg H(\bar{e})\wedge H(\bar{e}'),\dim_{\CL}(\bar{e}\bar{e}'/\bar d'_{\bar\eta})=n+m,\; \dim_{\CL}(\bar{e}/ H(\mathbb{M})\bar d'_{\bar\eta})=n\}.
\]
 By the same argument of \Cref{thm: T_P T^ind preservation}, there exist $\psi_0(\bar{x},\bar{x}'\!,y'_0,...,y'_{k-1}),...,\psi_{r-1}(\bar{x},\bar{x}'\!,y'_0,...,y'_{k-1})\in\CL$ such that for each $\bar\eta\in\mathbbof{X}$, there exists $i<r$ such that $\models \chi_i(\bar d'_{\bar\eta})$ and
\begin{itemize}
\item[(xvi)] $Z^0_{\bar\eta}\subseteq\psi_i(\mathbb{M}^{n+m}\!,\bar d'_{\bar\eta})$,
\item[(xvii)] $Z^1_{\bar\eta}\cap \psi_i(\mathbb{M}^{n+m}\!,\bar d'_{\bar\eta})=\emptyset$.
\end{itemize}

Let 
\[
\psi(\bar x,\bar x',y'_0,...,y'_{k-1}):=\bigvee_{i<r}\Big(\chi_i(y'_0,...,y'_{k-1})\wedge\psi_i(\bar x,\bar x',y'_0,...,y'_{k-1})\Big).
\]
Then by the same argument of \Cref{thm: T_P T^ind preservation}, we can show that 
\begin{itemize}
\item[(xviii)] $\dim_\CL( \{\psi(\bar x,\bar x',\bar d'_{\bar\eta})\}_{\bar\eta\in X}) =|\bar x|+|\bar x'|$
\item[(xix)] $\dim_\CL( \{\psi(\bar x,\bar x',\bar d'_{\bar\nu})\}_{\bar\nu\in Y}) <|\bar x|+|\bar x'|$ for each $Y\in \bar Y$.
\end{itemize}
Thus $T\notin D^{\CL^*\!\!\!,\,\CI^*}_{k,X,\bar{Y}}$ by \Cref{prop: n-var theorem exchange higher}.
\end{proof}
\end{theorem}

The following can be proved by using techniques in \Cref{thm: T_P T^ind preservation higher} and the argument of \Cref{thm: T^G_R preservation}.

\begin{theorem}\label{thm: T^G_R preservation higher}
Let $\mathbb{F}$ be a field of characteristic zero,   $\CL\supseteq\CL_0:=\{+,0,\{\lambda\cdot\}_{\lambda\in\mathbb F}\}$, and $T$ a complete $\CL$-theory expanding the theory of vector spaces over $\mathbb{F}$ which has quantifier elimination in $\CL$ for which $\dcl_\CL=\acl_\CL=\span_\mathbb{F}$ and such that it eliminates the quantifier $\exists^\infty$.
Let $D\in\mathfrak{D}^h_{n\text{-var}}$ and assume that $R$ is a subfield of $\mathbb{F}$. Then $T\in D$ if and only if $T^G_R\in D$. 
\end{theorem}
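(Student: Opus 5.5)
The direction $T^G_R\in D\Rightarrow T\in D$ is immediate, since $T^G_R$ is an expansion of $T$. For the converse, suppose $T^G_R\notin D=D^{\CL^*\!\!\!,\,\CI^*}_{k,X,\bar{Y}}$. I would follow the proof of \Cref{thm: T_P T^ind preservation higher} essentially line by line, with $H$ replaced by $G$. The $H$-structure facts would be replaced by their $G$-analogues exactly as in \Cref{thm: T^G_R preservation}. Concretely, \Cref{prop: n-var theorem 1-predicate higher} with $P:=G$ gives a witness $\varphi(\bar x,\bar x',\bar y)\wedge\neg G(\bar x)\wedge G(\bar x')$. Its $X$-part has a realization $\bar b\bar b'$ of full $\CL$-dimension $|\bar x|+|\bar x'|$, with $\dim_\CL(\bar b/G(V)(a_\eta))=|\bar x|$. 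Here $\acl_\CL=\span_\mathbb{F}$ provides the exchange property needed in that argument.

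Next I pass to a $k$-monochromatic extension $\CI^\dagger$ over $\mathbbof X$. I choose a $\span_\mathbb{F}$-independent set $C\subseteq V\setminus G(V)$ over $G(V)$ such that every $a_\eta$ lies in $\span_\mathbb{F}(G(V)C)$. For each $\eta$ I take finite tuples $\bar c_\eta\in C$ and $\bar h_\eta\in G(V)$ satisfying the analogues of (v)--(vii) there. The tuples $a_\eta\bar c_\eta\bar h_\eta$, and their finite concatenations, are then $G$-independent. This should follow from \Cref{rmk: G-independent tuple} and \Cref{lem: G-independent tuple extend}, since a coordinate lying in $\span_\mathbb{F}(G(V))$ is spanned by $\bar h_\eta$. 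I color $k$-tuples by the $\CL_G$-type of the concatenated data and apply monochromatic extendability. Then $k$-weak monochromaticity bounds the lengths, and after padding, modeling, and s-modeling I obtain an $\CL^*$-indiscernible family $(d'_\eta)$ that is $G$-independent on $\mathbbof X$ and still witnesses the configuration with full-dimension realizations.

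The key replacement is Claim 1. For $\bar\eta,\bar\nu\in\mathbbof X$, if $\bar d'_{\bar\eta}\equiv_\CL\bar d'_{\bar\nu}$ then $\bar d'_{\bar\eta}\equiv_{\CL_G}\bar d'_{\bar\nu}$. I would get this from \Cref{lem: T^G quantifier elimination}, which needs $R$ to be a field. For that I must check that both tuples are $G$-independent and have the same independent code. The coordinates in $G$ match by (vii) and the monochromatic coloring. The coordinates outside $\span_\mathbb{F}(G(V)\cdot)$ and those in the span of the earlier ones are determined by $\CL$-type together with the fixed positions of $\bar c,\bar h$. Since $\CL_G$-types refine to $\CL^+_G$-types, equality of $\CL^+_G$-types gives equality of $\CL_G$-types.

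After that, finitely many $\CL$-formulas $\chi_i$ separate the $\CL_G$-types on $\mathbbof X$. As in the $H$ case, I use type-definability of the sets $Z^0_{\bar\eta},Z^1_{\bar\eta}$, with $G$ replacing $H$, and compactness to find $\CL$-formulas $\psi_i$ separating them. The formula $\psi:=\bigvee_i(\chi_i\wedge\psi_i)$ then has full dimension on $X$. On each $Y\in\bar Y$ it has dimension $<|\bar x|+|\bar x'|$, by the density and extension properties (axioms (C) and (D)). \Cref{prop: n-var theorem exchange higher} then gives $T\notin D$.

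I expect the main obstacle to be Claim 1, namely verifying that the chosen tuples carry the same independent code. In the $H$ case, $H$-independence and \Cref{fact: H-independent tuples} do this automatically. Here I would need to order each tuple as in \Cref{rem/def: G-independent tuple} and argue that the code is an $\CL$-type-invariant of $d'_\eta$ once the $\bar h$ and $\bar c$ positions are fixed, using the $\span_\mathbb{F}$-independence of $C$ over $G(V)$.
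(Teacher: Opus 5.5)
Your proposal is correct and is essentially the paper's own argument: the paper proves this theorem only by referring to the proof of \Cref{thm: T_P T^ind preservation higher}, with \Cref{fact: H-independent tuples} replaced by \Cref{rmk: G-independent tuple}, \Cref{lem: G-independent tuple extend}, and \Cref{lem: T^G quantifier elimination}, which is exactly the substitution you carry out. The obstacle you flag in Claim~1 is resolved as you indicate: for a $G$-independent tuple $\bar a$ and any ordering, $a_i\in\operatorname{span}_\mathbb{F}(G(V)a_{<i})$ if and only if $a_i\in\operatorname{span}_\mathbb{F}(G(\bar a)a_{<i})$ (the two dimension counts agree term by term), so the independent code is determined by the $\CL$-type together with the positions of the $G$-coordinates.
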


The following can be proved by the same argument of \Cref{thm: T^delta_g preservation}.

\begin{theorem}\label{thm: T^delta_g preservation higher}
Let $T$ be a complete algebraically bounded theory of fields of characteristic $0$, possibly with additional structure. If $T\in D$ for some $D\in\mathfrak{D}^h_{n\text{-var}}$, then every (some) completion of $T^\delta_g$ belongs to $D$.
\end{theorem}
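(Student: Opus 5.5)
The plan is to follow the proof of \Cref{thm: T^delta_g preservation} essentially verbatim, replacing the single parameter variable $y$ and indices $\eta\in\CI^*$ by tuples $\bar y=(y_0,\dots,y_{k-1})$ and $\bar\eta\in(\CI^*)^k$. Assume $T\in D=D^{\CL^*\!,\CI^*}_{k,X,\bar Y}$ and suppose, towards a contradiction, that $\Th_{\CL^\delta}(\mathcal K^*,\delta)\notin D$, where $(K,\delta)\prec(\mathcal K^*,\delta)\models T^\delta_g$ is $|K|^+$-saturated.

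By \Cref{fact: generic derivative}, a witness can be written as
\[
\psi(\nabla^{r_0}(x_0),\dots,\nabla^{r_{n-1}}(x_{n-1}),\bar y)
\]
with $\psi\in\CL$ and an $\CI^*$-indiscernible $(a_\eta)_{\eta\in\CI^*}$ over $K$. Since $T\in D$, some $r_i>0$. Choose such a witness with $n$ minimal and, among those, with $(r_0,\dots,r_{n-1})$ minimal for the well-founded order $\sqsubset$.

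The first step is the analogue of the Claim. We want a realization of $\{\psi(\bar x,\bar a_{\bar\eta})\}_{\bar\eta\in X}$ in which each $b^j_i$ (for $j<r_i$) is not algebraic over the parameters and the earlier coordinates. If no such realization exists, algebraic boundedness gives a polynomial relation $p=0\wedge q_d\ne0$ over $K(\bar a_{\bar\eta})_{\bar\eta\in X_0}$ for some finite $X_0\subseteq X$. Differentiating $r_i-j$ times expresses $\delta^{r_i}(x_i)$ as a rational term in lower derivatives on a finite definable partition $P_k$. This yields $\psi^*$ of strictly $\sqsubset$-smaller order, with the parameters $\bar a_{\bar\eta}$ for $\bar\eta\in X_0$ absorbed.

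We must check that $\psi^*$ is still a witness. Apply the higher-arity cofinality (\Cref{def: higher-arity cofinality}(iv)) with $Z$ equal to the finite set of coordinates occurring in $X_0$, obtaining $f$ and $a^*_\eta:=a_{f(\eta)}$. Inconsistency on each $Y$ is inherited because $\psi^*\vdash\psi$. For consistency, take a finite $X_1\subseteq X$. Using $(f(\CI^*)^k\cap X)\sim^{\rm fin}X$ together with the $Z$-similarity clause, we find $X_1'\subseteq X$ with $X_1'X_0\sim f(X_1)X_0$ at the level of coordinates. Then, as in the unary proof, an automorphism over $K(a_\eta)$ (for $\eta$ in the coordinates of $X_0$) transfers realizations. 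This contradicts minimality.

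The second step obtains an inconsistency-side realization. Since $T\in D$, the argument of \Cref{prop: n-var theorem exchange higher} (which uses only compactness, finiteness of $\bar Y$, and the s-modeling property) produces some $Y\in\bar Y$ and a realization of $\{\psi(\bar x,\bar a_{\bar\nu})\}_{\bar\nu\in Y}$ with the same non-algebraicity pattern. As in the unary proof, we build the tree-indexed type $\Psi$ over the $(\bar a_{\bar\nu})_{\bar\nu\in Y}$ and then apply \Cref{lem: generic derivative} repeatedly, one block $I_i$ at a time, refreshing s-indiscernibility each time. This realizes $\{\psi(\nabla^{r_0}x_0,\dots,\nabla^{r_{n-1}}x_{n-1},\bar a_{\bar\nu})\}_{\bar\nu\in Y}$ in $(\mathcal K^*,\delta)$, a contradiction. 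The "every (some) completion" clause follows because $K$ was an arbitrary model.

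The main obstacle is the cofinality step. Here $X_0$ consists of $k$-tuples, and one must make sure that absorbing parameters indexed by the coordinates of $X_0$ keeps $(a^*_\eta)$ indiscernible and preserves the similarity types needed for consistency. I expect the $Z$-clause of \Cref{def: higher-arity cofinality} to handle exactly this, with $Z$ taken to be the set of all coordinates of tuples in $X_0$.
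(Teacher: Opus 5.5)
Your proposal is correct and takes essentially the same approach as the paper, which proves this theorem simply by saying that the argument for the unary generic-derivation theorem goes through verbatim. The one point needing care is the cofinality step, and you handle it as the paper intends: apply the higher-arity cofinality clause with $Z$ equal to the finite set of coordinates of the tuples in $X_0$, so that $(a_{f(\eta)})_\eta$ stays indiscernible over $K$ together with the absorbed parameters, and a realization on $X_1'X_0\subseteq X$ transfers to $f(X_1)$ by an automorphism.
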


\begin{corollary}\label{cor: all preservation results higher}
Let $T$ be a complete geometric theory with $\acl=\dcl$, and $1<k<\omega$.
\begin{itemize}
\item[(i)] $T$ is NOP$_k$ if and only if $T^{gt}$ is NOP$_k$.
\item[(ii)] $T$ is NFOP$_k$ if and only if $T^{gt}$ is NFOP$_k$.
\item[(iii)] $T$ is NIP$_k$ if and only if $T^{gt}$ is NIP$_k$.
\end{itemize}
Let $T$ be a complete geometric theory and $1<k<\omega$.
\begin{itemize}
\item[(iv)]  $T$ is NOP$_k$ if and only if $T_P$ is NOP$_k$ if and only if $T^{ind}$ is NOP$_k$.
\item[(v)]  $T$ is NFOP$_k$ if and only if $T_P$ is NFOP$_k$ if and only if $T^{ind}$ is NFOP$_k$.
\item[(vi)]  $T$ is NIP$_k$ if and only if $T_P$ is NIP$_k$ if and only if $T^{ind}$ is NIP$_k$.
\end{itemize}
Let $\mathbb{F}$ be a field of characteristic $0$. Let $T$ be a complete $\CL$-theory expanding the theory of vector spaces over $\mathbb{F}$ which has quantifier elimination in $\CL$ for which $\dcl_\CL=\acl_\CL=\span_\mathbb{F}$ and such that it eliminates the quantifier $\exists^\infty$. Let $R$ be a subfield of $\mathbb{F}$. For each $1<k<\omega$,
\begin{itemize}
\item[(vii)] $T$ is NOP$_k$ if and only if $T^G_R$ is NOP$_k$,
\item[(viii)] $T$ is NFOP$_k$ if and only if $T^G_R$ is NFOP$_k$,
\item[(ix)] $T$ is NIP$_k$ if and only if $T^G_R$ is NIP$_k$.
\end{itemize}
Let $T$ be a complete algebraically bounded theory of fields of characteristic $0$, possibly with additional structure. $1<k<\omega$.
\begin{itemize}
\item[(x)] $T$ is NOP$_k$ if and only if every (some) completion of $T^\delta_g$ is NOP$_k$.
\item[(xi)] $T$ is NFOP$_k$ if and only if every (some) completion of $T^\delta_g$ is NFOP$_k$.
\item[(xii)] $T$ is NIP$_k$ if and only if every (some) completion of $T^\delta_g$ is NIP$_k$.
\end{itemize}
\end{corollary}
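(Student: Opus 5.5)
The statement to prove is \Cref{cor: all preservation results higher}. The plan is to read off each item from the four preservation theorems for $\mathfrak{D}^h_{n\text{-var}}$, once we know that the relevant dividing lines lie in that class. The input is \Cref{thm: l-over-kP-k D-n-var}: for every $1<k<\omega$, the classes $D_{\text{NOP}_k}$, $D_{\text{NFOP}_k}$ and $D_{\text{NIP}_k}$ belong to $\mathfrak{D}^h_{n\text{-var}}$. This works because NOP$_k$, NFOP$_k$ and NIP$_k$ are ${0\over k}$NP$_k$, ${k-1\over k}$NP$_k$ and ${k\over k}$NP$_k$ respectively, by the definition of OP$_k$ and \Cref{rmk: m/nPn}(iv),(v). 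Concretely, each of these equals $D^{\CL_{{x\over k}\text{NP}_k},\CI_{{x\over k}\text{NP}_k}}_{k,{l\over k}X^*_k,Y^*_k}$ for the appropriate $l\in\{0,k-1,k\}$. The admissibility of the quintuple (\Cref{def: higher-arity n-var quadruple}) is supplied by \Cref{lem: cofinality of l/kX_k}, \Cref{lem: k-monochromatic extendable IxkNPk} and \Cref{lem: l/kCLk l/kCIk k l/kXk Yk satisfies nvariable theorem}.

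With that in hand, the items group as follows.
\begin{itemize}
\item[$\ast$] Items (i)--(iii): apply \Cref{thm: T^gt preservation higher} to each of the three classes. Its hypotheses are exactly that $T$ is geometric with $\acl=\dcl$.
\item[$\ast$] Items (iv)--(vi): apply \Cref{thm: T_P T^ind preservation higher} with $T^*=T_P$ and with $T^*=T^{ind}$. This gives $T\in D\Leftrightarrow T_P\in D$ and $T\in D\Leftrightarrow T^{ind}\in D$, and chaining them gives the stated three-way equivalence.
\item[$\ast$] Items (vii)--(ix): apply \Cref{thm: T^G_R preservation higher}. Its hypotheses on $\mathbb{F}$, $\CL$, $T$ and the subfield $R$ are literally those of the corollary.
\end{itemize}

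Items (x)--(xii) need more care, because \Cref{thm: T^delta_g preservation higher} gives only one direction: $T\in D$ implies every completion of $T^\delta_g$ is in $D$. For the converse, I would note that $T^\delta_g$ is an expansion of $T$ by the symbol $\delta$. So any witness $\varphi(x,\bar y)\in\CL$ of $T\notin D$ still witnesses $\neg D$ in any completion, because the reduct of a model of that completion to $\CL$ is a model of $T$. This disposes of the "every" reading. For the "some" reading: if some completion is in $D$, then $T$ is in $D$ by the expansion argument, and then every completion is in $D$ by the theorem. Hence "every" and "some" are equivalent.

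The main point to check is therefore not any of the preservation steps but the identification of the three named dividing lines with members of $\mathfrak{D}^h_{n\text{-var}}$. That identification is already carried out in \Cref{thm: l-over-kP-k D-n-var}, so the remaining work is bookkeeping: matching each item's hypotheses against the corresponding theorem.
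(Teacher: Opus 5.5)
Your proposal is correct and is exactly how the paper obtains the corollary: it combines \Cref{thm: l-over-kP-k D-n-var}, which puts $D_{\text{NOP}_k}$, $D_{\text{NFOP}_k}$ and $D_{\text{NIP}_k}$ into $\mathfrak{D}^h_{n\text{-var}}$, with \Cref{thm: T^gt preservation higher}, \Cref{thm: T_P T^ind preservation higher}, \Cref{thm: T^G_R preservation higher} and \Cref{thm: T^delta_g preservation higher}, and the converse in (x)--(xii) follows because $T^\delta_g$ expands $T$. In that expansion step you should add that the transferred $\CL$-witness has to be made $\CI^*$-indiscernible in $\CL^\delta$ using the modeling property, which preserves both the consistency and the inconsistency conditions.
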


\begin{remark}
\noindent\Cref{cor: all preservation results higher} (ix) gives a partial answer to the question posed in \cite[p.~25]{BdV22}.
\end{remark}

\appendix

\section{Higher arity TP$_2$}

While constructing a configuration for NIP$_k$ in \Cref{sec: higher arity}, we found a candidate for a higher-arity version of TP$_2$. We will temporarily call it TP$^{*k}_2$, and in this appendix, we briefly introduce its basic properties.

\begin{definition}\label{def: higher arity TP2}
Let $0<k<\omega$ and $1<d<\omega$. We say $\varphi(x,y_0,...,y_{k-1})$ has {\it $d$-TP$_2^{\,*k}$} if there exist $(b^0_{n,m})_{n,m<\omega}$,..., $(b^{k-1}_{n,m})_{n,m<\omega}$ such that 
\begin{itemize}
\item[(i)] $\{\varphi(x,b^0_{n_0,m_{0,e}},...,b^{k-1}_{n_{k-1},m_{k-1,e}})\}_{e<d}$ is inconsistent if $m_{i,0}<\cdots<m_{i,d-1}$ for all $i<k$.
\item[(ii)] $\{\varphi(x,b^0_{n_0, f_0(n_0,...,n_{k-1})},...,b^{k-1}_{n_{k-1},f_{k-1}(n_0,...,n_{k-1})})\}_{n_0,...,n_{k-1}<\omega}$ is consistent for all $f_0:\omega^k\to\omega$, $...$, $f_{k-1}:\omega^k\to \omega$.
\end{itemize}
We say $T$ has $d$-TP$^{*k}_2$ if $\varphi$ has it for some $\varphi$. We say $T$ has {\it TP$^{\,*k}_2$} if it has $d$-TP$^{*k}_2$ for some $1<d<\omega$. We say $T$ is {\it NTP$^{\,*k}_2$} if it does not have TP$^{*k}_2$. 
\end{definition}

\begin{remark}
It is easy to check that TP$^{*1}_2$ is equivalent to TP$_2$ and that TP$^{*k+1}_2$ implies TP$^{*k}_2$ for each $0<k<\omega$.
\end{remark}

\begin{notation}
For $k<\omega$, let $\CL_{\text{TP}^{*k}_2}:=\{<,E,P_0,...,P_{k-1}\}$. Let $\CI_{\text{TP}^{*k}_2}$ be an $\CL_{\text{TP}^{*k}_2}$-structure on $k\times \mathbb{Q}\times\mathbb{Q}$ such that 
\begin{itemize}
\item[(i)] $(i,q,r)<(i',q',r')$ if and only if one of the following holds
\begin{itemize}
\item[$\ast$] $i<i'$,
\item[$\ast$] $i=i'$ and $q<q'$,
\item[$\ast$] $i=i'$, $q=q'$, and $r<r'$,
\end{itemize}
\item[(ii)] $E((i,q,r),(i',q',r'))$ if and only if $i=i'$ and $q=q'$,
\item[(iii)] $P_i((j,q,r))$ if and only if $i=j$ for each $i,j<k$.
\end{itemize}
\end{notation}

\begin{remark}\label{rmk: modeling property higher TP2}
By compactness, if $\varphi$ witnesses $d$-TP$^{*k}_2$, then we can find $(b^i_{q,r})_{(i,q,r)\in k\times\mathbb{Q}\times\mathbb{Q}}$ that satisfies conditions (i) and (ii) in \Cref{def: higher arity TP2} with $\varphi$.
It is easy to check that $\CI_{\text{TP}^{*k}_2}$ has the modeling property. Thus if $\varphi$ witnesses TP$^{*k}_2$ with $(b^i_{n,m})_{(i,n,m)\in k\times\omega\times\omega}$, then by compactness and the modeling property, we can find $\CI_{\text{TP}^{*k}_2}$-indiscernible $(c^i_{q,r})_{(i,q,r)\in k\times \mathbb{Q}\times\mathbb{Q}}$ such that $\varphi$ witnesses TP$^{*k}_2$ with $(c^i_{q,r})_{(i,q,r)\in k\times \mathbb{Q}\times\mathbb{Q}}$.
\end{remark}

\begin{proposition}\label{prop: d-TPk2 => 2-TPk2}
If $T$ has TP$^{*k}_2$, then it has $2$-TP$^{*k}_2$.
\begin{proof}
Suppose $\varphi(x,y_0,...,y_{k-1})$ witnesses $(d+1)$-TP$^{*k}_2$ with $(b^0_{q,r})_{q,r\in\mathbb{Q}},...,(b^{k-1}_{q,r})_{q,r\in\mathbb{Q}}$, for some $d>1$. We may assume $(b^i_{q,r})_{(i,q,r)\in k\times \mathbb{Q}\times\mathbb{Q}}$ is $\CI_{\text{TP}^{*k}_2}$-indiscernible. Let 
\[\psi(x,y_{0,0},...,y_{0,d-1},...,y_{k-1,0},
...,y_{k-1,d-1}):=\bigwedge_{e<d}\varphi(x,y_{0,e},...,y_{k-1,e}).
\]
For each $(i,n,m)\in k\times\omega\times\omega$, let $c^i_{n,m}:= (b^i_{n,md},...,b^i_{n,md+d-1})$.

\medskip

\noindent\underline{Case 1.} For each $(f_i:\omega^k\to\omega)_{i<k}$, $\{\psi(x,c^0_{n_0,f_0(n_0,...,n_{k-1})},...,c^{k-1}_{n_{k-1},f_{k-1}(n_0,...,n_{k-1})}):n_0,...,n_{k-1}<\omega\}$ is consistent.

\smallskip

\noindent In this case, $\psi$ witnesses $2$-TP$^{*k}_2$ with $(c^i_{n,m})_{(i,n,m)\in k\times\omega\times\omega}$.

\medskip

\noindent\underline{Case 2.} For some $(f_i:\omega^k\to\omega)_{i<k}$, $\{\psi(x,c^0_{n_0,f_0(n_0,...,n_{k-1})},...,c^{k-1}_{n_{k-1},f_{k-1}(n_0,...,n_{k-1})}):n_0,...,n_{k-1}<\omega\}$ is inconsistent.

\smallskip

\noindent In this case, there exists $N<\omega$ such that 
\[
\{\psi(x,c^0_{n_0,f_0(n_0,...,n_{k-1})},...,c^{k-1}_{n_{k-1},f_{k-1}(n_0,...,n_{k-1})}):n_0,...,n_{k-1}<N\}
\]
is inconsistent. For each $i<k$, let $\bar y_i:=(y^i_{\bar n})_{\bar n\in N^k}$ such that $|y^i_{\bar n}|=|y_i|$. Let 
\[
\chi(x,\bar y_0,...,\bar y_{k-1}):=\bigwedge_{\bar n\in N^k}\varphi(x,y^0_{\bar n},...,y^{k-1}_{\bar n}).
\]
For each $(i,n,m)\in k\times \omega\times\omega$, let $d^i_{n,m}:= (b^i_{nN+n_i,f_i(n_0,...,n_{k-1})+1-{1\over {m+1} }})_{(n_0,...,n_{k-1})\in N^k}$.
Then by the indiscernibility, $\chi$ witnesses $d$-TP$^{*k}_2$ with $(d^i_{n,m})_{(i,n,m)\in k\times\omega\times\omega}$.
 
\medskip

We can repeat this process until we obtain $2$-TP$^{*k}_2$.
\end{proof}
\end{proposition}

\begin{remark} 
From \Cref{prop: d-TPk2 => 2-TPk2}, it is clear that NIP$_k$ $\Rightarrow$ NTP$^{*k}_2$ for each $1<k<\omega$.
\end{remark}
So we have the following diagram.
\vspace{-15pt}
\begin{figure}[H]
\[
\begin{tikzpicture}[
    >={stealth[length=1pt,width=6pt]},
    x={(1.6cm,0cm)},
    y={(0cm,0.82cm)}
]

\footnotesize

\node (NIP)  at (0,0) {NIP};
\node (NTP2) at (1,0) {NTP$_2$};

\node (NIP2)  at (0,1.3) {NIP$_2$};
\node (NTP22) at (1,1.3) {NTP$_2^{*2}$};

\node (NIP3)  at (0,2.6) {NIP$_3$};
\node (NTP23) at (1,2.6) {NTP$_2^{*3}$};

\node (NIP4)  at (0,3.9) {NIP$_4$};
\node (NTP24) at (1,3.9) {NTP$_2^{*4}$};

\node (vdots1) at (0,5.2) {$\vdots$};
\node (vdots2) at (1,5.2) {$\vdots$};

\draw[->] (NIP) -- (NIP2);
\draw[->] (NIP2) -- (NIP3);
\draw[->] (NIP3) -- (NIP4);
\draw[->] (NIP4) -- (vdots1);

\draw[->] (NTP2) -- (NTP22);
\draw[->] (NTP22) -- (NTP23);
\draw[->] (NTP23) -- (NTP24);
\draw[->] (NTP24) -- (vdots2);

\draw[->] (NIP) -- (NTP2);
\draw[->] (NIP2) -- (NTP22);
\draw[->] (NIP3) -- (NTP23);
\draw[->] (NIP4) -- (NTP24);

\end{tikzpicture}
\]\vspace{-15pt}
\caption{}
\label{fig: 2}
\end{figure}

\begin{notation}\label{notation: universal TPk2 subset}
Let $(X_i)_{i<\omega}$ be a sequence of finite subsets of $(\CI_{\text{TP}^{*k}_2})^k$ such that
\begin{itemize}
\item[(i)] each $\bar\eta\in X_i$ is of the form $\big((0,q_0,r_0),...,(k-1,q_{k-1},r_{k-1})\big)$ such that $0<q_j<1$ for all $j<k$,
\item[(ii)] If $((i,q_i,r_i))_{i<k},((i,q'_i,r'_i))_{i<k}\in X_i$ are distinct, then $(q_0,...,q_{k-1})\neq (q'_0,...,q'_{k-1})$.
\item[(iii)] If $N<\omega$ and $X$ is a finite subset of $(\CI_{\text{TP}^{*k}_2})^k$ satisfying (i) and (ii), then there exists $i>N$ such that $X_i\sim X$.
\end{itemize}
Let 
\[
X_{\text{TP}^{*k}_2}\!:=\!\bigcup_{i<\omega}\!\bigg\lbrace\!\Big(\!(0,q_0-i,r_0),(1,q_1+i,r_1),...,(k-1,q_{k-1}+i,r_{k-1})\!\Big)\!:\!\Big(\!(0,q_0,r_0),...,(k-1,q_{k-1},r_{k-1})\!\Big)\!\in\! X_i\!\bigg\rbrace
\]
and
\[
Y_{\text{TP}^{*k}_2}:=
\bigg\lbrace
\Big((0,0,0),...,(k-1,0,0)\Big),
\Big((0,0,{1\over 2}),...,(k-1,0,{1\over 2})\Big)
\bigg\rbrace.
\]
\end{notation}

\begin{proposition}
Let $D_{\text{NTP}^{*k}_2}$ be the class of complete NTP$^{*k}_2$ theories. Then $D_{\text{NTP}^{*k}_2}\in \mathfrak{D}^h_{n\text{-var}}$. Thus the preservation results in \Cref{cor: all preservation results higher} also hold for NTP$^{*k}_2$.
\begin{proof}
By the same argument of \ref{lem: k-monochromatic extendable IxkNPk}, we can show that $\CI_{\text{TP}^{*k}_2}$ is $k$-monochromatically extendable over $\mathbbof{X}_{\text{TP}^{*k}_2}$. It is also easy to check that $X_{\text{TP}^{*k}_2}$ is cofinal. 
Clearly $D_{\text{NTP}^{*k}_2}=D^{\CL_{\text{TP}^{*k}_2}\!,\CI_{\text{TP}^{*k}_2}}_{k,X_{\text{TP}^{*k}_2},Y_{\text{TP}^{*k}_2}}\!$. 
\end{proof}
\end{proposition}

\end{document}